\newtheorem{Proposition}{Proposition}[section]
\newtheorem{Lemma}[Proposition]{Lemma}
\newtheorem{Theorem}[Proposition]{Theorem}
\newtheorem{Corollary}[Proposition]{Corollary}
\newtheorem{Remark}[Proposition]{Remark}
\def\phantomsubsection#1{\vspace{2mm}\noindent{\bf #1.}}
\newcommand{\bla}{\mbox{\boldmath$\lambda$}}
\newcommand{\bmu}{\mbox{\boldmath$\mu$}}
\newcommand{\bnu}{\mbox{\boldmath$\nu$}}
\newcommand{\cC}{C}
\newcommand{\cP}{P}
\newcommand{\cQ}{Q}
\newbox\squ  
\def\deg{\operatorname{deg}}
\def\cups{\operatorname{cups}}
\def\caps{\operatorname{caps}}
\def\ups{\operatorname{\up}}
\def\downs{\operatorname{\down}}
\def\defect{\operatorname{def}}
\def\cl{{\operatorname{cl}}}
\def\ex{{\operatorname{ex}}}
\def\op{\operatorname{op}}
\def\down{\vee}
\def\up{\wedge}
\def\mod#1{\operatorname{Mod}_{l\!f}(#1)}
\def\C{{\mathbb F}}
\def\Z{{\mathbb Z}}
\def\0{{\bar 0}}
\def\1{{\bar 1}}
\def\hom{{\operatorname{Hom}}}
\def\ext{{\operatorname{Ext}}}
\def\End{{\operatorname{End}}}
\def\im{{\operatorname{im}}}
\def\soc{{\operatorname{soc}\:}}
\def\eps{{\varepsilon}}
\def\phi{{\varphi}}
\def\la{{\lambda}}
\def\La{{\Lambda}}
\def\Ga{{\Gamma}}
\def\ga{{\gamma}}
\def\De{{\Delta}}
\def\al{{\alpha}}
\def\be{{\beta}}
\def\bt{\text{\boldmath$t$}}
\def\br{\text{\boldmath$r$}}
\def\bs{\text{\boldmath$s$}}
\def\bu{\text{\boldmath$u$}}
\def\bLa{\text{\boldmath$\La$}}
\def\bGa{\text{\boldmath$\Ga$}}
\def\bPi{\text{\boldmath$\Pi$}}
\def\bDe{\text{\boldmath$\De$}}
\def\bOm{\text{\boldmath$\Omega$}}
\def\bUps{\text{\boldmath$\Upsilon$}}
\def\bGa{\text{\boldmath$\Ga$}}
\def\bla{\text{\boldmath$\la$}}
\def\bmu{\text{\boldmath$\mu$}}
\begin{document}

\title[Khovanov's diagram algebra II]{Highest weight categories
arising from Khovanov's diagram algebra II: Koszulity}
\author{Jonathan Brundan and Catharina Stroppel}

\address{Department of Mathematics, University of Oregon, Eugene, OR 97403, USA}
\email{brundan@uoregon.edu}
\address{Department of Mathematics, University of Bonn,
53115 Bonn, Germany}
\email{stroppel@math.uni-bonn.de}

\thanks{2000 {\it Mathematics Subject Classification}: 17B10, 16S37.}
\thanks{First author supported in part by NSF grant no. DMS-0654147}
\thanks{Second author supported by the NSF and the Minerva Research Foundation DMS-0635607.}

\begin{abstract}
This is the second of a series of four articles
studying various generalisations
of Khovanov's diagram algebra.
In this article we develop the general theory of
Khovanov's diagrammatically defined ``projective functors''
in our setting.
As an application, we give a direct proof of the fact that
the quasi-hereditary covers of generalised Khovanov algebras
are Koszul.
\end{abstract}
\maketitle

\tableofcontents

\section{Introduction}\label{sintroduction}

This is Part II of a series of four articles studying some generalisations
of Khovanov's diagram algebra.
In Part I, we explained how to associate to every block
$\La$ of weights
two positively graded associative algebras denoted
$H_\La$ and $K_\La$. Roughly speaking, a {\em weight} means a diagram like
$$
\begin{picture}(30,15)
\put(-95.5,1){$\la = $}
\put(-64.8,3){\line(1,0){184}}
\put(-68.2,1){$\scriptstyle\times$}
\put(-44.8,0.2){$\circ$}
\put(46.8,1){$\scriptstyle\times$}
\put(93.2,0.2){$\circ$}
\put(-21.7,-1.6){$\scriptstyle\up$}
\put(70.3,-1.6){$\scriptstyle\up$}
\put(1.3,3.1){$\scriptstyle\down$}
\put(116.3,3.1){$\scriptstyle\down$}
\put(24.3,3.1){$\scriptstyle\down$}
\end{picture}
$$
consisting of a (possibly infinite) number line with some vertices labelled $\down, \up,
\circ$ and $\times$. Then a {\em block} $\La$
is an equivalence class of weights,
two weights being equivalent if one is obtained from the other by
permuting $\down$'s and $\up$'s (but not changing the positions of
$\circ$'s and $\times$'s that only play an auxiliary role).
If $\La$ consists of weights with exactly
$n$ $\down$'s and $n$ $\up$'s, then
$H_\La$ is Khovanov's diagram algebra $H_n^n$ from
\cite[$\S$2.5]{K2} which plays a key role in the definition of
 Khovanov homology in knot theory.
We showed in Part I that $H_\La$ is a
symmetric algebra with a cellular basis
parametrised by various closed oriented circle
diagrams and that $K_\La$ is a quasi-hereditary algebra with a cellular basis
parametrised by arbitrary oriented circle diagrams.
We also described explicitly the combinatorics of
projective, irreducible and cell modules.

In \cite[$\S$2.7]{K2}, Khovanov introduced a class of
$(H_m^m,H_n^n)$-bimodules he called
{\em geometric bimodules}, one for each crossingless matching
between $2m$ points and $2n$ points. In this article
we consider the analogous classes of
bimodules for the algebras $H_\La$ and $K_\La$
in general. Tensoring with these bimodules gives rise to
some remarkable functors
on the module categories, which in the case of $K_\La$ we call
{\em projective functors}.
One of the main aims of this article is to
give a systematic account of
the general theory of such functors since, although elementary in nature,
proofs of many of
these foundational results are hard to find in the literature.
We then apply the theory to
give self-contained diagrammatic proofs of the
following:

\phantomsubsection{Koszulity}
We show that the algebras $K_\La$ are Koszul, establishing at the same time
that their Kazhdan-Lusztig polynomials
in the sense of Vogan \cite{V}
are the usual
Kazhdan-Lusztig polynomials associated to Grassmannians
which were computed explicitly by Lascoux and Sch\"utzenberger in \cite{LS}.

\phantomsubsection{Double centralizer property} We
prove a double centralizer property
which implies that
$K_\La$ is a quasi-hereditary cover of
$H_\La$
in the sense
of \cite[$\S$4.2]{Rou}.

\phantomsubsection{Kostant modules}
We classify and study the {Kostant modules} over the algebras $K_\La$
in the sense of \cite{BH}.
These are the irreducible modules whose Kazhdan-Lusztig polynomials
are multiplicity-free.
We show that they
possess a resolution
by direct sums of
cell modules in the spirit of
\cite{BGG}.

\vspace{2mm}
When $K_\La$ is finite dimensional,
these should not be regarded as new results, but the existing
proofs are indirect.
In fact, in the finite dimensional cases,
work of Braden \cite{Br} and the second author \cite[Theorem 5.8.1]{S}
shows that the category of
$K_\La$-modules is equivalent to the category of
perverse sheaves on a Grassmannian
(constructible with respect to the Schubert stratification).
In turn, by the Beilinson-Bernstein
localisation theorem from \cite{BB},
this category of perverse sheaves
is equivalent to a regular integral
block of a parabolic analogue of the Bernsein-Gelfand-Gelfand category
$\mathcal O$ associated to a maximal parabolic in type A.
Using this connection to parabolic category $\mathcal O$,
the fact that $K_\La$ is Koszul in the finite dimensional case
follows as a relatively trivial special case of
an important theorem of Beilinson, Ginzburg and Soergel
\cite[Theorem 1.1.3]{BGS} (see also \cite[Theorem 1.1]{Ba}).
Also the double centralizer property for $K_\La$
follows from \cite[Theorem 10.1]{Sdcp}, while
our results about Kostant modules in this setting (where
they correspond to the rationally smooth
Schubert varieties in Grassmannians)
follow from \cite{Lep, BH}.

The results of this article play an essential role in our subsequent work.
In Part III, we apply the
Koszulity and the
double centralizer property to give a direct algebraic proof
of the equivalence of the category of (graded) $K_\La$-modules
with the aforementioned blocks of (graded) parabolic category $\mathcal O$
in the finite dimensional cases;
this is one of the reasons we wanted to reprove
the above results independently of \cite{BB}, \cite{BGS} and \cite{Br} in the first place.
Then in Part IV, we relate some of the infinite
dimensional versions of the algebra $K_\La$ to blocks of the general linear
supergroup, a setting in which (so far) no geometric approach is available.
Since
we also establish Koszulity for the (locally unital) algebras $K_\La$
in these infinite dimensional cases, this implies
that blocks of the general linear supergroup are Koszul.
Finally our treatment of Kostant modules
leads to another proof of
the main result of \cite{CKL}: all
irreducible polynomial representations of $GL(m|n)$ possess BGG-type
resolutions.

\vspace{2mm}
\noindent
{\em Notation.}
Throughout the article we work over
a fixed ground field $\C$, and
gradings mean $\Z$-gradings.
For a graded algebra $K$, we write $\mod{K}$ for the
category of
locally finite dimensional graded {\em left} $K$-modules that are bounded
below; see \cite[$\S$5]{BS} for other general
conventions regarding graded modules.

\section{More diagrams}

We assume the reader is familiar with the
notions of weights, blocks, cup diagrams, cap diagrams, and circle
diagrams as defined in \cite[$\S2$]{BS}.
In particular, our cup and cap diagrams are allowed to
contain rays
as well as cups and caps, and they may also have some free vertices which are
not joined to anything.
In this section we generalise these
diagrammatic notions by incorporating
crossingless matchings.

\phantomsubsection{Crossingless matchings}
A {\em crossingless matching} means a diagram $t$ obtained by
drawing a cap diagram $c$
underneath a cup diagram $d$, and then connecting rays in $c$ to
rays in $d$ as prescribed by some order preserving
 bijection between the vertices of $c$ at the bottoms of rays
and the vertices of $d$ at the tops of rays. This is possible
if and only if $c$ and $d$ have the same number of rays;
if this common number of rays is finite there is a unique such
order preserving bijection.
Any crossingless matching is a union of cups, caps, and
{\em line segments} (which arise when two rays are connected).
For example:
$$
\begin{picture}(30,70)
\put(-42,66){\line(1,0){138.3}}
\put(72.5,66){\oval(47.2,46)[b]} \put(15.6,66){\oval(23,23)[b]}
\put(-18.8,66){\line(0,-1){14}} \put(-18.8,52){\line(4,-1){137.8}}
\put(119,1){\line(0,1){16.5}} \put(-41.8,45){\line(-1,-1){22.9}}
\put(-41.8,66){\line(0,-1){21.1}} \put(-64.8,1){\line(0,1){21.1}}
\put(-65,1){\line(1,0){184.2}}
\put(96,1){\line(0,1){2.5}}
\put(73,66){\line(0,-1){2.5}}
\put(15.5,1){\oval(115,53)[t]} \put(15.5,1){\oval(23,23)[t]}
\put(15.5,1){\oval(69,38)[t]}
\end{picture}
$$
Let $\cups(t)$ (resp.\ $\caps(t)$) denote the number of cups (resp.\ caps) in $t$. Also let $t^*$ denote the mirror image
of
$t$ in a horizontal axis.

Suppose that we are given blocks
$\La$ and $\Ga$ and a crossingless matching $t$.
We say that $t$
is a {\em $\La\Ga$-matching} if
the bottom and top
number lines of $t$ are the same as the number lines underlying
weights from $\La$ and $\Ga$, respectively.
More generally, suppose we are given a sequence
$\bLa = \La_k \cdots \La_0$ of blocks.
A {\em $\bLa$-matching} means a diagram of the
form $\bt = t_k \cdots t_1$ obtained by gluing
a sequence $t_1,\dots,t_k$
of crossingless matchings together  from top (starting with $t_1$)
to bottom (ending with $t_k$) such that
\begin{itemize}
\item
$t_i$ is a $\La_i \La_{i-1}$-matching for each $i=1,\dots,k$;
\item
the free vertices at the bottom of $t_i$
are in the same positions as the free vertices at the top
of $t_{i+1}$ for each $i=1,\dots,k-1$.
\end{itemize}
Given in addition cup and cap diagrams
$a$ and $b$ whose number lines are the same as the
bottom and top number lines of $t_k$ and $t_1$, respectively,
with free vertices in all the same positions,
we can glue
to obtain a {\em $\bLa$-circle diagram} $a \bt b = a t_k \cdots t_1 b$.
Here is an example with $k=2$:
$$
\begin{picture}(30,160)
\put(-116,10){$a$}
\put(-116,148){$b$}
\put(-116,113){$t_1$}
\put(-116,53){$t_2$}
\put(-30.4,146){\oval(23,23)[t]}
\put(4.1,146){\line(0,1){15}}
\put(96.1,146){\line(0,1){15}}
\put(38.6,146){\oval(23,23)[t]}
\put(15.6,146){\oval(23,23)[b]}
\put(-65.2,26){\line(1,0){184.2}}
\put(-65.2,86){\line(1,0){184.2}} \put(-42.2,146){\line(1,0){138.4}}
\put(15.5,86){\oval(115,51)[t]} \put(15.5,86){\oval(23,23)[t]}
\put(15.5,86){\oval(69,36)[t]} \put(119,86){\line(0,1){15.6}}
\put(-64.8,26){\line(1,0){184}} \put(-42,146){\line(0,-1){13}}
\put(-42,133){\line(-2,-3){23}} \put(-65,86){\line(0,1){12.5}}
\put(73,148.5){\line(0,-1){5}}
\put(73,146){\oval(46,46)[b]}
\put(-19,146){\line(0,-1){10}} \put(-18.9,136){\line(4,-1){137.8}}
\put(119,26){\line(0,1){12.1}} \put(-65,86){\line(0,-1){60}}
\put(96,86){\oval(46,46)[b]} \put(38.5,86){\oval(23,19)[b]}
\put(-7.5,86){\oval(23,19)[b]} \put(-42,86){\line(0,-1){7.9}}
\put(-41.9,78.2){\line(4,-1){160.7}}
\put(96,23.5){\line(0,1){5}}
\put(96,83.5){\line(0,1){5}}
\put(15.5,26){\oval(115,45)[t]}
\put(15.5,26){\oval(23,23)[t]}
\put(15.5,26){\oval(69,34)[t]}
\put(50,26){\oval(138,54)[b]}
\put(38.5,26){\oval(69,34)[b]}
\put(38.5,26){\oval(23,23)[b]}
\put(-53.5,26){\oval(23,23)[b]}
\end{picture}
$$
Any such circle diagram is a union of circles and lines.
We call it a {\em closed} circle diagram if there are
only circles, no lines.

\phantomsubsection{Orientations and degrees}
Let $\La$ and $\Ga$ be blocks and $t$ be a $\La\Ga$-matching.
Given weights $\la \in \La$ and $\mu \in \Ga$,
we can glue $\la, t$ and $\mu$ together from bottom to top to obtain
a new diagram $\la t \mu$.
This is called an {\em oriented $\La\Ga$-matching} if
\begin{itemize}
\item each pair of vertices lying on the same cup or the same
cap is labelled so that one is $\up$ and one is $\down$;
\item each pair of
vertices lying on the same line segment is labelled
so that both are $\up$ or both are $\down$;
\item all other vertices are labelled either $\circ$ or $\times$.
\end{itemize}
Here is an example:
$$
\begin{picture}(30,72)
\put(-116,60){$\mu$}\put(-116,30){$t$}\put(-116,0){$\la$}
\put(-64.8,3){\line(1,0){184}}
\put(-44.7,3.1){$\scriptstyle\down$}
\put(-67.7,-1.6){$\scriptstyle\up$}
\put(70.3,-1.6){$\scriptstyle\up$}
\put(-21.7,3.1){$\scriptstyle\down$}
\put(24.3,-1.6){$\scriptstyle\up$}
\put(1.3,58.4){$\scriptstyle\up$}
\put(93.3,58.4){$\scriptstyle\up$}
\put(116.3,58.4){$\scriptstyle\up$}
\put(1.3,3.1){$\scriptstyle\down$}
\put(24.3,63.1){$\scriptstyle\down$}
\put(47.3,63.1){$\scriptstyle\down$}
\put(116.3,-1.6){$\scriptstyle\up$}
\put(70.3,63.1){$\scriptstyle\down$}
\put(-21.7,58.4){$\scriptstyle\up$}
\put(-44.7,63.1){$\scriptstyle\down$}
\put(-67.7,58.4){$\scriptstyle\up$}
\put(46.8,1){$\scriptstyle\times$}
\put(93.2,.2){$\circ$}
\put(27,3){\oval(92,46)[t]}
\put(15.5,3){\oval(23,23)[t]}
\put(119,3){\line(0,1){15}}
\put(-64.8,63){\line(1,0){184}}
\put(-42,3){\line(0,1){60}}
\put(-65,3){\line(0,1){60}}
\put(84.5,63){\oval(69,46)[b]}
\put(15.5,63){\oval(23,23)[b]}
\put(84.5,63){\oval(23,23)[b]}
\put(-19,63){\line(0,-1){10}}
\put(-19,53){\line(4,-1){138}}
\end{picture}
$$
An {\em oriented $\La\Ga$-circle diagram}
means a composite diagram of the form $a \la t \mu b$
in which
$\la t \mu$ is an oriented $\La\Ga$-matching in the above sense
and $a \la$ and $\mu b$ are oriented
cup and cap diagrams in the sense of \cite[$\S$2]{BS}.

More generally, suppose that we are given a sequence
$\bLa = \La_k \cdots \La_0$ of blocks for some $k \geq 0$.
An {\em oriented $\bLa$-matching} means a
composite diagram
of the form
\begin{equation}\label{imp}
\bt[\bla] := \la_k t_k \la_{k-1} \cdots \la_1 t_1 \la_0
\end{equation}
where $\bla = \la_k\cdots \la_0$ is a sequence of weights
such that
$\la_{i} t_i\la_{i-1}$ is an oriented $\La_i \La_{i-1}$-matching for
each $i=1,\dots,k$.
We stress that {\em every} number
line in the diagram $\bt[\bla]$ is decorated by a weight.
Finally, given in addition a cup diagram $a$ and a cap diagram $b$ such that
$a \lambda_k$ is an oriented cup diagram and
$\lambda_0 b$ is an oriented cap diagram, we can glue
$a$ to the bottom and $b$ to the top of the diagram $\bt[\bla]$
to obtain an {\em oriented $\bLa$-circle diagram}
denoted $a \,\bt[\bla]\,b$.

We say that a $\bLa$-matching $\bt$
is {\em proper} if at least one oriented $\bLa$-matching
exists. To formulate an obvious implication of this
condition, introduce the notation
$\up(\Ga)$ (resp.\ $\down(\Ga)$) to denote the (possibly infinite)
number of vertices of a weight belonging to a
block $\Ga$ that are labelled by the symbol $\up$ (resp.\ $\down$).

\begin{Lemma}\label{updown}
If $\bt = t_k \cdots t_1$ is a proper $\bLa$-matching then
\begin{align*}
\up(\La_i) - \caps(t_i) &= \up(\La_{i-1}) - \cups(t_i) \geq 0,\\
\down(\La_i) - \caps(t_i) &= \down(\La_{i-1}) - \cups(t_i) \geq 0,
\end{align*}
for each $i=1,\dots,k$.
\end{Lemma}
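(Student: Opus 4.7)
The plan is to read off the equalities from a single fixed orientation of $\bt$ by a direct counting argument applied to each individual matching $t_i$.

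First I would note that $\up(\La)$ and $\down(\La)$ are well-defined invariants of a block $\La$: the equivalence that defines blocks permutes $\up$'s with $\down$'s but keeps $\circ$'s and $\times$'s fixed, so the total numbers of $\up$'s and $\down$'s appearing in any weight of $\La$ depend only on $\La$. In particular we may count them using whichever representative weight is convenient.

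Next, since $\bt$ is proper, pick any oriented $\bLa$-matching $\bt[\bla] = \la_k t_k \la_{k-1}\cdots \la_1 t_1 \la_0$ and fix $i \in \{1,\dots,k\}$. Consider the oriented $\La_i\La_{i-1}$-matching $\la_i t_i \la_{i-1}$. Decompose the components of $t_i$ into $\caps(t_i)$ caps, $\cups(t_i)$ cups, and some number of line segments; let $\ell_\up(t_i)$ and $\ell_\down(t_i)$ denote the numbers of line segments labelled $\up$ and $\down$ respectively by the chosen orientation. By the three defining conditions on an oriented matching, every non-free vertex at the top of $t_i$ is either an endpoint of a cap (contributing one $\up$ and one $\down$ to $\la_i$) or the top of a line segment (contributing an $\up$ or $\down$ according to its label), while every non-free vertex at the bottom of $t_i$ is either an endpoint of a cup (again contributing one of each) or the bottom of a line segment (with the same label as its top). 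Counting then gives
\begin{align*}
\up(\La_i) &= \caps(t_i) + \ell_\up(t_i), & \up(\La_{i-1}) &= \cups(t_i) + \ell_\up(t_i),\\
\down(\La_i) &= \caps(t_i) + \ell_\down(t_i), & \down(\La_{i-1}) &= \cups(t_i) + \ell_\down(t_i).
\end{align*}
Subtracting yields the two claimed equalities, and the non-negativity follows because $\ell_\up(t_i)$ and $\ell_\down(t_i)$ are cardinalities of subsets of the set of line segments of $t_i$.

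There is no real obstacle here; the statement is essentially a bookkeeping identity. The only point requiring a moment's care is the well-definedness of $\up(\La)$ and $\down(\La)$, and the observation that the orientation conditions force cap and cup endpoints each to contribute exactly one $\up$ and one $\down$, so that the discrepancy between the top and bottom counts is measured purely by the line segments.
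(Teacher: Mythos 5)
Your argument is correct and is essentially the paper's own proof: both identify the common value $\up(\La_i)-\caps(t_i)=\up(\La_{i-1})-\cups(t_i)$ with the number of line segments of $t_i$ oriented $\up$ in the chosen oriented matching (and similarly with $\down$), the non-negativity being automatic. One cosmetic remark: under the paper's conventions the caps of $t_i$ are attached to the \emph{bottom} number line of $t_i$ (the one decorated by $\la_i$) and the cups to the \emph{top} one (decorated by $\la_{i-1}$), so your words ``top'' and ``bottom'' are interchanged; since you nevertheless pair caps with $\la_i$ and cups with $\la_{i-1}$, your counting identities are exactly the right ones and the proof stands.
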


\begin{proof}
As $\bt$ is proper, some oriented $\bLa$-matching
of the form (\ref{imp}) exists.
Now observe that
$\up(\La_i) - \caps(t_i) = \up(\La_{i-1})-\cups(t_i)$
counts the number of line
segments in the diagram $\la_i t_i \la_{i-1}$
that are oriented $\up$. This establishes the first equation,
and the second is similar.
\end{proof}

As in \cite[$\S2$]{BS}, we refer to a circle in an
oriented $\bLa$-circle diagram
as {\em anti-clockwise} (type $1$) or {\em clockwise} (type $x$)
according to whether the leftmost vertices on the circle are labelled
$\down$ or $\up$ (equivalently, the rightmost vertices are labelled
$\up$ or $\down$).
The {\em degree} of a circle or a line in an oriented circle diagram
means its total number of clockwise cups and caps.
Finally, the degree of the oriented circle diagram itself is the sum of the degrees of
its individual circles and lines.

\begin{Lemma}\label{deglem}
The degree of an anti-clockwise circle in an oriented $\bLa$-circle
diagram is one less than the total number of caps (equivalently, cups) that it contains.
The degree of a clockwise circle
is one more than the total number of caps (equivalently, cups)
that it contains.
The degree of a line is equal to the number of caps or the number
of cups that it contains, whichever is the maximum of the two.
\end{Lemma}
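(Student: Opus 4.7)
The plan is to prove the formulas via a tangent-rotation (winding number) count applied to each strand. Regarding each circle or line as a smooth oriented curve in the plane passing through the diagram's vertices, I would interpret the $\up/\down$ label at each vertex as encoding the local tangent direction of the curve ($\up$ = upward tangent, $\down$ = downward). Under this interpretation, each cup or cap contributes $\pm\pi$ to the total tangent rotation of its strand with the sign determined locally by the endpoint labels, while line segments are essentially vertical and contribute no rotation.

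The key local observation, to be verified first, is that an anti-clockwise cup or cap (left endpoint $\down$, right endpoint $\up$) contributes $+\pi$ to the tangent rotation, whereas a clockwise cup or cap (left $\up$, right $\down$) contributes $-\pi$. This is a purely local computation from the shape of the cup or cap together with the tangent-direction interpretation of the labels. Writing $N$ for the total number of cups plus caps in the strand and $d$ for its degree (the total number of clockwise cups and caps), the total tangent rotation then equals $(N-d)(+\pi) + d(-\pi) = \pi(N-2d)$.

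For a closed circle the total tangent rotation equals $+2\pi$ if anti-clockwise and $-2\pi$ if clockwise, and $N=2n$; solving $\pi(2n-2d) = \pm 2\pi$ yields $d = n-1$ in the anti-clockwise case and $d = n+1$ in the clockwise case, establishing the circle formulas. For an open line, the total rotation is instead determined by the vertical tangent directions at the two ray endpoints at the boundary of the diagram. A case analysis on which boundary each endpoint lies on, with the tangent direction at each endpoint forced by its label, shows that the total rotation equals $0$, $+\pi$, or $-\pi$; in each case the same boundary data fix the value of $\#\text{caps} - \#\text{cups}$ correspondingly, and substituting into the rotation identity collapses to $d = \max(\#\text{cups}, \#\text{caps})$.

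The main obstacle will be the local identification of cup/cap orientation with the sign of tangent rotation in the second paragraph, which requires careful bookkeeping between the label conventions of \cite{BS} and the geometric tangent directions (in particular matching up the asymmetry between cups and caps). Once that identification is in place, the rest is a straightforward computation with the rotation identity together with the boundary data at the endpoints of each line.
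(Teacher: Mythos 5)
Your circle computation is fine and is a genuinely different route from the paper, which simply extends the combinatorial argument of \cite[Lemma 2.1]{BS}: for a simple closed curve the turning number argument, together with the observations that a circle has equally many cups and caps and that the combinatorial notion of anti-clockwise (leftmost vertex labelled $\vee$) agrees with the geometric sense of traversal, does give degree $=\#\mathrm{caps}\mp 1$.

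The line case, however, has a genuine gap. The total rotation of an open strand is \emph{not} determined by ``which boundary each endpoint lies on'' together with the $\wedge/\vee$ labels there: that data determines the rotation only modulo $2\pi$, and more importantly it does not fix the sign $\pm\pi$ in the two cases where both ends lie on the same boundary. Concretely, take a line consisting of a single cup of $a$ with endpoints $i<j$ and two rays of $b$ at $i$ and $j$. The labelling $i\mapsto\vee$, $j\mapsto\wedge$ satisfies every condition you use (one $\vee$ and one $\wedge$ on the cup, no condition violated on segments), the cup is anti-clockwise, and the degree of this line is $0$, not $\max(\#\mathrm{cups},\#\mathrm{caps})=1$; geometrically this is an embedded configuration with rotation $+\pi$ rather than $-\pi$. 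What rules it out is an ingredient your proposal never invokes: the orientation condition on \emph{rays} in the definition of oriented cup and cap diagrams in \cite[$\S$2]{BS} (within $a$, and within $b$, no $\vee$-labelled ray may lie to the left of an $\wedge$-labelled ray; this is exactly what makes ``$\underline{\la}\mu$ oriented $\Rightarrow\la\le\mu$'' true). That condition forces the two terminal rays of a line with both ends on the same boundary into the correct left--right positions relative to the direction of traversal, and only then can one argue (e.g.\ by closing the strand up around the outside of the diagram and applying the Umlaufsatz, or by induction) that the rotation is exactly $-\pi$ there and exactly $0$ in the mixed bottom--top case, which is what yields $d=\max(\#\mathrm{cups},\#\mathrm{caps})$ rather than the minimum. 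So to repair the proof you must (a) build the ray condition into your boundary case analysis, and (b) replace ``the rotation is determined by the endpoint data'' by an actual embeddedness argument excluding rotations of absolute value $\ge 3\pi$; as it stands the line case does not go through.
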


\begin{proof}
This is a straightforward extension of
\cite[Lemma 2.1]{BS}.
\end{proof}

\phantomsubsection{Reductions}
Suppose we are given a $\bLa$-matching
$\bt=t_k \cdots t_1$
for some sequence $\bLa = \La_k \cdots\La_0$ of blocks.
We refer to the
circles in the diagram $\bt$ that do not meet the top or bottom number lines as
{\em internal} circles. The
{\em reduction} of $\bt$
means the  $\La_k\La_0$-matching obtained from $\bt$
by removing all its internal circles and all its number lines
except for the top and bottom ones.
Here is an example
in which one internal circle gets removed:
$$
\begin{picture}(195,128)
\put(15.6,123){\oval(23,23)[b]}
\put(-42.2,3){\line(1,0){138.4}}
\put(-42.2,63){\line(1,0){138.4}}
 \put(-42.2,123){\line(1,0){138.4}}
\put(-42,123){\line(0,-1){2.5}}
\put(73,123){\line(0,-1){2.5}}
\put(73,123){\oval(46,46)[b]}

\put(-19,123){\line(0,-1){16}}
\put(50,63){\line(0,1){26.6}}
\put(-18.9,107){\line(4,-1){68.9}}

\put(-7.5,63){\oval(23,23)[b]}
\put(-7.5,63){\oval(23,23)[t]}
\put(-7.5,63){\oval(69,46)[t]}

\put(61.5,63){\oval(23,23)[b]}
\put(84.5,63){\oval(23,23)[t]}

\put(-42,63){\line(0,-1){20.5}}
\put(-41.9,42.2){\line(4,-1){91.7}}
\put(27.1,45.2){\line(4,-1){45.7}}
\put(50,3){\line(0,1){16.3}}
\put(27,63){\line(0,-1){18}}

\put(96,3){\line(0,1){60}}

\put(73,3){\line(0,1){30.7}}

\put(-7.5,3){\oval(69,38)[t]}
\put(-7.5,3){\oval(23,23)[t]}
\end{picture}
\begin{picture}(55,120)
\put(-76,60){$\rightsquigarrow$}
\put(-42,96){\line(1,0){138.3}}
\put(-42,96){\line(0,-1){2.5}}
\put(-42,31){\line(1,0){138.3}}
\put(72.5,96){\oval(47.2,46)[b]}
\put(15.6,96){\oval(23,23)[b]}

\put(-18.8,96){\line(0,-1){18}}
\put(-18.6,78){\line(4,-1){114.6}}
\put(96,31){\line(0,1){18}}

\put(73,96){\line(0,-1){2.5}}
\put(-7.5,31){\oval(23,23)[t]}
\put(61.5,31){\oval(23,23)[t]}
\put(-7.5,31){\oval(69,38)[t]}
\end{picture}
$$

\begin{Lemma}\label{adeg}
Assume that
 $a\,\bt[\bla]\,b$ is an oriented $\bLa$-circle diagram
for some sequence $\bla = \la_k\cdots\la_0$ of weights.
Let $u$ be the reduction of $\bt$. Then
$a \la_k u \la_0 b$ is an oriented $\La_k \La_0$-circle diagram and
\begin{align*}
\deg(a\,\bt[\bla]\,b) &=
\deg(a \la_k u \la_0 b)
+ \caps(t_1)+\cdots+\caps(t_k)-\caps(u)
 + p-q\\
&=\deg(a \la_k u \la_0 b)
+ \cups(t_1)+\cdots+\cups(t_k)-\cups(u)
 + p-q,
\end{align*}
where $p$ (resp.\ $q$) is the number of internal circles
of $\bt$ that are clockwise (resp.\ anti-clockwise)
in the diagram $a\,\bt[\bla]\,b$.
\end{Lemma}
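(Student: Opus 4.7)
The plan is to apply Lemma~\ref{deglem} to every circle and line of $a\,\bt[\bla]\,b$ and of $a\la_k u\la_0 b$, then compare term by term. I would first observe that the connected components of $a\,\bt[\bla]\,b$ split into two kinds: the internal circles of $\bt$ themselves, and ``outer'' components obtained by threading one or more non-internal-circle components of $\bt$ through cups of $a$, caps of $b$, and any rays. Forgetting the intermediate number lines sets up a canonical bijection between non-internal-circle components $C$ of $\bt$ and arcs $C'$ of $u$, and this in turn induces a bijection $D\leftrightarrow D'$ between outer components of $a\,\bt[\bla]\,b$ and components of $a\la_k u\la_0 b$ that preserves the collection of cups of $a$, caps of $b$, and rays contained in each.

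The main technical step is a local counting lemma: for every non-internal-circle component $C$ of $\bt$ with reduction $C'$, one has $\caps(C)-\caps(C')=\cups(C)-\cups(C')$. I would prove this by analysing the height function (the layer coordinate) along $C$. The interior caps of $\bt$ are exactly the local maxima of this function, the interior cups the local minima, and the line segments are monotone between neighbouring layers. A standard extrema-alternation argument then gives $\caps(C)-\cups(C)=1$, $-1$, or $0$ according as the two terminals of $C$ both lie at the bottom, both lie at the top, or one lies at each end of $\bt$; and $\caps(C')-\cups(C')$ equals the same value by direct inspection of a single cap, cup, or line segment. As a by-product, the number of $\up/\down$-switches along $C$ has the correct parity, which is exactly the compatibility condition needed to verify that $a\la_k u\la_0 b$ is a legitimate oriented $\La_k\La_0$-circle diagram.

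Next I would argue that the bijection $D\leftrightarrow D'$ preserves topological type (circle versus line) and, for circles, the clockwise versus anti-clockwise type. Topological type is immediate, since the gluing pattern via $a$, $b$, and rays is unaffected by reduction. Type-preservation for circles reduces to a local check at any vertex $v$ that $D$ and $D'$ share on the top or bottom number line of $\bt$: at such a $v$ the two incident arcs leave in the same left/right directions in the two diagrams and carry the same $\la_0$- or $\la_k$-labelling, which is enough to read off the type.

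Granting all this, the remainder is bookkeeping. Each internal circle $D$ contributes $\caps(D)\pm 1$ to $\deg(a\,\bt[\bla]\,b)$ by Lemma~\ref{deglem}, summing to $(\text{caps in internal circles})+p-q$. For each pair $D\leftrightarrow D'$, Lemma~\ref{deglem} combined with type-preservation and the counting lemma yields $\deg(D)-\deg(D')=\caps(D)-\caps(D')$: for circles the $\pm 1$ terms match and cancel, and for lines the counting lemma forces $\caps-\cups$ to have the same sign on $D$ and $D'$, so both maxima are computed on the same side. Summing all contributions, the caps of non-internal components and those inside internal circles combine to the grand total $\sum_i\caps(t_i)$, leaving
\[
\deg(a\,\bt[\bla]\,b)-\deg(a\la_k u\la_0 b)=\sum_i\caps(t_i)-\caps(u)+p-q.
\]
The cups version follows by the same argument with caps and cups interchanged, and the two expressions agree because summing the counting lemma over all non-internal-circle $C$ also forces $\sum_i\caps(t_i)-\caps(u)=\sum_i\cups(t_i)-\cups(u)$. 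The main obstacle is the counting lemma itself, since it must handle the three boundary configurations of $C$ uniformly; the other subtlety is type-preservation for circles, which as noted reduces to a single local inspection at a shared vertex.
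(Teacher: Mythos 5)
Your overall route is the paper's route: split $a\,\bt[\bla]\,b$ into the internal circles of $\bt$ and the remaining components, show that the reduction removes equally many cups and caps from each of the latter, and then apply Lemma~\ref{deglem}. The extrema-alternation counting lemma and its parity by-product (which is exactly what makes $a\la_k u\la_0 b$ oriented) are correct and supply precisely the details the paper's one-line proof treats as obvious, so most of the proposal is sound.

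The one step whose justification does not hold up is type-preservation for the outer circles. First, the incident arcs at a shared outer vertex $v$ need not leave in the same left/right direction in the two diagrams: the component $C$ of $\bt$ through $v$ may start along a line segment of $t_k$ slanting to the left even though its other endpoint, and hence the single arc $C'$ of $u$, lies to the right of $v$. Second, and more seriously, clockwise versus anti-clockwise is a global winding property and cannot be read off from the local picture (departure directions plus label) at an arbitrary vertex: a clockwise circle with an indentation contains vertices that are locally leftmost and carry the label $\down$, i.e.\ are locally indistinguishable from the leftmost vertex of an anti-clockwise circle. The claim you need is nevertheless true, and the repair is short: every vertex of $D'$ lies on one of the two outer number lines, is also a vertex of $D$, and carries the same label; and for an oriented circle the label at its leftmost vertex on \emph{any} number line it crosses already determines the type ($\down$ for anti-clockwise, $\up$ for clockwise -- the Jordan-curve fact behind the ``equivalently'' in the definition of type). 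Applying this at the leftmost vertex of $D'$, which is also the leftmost vertex of $D$ on that outer line, gives that $D$ and $D'$ have the same type. With that substitution, and noting explicitly that each internal circle has equally many cups and caps (immediate from your height-function argument, and needed both for the total cap count and for the equality of the caps and cups versions of the formula), the rest of your bookkeeping, including the treatment of lines via $\max(\caps,\cups)$, goes through.
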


\begin{proof}
Passing from $a \bt b$ to  $a u b$
involves removing all internal circles, each of which
obviously has the same number of cups and caps,
and also
removing an equal number of cups and caps from
each of the other
circles and lines in the diagram.
Moreover the total number of caps removed
is equal to $\caps(t_1)+\cdots+\caps(t_k)-\caps(u)$.
In view of these observations, the lemma follows directly
from Lemma~\ref{deglem}.
\end{proof}

Instead suppose that we are given a pair of blocks $\La$ and $\Ga$
and a $\La \Ga$-matching $t$.
Let $a$ be a cup diagram
whose number line agrees with the bottom number line of $t$.
We refer to circles or lines in the composite
diagram $at$ that do not cross the top number line
as {\em lower} circles or lines.
The {\em lower reduction} of the diagram $at$ means the cup
diagram obtained by removing the bottom number line and
all lower circles and lines.
Here is an example illustrating
such a lower reduction, in which one lower circle and one lower line
get removed:
$$
\begin{picture}(200,91)
\put(-19,84){\line(1,0){92}}
\put(15.5,84){\oval(23,23)[b]}
\put(50,84){\line(0,-1){8.5}}
\put(73,84){\line(0,-1){8.6}}
\put(165,84){\line(1,0){92}}
\put(199.5,84){\oval(23,23)[b]}
\put(199.5,84){\oval(69,46)[b]}
\put(257,84){\line(0,-1){21}}

\put(-65,21){\line(1,0){184}}
\put(61.5,21){\oval(23,23)[b]}
\put(-30.5,21){\oval(23,23)[b]}
\put(-30.5,21){\oval(69,46)[b]}
\put(61.5,21){\oval(23,23)[t]}
\put(-53.5,21){\oval(23,23)[t]}
\put(61.5,21){\oval(69,46)[t]}
\put(-19,21){\line(0,1){63}}
\put(4,21){\line(0,1){8.5}}
\put(119,21){\line(0,1){8.8}}
\put(119,21){\line(0,-1){21}}
\put(96,21){\line(0,-1){21}}
\put(27,21){\line(0,-1){21}}

\put(4.2,29.5){\line(1,1){45.8}}
\put(119,29.8){\line(-1,1){45.8}}

\put(122,72){$\rightsquigarrow$}
\end{picture}
$$
Dually,
if $b$ is a cap diagram
whose number line agrees with the top number line of $t$,
the {\em upper} circles and lines
in the diagram $tb$ means the ones that do not cross the bottom
number line.
Then the {\em upper reduction} of $tb$ means the cap
diagram obtained by removing the top number line
and all upper circles and lines.

\begin{Lemma}\label{cdeg}
If $a \la t \mu b$ is an oriented $\La\Ga$-circle diagram
and $c$ is the lower reduction of $at$ then
$c \mu b$ is an oriented circle diagram and
$$
\deg(a \la t \mu b)  = \deg(c \mu b)
+ \caps(t) + p-q,
$$
where $p$ (resp.\ $q$) is the number of lower circles
of $at$ that are clockwise (resp.\ anti-clockwise) in
the diagram $a \la t \mu b$.
For the dual statement about upper reduction, replace $\caps(t)$ by $\cups(t)$.
\end{Lemma}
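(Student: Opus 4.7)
The plan is to mirror the strategy of Lemma \ref{adeg}: examine each component of $a\la t\mu b$ in turn, use Lemma \ref{deglem} to compute its contribution to the degree, and show that the change from $\deg(a\la t\mu b)$ to $\deg(c\mu b)$ is exactly $\caps(t) + p - q$. First I would verify that $c\mu b$ is an oriented $\La\Ga$-circle diagram: the top number line of $t$ is labelled by $\mu$, and each cup (resp.\ ray) of $c$ arises from a surviving arc in $at$ whose two (resp.\ one) labelled top-line endpoints already satisfy the orientation rule for a cup (resp.\ ray), as one checks by tracing along the alternating cup/cap/line-segment structure of $at$.

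Next I would partition the components of $a\la t\mu b$ into lower circles, lower lines, and \emph{surviving} components (those meeting $t$'s top line). The surviving components correspond bijectively to the components of $c\mu b$, since each maximal sub-path of a surviving component that stays strictly below $t$'s top line -- a ``dipping piece'' -- collapses to a single cup of $c$ under the lower reduction. For the lower circles, every cap comes from $t$, so Lemma \ref{deglem} immediately gives the total contribution (total $t$-caps in lower circles) $+\, p - q$. For the lower lines, both ends must be $a$-rays (since $t$ has no rays and $b$ lies above $t$'s top line); tracing such a line shows that its arcs alternate $t$-cap, $a$-cup, $\dots$, $t$-cap, so $\caps = \cups + 1$ and Lemma \ref{deglem} forces the degree to equal the number of its $t$-caps.

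The main step is the surviving components. For each such $Z$ with image $Z'$ in $c\mu b$, I would establish three claims: (a) the $b$-caps of $Z$ are exactly the $b$-caps of $Z'$, since $b$ is unaffected by the lower reduction; (b) $\caps(Z) - \cups(Z) = \caps(Z') - \cups(Z')$, verified piecewise on $Z$ -- each dipping piece (whether a single $t$-cup, or a deeper dip through a line segment, some $a$-cups and $t$-caps, and a second line segment) contains one more cup than cap, exactly matching the single cup of $c$ that replaces it, while $a$-ray pieces, $b$-ray pieces, and above pieces contribute equally to both diagrams; and (c) if $Z$ is a circle, then $Z$ and $Z'$ have the same clockwise/anti-clockwise type, because the cyclic pattern of $\up/\down$ labels around the circle -- with cups and caps flipping the label and line segments preserving it -- restricts to the same cyclic pattern on the $\mu$-line vertices of $Z'$. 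Combining (a)--(c) with Lemma \ref{deglem} yields $\deg(Z) - \deg(Z') = \caps(Z) - \caps(Z')$, which equals the number of $t$-caps in $Z$; here (b) ensures that the two maxima in Lemma \ref{deglem} for lines are attained on the same side.

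Summing the three contributions gives $\caps(t) + p - q$ as required. The most delicate part will be the verification of (b) and (c) by the essentially exhaustive piecewise inspection of $Z$; everything else is routine bookkeeping in the spirit of Lemma \ref{adeg}. The dual statement about upper reductions is proved by the symmetric argument, with the roles of $a$ and $b$, cups and caps, and top and bottom of $t$ interchanged throughout.
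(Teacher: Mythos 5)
Your proposal is correct and follows essentially the same route as the paper's proof: the paper also argues componentwise, noting that lower circles have equally many cups and caps, lower lines have one more cap than cup, the surviving components lose equal numbers of cups and caps (with total caps removed equal to $\caps(t)$), and then invokes Lemma~\ref{deglem}. Your write-up simply makes explicit the bookkeeping (including the preservation of circle orientation type and the treatment of ray-ending pieces) that the paper leaves as obvious.
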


\begin{proof}
Passing from $atb$ to $cb$ involves removing lower circles,
which obviously
have the same number of caps and cups, and
lower lines, which have one more cap than cup,
and also removing an equal number of cups and caps
from each of the other circles and lines in the diagram.
Moreover the total number of caps removed is equal to $\caps(t)$.
Now apply Lemma~\ref{deglem}.
\end{proof}

\section{Geometric bimodules}\label{smatchings}

In this section we generalise
a construction of Khovanov \cite[$\S$2.7]{K2}
associating a geometric bimodule
to each crossingless matching $t$.
Recall for any
{block} $\La$ of weights that there are corresponding
positively graded associative algebras
$H_\La$ and $K_\La$; see \cite[$\S$3]{BS} for the definition of
$H_\La$ for
Khovanov blocks,
and \cite[$\S$4]{BS} and \cite[$\S$6]{BS} for the definitions of
$K_\La$ and $H_\La$ for general blocks.

\phantomsubsection{\boldmath Khovanov's geometric bimodules}
Assume in this subsection that
$\bLa = \La_k \cdots \La_0$ is a sequence of Khovanov
blocks, so for each $i=0,\dots,k$ the block
$\La_i$ consists of bounded weights with
$\up(\La_i) = \down(\La_i)$. Let $\bt = t_k \cdots t_1$ be a $\bLa$-matching.
Using the notation (\ref{imp}), let $H_{\bLa}^{\bt}$
be the graded vector space on homogeneous basis
\begin{equation}\label{equiv3}
\left\{(a \,\bt[\bla]\, b) \:\big|\:
\text{for all closed oriented $\bLa$-circle diagrams
$a \,\bt[\bla]\,b$}\right\}.
\end{equation}
Note $H^{\bt}_{\bLa}$ is non-zero if and only if
$\bt$ is a proper $\bLa$-matching.
For example, if $k = 0$ then the sequence $\bLa$ is a single block
$\La$, ${\bt}$
is empty, and $H_{\bLa}^{\bt}$ is the graded vector
space underlying Khovanov's algebra $H_{\La}$
as defined in \cite[$\S$3]{BS}.
In general, define a degree preserving linear map
\begin{equation}\label{star}
*:H_{\bLa}^{\bt} \rightarrow H_{\bLa^*}^{\bt^*},\qquad
(a \,\bt[\bla]\, b) \mapsto (b^* \,\bt^*[\bla^*] \,a^*)
\end{equation}
where $\bLa^* := \La_0 \cdots \La_k,
\bla^* := \la_0 \cdots \la_k,
\bt^* := t_1^* \cdots t_k^*$, and $a^*, b^*$ and $t_i^*$
denote the mirror images of $a, b$ and $t_i$ in a horizontal axis.

Let $\bGa = \Ga_l \cdots \Ga_0$
be another sequence of Khovanov blocks such that $\La_0 = \Ga_l$.
We introduce the notation
$\bLa \wr \bGa$ for the concatenated sequence
$\La_k \cdots \La_1 \Ga_l \cdots \Ga_0$; observe here we have
skipped one copy of $\La_0$ in the middle.
Let $\bu = u_l \cdots u_1$ be a $\bGa$-matching and
note that
$\bt \bu := t_k \cdots t_1 u_l\cdots u_1$ is
a $\bLa \wr\bGa$-matching.
We define a degree preserving linear multiplication
\begin{equation}\label{oldmult}
m:\quad H_{\bLa}^{\bt} \otimes H_{\bGa}^{\bu} \rightarrow H_{\bLa \wr\bGa}^{\bt \bu}
\end{equation}
by defining the product
$(a \, \bt[\bla] \,b ) (c \, \bu[\bmu]\, d)$ of two basis
vectors as follows.
If $b^* \neq c$ then we simply declare that
$(a \, \bt[\bla] \,b ) (c \, \bu[\bmu]\, d) = 0$.
If
$b^* = c$ then we draw the oriented circle diagram $a\,\bt[\bla]\, b$
underneath $c \,\bu[\bmu]\, d$
to create a new diagram with a symmetric {middle section} containing the cap
diagram $b$ underneath the cup diagram $c$. Then we perform surgery procedures on
this middle section exactly as in \cite[$\S$3]{BS} to obtain a disjoint
union of diagrams with no cups or caps left in their middle sections. Finally,
collapse the resulting diagrams by identifying the number lines above and below
the middle section to obtain a disjoint union of some new closed oriented
$\bLa\wr\bGa$-circle diagrams of the form
$a \, (\bt\bu)[\bnu] \, d$,
for sequences $\bnu = \nu_{k+l} \cdots \nu_0$
with
$$
\nu_{k+l} \in \La_k,\dots,\nu_{l} \in \La_0,\quad
\nu_l \in \Ga_l,\dots,\nu_0 \in \Ga_0.
$$
Define the desired product
$(a \, \bt[\bla] \,b ) (c \, \bu[\bmu]\, d)$
to be the sum of the basis vectors
of $H_{\bLa\wr\bGa}^{\bt\bu}$ corresponding to these diagrams.

The fact that this multiplication is well defined
follows like in
\cite[$\S$3]{BS}
by reinterpreting the construction in terms of a certain TQFT.
Indeed, in the special case that $k=l=0$
the rule just described is exactly the definition of
the multiplication on Khovanov's
algebra given there.
The multiplication $m$ is associative in the
sense that,
given a third sequence $\bUps$
of Khovanov blocks such that $\Upsilon_0 = \Ga_k$ and an
$\bUps$-matching $\bs$,
the following
diagram commutes:
\begin{equation}\label{assoc}
\begin{CD}
H_{\bUps}^{\bs}\otimes
H_{\bLa}^\bt \otimes H_{\bGa}^{\bu}
&@>1 \otimes m>>&H_{\bUps}^\bs \otimes H_{\bLa \wr \bGa}^{\bt\bu}\\
@V m\otimes 1VV&&@VVmV\\
H_{\bUps\wr \bLa}^{\bs \bt} \otimes H_{\bGa}^{\bu}
&@>>m>&H_{\bUps\wr \bLa\wr \bGa}^{\bs\bt\bu}
\end{CD}
\end{equation}
Again this follows easily from the TQFT
point of view.
Moreover the linear map $*$
from (\ref{star}) is anti-multiplicative in the sense that the following
diagram commutes:
\begin{equation}\label{anti}
\begin{CD}
H_{\bLa}^{\bt} \otimes H_{\bGa}^{\bu}&@>P\circ (* \otimes *)>>&H_{\bGa^*}^{\bu^*}\otimes H_{\bLa^*}^{\bt^*}\\
@Vm VV&&@VVm V\\
H_{\bLa\wr \bGa}^{\bt\bu}&@>>\phantom{asd,}*\phantom{asd}>&H_{\bGa^*\wr\bLa^*}^{\bu^*\bt^*}
\end{CD}
\end{equation}
where $P$ is the flip $x \otimes y \mapsto y \otimes x$.
Finally, the fact that $m$
is a homogeneous linear map of degree zero can be checked directly
using Lemma~\ref{deglem}; see \cite[$\S$3]{BS} once more
for a similar argument.
All the assertions just made can also be extracted from \cite[$\S$2.7]{K2}.

Applying (\ref{assoc})
with $H_{\bUps}^\bs := H_{\La_k}$ and $H_{\bGa}^{\bu} := H_{\La_0}$,
we see that the multiplication $m$ defines
commuting left $H_{\La_k}$-
and right $H_{\La_0}$-actions on $H_{\bLa}^{\bt}$.
Another couple of applications of (\ref{assoc}) shows that these actions are associative.
Finally,
recalling the definitions
of the cup and cap diagrams
 $\underline{\la}$ and $\overline{\la}$
associated to a weight $\la$
from \cite[$\S$2]{BS} and the idempotents
$e_\la$ from
\cite[(3.3)]{BS}, we have that
\begin{align}\label{idempotents1}
e_\al (a\,\bt[\bla]\, b) &= \left\{
\begin{array}{ll}
(a \,\bt[\bla]\, b) &\!\text{if $\underline{\al}=a$,}\\
0&\!\text{otherwise,\!\!}
\end{array}\right.\\
(a\,\bt[\bla]\, b) e_\be &= \left\{
\begin{array}{ll}
(a\,\bt[\bla]\, b) &\!\text{if $b = \overline{\be}$,}\\
0&\!\text{otherwise,\!\!}
\end{array}\right.\label{idempotents2}
\end{align}
for  $\al \in \La_k^\circ, \be \in \La_0^\circ$
and any basis vector $(a\,\bt[\bla]\, b)\in H_{\bLa}^{\bt}$.
This implies that the $H_{\La_k}$- and $H_{\La_0}$-actions
are unital.
Hence we have made $H_{\bLa}^{\bt}$ into a well-defined graded
$(H_{\La_k},H_{\La_0})$-bimodule.
Specialising further,
take $k=1$ so that the sequence $\bLa$ is just a pair of blocks
$\La \Ga$ and
$\bt$ is a single $\La\Ga$-matching $t$.
In this case we usually
write $H_{\La\Ga}^t$ instead of $H_{\bLa}^\bt$,
and we have shown that $H_{\La\Ga}^t$ is a graded $(H_\La,H_\Ga)$-bimodule
which is non-zero if and only if $t$ is a proper $\La\Ga$-matching.
These bimodules are precisely
Khovanov's {\em geometric bimodules} from \cite[$\S$2.7]{K2}.

The following theorem
is a generalisation of \cite[Theorem 3.1]{BS}
and is essential for  all subsequent constructions and
proofs in this article.

\begin{Theorem}\label{cell3}
Let notation be as in (\ref{oldmult}) and suppose we are given basis vectors
 $(a \, \bt[\bla] \, b) \in H_\bLa^\bt$
and $(c \, \bu[\bmu]\, d) \in H_{\bGa}^\bu$.
Write $a \,\bt[\bla]\, b$ as
$\vec{a} \la b$ where $\vec{a} := a \la_k t_k \la_{k-1} \cdots \la_1 t_1$
and $\la := \la_0$.
Similarly write
$c \, \bu[\bmu]\, d =
c \mu \vec{d}$ where
$\mu := \mu_l$
and
$\vec{d} := u_l \mu_{l-1} \cdots \mu_1 u_1 \mu_0 d$.
Then the multiplication satisfies
$$
(\vec{a} \la b) (c \mu \vec{d}) = \left\{
\begin{array}{ll}
0&\text{if $b \neq c^*$,}\\
s_{\vec{a} \la b}(\mu) (\vec{a} \mu \vec{d}) + (\dagger)
&\text{if $b = c^*$ and $\vec{a} \mu$ is oriented,}\\
(\dagger)&\text{otherwise,}
\end{array}\right.
$$
where
\begin{itemize}
\item[(i)]
$(\dagger)$ denotes a linear combination of basis vectors of $H_{\bLa\wr\bGa}^{\bt\bu}$
of the form $(a \,(\bt\bu)[\bnu]\,d)$
for
$\bnu = \nu_{k+l}\cdots \nu_0$ with
$\nu_l > \mu_l, \nu_{l-1}\geq\mu_{l-1},\dots,\nu_0 \geq \mu_0$;
\item[(ii)]
the scalar $s_{\vec{a} \la b}(\mu)\in \{0,1\}$ depends only on $\vec{a} \la b$ and $\mu$
(but not on $\vec{d}$);
\item[(iii)]
if $b = \overline{\la} = c^*$ and $\vec{a} \mu$ is oriented then
$s_{\vec{a} \la  b}(\mu) = 1$;
\item[(iv)] if $k = 0$ then the $s_{\vec{a} \la b}(\mu)$ is equal to the
scalar $s_{a \la b}(\mu)$ from \cite[Theorem 3.1]{BS}.
\end{itemize}
\end{Theorem}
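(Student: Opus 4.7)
The plan is to reduce to \cite[Theorem 3.1]{BS} by exploiting the fact that the multiplication in (\ref{oldmult}) is defined by surgery procedures entirely local to the middle section where the cap diagram $b$ lies beneath the cup diagram $c$. The combinatorial analysis of these surgeries proceeds exactly as in the Khovanov-algebra case; the only new feature is that arcs passing through the middle may now extend arbitrarily far into $\vec{a}$ or $\vec{d}$ rather than terminating at the nearest number line. The case $b \neq c^*$ is immediate from the definition of $m$.

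For $b = c^*$, I would process the cup-cap pairs in the middle section one at a time using the TQFT rule. At each step the analysis is entirely local, so it agrees with that used in \cite[Theorem 3.1]{BS}: the surgery either merges two connected components into one or splits one into two, and the allowed orientations on the resulting arcs are determined combinatorially. Among all resolutions of the iterated surgery there is a unique one which produces the leading basis vector $(\vec{a}\mu\vec{d})$, and its coefficient is declared to be $s_{\vec{a}\la b}(\mu) \in \{0,1\}$; this coefficient is $0$ or $1$ according to whether the orientation induced on the arcs emerging from the middle upwards into $\vec{a}$ is consistent with $\vec{a}$ itself. Every other resolution strictly increases the weight at level $l$ in the Bruhat order, contributing to $(\dagger)$; the inequalities $\nu_i \geq \mu_i$ for $i < l$ similarly follow by the Bruhat-monotonicity of surgery used in the proof of \cite[Theorem 3.1]{BS}.

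Claim (iv) is immediate: when $k = 0$ the notation $\vec{a}$ collapses to $a$ and the construction literally becomes the Khovanov-algebra multiplication of \cite[Theorem 3.1]{BS}. Claim (iii) rests on the observation that when $b = \overline{\la}$ and $c = \underline{\la}$, every cup-cap pair in the middle section is already oriented optimally with respect to $\la$, so the leading surgery resolution is never obstructed as long as $\vec{a}\mu$ is itself oriented, forcing $s_{\vec{a}\la b}(\mu) = 1$.

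I expect the main obstacle to be claim (ii): that $s_{\vec{a}\la b}(\mu)$ is genuinely independent of $\vec{d}$. The key observation should be that the leading-term resolution is the \emph{unique} sequence of surgery choices that does not strictly increase the level-$l$ weight; hence $\vec{d}$ enters this coefficient only through its starting label $\mu = \mu_l$, with all further dependence on $\bmu$ absorbed into $(\dagger)$. An induction on the total number of cup-cap pairs in the middle section, parallel to that carried out in \cite[Theorem 3.1]{BS}, should then close the argument.
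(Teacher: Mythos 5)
Your proposal is correct and matches the paper's approach exactly: the paper's entire proof is to mimic the proof of \cite[Theorem 3.1]{BS}, replacing $a$ by $\vec{a}$ and $d$ by $\vec{d}$, which is precisely your observation that the surgery procedures are confined to the middle section so the earlier argument carries over verbatim with the composite diagrams playing the roles of the old cup and cap diagrams.
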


\begin{proof}
Mimic the proof of \cite[Theorem 3.1]{BS} replacing $a$ by $\vec{a}$ and $d$ by
$\vec{d}$.
\end{proof}

\begin{Corollary}\label{ideal3}
The product $(a\,\bt[\bla]\, b)(c\,\bu[\bmu]\, d)$
of any pair of basis vectors of $H_{\bLa}^{\bt}$ and
$H_{\bGa}^{\bu}$
is a linear combination of basis vectors of $H_{\bLa\wr\bGa}^{\bt\bu}$
of the form $(a\,(\bt\bu)[\bnu]\,d)$ for
$\bnu = \nu_{k+l} \cdots \nu_0$ with
$\la_k \leq \nu_{k+l},\dots,\la_0 \leq \nu_l \geq  \mu_l,\dots,
\nu_0 \geq \mu_0$.
\end{Corollary}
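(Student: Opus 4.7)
The plan is to deduce the corollary from two applications of Theorem~\ref{cell3}: once applied directly to the product $(a\,\bt[\bla]\,b)(c\,\bu[\bmu]\,d)$ to yield the ``right-half'' inequalities $\nu_l \geq \mu_l, \ldots, \nu_0 \geq \mu_0$; and once applied to the reflected product $*(c\,\bu[\bmu]\,d)\,*(a\,\bt[\bla]\,b)$, obtained via the anti-multiplicativity diagram (\ref{anti}), to yield the ``left-half'' inequalities $\nu_{l+i} \geq \la_i$ for $i=0,\ldots,k$.

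For the first application, write $(a\,\bt[\bla]\,b) = \vec{a}\la_0 b$ and $(c\,\bu[\bmu]\,d) = c\mu_l\vec{d}$ in the notation of Theorem~\ref{cell3}. The product is then either zero (when $b \neq c^*$, in which case the corollary is vacuous), of the form $(\dagger)$, or of the form $s_{\vec{a}\la_0 b}(\mu_l)(\vec{a}\mu_l\vec{d}) + (\dagger)$. The leading term, when present, has $\nu_l = \mu_l$ and $\nu_i = \mu_i$ for $i < l$; and part~(i) of the theorem says that each basis vector in $(\dagger)$ satisfies $\nu_l > \mu_l$ and $\nu_i \geq \mu_i$ for $i < l$. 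Combining these possibilities yields the right-half inequalities.

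For the left-half inequalities, apply $*$ via (\ref{anti}) to obtain the product $(d^*\,\bu^*[\bmu^*]\,c^*)(b^*\,\bt^*[\bla^*]\,a^*)$ in $H_{\bGa^*\wr\bLa^*}^{\bu^*\bt^*}$, and reapply Theorem~\ref{cell3} with the first factor split as $\vec{d^*}\mu_l c^*$ and the second as $b^*\la_0\vec{a^*}$. Since $*(a\,\bt[\bla]\,b)$ has $k$ matchings, the role of the theorem's ``$l$'' here is played by our $k$, and its bottom-to-top weight sequence for the second factor reads $\la_0, \la_1, \ldots, \la_k$. The same combining-leading-term-with-$(\dagger)$ argument as before gives that every basis vector $(d^*\,(\bu^*\bt^*)[\bnup]\,a^*)$ in the reflected product satisfies $\nu'_k \geq \la_0,\, \nu'_{k-1} \geq \la_1,\, \ldots,\, \nu'_0 \geq \la_k$. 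Applying $*$ once more returns us to the original product, and the index reversal $\nu'_j = \nu_{k+l-j}$ (forced by $*$ reversing both the matching and weight sequences) translates these into $\nu_l \geq \la_0,\, \nu_{l+1} \geq \la_1,\, \ldots,\, \nu_{k+l} \geq \la_k$, the desired left-half inequalities.

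The main obstacle is not the diagrammatic content of either application---both steps reduce to verbatim uses of Theorem~\ref{cell3}---but rather the bookkeeping of index conventions: one has to check that $\bLa\wr\bGa$ transforms under $*$ into $\bGa^*\wr\bLa^*$ with reversed block order, and that the weight index $\nu_j$ at block $j$ positions from the top of $\bLa\wr\bGa$ corresponds to the index $\nu'_{k+l-j}$ at the same block in $\bGa^*\wr\bLa^*$. Once this has been verified, combining the two sets of inequalities gives precisely the chain $\la_k \leq \nu_{k+l}, \ldots, \la_0 \leq \nu_l \geq \mu_l, \ldots, \nu_0 \geq \mu_0$ claimed in the corollary.
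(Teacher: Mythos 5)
Your proposal is correct and takes essentially the same route as the paper: the inequalities $\nu_l \geq \mu_l,\dots,\nu_0\geq\mu_0$ are read off from Theorem~\ref{cell3}(i) (together with the form of the leading term), and the remaining ones are obtained by applying $*$ via (\ref{anti}), reapplying Theorem~\ref{cell3} to the reflected product and reversing indices. This is exactly the argument the paper compresses into ``use the anti-multiplicativity of $*$ and argue as in the Part~I analogue,'' so nothing further is needed.
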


\begin{proof}
Theorem~\ref{cell3}(i) implies that the product is a linear combination
of vectors $(a\,(\bt\bu)[\bnu]\,d)$
for $\bnu = \nu_{k+l}\cdots\nu_0$ such that
$\nu_l \geq \mu_l,\dots,\nu_0 \geq \mu_0$.
To show also that $\la_k \leq \nu_{k+l},\dots,\la_0 \leq \nu_l$,
use the anti-multiplicativity of the map $*$ from (\ref{anti}) and
argue as in the proof of \cite[Corollary~3.2]{BS}.
\end{proof}

\phantomsubsection{\boldmath Geometric bimodules for $K_\La$}
Now we construct the analogous geometric
bimodules for the algebras
$K_\La$.
Assume for this that we are
given a sequence $\bLa=\La_k\cdots \La_0$ of blocks
for some $k \geq 0$ and a
$\bLa$-matching $\bt=t_k \cdots t_1$, dropping the assumption that the
$\La_i$ are Khovanov blocks.
Let $K_{\bLa}^{\bt}$ be the graded vector space
on homogeneous basis
\begin{equation}\label{equiv4}
\left\{
(a \, \bt[\bla]\,b)\:\big|\:
\text{for all oriented $\bLa$-circle diagrams
$a\, \bt[\bla]\,b$}\right\}.
\end{equation}
Again, $K_\bLa^\bt$ is non-zero if and only if $\bt$ is proper.
For example, if $k=0$ then $\bLa$ is a single block $\La$, $\bt$ is empty,
and $K^\bt_\bLa$ is the graded vector space
underlying the algebra $K_\La$ from \cite[$\S$4]{BS}.
In general, we define a degree preserving linear map
\begin{equation}\label{star2}
*:K_{\bLa}^{\bt} \rightarrow K_{\bLa^*}^{\bt^*},\qquad
(a \,\bt[\bla]\, b) \mapsto (b^* \,\bt^*[\bla^*]\, a^*),
\end{equation}
notation as in (\ref{star}).

We want to extend the multiplication (\ref{oldmult})
to this new setting. In practise, this multiplication is best computed
by following exactly the same steps used to compute
(\ref{oldmult}), but replacing the
surgery procedure
by the generalised surgery procedure from
\cite[$\S$6]{BS}. However this approach does not easily imply that
the bimodule multiplication is associative,
so instead we proceed in a more roundabout manner mimicing the
definition of multiplication in $K_\La$ as formulated in \cite[$\S$4]{BS}.
Before we can do this, we must generalise the definitions of
closure and extension from \cite[$\S$4]{BS}
to incorporate crossingless matchings.

\phantomsubsection{Closures and extensions}
To generalise closure, assume that $\bLa$ is a sequence of
blocks of bounded weights and that $\bt=t_k\cdots t_{1}$ is a
proper $\bLa$-matching.
Let $p, q \geq 0$ be integers such that
$p - \ups(\La_i) = q - \downs(\La_i) \geq 0$
for each $i$; this is possible because the difference
$\ups(\La_i) - \downs(\La_i)$ is independent of $i$ by Lemma~\ref{updown}.
Using the
notation \cite[(4.4)]{BS}
but working always now with these new choices for
$p$ and $q$, let $\De_i$ be the block
generated by $\cl(\La_i)$
and set $\bDe := \De_k\cdots\De_0$.
Also set $\cl(\bt) :=
\cl(t_k) \cdots \cl(t_1)$, where the {\em closure} $\cl(t)$
of a crossingless matching $t$
is obtained by adding $p$ new vertices to the left ends and $q$ new vertices to
the right hand ends of all the number lines in $t$, then joining these new
vertices together in order by some new vertical line segments.
Finally, for an oriented $\bLa$-circle diagram
$(a\,\bt[\bla]\,b)$, define its closure
\begin{equation*}
\cl(a\,\bt[\bla]\,b) :=
\cl(a) \,\cl(\bt) [\cl(\la_k) \cdots \cl(\la_0)] \,\cl(b)
\end{equation*}
where  $\cl(a)$ and $\cl(b)$ are as in \cite[(4.5)--(4.6)]{BS}.
In the same spirit as \cite[Lemma~4.2]{BS},
this map defines a bijection between the
oriented $\bLa$-circle diagrams
indexing the basis of $K_{\bLa}^{\bt}$ and certain closed
oriented $\bDe$-circle diagrams indexing part of the
basis of $H_{\bDe}^{\cl(\bt)}$.
More precisely, let $I_{\bLa}$ denote the subspace of $H_{\bDe}^{\cl(\bt)}$
spanned by all basis vectors $(a\,\cl(\bt)[\bla]\,b)$
such that $\bla = \la_k\cdots\la_0$
with
$\la_i \notin \cl(\La_i)$
for at least one $0 \leq i \leq k$. Then the map
\begin{align}\label{clmap}
\cl:K_{\bLa}^{\bt} &\stackrel{\sim}{\rightarrow} H_{\bDe}^{\cl(\bt)}/ I_{\bLa},
\qquad
(a \,\bt[\bla]\,b) \mapsto
(\cl(a \,\bt[\bla]\,b)) + I_\bLa
\end{align}
is a graded vector
space isomorphism.

To generalise extension, given crossingless matchings $s$ and $t$, we
write $s
\prec t$ to indicate that $t$ can be obtained from $s$ by extending each of its
number lines then adding possibly infinitely many more vertical line segments.
Now assume that we are given two sequences of arbitrary blocks,
$\bLa=\La_k\cdots\La_0$ and $\bUps=\Upsilon_k\cdots\Upsilon_0$, such that $\Upsilon_i\prec \La_i$ for each $i$ in the sense explained just before \cite[(4.9)]{BS}.
Let
$\bs = s_k \cdots s_1$ be a proper $\bUps$-matching
and $\bt = t_k \cdots t_1$ be a proper
$\bLa$-matching, such that
$s_i \prec t_i$ for each $i$.
Extending the definition \cite[(4.11)]{BS} to
oriented $\bUps$-circle diagrams, we
set
\begin{equation*}
\ex^\bLa_\bUps(a \, \bs[\bla]\,b) :=
\ex_{\Upsilon_k}^{\La_k}(a) \,\bt[\ex^{\La_k}_{\Upsilon_k}(\la_k) \cdots
\ex^{\La_0}_{\Upsilon_0}(\la_0)]\,
\ex_{\Upsilon_0}^{\La_0}(b).
\end{equation*}
Then the map
\begin{equation}\label{exxx}
\ex_{\bUps}^{\bLa}:K_{\bUps}^{\bs} \hookrightarrow K_{\bLa}^{\bt},
\qquad (a \,\bs[\bla]\,b)
\mapsto (\ex_{\bUps}^\bLa(a\,\bs[\bla]\,b))
\end{equation}
is a degree preserving injective linear map.

\phantomsubsection{\boldmath The bimodule multiplication}
Finally assume $\bLa = \La_k\cdots\La_0$ and
$\bGa = \Ga_l\cdots\Ga_0$ are sequences of arbitrary blocks
with $\La_0 = \Ga_l$, and
$\bt = t_k\cdots t_1$
and $\bu = u_l \cdots u_1$ are
$\bLa$-
and $\bGa$-matchings.
We are ready to define
a degree preserving linear multiplication
\begin{equation}
\label{newmult}
m:K_{\bLa}^{\bt}
\otimes
K_{\bGa}^{\bu}
\rightarrow
K_{\bLa\wr\bGa}^{\bt\bu}.
\end{equation}
If either $\bt$ is not a proper $\bLa$-matching
or $\bu$ is not a proper $\bGa$-matching then
$K^\bt_\bLa\otimes K^\bu_\bGa = \{0\}$, and we simply have to take $m := 0$.
Assume from now on that both $\bt$ and $\bu$ are both proper.

Suppose to start with that each $\La_i$ and each $\Ga_j$
consists of bounded weights.
Applying Lemma~\ref{updown}, we can pick $p,q \geq 0$
so that
$p - \ups(\La_i) = q - \downs(\La_i) \geq 0$
and $p - \ups(\Ga_j) = q - \downs(\Ga_j) \geq 0$ for all $i,j$;
it is easy to see that the construction we are about to explain
is independent of the particular choices of $p,q$ just made.
Let $\bDe := \De_k\cdots\De_0$ where $\De_i$ is the block generated
by $\cl(\La_i)$ and $\bPi := \Pi_l\cdots\Pi_0$ where
$\Pi_i$ is the block generated by $\cl(\Ga_i)$.
Noting also
that $\cl(\bt)\cl(\bu) = \cl(\bt\bu)$,
the isomorphism (\ref{clmap}) gives the following three graded vector space isomorphisms:
\begin{equation}\label{cl3}
\cl:K^\bt_\bLa \stackrel{\sim}{\rightarrow}
H_{\bDe}^{\cl(\bt)} \big/ I_{\bLa},
\quad
K^\bu_\bGa \stackrel{\sim}{\rightarrow}
H_{\bPi}^{\cl(\bu)} \big/ I_{\bGa},
\quad
K^{\bt\bu}_{\bLa\wr\bGa} \stackrel{\sim}{\rightarrow}
H_{\bDe\wr\bPi}^{\cl(\bt\bu)} \big/ I_{\bLa\wr\bGa}.
\end{equation}
By \cite[Lemma 4.1]{BS} and Corollary~\ref{ideal3}, the multiplication map
$$
H_{\bDe}^{\cl(\bt)} \otimes H_{\bPi}^{\cl(\bu)} \rightarrow H_{\bDe\wr\bPi}^{\cl(\bt\bu)}
$$
from
(\ref{oldmult}) has the properties that
$I_{\bLa} H_{\bPi}^{\cl(\bu)}
 \subseteq I_{\bLa\wr\bGa}$
and $H_{\bDe}^{\cl(\bt)}
 I_{\bGa}
 \subseteq I_{\bLa\wr\bGa}$.
Hence it factors through the quotients to give a well-defined degree preserving
linear map
$$
\left(H_{\bDe}^{\cl(\bt)} \big/ I_{\bLa}\right) \otimes \left(H_{\bPi}^{\cl(\bu)} \big/
I_{\bGa}\right) \rightarrow H_{\bDe\wr\bPi}^{\cl(\bt\bu)} \big/ I_{\bLa\wr\bGa}.
$$
Transporting this through the
isomorphisms $\cl$ from (\ref{cl3}), we obtain
the desired multiplication (\ref{newmult}).

It remains to drop the assumption that each $\La_i$ and $\Ga_j$
consists of bounded weights;
formally this involves taking direct limits as explained immediately after
\cite[Lemma 4.3]{BS}
but we will suppress the details.
Take vectors $x \in K_{\bLa}^{\bt}$ and $y \in K_{\bGa}^{\bu}$.
Recalling (\ref{exxx}), choose sequences of blocks
$\bUps = \Upsilon_k\cdots\Upsilon_0$
and
$\bOm = \Omega_l\cdots\Omega_0$ and
proper $\bUps$- and $\bOm$-matchings
$\br = r_k\cdots r_1$
 and
$\bs = s_l\cdots s_1$ such that
\begin{itemize}
\item $\Upsilon_i \prec \La_i$ and
$r_i \prec t_i$ for each $i$,
and $\Omega_j \prec \Ga_j$ and $s_j \prec u_j$ for each $j$;
\item $x$ is in the image of the map
$\ex_\bUps^\bLa: K_\bUps^\br \hookrightarrow K_{\bLa}^\bt$,
and $y$ is in the image of the map
$\ex_\bOm^\bGa:K_\bOm^\bs \hookrightarrow K_\bGa^\bu$.
\end{itemize}
Then we truncate by applying the inverse maps to $\ex_{\bUps}^{\bLa}$ and
$\ex_{\bOm}^{\bGa}$ to $x$ and $y$, respectively, compute the resulting
bounded product as in the previous paragraph
to obtain an element of
$K_{\bUps\wr\bOm}^{\br \bs}$,
then finally apply the map $\ex_{\bUps\wr\bOm}^{\bLa\wr\bGa}$
to get the desired product $xy \in K_{\bLa\wr\bGa}^{\bt\bu}$. This
completes the definition of the multiplication (\ref{newmult}) in general.

Using the associativity from (\ref{assoc}),
one checks that the new multiplication is also associative
in the same sense.
Also, the map $*$ from (\ref{star2}) is
anti-multiplicative in the same sense as (\ref{anti}).
Finally (\ref{idempotents1})--(\ref{idempotents2})
remain true in the new setting
for arbitrary $\alpha \in \La_k, \beta \in \La_0$
and any $(a \,\bt[\bla]\,b) \in K^\bt_\bLa$.
So as before $m$ makes $K^\bt_\bLa$ into a well-defined
graded $(K_{\La_k},K_{\La_0})$-bimodule.
Generalising the decomposition \cite[(4.13)]{BS}, we also have that
\begin{equation}\label{stiller}
K_{\bLa}^{\bt} = \bigoplus_{\alpha \in \La_r, \beta \in \La_0} e_\alpha
K_{\bLa}^{\bt} e_\beta.
\end{equation}
By considering the explicit parametrisation of the basis
using
(a slightly modified version of)
\cite[Lemma~2.4(i)]{BS}, it follows that each summand on the right hand side of
(\ref{stiller}) is finite dimensional. Moreover $K_{\bLa}^{\bt}$ itself is
locally finite dimensional and bounded below.

The following theorem is a generalisation of \cite[Theorem 4.4]{BS}.
We stress that its statement is
{\em exactly the same} as the statement of Theorem~\ref{cell3},
just with $H$ replaced by $K$ everywhere.

\begin{Theorem}\label{cell4}
Let notation be as in (\ref{newmult}) and suppose we are given basis vectors
 $(a \, \bt[\bla] \, b) \in K_\bLa^\bt$
and $(c \, \bu[\bmu]\, d) \in K_{\bGa}^\bu$.
Write $a \,\bt[\bla]\, b$ as
$\vec{a} \la b$ where $\vec{a} := a \la_k t_k \la_{k-1} \cdots \la_1 t_1$
and $\la := \la_0$.
Similarly write
$c \, \bu[\bmu]\, d =
c \mu \vec{d}$ where
$\mu := \mu_l$
and
$\vec{d} := u_l \mu_{l-1} \cdots \mu_1 u_1 \mu_0 d$.
Then the multiplication satisfies
$$
(\vec{a} \la b) (c \mu \vec{d}) = \left\{
\begin{array}{ll}
0&\text{if $b \neq c^*$,}\\
s_{\vec{a} \la b}(\mu) (\vec{a} \mu \vec{d}) + (\dagger)
&\text{if $b = c^*$ and $\vec{a} \mu$ is oriented,}\\
(\dagger)&\text{otherwise,}
\end{array}\right.
$$
where
\begin{itemize}
\item[(i)]
$(\dagger)$ denotes a linear combination of basis vectors of $K_{\bLa\wr\bGa}^{\bt\bu}$
of the form $(a \,(\bt\bu)[\bnu]\,d)$
for
$\bnu = \nu_{k+l}\cdots \nu_0$ with
$\nu_l > \mu_l, \nu_{l-1}\geq\mu_{l-1},\dots,\nu_0 \geq \mu_0$;
\item[(ii)]
the scalar $s_{\vec{a} \la b}(\mu)\in \{0,1\}$ depends only on $\vec{a} \la b$ and $\mu$
(but not on $\vec{d}$);
\item[(iii)]
if $b = \overline{\la} = c^*$ and $\vec{a} \mu$ is oriented then
$s_{\vec{a} \la  b}(\mu) = 1$;
\item[(iv)] if $k = 0$ then the $s_{\vec{a} \la b}(\mu)$ is equal to the
scalar $s_{a \la b}(\mu)$ from \cite[Theorem 4.4]{BS}.
\end{itemize}
\end{Theorem}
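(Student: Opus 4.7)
The plan is to reduce Theorem~\ref{cell4} to Theorem~\ref{cell3} by transporting everything through the closure isomorphisms in (\ref{cl3}), following exactly the same route used to \emph{define} the multiplication in $K$ from the multiplication in $H$.

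First I would handle the general case by reducing to bounded blocks. Given arbitrary $\bLa$ and $\bGa$, choose sequences $\bUps \prec \bLa$ and $\bOm \prec \bGa$ of bounded blocks, together with proper matchings $\br \prec \bt$ and $\bs \prec \bu$, so that $(a\,\bt[\bla]\,b)$ and $(c\,\bu[\bmu]\,d)$ lie in the images of the extension maps $\ex_\bUps^\bLa$ and $\ex_\bOm^\bGa$, respectively. By the very definition of (\ref{newmult}) in the unbounded case, the product is computed at the bounded truncation and then extended. The extension maps preserve the combinatorial decompositions $\vec{a}\la b$ and $c\mu\vec{d}$, preserve orientability, and are monotone with respect to the Bruhat order on weight sequences, so the statement of the theorem in the bounded case transfers verbatim to the unbounded case.

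Assume now that each $\La_i$ and $\Ga_j$ is bounded and choose $p,q$ satisfying the conditions from the construction of closure. Applying the isomorphisms (\ref{cl3}), the product $(\vec{a}\la b)(c\mu\vec{d})$ in $K_{\bLa\wr\bGa}^{\bt\bu}$ is computed by lifting to the $H$-product $(\cl(\vec{a})\,\cl(\la)\,\cl(b))(\cl(c)\,\cl(\mu)\,\cl(\vec{d}))$ in $H_{\bDe\wr\bPi}^{\cl(\bt\bu)}$, and then reducing modulo $I_{\bLa\wr\bGa}$. Since closure commutes with the decomposition $a\,\bt[\bla]\,b = \vec{a}\la b$ (each $t_i$ becomes $\cl(t_i)$ and each $\la_i$ becomes $\cl(\la_i)$, and $\cl(b) = \cl(c)^*$ iff $b = c^*$), I can directly invoke Theorem~\ref{cell3} for the $H$-product. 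This gives an expression of precisely the required shape, with the scalar $s_{\cl(\vec{a})\,\cl(\la)\,\cl(b)}(\cl(\mu))$ playing the role of $s_{\vec{a}\la b}(\mu)$, and all lower-order terms indexed by sequences $\cl(\bnu)$ satisfying the stated Bruhat inequalities.

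Two compatibility checks then finish the proof. First, I must verify that the correction terms $(\dagger)$ produced by Theorem~\ref{cell3} descend correctly: any term whose weight sequence is of the form $\cl(\bnu)$ for a genuine oriented $\bLa\wr\bGa$-circle diagram corresponds to a basis vector of $K_{\bLa\wr\bGa}^{\bt\bu}$, while any term whose weight sequence contains some $\mu_i \notin \cl(\La_i)$ lies in $I_{\bLa\wr\bGa}$ and vanishes after quotienting; the required Bruhat inequalities on $\bnu$ survive because closure is monotone. Second, I set $s_{\vec{a}\la b}(\mu) := s_{\cl(\vec{a})\,\cl(\la)\,\cl(b)}(\cl(\mu))$; independence from the choice of $p,q$ follows by comparing with a larger closure and using associativity, while independence from $\vec{d}$ is immediate from Theorem~\ref{cell3}(ii). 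Property (iii) follows because $b = \overline{\la}$ implies $\cl(b) = \overline{\cl(\la)}$ and orientability of $\vec{a}\mu$ lifts to orientability of $\cl(\vec{a})\cl(\mu)$, and property (iv) is tautological since closure is the identity when $k=0$ for basis vectors of $H_\La$ (modulo the convention matching the scalar in \cite[Theorem 4.4]{BS}).

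The main obstacle I anticipate is the first compatibility check, namely tracking the weights on the internal number lines introduced by closure and confirming that the lower-order terms from Theorem~\ref{cell3} partition cleanly into those supported on $\cl(\La_i)$-labels (which yield valid $K$-terms) and those with an off-block label (which are killed in $I_{\bLa\wr\bGa}$). This is essentially bookkeeping, but needs care because the surgery procedure used in (\ref{oldmult}) may temporarily produce weight sequences that straddle the two cases before they are collected into the final sum.
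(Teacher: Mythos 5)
Your proposal is correct and is essentially the paper's own argument: the paper disposes of Theorem~\ref{cell4} by remarking that it follows from Theorem~\ref{cell3} in the same fashion as \cite[Theorem 4.4]{BS} follows from \cite[Theorem 3.1]{BS}, which is exactly the route you spell out, namely reduction to bounded blocks via the extension maps (\ref{exxx}) and transfer through the closure isomorphisms (\ref{cl3}), with the off-block terms killed in $I_{\bLa\wr\bGa}$ and the Bruhat inequalities preserved. The only cosmetic quibble is your justification of (iv): the point is that for $k=0$ the construction of (\ref{newmult}) specialises literally to the construction of the multiplication in \cite[$\S$4]{BS}, rather than closure being the identity.
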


\begin{proof}
This follows from Theorem~\ref{cell3} above in the same fashion
as \cite[Theorem 4.4]{BS} follows from \cite[Theorem 3.1]{BS}.
\end{proof}

\begin{Corollary}\label{ideal4}
The product $(a\,\bt[\bla]\, b)(c\,\bu[\bmu]\, d)$
of any pair of basis vectors of $K_{\bLa}^{\bt}$ and
$K_{\bGa}^{\bu}$
is a linear combination of basis vectors of $K_{\bLa\wr\bGa}^{\bt\bu}$
of the form $(a\,(\bt\bu)[\bnu]\,d)$ for
$\bnu = \nu_{k+l} \cdots \nu_0$ with
$\la_k \leq \nu_{k+l},\dots,\la_0 \leq \nu_l \geq  \mu_l,\dots,
\nu_0 \geq \mu_0$.
\end{Corollary}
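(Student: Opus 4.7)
The plan is to imitate the strategy used in the $H$-case to deduce Corollary~\ref{ideal3} from Theorem~\ref{cell3}, replacing Theorem~\ref{cell3} by Theorem~\ref{cell4} and the involution (\ref{star}) by the involution (\ref{star2}).

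First I would apply Theorem~\ref{cell4}(i) directly to $(a\,\bt[\bla]\,b)(c\,\bu[\bmu]\,d)$. In each of the three cases distinguished by that theorem the outcome is a linear combination of basis vectors of the form $(a\,(\bt\bu)[\bnu]\,d)$ with $\nu_l \geq \mu_l$, $\nu_{l-1}\geq \mu_{l-1}, \ldots, \nu_0 \geq \mu_0$: the strict inequality $\nu_l > \mu_l$ appearing inside the error term $(\dagger)$ weakens to $\geq$ as soon as the possibly non-zero main term $s_{\vec{a}\la b}(\mu)(\vec{a}\,\mu\,\vec{d})$ of case~(ii) (for which $\nu_l = \mu_l$) is incorporated. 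This immediately establishes the right-hand half of the claimed inequalities.

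For the left-hand half $\la_k \leq \nu_{k+l}, \ldots, \la_0 \leq \nu_l$, I would exploit the anti-multiplicativity of the map $*$ from (\ref{star2}), which (as remarked just before Theorem~\ref{cell4}) holds in the same sense as (\ref{anti}). Applying $*$ converts the product into
$$
(d^*\,\bu^*[\bmu^*]\,c^*)\,(b^*\,\bt^*[\bla^*]\,a^*) \;\in\; K_{\bGa^*\wr\bLa^*}^{\bu^*\bt^*},
$$
to which I would then apply Theorem~\ref{cell4}(i) a second time. Under the reversal induced by $*$, what were originally the ``left'' indices $\la_k, \ldots, \la_0$ now appear as the rightmost portion of the $\bmu$-sequence of the rightmost factor, so the inequalities produced by Theorem~\ref{cell4}(i) concern exactly these indices. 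Applying $*$ once more (it is an involution) and noting that the Bruhat order on weights is unaffected by horizontal mirror reflection, one recovers the asserted bounds $\la_k \leq \nu_{k+l}, \ldots, \la_0 \leq \nu_l$.

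The main obstacle will be careful bookkeeping: tracking precisely how the indices of $\bnu$ transform under $*$ and checking that the inequalities delivered by the second application of the theorem align correctly with the claimed ones after inverting $*$. This is, however, entirely parallel to the bookkeeping already carried out in the proof of \cite[Corollary~3.2]{BS} (reproduced here in Corollary~\ref{ideal3}), so no essentially new idea should be required.
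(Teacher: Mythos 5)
Your proposal is correct and follows essentially the same route as the paper: the paper's proof of Corollary~\ref{ideal4} simply says to argue as in Corollary~\ref{ideal3}, i.e.\ obtain the inequalities $\nu_l \geq \mu_l,\dots,\nu_0\geq\mu_0$ from Theorem~\ref{cell4}(i) and then deduce $\la_k\leq\nu_{k+l},\dots,\la_0\leq\nu_l$ by applying the anti-multiplicative involution $*$ of (\ref{star2}) and invoking the theorem a second time, exactly as you describe.
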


\begin{proof}
Argue as in the proof of Corollary~\ref{ideal3}
using the analogue of (\ref{anti}).
\end{proof}

\phantomsubsection{Composition of geometric bimodules}
The next
theorem is crucial and is an extension of \cite[Theorem 1]{K2}.

\begin{Theorem}\label{t1}
Let notation be as in (\ref{newmult}),
so the tensor product
$K_{\bLa}^{\bt} \otimes_{K_{\La_0}}
K_{\bGa}^{\bu}$ is a well-defined graded
$(K_{\La_k},K_{\Ga_0})$-bimodule.
\begin{enumerate}
\item[(i)] Any vector
$(a\,\bt[\bla]\, b)
\otimes(c\,\bu[\bmu]\, d)
\in
K_{\bLa}^{\bt} \otimes_{K_{\La_0}}
K_{\bGa}^{\bu}$
is a linear combination of vectors of the form
\begin{equation}\label{vectors}
(a\,\bt[\nu_{k+l}\cdots\nu_l]\,\overline{\nu}_l)
\otimes (\underline{\nu}_l\,\bu[\nu_l\cdots\nu_0]\, d)
\end{equation}
for $\bnu = \nu_{k+l}\cdots\nu_0$
such that
$a\, (\bt\bu)[\bnu]\,d$ is an oriented
$\bLa\wr\bGa$-circle diagram and
$\la_k \leq \nu_{k+l},\dots,\la_0 \leq \nu_l\geq \mu_l,\dots,\nu_0\geq \mu_0$.
\item[(ii)]
The vectors (\ref{vectors})
for {\em all}
oriented $\bLa\wr\bGa$-circle diagrams
$a \,(\bt\bu)[\bnu]\,d$
form a basis for
$K_{\bLa}^{\bt} \otimes_{K_{\La_0}} K_{\bGa}^{\bu}$.
\item[(iii)]
The multiplication $m$ from (\ref{newmult})
induces an isomorphism
$$
\overline{m}: K_{\bLa}^{\bt} \otimes_{K_{\La_0}} K_{\bGa}^{\bu}
\stackrel{\sim}{\rightarrow} K_{\bLa\wr\bGa}^{\bt\bu}.
$$
of graded $(K_{\La_{k}}, K_{\Ga_{0}})$-bimodules.
\end{enumerate}
\end{Theorem}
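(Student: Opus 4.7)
The plan is to prove (i)--(iii) simultaneously, combining the bimodule identity $xr \otimes y = x \otimes ry$ for $r \in K_{\La_0}$ with the upper unitriangular multiplication formula of Theorem~\ref{cell4}.

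For part (i), I take an elementary tensor $x \otimes y = (a\,\bt[\bla]\,b) \otimes (c\,\bu[\bmu]\,d)$. Since $y = e_{\mu_l} y$ by the $K$-version of (\ref{idempotents1}) (if $c \neq \underline{\mu_l}$ then $y = 0$ already), I rewrite $x \otimes y$ as $x e_{\mu_l} \otimes y$ and apply Theorem~\ref{cell4} to expand $x e_{\mu_l}$ inside $K_\bLa^\bt$. The theorem produces a leading term of the form $s\,(\vec{a}\,\mu_l\,\overline{\mu_l})$ plus a combination $(\dagger)$ of basis vectors whose bottom weight $\nu_0$ satisfies $\nu_0 > \mu_l$. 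Reabsorbing the $(\dagger)$-terms by further applications of the same device, and applying a symmetric manipulation on the right tensorand, rewrites $x \otimes y$ as a linear combination of canonical tensors of the form (\ref{vectors}). The full list of inequalities $\la_k \leq \nu_{k+l}, \dots, \la_0 \leq \nu_l \geq \mu_l, \dots, \nu_0 \geq \mu_0$ is then supplied by Corollary~\ref{ideal4} applied to both factors (the right-hand inequalities directly, the left-hand ones via the anti-multiplicativity of $*$ from the $K$-analogue of (\ref{anti})); local finite-dimensionality of $K_\bLa^\bt$ (see (\ref{stiller})) ensures the iteration terminates inside each fixed graded piece.

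For parts (ii) and (iii), I compute $\overline{m}$ on an arbitrary canonical tensor. Its left tensorand has right cap $\overline{\nu_l}$ and its right tensorand has left cup $\underline{\nu_l}$, so in the notation of Theorem~\ref{cell4} we are exactly in the distinguished case of part (iii): $b = \overline{\nu_l} = c^*$ and $\vec{a}\,\nu_l$ is oriented by construction. This yields
\[
\overline{m}\bigl((a\,\bt[\nu_{k+l}\cdots\nu_l]\,\overline{\nu_l}) \otimes (\underline{\nu_l}\,\bu[\nu_l\cdots\nu_0]\,d)\bigr) = (a\,(\bt\bu)[\bnu]\,d) + (\ddagger),
\]
where $(\ddagger)$ is a combination of basis vectors $(a\,(\bt\bu)[\bnu']\,d)$ with $\nu'_l > \nu_l$ and $\nu'_j \geq \nu_j$ for $j < l$. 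Restricting to a finite-dimensional weight-space $e_\alpha\,(-)\,e_\beta$ via the decomposition (\ref{stiller}) and choosing any total refinement of the resulting partial order on $\bnu$, the matrix of $\overline{m}$ becomes upper unitriangular, hence invertible. Combined with the spanning in (i), this forces the canonical tensors to form a basis of $K_\bLa^\bt \otimes_{K_{\La_0}} K_\bGa^\bu$ (giving (ii)) and $\overline{m}$ to be a bimodule isomorphism (giving (iii)).

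The main obstacle is the two-sided interplay between Theorem~\ref{cell4} and its anti-multiplicative dual: both are needed to produce the full list of inequalities in (i) and to orient the triangularity correctly in (iii). Once one localises the argument to a fixed weight-space $e_\alpha\,(-)\,e_\beta$, where only finitely many $\bnu$ contribute, everything collapses to a standard triangular-matrix computation requiring no new combinatorial input.
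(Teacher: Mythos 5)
The heart of the theorem is part (i), and there your mechanism for reaching the canonical form (\ref{vectors}) does not work. You propose to insert the idempotent $e_{\mu_l}$ and "expand $x e_{\mu_l}$ by Theorem~\ref{cell4}". But by (\ref{idempotents1})--(\ref{idempotents2}) multiplication of a basis vector by an idempotent is either the identity or zero: $x e_{\mu_l}$ equals $x$ if $b = \overline{\mu}_l$ and $0$ otherwise, so no nontrivial expansion with leading term $(\vec{a}\,\mu_l\,\overline{\mu}_l)$ ever appears. Moreover your premise $y = e_{\mu_l} y$ is false in general: $y = e_\alpha y$ for the weight $\alpha$ with $\underline{\alpha} = c$, and if $c \neq \underline{\mu}_l$ then $e_{\mu_l} y = 0$ while $y \neq 0$. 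At best this manipulation shows $x \otimes y = 0$ unless $b = c^*$; it can never convert an arbitrary cap $b$ of the left tensorand into $\overline{\nu}_l$, nor an arbitrary cup $c$ of the right tensorand into $\underline{\nu}_l$, which is exactly what the canonical form requires. The missing idea is a factorization through a \emph{non-idempotent} element of $K_{\La_0}$: by Theorem~\ref{cell4}(iii) and Corollary~\ref{ideal4} one has $(a\,\bt[\bla]\,\overline{\la}_0)(\underline{\la}_0\,\la_0\,b) = (a\,\bt[\bla]\,b) + (\dagger)$ with $(\dagger)$ strictly higher in the bottom weight, so modulo higher terms $x$ itself factors as a standard-form vector times $(\underline{\la}_0\,\la_0\,b) \in K_{\La_0}$, and this factor can be slid across the balanced tensor product to act on $y$, where a second application of Theorem~\ref{cell4} produces $(\underline{\la}_0\,\bu[\bmu]\,d)$ plus higher terms. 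One then needs an induction to control the "higher" terms on both sides simultaneously (the paper runs a maximality/contradiction argument over the finite set of admissible weight sequences $(\bla,\bmu)$), and a separate case handled by the anti-automorphism $*$ of (\ref{star2}) to force $\la_0 = \mu_l$ (the two cases $\la_0 \not< \mu_l$ and $\la_0 \not> \mu_l$). Your appeal to "reabsorbing $(\dagger)$-terms" plus local finite-dimensionality does not substitute for this, because without the factorization step there is nothing to iterate.

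Your treatment of (ii) and (iii) is essentially the correct (and the paper's) argument: evaluate $\overline{m}$ on the canonical tensors, use Theorem~\ref{cell4}(iii) and Corollary~\ref{ideal4} to get $y(\bnu)$ plus higher terms, restrict to the finite-dimensional spaces $e_\alpha(\,\cdot\,)e_\beta$, and conclude surjectivity and then bijectivity together with linear independence of the canonical tensors. One small caution: since at that stage the canonical tensors are only known to span, you cannot literally speak of "the matrix of $\overline{m}$" being unitriangular with respect to them; the clean formulation is that the images are unitriangular with respect to the known basis of $K^{\bt\bu}_{\bLa\wr\bGa}$, giving surjectivity, and then a comparison of spanning set against basis forces both conclusions. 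But all of this rests on the spanning statement (i), so the proposal as it stands has a genuine gap there.
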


\begin{proof}
(i)
Assume for a contradiction that (i) is false.
Fix cup and cap diagrams $a$ and $d$
such that the statement of (i) is wrong
for some $\bla,\bmu,b$ and $c$.
Let
$$
S := \left\{(\bla,\bmu)
\:\Bigg|\:
\begin{array}{l}
\text{$\bla = \la_k\cdots \la_0$ with each $\la_i \in \La_i$,}\\
\text{$\bmu = \mu_l\cdots \mu_0$ with each $\mu_j \in \Ga_j$,}\\
\text{$a\,\bt[\bla]$ and $\bu[\bmu]\,d$ are oriented}
\end{array}
\right\}
$$
Put a partial order on $S$
by declaring that
$(\bla,\bmu) \leq (\bla',\bmu')$ if
$\la_i \leq \la_i'$ and $\mu_j \leq \mu_j'$ for all $i$ and $j$.
For $(\bla,\bmu) \in S$,
let $K(\bla,\bmu)$ denote the span of all the vectors of the form
(\ref{vectors})
for $\bnu = \nu_{k+l}\cdots\nu_0$ such that $a \,(\bt\bu)[\bnu]\,d$
is an oriented $\bLa\wr\bGa$-circle diagram
and $\la_k \leq \nu_{k+l},\dots,\la_0 \leq \nu_l \geq \mu_l,\dots,\nu_0 \geq \mu_0$.
Because the set $S$ is necessarily finite (see \cite[Lemma 2.4(i)]{BS}),
we can choose $(\bla,\bmu) \in S$ maximal
such that
$$
(a \,\bt[\bla]\,b) \otimes (c\,\bu[\bmu]\,d) \notin K(\bla,\bmu)
$$
for some $b$ and $c$.
To get a contradiction from this, we
now split into two cases according to whether
$\la_0 \not< \mu_l$ or $\la_0 \not > \mu_l$.

Suppose to start with that $\la_0 \not< \mu_l$.
By Theorem~\ref{cell4}(iii) and Corollary~\ref{ideal4}, we have that
$$
(a\,\bt[\bla]\,\overline{\la}_0) (\underline{\la}_0 \la_0 b)=
(a\,\bt[\bla]\, b)
 + (\dagger)
$$
where $(\dagger)$ is a linear combination of
$(a\,\bt[\bla']\,b)$'s with $(\bla,\bmu) < (\bla',\bmu)$.
By maximality of $(\bla,\bmu)$, we deduce that
$(\dagger) \otimes (c\,\bu[\bmu]\,d)$ is contained in $K(\bla',\bmu)$,
hence in $K(\bla,\bmu)$ too.
Hence
$$
(a \,\bt[\bla]\,\overline{\la}_0)
(\underline{\la}_0\la_0 b)
\otimes (c\,\bu[\bmu]\,d)
=
(a \,\bt[\bla]\,\overline{\la}_0)
\otimes
(\underline{\la}_0\la_0 b)
(c\,\bu[\bmu]\,d) \notin K(\bla,\bmu).
$$
By Theorem~\ref{cell4}(i)--(ii), we can expand
$$
(\underline{\la}_0 \la_0 b) (c\,\bu[\bmu]\,d) =
(\underline{\la}_0\, \bu[\bmu]\,d) + (\dagger\dagger)
$$
where the first term on the right hand side should be omitted unless $b = c^*$,
$\underline{\la}_0 \mu_l$ is oriented and
$s_{\underline{\la}_0 \la_0 b}(\mu_l) = 1$, and
$(\dagger\dagger)$ denotes a linear combination of
$(\underline{\la}_0\,\bu[\bmu']\,d)$'s with
$(\bla,\bmu) < (\bla,\bmu')$.
By maximality again,
$(a\,\bt(\bla)\,\overline{\la}_0)
\otimes(\dagger\dagger)$ is contained in $K(\bla,\bmu)$
Hence the first term on the right hand side must be present and we have proved that
$$
(\ddagger) :=
(a\,\bt[\bla]\,\overline{\la}_0)
\otimes
(\underline{\la}_0 \,\bu[\bmu]\,d)
\notin K(\bla,\bmu).
$$
This shows in particular that $\underline{\la}_0 \mu_l$
is oriented, hence $\la_0 \leq \mu_l$ by \cite[Lemma~2.3]{BS}.
Because $\la_0 \not < \mu_l$
we deduce that $\la_0 = \mu_l$.
Hence $(\ddagger)$ is equal to the vector
(\ref{vectors}) for
$\bnu := \la_k\cdots\la_1\mu_l\cdots\mu_0$.
So we also have $(\ddagger) \in K(\bla,\bmu)$, contradiction.

Assume instead that $\la_0 \not > \mu_l$.
Then we can apply the anti-multiplicative map $*$ from (\ref{star2})
to reduce to exactly the situation of the previous paragraph,
hence get a similar contradiction.

(ii), (iii)
By the associativity of multiplication,
the images of $xh \otimes y$ and $x \otimes hy$ are equal
for $x \in K_{\bLa}^{\bt}$,
$h \in K_{\La_0}$ and
$y \in K_{\bGa}^{\bu}$.
Hence the multiplication map
$m$ factors through the quotient to
induce a well-defined graded
bimodule homomorphism $\overline{m}$
as in (iii).
To prove that it is an isomorphism, it suffices to show that the restriction
$$
\overline{m}:
e_\alpha K_{\bLa}^{\bt}
\otimes_{K_{\La_0}}
K_{\bGa}^{\bu} e_\beta
\rightarrow
e_\alpha K_{\bLa\wr\bGa}^{\bt\bu}e_\beta
$$
is a vector space isomorphism
for each
fixed $\alpha \in \La_{k}$ and $\beta \in \Ga_{0}$.
Note the vector space on the
right hand side is finite dimensional, with basis given by the
vectors
$y(\bnu) := (\underline{\alpha}\,(\bt\bu)[\bnu]\,\overline{\beta})$
for all $\bnu$ such that $\underline{\alpha}\,(\bt\bu)[\bnu]\,
\overline{\beta}$ is an oriented $\bLa\wr\bGa$-circle diagram.
In view of (i), we know already that
the vectors
$x(\bnu) := (\underline{\alpha}\,\bt[\nu_{k+l}\cdots\nu_l]\,
\overline{\nu}_l)
\otimes
(\underline{\nu}_l\,\bu[\nu_l\cdots\nu_0]\,\overline{\beta})$
indexed by the same set of $\bnu$'s span
$e_\alpha K_{\bLa}^{\bt}
\otimes_{K_{\La_0}}
K_{\bGa}^{\bu} e_\beta$.
Moreover, $\overline{m}(x(\bnu))$ is equal to $y(\bnu)$ plus higher terms
by Theorem~\ref{cell4}(iii) and Corollary~\ref{ideal4}, so $\overline{m}$
is surjective.
Combining these two statements we deduce that $\overline{m}$ is actually
an isomorphism giving (iii), and at the same time
we see that
the $x(\bnu)$'s form a basis giving (ii).
\end{proof}

\phantomsubsection{Reduction of geometric bimodules}
The following theorem
reduces the study of the bimodules $K_{\bLa}^{\bt}$ for
arbitrary sequences $\bLa = \La_k\cdots\La_0$
and $\bt = t_k\cdots t_1$
just to the bimodules of the form $K_{\La \Ga}^t$ for
fixed blocks $\La$ and $\Ga$ and a single $\La \Ga$-matching $t$,
greatly simplifying all our notation in the remainder of the article.
Let $R$ denote the algebra $\C[x] /
(x^2)$. This is a Frobenius algebra with counit
\begin{equation}\label{ntau}
\tau:R \rightarrow \C, \qquad 1\mapsto 0, \quad x\mapsto 1.
\end{equation}
We view $R$ as a graded vector space so that $1$ is in degree $-1$ and $x$ is in degree one. The multiplication and comultiplication are then homogeneous of degree one.

\begin{Theorem}\label{t2}
Suppose that $\bLa = \La_k\cdots\La_0$ is a sequence of blocks and
$\bt = t_k\cdots t_1$ is a proper $\bLa$-matching.
Let $u$ be the reduction of $\bt$ and let
$n$ be the number of internal circles removed in the reduction process.
Then we have that
\begin{align*}
K_{\bLa}^{\bt}
&\cong K_{\La_k \La_{0}}^{u} \otimes R^{\otimes n}
\langle \caps(t_1)+\cdots+\caps(t_k)
-\caps(u)\rangle\\
&= K_{\La_k \La_{0}}^{u} \otimes R^{\otimes n}
\langle \cups(t_1)+\cdots+\cups(t_k)
-\cups(u)\rangle
\end{align*}
as graded $(K_{\La_k}, K_{\La_{0}})$-bimodules (viewing
$K_{\La_k \La_{0}}^{u} \otimes R^{\otimes n}$ as a bimodule via the
natural action on the first tensor factor).
\end{Theorem}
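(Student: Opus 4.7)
The strategy is to write down the graded isomorphism
$\phi: K_{\bLa}^{\bt} \stackrel{\sim}{\rightarrow} K_{\La_k\La_0}^{u} \otimes R^{\otimes n}$
explicitly on the basis \eqref{equiv4} and then verify it is degree preserving and a bimodule map. Fix an enumeration $C_1,\dots,C_n$ of the internal circles of $\bt$. Given a basis vector $(a\,\bt[\bla]\,b)$ with $\bla = \la_k\cdots\la_0$, Lemma~\ref{adeg} ensures that $a\la_k u\la_0 b$ is itself an oriented $\La_k\La_0$-circle diagram, and each $C_i$ inherits an orientation (clockwise or anti-clockwise) from $\bla$. Define
$$
\phi(a\,\bt[\bla]\,b) := (a\la_k u\la_0 b) \otimes z_1\otimes\cdots\otimes z_n,
$$
with $z_i:=x$ when $C_i$ is clockwise and $z_i:=1$ when $C_i$ is anti-clockwise.

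To see $\phi$ is bijective on bases, observe that every vertex on an intermediate number line of $\bt$ lies on a unique connected component of $\bt$ (viewed as a $1$-manifold), and the label at that vertex is forced by the orientation of its component: components meeting the top or bottom number line have orientations fixed by $\la_k$ or $\la_0$, while each internal component's orientation is recorded by the corresponding $z_i$. Hence the intermediate weights $\la_{k-1},\dots,\la_1$ can be uniquely recovered from the image, so $\phi$ is a vector-space isomorphism. Degree preservation follows from Lemma~\ref{adeg}: with $p$ clockwise and $q$ anti-clockwise internal circles, $\deg(z_1\otimes\cdots\otimes z_n) = p-q$ in $R^{\otimes n}$, and the shift $\langle\caps(t_1)+\cdots+\caps(t_k) - \caps(u)\rangle$ exactly absorbs the remaining correction terms in Lemma~\ref{adeg}. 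The equivalence of the two formulas in the statement is a consequence of Lemma~\ref{updown}.

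The main work is showing $\phi$ is a $(K_{\La_k},K_{\La_0})$-bimodule map. The idea is that the multiplication \eqref{newmult} defining the left $K_{\La_k}$-action on $K_{\bLa}^{\bt}$ is computed by stacking a basis vector of $K_{\La_k}$ on top of $(a\,\bt[\bla]\,b)$ and performing surgery only on the topmost middle section created thereby, which is geometrically disjoint from every internal circle $C_i$ lying deeper inside $\bt$. More formally, after bounding via \eqref{exxx} and transporting through the closure isomorphisms \eqref{cl3}, the product is computed inside a quotient of $H_{\bDe}^{\cl(\bt)}$ by the TQFT-based rule \eqref{oldmult}; in that picture the circles $C_i$ appear as disjoint topological circles and contribute literal tensor factors of $R$ precisely as in Khovanov's original construction. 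Consequently the top surgery descends through $\phi$ to act only on the first tensor factor of $K_{\La_k\La_0}^u \otimes R^{\otimes n}$, leaving $z_1\otimes\cdots\otimes z_n$ untouched, matching the prescribed action on the target. The right $K_{\La_0}$-action is handled symmetrically, or more economically by invoking the anti-multiplicative map $*$ from \eqref{star2}. The principal obstacle is precisely this bookkeeping: verifying that no surgery in the multiplication procedure ever touches the internal circles $C_i$, which is transparent from the TQFT picture but needs a careful reading of the definitions.
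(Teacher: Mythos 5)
Your proof is correct and is essentially the paper's own argument: the paper defines exactly the same map (the reduced diagram $(a\la_k u\la_0 b)$ tensored with a factor $1$ or $x$ for each internal circle according to its orientation), declares it ``obviously a bijection'', reads off the degree shift from Lemma~\ref{adeg}, and disposes of the bimodule property with the one-line remark that the internal circles play no role in the bimodule structure --- which is precisely the disjointness-of-surgeries observation you spell out via the closure/extension reductions. (One trivial caveat: with the paper's conventions the left $K_{\La_k}$-action is computed by gluing the acting diagram \emph{below} $a\,\bt[\bla]\,b$, not on top, but by symmetry this does not affect your argument, since the internal circles meet neither boundary number line.)
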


\begin{proof}
Enumerating the $n$ internal circles
in the diagram $\bt$ in some fixed order,
we define a linear map
$$
f:K_{\bLa}^{\bt} \rightarrow K_{\La_k \La_0}^{u}
\otimes R^{\otimes n},\quad
(a\,\bt[\bla]\,b) \mapsto (a \la_k u \la_0 b)
\otimes x_1 \otimes \cdots \otimes x_n
$$
where each $x_i$ is $1$ or $x$ according to whether the $i$th internal circle
of $\bt$ is anti-clockwise or clockwise
in the diagram $a\,\bt[\bla]\,b$. This map is
obviously a bijection. Moreover,
because the internal circles play no role in
the bimodule structure, it is a $(K_{\La_r}, K_{\La_0})$-bimodule homomorphism.
Finally Lemma~\ref{adeg} implies that $f$ is a homogeneous
linear map of degree
$-\caps(t_1)-\cdots-\caps(t_r)+\caps(u)
=-\cups(t_1)-\cdots-\cups(t_r)+\cups(u)$.
This accounts for the degree shift in the statement of the theorem.
\end{proof}

\phantomsubsection{\boldmath Geometric bimodules for $H_\La$}
For an arbitrary block $\La$, recall that the generalised
Khovanov algebra is the subalgebra
\begin{equation}\label{hla}
H_\La := \bigoplus_{\alpha,\beta \in \La^\circ} e_\alpha K_\La e_\beta
\end{equation}
of $K_\La$,
where $\La^\circ$ is the subset of $\La$ consisting of all weights of maximal
defect. Given another block $\Ga$ and a $\La\Ga$-matching $t$,
we define
\begin{equation}
H^t_{\La\Ga} := \bigoplus_{\alpha \in \La^\circ, \beta \in \Ga^\circ}
e_\alpha K_{\La\Ga}^t e_\beta.
\end{equation}
This is naturally a graded $(H_\La,H_\Ga)$-bimodule,
and if $\La$ and $\Ga$ are Khovanov blocks it is the
same as Khovanov's geometric bimodule $H^t_{\La\Ga}$ defined earlier.
We refer to the $H^t_{\La\Ga}$'s as {\em geometric bimodules} for arbitrary
blocks $\La$ and $\Ga$ too.
We are not go to
say anything else about these geometric bimodules in the rest
of the article, since we are mainly interested in
the $K^t_{\La\Ga}$'s. However using
the truncation functor
\begin{equation}\label{efunc}
e:\mod{K_\La} \rightarrow \mod{H_\La},
\quad
M \mapsto \bigoplus_{\la \in \La^\circ} e_\la M
\end{equation}
from \cite[(6.13)]{BS}
it is usually a
routine exercise to deduce analogues for the $H^t_{\La\Ga}$'s
of all our subsequent results about the $K^t_{\La\Ga}$'s.
In doing this, it is useful to note that there is an isomorphism
\begin{equation}\label{us}
e_\La \circ K_{\La \Ga}^t \otimes_{K_\Ga} ?
\cong H_{\La \Ga}^t \otimes_{H_\Ga} ?
\circ e_\Ga
\end{equation}
of functors
from $\mod{K_\Ga}$ to $\mod{H_\La}$, where we have added the subscripts $\La$ and $\Ga$ to the $e$'s
to avoid confusion.
Since we do not need any of these results here, we omit the details.
The main point to prove (\ref{us}) is to observe that all indecomposable summands of
$e_\La K_{\La \Ga}^t$ as a right $K_\Ga$-module are
also summands of $e_\Ga K_\Ga$ (up to degree shifts),
which is a consequence of Theorem~\ref{pf} below.

\section{Projective functors}

Fix blocks $\La$ and $\Ga$ throughout the section.
We are going to study projective functors that arise
by tensoring with geometric bimodules.

\phantomsubsection{Projective functors are exact}
Given a proper $\La\Ga$-matching $t$,
tensoring with the geometric bimodule $K_{\La\Ga}^t$
defines a functor
\begin{equation}\label{pfun}
G_{\La\Ga}^t := K_{\La\Ga}^t \langle -\caps(t)\rangle \otimes_{K_\Ga} ?
:\mod{K_\Ga} \rightarrow \mod{K_\La}
\end{equation}
between the graded module categories.
The extra degree shift by $-\caps(t)$ in this definition
is included to ensure that $G_{\La\Ga}^t$ commutes with
duality; see Theorem~\ref{duality} below.
We will call any functor that is isomorphic to a
finite direct sum of such functors (possibly shifted in degree)
a {\em projective functor}.
Note Theorems~\ref{t1}(iii) and \ref{t2} imply that the composition of two projective functors is again a projective functor.
Moreover, in view of the next lemma, projective functors
corresponding to ``identity matchings''
are equivalences of categories.

\begin{Lemma}\label{straight}
If $t$ is a proper $\La\Ga$-matching
containing no cups or caps
then the functor
$G_{\La\Ga}^t:
\mod{K_\Ga} \rightarrow \mod{K_\La}$
is an equivalence of categories.
\end{Lemma}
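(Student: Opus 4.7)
The plan is to exhibit $G^{t^*}_{\Ga\La}$ as an explicit quasi-inverse to $G^t_{\La\Ga}$, using Theorem~\ref{t1}(iii) to compute the composite bimodules and Theorem~\ref{t2} to reduce them.

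I would start by observing that, because $t$ contains no cups or caps, every non-free vertex of $t$ lies on a line segment connecting a vertex of the bottom number line to a vertex of the top number line. In particular $\caps(t) = \cups(t) = 0$, so no degree shift appears in the definition of $G^t_{\La\Ga}$. The mirror $t^*$ is a $\Ga\La$-matching with the same property; it is proper because reflecting any oriented $\La\Ga$-matching $\la t \mu$ yields an oriented $\Ga\La$-matching $\mu t^* \la$. So $G^{t^*}_{\Ga\La}:\mod{K_\La} \rightarrow \mod{K_\Ga}$ is likewise a projective functor with no degree shift.

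Next I would analyse the $\La\Ga\La$-matching $t t^*$. Since the order-preserving bijection encoded in $t^*$ is the inverse of the one in $t$, each line segment in $t$ from position $i$ on the bottom to position $j$ in the middle continues through $t^*$ as the mirrored segment back down from $j$ to position $i$ on the top. Hence $tt^*$ has no internal circles, and its reduction is the identity $\La\La$-matching $1_\La$. Applying Theorem~\ref{t2} with $n=0$ and all cap-counts zero, and noting that $K^{1_\La}_{\La\La} \cong K_\La$ by a direct comparison of bases (the extra middle number line is forced by any orientation to carry the same weight as the outside number lines, and contributes nothing to the surgery-defined multiplication), I would obtain
$$
K^{t t^*}_{\La\Ga\La} \,\cong\, K_\La
$$
as graded $(K_\La, K_\La)$-bimodules. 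Combining with Theorem~\ref{t1}(iii) then gives
$$
K^{t}_{\La\Ga} \otimes_{K_\Ga} K^{t^*}_{\Ga\La} \,\cong\, K_\La,
$$
so $G^{t}_{\La\Ga} \circ G^{t^*}_{\Ga\La} \cong \id_{\mod{K_\La}}$. The symmetric analysis of the $\Ga\La\Ga$-matching $t^* t$ yields the other composition, and it follows that $G^{t}_{\La\Ga}$ is an equivalence with quasi-inverse $G^{t^*}_{\Ga\La}$.

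The only delicate step is the diagrammatic verification that the reduction of $tt^*$ is really $1_\La$ and contains no internal circles. Once one recognises that a crossingless matching without cups or caps is nothing more than an order-preserving arrangement of line segments together with some free vertices, the remaining bookkeeping on degree shifts and bimodule structures is mechanical.
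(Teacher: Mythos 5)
Your proof is correct, but it takes a genuinely different route from the paper's. The paper argues directly: the matching $t$ (being a union of line segments and free vertices) induces an order-preserving bijection $f:\La\rightarrow\Ga$ with $\underline{f(\la)}$ the lower reduction of $\underline{\la}t$, and hence a graded algebra isomorphism $K_\La\cong K_\Ga$ under which the bimodule $K^t_{\La\Ga}$ becomes the regular bimodule; the equivalence is then immediate, with no appeal to Theorems~\ref{t1} or \ref{t2}. You instead exhibit $G^{t^*}_{\Ga\La}$ as an explicit quasi-inverse by composing bimodules via Theorem~\ref{t1}(iii), reducing $tt^*$ and $t^*t$ via Theorem~\ref{t2} (correctly noting there are no internal circles and no degree shifts since $\caps(t)=\cups(t)=0$), and identifying $K^{1_\La}_{\La\La}\cong K_\La$. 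Your approach buys an explicit adjoint-style quasi-inverse and reuses the composition machinery already in place, and it is logically sound (the results you cite precede the lemma, so there is no circularity); what it costs is that the key diagrammatic verification has merely been relocated rather than avoided: the assertion that $K^{1_\La}_{\La\La}\cong K_\La$ as graded bimodules (that the doubled number line, forced to carry the same weight, is invisible to the left and right surgery actions) is exactly the same kind of unwound check that the paper performs once, in the slightly more general form that $K^t_{\La\Ga}$ is the regular bimodule after relabelling by $f$. In that sense the paper's argument is shorter and more self-contained, while yours is a clean illustration of how Theorems~\ref{t1} and \ref{t2} can be used to manufacture inverse functors; either is acceptable, though you should state the bimodule identification $K^{1_\La}_{\La\La}\cong K_\La$ as a one-line lemma rather than a parenthetical, since it carries the real content of your argument.
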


\begin{proof}
The matching $t$ determines an order-preserving bijection
$f:\La \rightarrow \Ga$, $\la \mapsto \ga$
such that $\underline{\ga}$ is the lower reduction of
$\underline{\la} t$.
Moreover the induced map
$$
f:K_\La {\rightarrow} K_\Ga,\qquad
(\underline{\alpha} \la \overline{\beta})
\mapsto (\underline{f(\alpha)} f(\la) \overline{f(\beta)})
$$
is an isomorphism of graded algebras.
Identifying $K_\La$ with $K_\Ga$ in this way,
the $(K_\La, K_\Ga)$-bimodule $K_{\La \Ga}^t$
is isomorphic to the regular $(K_\Ga, K_\Ga)$-bimodule
$K_\Ga$.
Since $G^t_{\La\Ga}$ is the functor defined by tensoring
with this bimodule,
the lemma follows.
\end{proof}

The next important theorem describes explicitly
the effect of a projective functor
on the projective indecomposable modules
$P(\la) := K_\La e_\la$ from \cite[$\S$5]{BS}.
Recall the graded vector space $R$ defined
immediately before (\ref{ntau}).

\begin{Theorem}
\label{pf}
Let $t$ be a proper $\La\Ga$-matching and $\gamma \in \Ga$.
\begin{itemize}
\item[\rm(i)]
We have that
$G^t_{\La\Ga} P(\ga)
\cong K_{\La\Ga}^t e_\ga \langle -\caps(t)\rangle$
as left $K_\La$-modules.
\item[\rm(ii)]
The module $G^t_{\La\Ga} P(\ga)$ is non-zero if and only if the rays of each
 upper line in $t \ga \overline{\ga}$
are oriented so that one is $\up$ and one is $\down$.
\item[\rm(iii)]
Assuming the condition from (ii) is satisfied,
define $\la \in \La$ by declaring that $\overline{\la}$ is
the upper reduction of $t \overline{\gamma}$,
and let $n$ be the number of upper circles
removed in the reduction process.
Then
$$
G_{\La \Ga}^t P(\ga) \cong
P(\la) \otimes R^{\otimes n} \langle
\cups(t)-\caps(t)\rangle.
$$
as graded left $K_\La$-modules (where the $K_\La$-action on
$P(\la) \otimes R^{\otimes n}$ comes from its action on the
first tensor factor).
\end{itemize}
\end{Theorem}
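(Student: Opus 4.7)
The plan is to prove (i) by a direct tensor-product identification, and then treat (ii) and (iii) together by constructing an explicit graded bijection of bases via the upper reduction of $t\overline\ga$.

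For (i), starting from the definition $G^t_{\La\Ga}P(\ga) = K^t_{\La\Ga}\langle -\caps(t)\rangle \otimes_{K_\Ga} K_\Ga e_\ga$ and applying the standard identification $M \otimes_A Ae \cong Me$, one immediately obtains $G^t_{\La\Ga}P(\ga) \cong K^t_{\La\Ga}e_\ga\langle -\caps(t)\rangle$.

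For (ii) and (iii), I would work from the basis of $K^t_{\La\Ga}e_\ga$ supplied by the right-idempotent rule \eqref{idempotents2}: it consists of oriented $\La\Ga$-circle diagrams of the form $(a\la' t\mu\overline\ga)$ with $\la'\in\La$, $\mu\in\Ga$, and $a$ a cup diagram making everything oriented. The main construction is a linear map
$$
f: K^t_{\La\Ga}e_\ga \longrightarrow P(\la)\otimes R^{\otimes n},\qquad (a\la' t\mu\overline\ga)\longmapsto (a\la'\overline\la)\otimes x_1\otimes\cdots\otimes x_n,
$$
where $\la$ is as defined in (iii) from the upper reduction of $t\overline\ga$ and each $x_i\in\{1,x\}$ records whether the $i$-th upper circle of $t\overline\ga$ (removed during the reduction) is anti-clockwise or clockwise in the given oriented diagram. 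The inverse map reinserts the upper part: starting from $(a\la'\overline\la)\otimes x_1\otimes\cdots\otimes x_n$, the line segments of $t$ propagate $\la'$'s orientations up to $\Ga$'s number line, the $n$ upper circles are then oriented as prescribed by the $x_i$'s (giving the $R^{\otimes n}$ factor as independent binary choices), and propagation through chains of caps determines the rest of $\mu$. The hypothesis (ii) is precisely what makes this reinsertion consistent along each upper line: a parity argument on the caps composing an upper line shows that its two end rays receive $\ga$-orientations that agree if and only if the line contains an even number of caps, so (ii) must hold for any basis vector to exist. This simultaneously establishes the vanishing statement of (ii) and the bijectivity of $f$ used in (iii).

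Left $K_\La$-equivariance of $f$ is automatic since the action only alters the pair $(a,\la')$, which is faithfully recorded in $P(\la)= K_\La e_\la$. The degree shift follows from the upper-reduction analogue of Lemma~\ref{cdeg}, giving
$$
\deg(a\la' t\mu\overline\ga) = \deg(a\la'\overline\la) + \cups(t) + (p-q),
$$
where $p$ (resp.\ $q$) is the number of clockwise (resp.\ anti-clockwise) upper circles. Identifying $p-q$ with the grading of $x_1\otimes\cdots\otimes x_n \in R^{\otimes n}$ (via $\deg 1 = -1$, $\deg x = 1$) and combining with the $\langle -\caps(t)\rangle$ shift from (i), the overall shift becomes $\cups(t)-\caps(t)$ as required. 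I expect the main obstacle to be the parity analysis for (ii), since it couples a subtle combinatorial condition on the upper lines of $t\overline\ga$ with the existence of a compatible weight $\la$ on $\La$'s line; once this is in hand, both the bijection and the degree bookkeeping reduce to routine diagrammatic verifications.
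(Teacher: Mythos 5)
Your proposal is correct and takes essentially the same route as the paper: the same tensor--hom identification for (i), and the same explicit bijection $(a\mu t\nu\overline{\ga})\mapsto(a\mu\overline{\la})\otimes x_1\otimes\cdots\otimes x_n$ together with the upper-reduction case of Lemma~\ref{cdeg} for the degree bookkeeping in (ii)--(iii). The only cosmetic difference is that the paper proves the necessity of the condition in (ii) directly (no weight $\nu$ can make both $\nu\overline{\ga}$ and $\mu t \nu$ oriented, so the basis of $K^t_{\La\Ga}e_\ga$ is empty), whereas you fold it into the parity analysis used to invert $f$; the content is the same.
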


\begin{proof}
Note to start with that
\begin{align*}
G^t_{\La\Ga} P(\ga) &=
K_{\La \Ga}^t \langle -\caps(t)\rangle\otimes_{K_\Ga} P(\ga) \\
&=
K_{\La \Ga}^t \otimes_{K_\Ga} K_\Ga e_\ga \langle -\caps(t)\rangle
\cong
K_{\La \Ga}^t e_\ga \langle -\caps(t)\rangle,
\end{align*}
as asserted in (i).

We next establish the forward implication of (ii).
Suppose that the rays
of some upper line in the diagram $t \ga \overline{\ga}$
are oriented so that both are $\up$ or both are $\down$.
For any $\nu \in \Ga$ such that $\nu \overline{\ga}$
is oriented, the rays in $\nu \overline{\ga}$
are oriented in the same way as in $\ga \overline{\ga}$.
It follows that there are no weights $\mu \in \La, \nu \in \Ga$ such that
both $\nu \overline{\ga}$ and $\mu t \nu$ are oriented.
Since $K_{\La \Ga}^t e_\ga$ has a basis indexed
by oriented circle diagrams of the form
$a \mu t \nu \overline{\ga}$,
this implies that $K_{\La \Ga}^t e_\ga = \{0\}$.
Hence also $G^t_{\La\Ga} P(\ga) = \{0\}$ by (i), as required.

Now to complete the proof
we assume that the rays of every upper line in $t \ga \overline{\ga}$
are oriented so one is $\up$ and one is $\down$.
Enumerate the $n$ upper circles in the diagram $t \overline{\ga}$
in some fixed order.
Then the map
$$
f:K_{\La \Ga}^t e_\ga \rightarrow
K_\La e_\la \otimes R^{\otimes n},\quad
(a \mu t \nu \overline{\ga}) \mapsto
(a \mu \overline{\la}) \otimes x_1 \otimes\cdots\otimes x_n,
$$
where $x_i$ is $1$ or $x$ according to whether the $i$th upper
circle of $t \overline{\ga}$ is
anti-clockwise or clockwise in $a \mu t \nu \overline{\ga}$,
is an isomorphism of left $K_\La$-modules.
Moreover $f$ is homogeneous of degree
$\cups(t)$ by Lemma~\ref{cdeg}.
Since $P(\la) = K_\La e_\la$
and $G^t_{\La\Ga} P(\ga) \cong K^t_{\La\Ga}e_\ga \langle -\caps(t)\rangle$,
this proves (iii).
It also shows that
$G^t_{\La\Ga} P(\ga)$ is non-zero,
completing the proof of (ii) as well.
\end{proof}

\begin{Corollary} $K_{\La \Ga}^t$ is
projective both as a left
$K_\La$-module and as a right $K_\Ga$-module.
\end{Corollary}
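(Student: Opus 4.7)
The plan is to deduce projectivity directly from Theorem~\ref{pf} and the anti-involution $*$. For the left $K_\La$-module structure, I will first decompose
\begin{equation*}
K_{\La\Ga}^t = \bigoplus_{\gamma \in \Ga} K_{\La\Ga}^t e_\gamma
\end{equation*}
using the right idempotents of $K_\Ga$ (a one-sided specialisation of \eqref{stiller}). By Theorem~\ref{pf}(i), each summand equals $G^t_{\La\Ga} P(\gamma) \langle \caps(t) \rangle$, and by parts~(ii)--(iii) of the same theorem it is either zero or isomorphic to $P(\la) \otimes R^{\otimes n} \langle \cups(t) \rangle$ for some $\la \in \La$ and $n \geq 0$. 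Since $R$ has a homogeneous basis $\{1,x\}$, the factor $R^{\otimes n}$ is a $2^n$-dimensional graded vector space with a homogeneous basis, so $P(\la) \otimes R^{\otimes n}$ decomposes as a finite direct sum of degree-shifted copies of the projective indecomposable $P(\la)$. Hence $K_{\La\Ga}^t$ is a direct sum of shifts of various $P(\la)$'s.

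It remains to argue that this (in general infinite) direct sum is projective in $\mod{K_\La}$. The bimodule $K_{\La\Ga}^t$ is already locally finite dimensional and bounded below, hence lies in $\mod{K_\La}$; and once this is confirmed, projectivity of an arbitrary direct sum of projectives $\bigoplus_i P_i$ lying in $\mod{K_\La}$ follows from the fact that $\hom_{K_\La}(\bigoplus_i P_i, -) = \prod_i \hom_{K_\La}(P_i, -)$ is a product of exact functors, hence exact. This settles the left case.

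For the right $K_\Ga$-module structure, I will invoke the anti-multiplicative, degree-preserving graded isomorphism $*:K_{\La\Ga}^t \stackrel{\sim}{\rightarrow} K_{\Ga\La}^{t^*}$ from \eqref{star2}. Since $*$ is also an anti-automorphism of $K_\Ga$, it intertwines the right action of $h \in K_\Ga$ on $K_{\La\Ga}^t$ with the left action of $h^*$ on $K_{\Ga\La}^{t^*}$. Applying the left-module case already proved to $K_{\Ga\La}^{t^*}$ (with the roles of $\La$ and $\Ga$ reversed and $t$ replaced by $t^*$) yields that $K_{\Ga\La}^{t^*}$ is projective as a left $K_\Ga$-module, whence $K_{\La\Ga}^t$ is projective as a right $K_\Ga$-module. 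The only mildly delicate step I anticipate is the infinite-direct-sum argument in the previous paragraph; otherwise the proof is a formal consequence of Theorem~\ref{pf} and the anti-involution~$*$.
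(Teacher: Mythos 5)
Your proof is correct and follows essentially the same route as the paper: decompose $K_{\La\Ga}^t = \bigoplus_{\ga\in\Ga} K_{\La\Ga}^t e_\ga$, identify each summand as a finite direct sum of degree-shifted copies of some $P(\la)$ via Theorem~\ref{pf}(i),(iii), and then transfer the left-module case for $K_{\Ga\La}^{t^*}$ to the right-module statement using the anti-multiplicative map $*$. Your extra paragraph justifying that the (possibly infinite) direct sum of projectives in $\mod{K_\La}$ is again projective is a routine detail the paper leaves implicit, and your argument for it is fine.
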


\begin{proof}
As a left $K_\La$-module, we have that
$K_{\La \Ga}^t \cong \bigoplus_{\ga \in \Ga} K_{\La \Ga}^t e_\ga$.
Each summand is projective by Theorem~\ref{pf}(i),(iii),
hence $K_{\La \Ga}^t$ is a projective left $K_\La$-module.
Moreover, by the anti-multiplicativity of $*$, we know that
$K_{\La \Ga}^t$ is isomorphic as a right $K_\Ga$-module
to the right $K_\Ga$-module obtained by twisting the
left $K_\Ga$-module $K_{\Ga \La}^{t^*}$ with
$*$.
As we have already established,
$K_{\Ga \La}^{t^*}$ is a projective left $K_\Ga$-module, hence
$K_{\La \Ga}^t$ is a projective right $K_\Ga$-module too.
\end{proof}

\begin{Corollary}\label{fp}
Projective functors are exact and send finitely generated
projectives to finitely generated projectives.
\end{Corollary}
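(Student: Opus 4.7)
The plan is to reduce to the case of a single functor of the form $G^t_{\La\Ga}$, since by definition every projective functor is a finite direct sum of degree-shifted functors of this form, and both exactness and the property of preserving finitely generated projectives are visibly preserved under finite direct sums and degree shifts.

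For exactness, the previous Corollary tells us that $K_{\La \Ga}^t$ is projective as a right $K_\Ga$-module; in particular it is flat, so the functor $K_{\La \Ga}^t \langle -\caps(t)\rangle \otimes_{K_\Ga} ?$ is exact on $\mod{K_\Ga}$. This immediately gives exactness of $G^t_{\La\Ga}$, and hence of any projective functor.

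For the second assertion, first recall that any finitely generated projective object of $\mod{K_\Ga}$ is a direct summand of a finite direct sum of degree-shifted copies of the indecomposable projectives $P(\ga) = K_\Ga e_\ga$ for $\ga \in \Ga$. Since $G^t_{\La\Ga}$ is additive and commutes with degree shifts, it suffices to check that $G^t_{\La\Ga} P(\ga)$ is a finitely generated projective $K_\La$-module for every $\ga \in \Ga$. But this is exactly the content of Theorem~\ref{pf}(ii)--(iii): either $G^t_{\La\Ga} P(\ga) = 0$, or it is isomorphic to $P(\la) \otimes R^{\otimes n}\langle \cups(t)-\caps(t)\rangle$, which is a finite direct sum of degree-shifted copies of $P(\la)$ since $R^{\otimes n}$ is a finite dimensional graded vector space. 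Combined with exactness (which ensures that summands are sent to summands), this shows that $G^t_{\La\Ga}$ sends finitely generated projectives to finitely generated projectives, and the corollary follows.

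There is no real obstacle here: all the substantive work has already been carried out in Theorem~\ref{pf} and the preceding Corollary; the present statement is essentially a formal bookkeeping consequence.
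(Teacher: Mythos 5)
Your proposal is correct and is exactly the argument the paper intends: the paper leaves Corollary~\ref{fp} without a written proof precisely because it follows immediately from the preceding Corollary (projectivity of $K_{\La\Ga}^t$ as a right $K_\Ga$-module gives exactness) together with Theorem~\ref{pf} (which identifies $G^t_{\La\Ga}P(\ga)$ as zero or a finite direct sum of shifted copies of $P(\la)$). The only trivial quibble is that it is additivity, not exactness, that guarantees summands go to summands, but you already invoked additivity earlier, so nothing is missing.
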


\phantomsubsection{Filtrations by cell modules}
Recall the definition of the
{\em cell modules} $V(\mu)$ for $K_\La$ from \cite[$\S$5]{BS}:
for each $\mu \in \La$, $V(\mu)$ is the graded vector space with
homogeneous basis
$\left\{
(c \mu | \:\big|\:
\text{for all oriented cup diagrams $c \mu$}\right\}$
and the action of  $(a \la b) \in K_\La$
is defined by
\begin{equation}\label{Actby}
(a \la b) (c \mu| :=
\left\{
\begin{array}{ll}
s_{a \la b}(\mu)  (a \mu|
&\text{if $b^* = c$ and $a \mu$ is oriented,}\\
0&\text{otherwise,}
\end{array}\right.
\end{equation}
where $s_{a \la b}(\mu) \in \{0,1\}$ is the scalar
from \cite[Theorem 4.4]{BS}.
The following theorem describes explicitly
the effect of a projective functor on a cell module,
showing in particular that
a cell module is mapped to a module with a
multiplicity-free filtration
by cell modules.

\begin{Theorem}\label{vf}
Let $t$ be a proper $\La \Ga$-matching and $\ga \in \Ga$.
\begin{itemize}
\item[(i)]
The $K_\La$-module
$G^t_{\La\Ga} V(\ga)$
has a filtration
$$
\{0\} = M(0) \subset M(1) \subset \cdots \subset M(n)
= G^t_{\La\Ga}V(\ga)
$$
such that
$M(i) / M(i-1) \cong V(\mu_i) \langle \deg(\mu_i t \ga)  - \caps(t)\rangle$
for each $i$.
Here $\mu_1,\dots,\mu_n$ denote the elements of the set
$\{\mu \in \La\:|\:\mu t \ga \text{ is oriented}\}$ ordered so that
$\mu_i > \mu_j$ implies $i < j$.
\item[(ii)] The module $G_{\La\Ga}^t V(\ga)$
is non-zero if and only if each cup in
$t\ga$ is oriented (some could be clockwise and some anti-clockwise).
\item[(iii)] Assuming the condition in (ii) is satisfied,
the module $G^t_{\La\Ga} V(\ga)$ is indecomposable with irreducible
head isomorphic to $L(\la) \langle \deg(\la t \ga) -\caps(t)\rangle$,
where $\la \in \La$ is the unique weight such that
$\overline{\la}$ is the upper reduction of $t \overline{\ga}$;
equivalently, $\la t \ga$ is oriented and all its caps are anti-clockwise.
\end{itemize}
\end{Theorem}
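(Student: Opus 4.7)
The plan is to follow the template of Theorem~\ref{pf}, tracking the cell-module quotient structure. Recall that $V(\ga)$ identifies with the quotient of $K_\Ga e_\ga$ by the submodule $N(\ga)$ spanned by basis vectors $(c\nu\overline{\ga})$ with $\nu>\ga$. Hence
\[
G_{\La\Ga}^t V(\ga) \cong \bigl(K_{\La\Ga}^t e_\ga \big/ K_{\La\Ga}^t \cdot N(\ga)\bigr)\langle-\caps(t)\rangle.
\]
A direct application of Theorem~\ref{cell4}(iii) and Corollary~\ref{ideal4} shows that $K_{\La\Ga}^t \cdot N(\ga)$ is precisely the span of basis vectors $(a\mu t\nu\overline{\ga}) \in K_{\La\Ga}^t e_\ga$ with $\nu>\ga$. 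Consequently $G_{\La\Ga}^t V(\ga)$ has basis given by the images $[a\mu t\ga\overline{\ga}]$ for all oriented diagrams $a\mu t\ga$ with $\mu\in\La$.

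For (i), I would enumerate the set $\{\mu\in\La : \mu t \ga$ is oriented$\}$ as $\mu_1,\dots,\mu_n$ with $\mu_i>\mu_j \Rightarrow i<j$, and let $M(i)$ be the span of the vectors $[a\mu_j t\ga\overline{\ga}]$ for $j\leq i$. That each $M(i)$ is a $K_\La$-submodule is immediate from Corollary~\ref{ideal4}, since the left $K_\La$-action can only raise the $\mu$-index in the partial order. To identify sections, I would define the map $M(i)/M(i-1) \to V(\mu_i)\langle\deg(\mu_i t\ga)-\caps(t)\rangle$ sending $[a\mu_i t\ga\overline{\ga}] \mapsto (a\mu_i|$, and verify using Theorem~\ref{cell4}(ii)--(iii) that it intertwines the $K_\La$-action: the ``dagger'' terms from the multiplication formula lie in $M(i-1)$ by the ordering convention. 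The degree shift is then computed by comparing the graded degree of $(a\mu_i t\ga\overline{\ga})$ (via Lemma~\ref{deglem}) with that of $(a\mu_i|$ in $V(\mu_i)$.

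Part (ii) is immediate from (i): $G_{\La\Ga}^tV(\ga) \neq 0$ if and only if the set $\{\mu : \mu t\ga$ is oriented$\}$ is non-empty, which amounts to every cup in $t\ga$ being orientable. For (iii), the specified weight $\la$ is exactly the minimum element of this set in the partial order, so $V(\la)\langle\deg(\la t\ga)-\caps(t)\rangle$ sits as the top section $M(n)/M(n-1)$, giving a surjection onto $L(\la)\langle\deg(\la t\ga)-\caps(t)\rangle$. Indecomposability then follows by showing $L(\la)$ is the unique head composition factor: any surjection $G_{\La\Ga}^tV(\ga) \twoheadrightarrow L(\mu)$ would lift to a surjection $G_{\La\Ga}^tP(\ga) \twoheadrightarrow L(\mu)$, but by Theorem~\ref{pf}(iii), $G_{\La\Ga}^tP(\ga)$ is a direct sum of shifted copies of $P(\la)$, forcing $\mu = \la$. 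The principal technical obstacle is the verification that the section map in (i) is $K_\La$-equivariant modulo $M(i-1)$; this requires carefully tracking how the partial order on $\La$ interacts with the matching $t$ when unpacking Theorem~\ref{cell4}.
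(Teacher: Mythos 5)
Your reduction in part (i) is a mild variant of the paper's: you work directly inside $K_{\La\Ga}^t e_\ga$ modulo $K_{\La\Ga}^t\cdot N(\ga)$, whereas the paper works in the tensor product and uses the basis of Theorem~\ref{t1}(ii) together with Theorem~\ref{t1}(i). Your route is sound, but note that identifying $K_{\La\Ga}^t\cdot N(\ga)$ with the span of the basis vectors $(a\mu t\nu\overline{\ga})$, $\nu>\ga$, is not quite a ``direct application'': Corollary~\ref{ideal4} gives one inclusion, and the reverse inclusion needs a finite descending induction on $\nu$, multiplying $(a\mu t\nu\overline{\nu})$ by $(\underline{\nu}\nu\overline{\ga})\in N(\ga)$ and absorbing the higher terms via Theorem~\ref{cell4}. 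Also, in checking that the section maps intertwine the $K_\La$-action, the relevant parts of Theorem~\ref{cell4} are (i), (ii) and (iv) (it is (iv), with $k=0$, that identifies the scalar with the one appearing in (\ref{Actby})), not (iii). Your observation in (iii) that $\la$ is the Bruhat-minimum of $\{\mu\in\La\,:\,\mu t\ga \text{ oriented}\}$, hence $\la=\mu_n$ and $V(\la)$ is the top section, is correct and gives the required surjection onto $L(\la)\langle\deg(\la t\ga)-\caps(t)\rangle$; part (ii) is fine.

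The genuine gap is the final step of (iii). Knowing that every irreducible quotient of $G_{\La\Ga}^tV(\ga)$ is isomorphic to $L(\la)$ up to a degree shift (via the surjection from $G_{\La\Ga}^tP(\ga)$, a direct sum of shifted copies of $P(\la)$) does \emph{not} imply that the head is irreducible or that the module is indecomposable: a priori the head could be $L(\la)\langle j\rangle\oplus L(\la)\langle j'\rangle$, and ``unique head constituent up to isomorphism'' is strictly weaker than ``irreducible head'' (compare $L(\la)\oplus L(\la)$). You must bound the multiplicity by one, which is exactly what the paper's ``multiplicity-free head'' observation does: since the cell filtration from (i) is multiplicity-free and each cell module has irreducible head, one has
$\sum_{j}\dim\hom_{K_\La}\big(G_{\La\Ga}^tV(\ga),L(\la)\langle j\rangle\big)\le\sum_{i=1}^{n}\sum_{j}\dim\hom_{K_\La}\big(M(i)/M(i-1),L(\la)\langle j\rangle\big)\le 1$,
because $\hom$ from a shifted $V(\mu_i)$ to a shifted $L(\la)$ vanishes unless $\mu_i=\la$, i.e.\ $i=n$, where it is one-dimensional. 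Adding this line (which uses only your own part (i) and \cite[Theorem 5.1--5.2]{BS}) closes the argument: the head is a single copy of $L(\la)$, with the shift read off from the top section, and indecomposability follows.
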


\begin{proof}
We first claim that the vectors
$$
\left\{(a \mu_i t \ga \overline{\ga}) \otimes (\underline{\ga} \ga |
\:\big|\: \text{for $i=1,\dots,n$
and all oriented cup diagrams $a \mu_i$}\right\}
$$
give a basis for $G_{\La\Ga}^t V(\ga)$.
By Theorem~\ref{t1}(ii), the vectors
$(a \mu t \nu \overline{\nu}) \otimes (\underline{\nu} \nu \overline{\gamma})$
give a basis for $G_{\La \Ga}^t P(\ga)$.
By \cite[Theorem~5.1]{BS}, $V(\ga)$ is the quotient of $P(\ga)$
by the subspace spanned by the vectors $(c \nu \overline{\ga})$
for $\nu > \ga$.
Using exactness of the functor $G_{\La \Ga}^t$,
it follows that
$G_{\La \Ga}^t V(\ga)$
is the quotient of
$G_{\La \Ga}^t P(\ga)$
by the subspace spanned by the vectors
$(a \mu t \la b) \otimes (c \nu \overline{\ga})$ for
$\nu > \ga$.
By Theorem~\ref{t1}(i), this subspace is spanned already by the basis vectors
$(a \mu t \nu \overline{\nu}) \otimes(\underline{\nu} \nu \overline{\ga})$
for $\nu > \ga$.
Hence $G_{\La \Ga}^t V(\ga)$ has a basis given by the images of the vectors
$(a \mu t \ga \overline{\ga}) \otimes (\underline{\ga} \ga \overline{\ga})$,
which is
equivalent to the claim.

Now we let $M(0) := \{0\}$ and for $i=1,\dots,n$ let
$M(i)$ be the subspace of $G_{\La\Ga}^t V(\ga)$
generated by $M(i-1)$ and the vectors
$$
\left\{(a \mu_i t \ga \overline{\ga}) \otimes (\underline{\ga} \ga |
\:\big|\:\text{for all oriented cup diagrams $a \mu_i$}\right\}.
$$
Thanks to the previous paragraph, this defines a filtration
of $G_{\La\Ga}^t V(\ga)$ with
$M(n)  = G_{\La\Ga}^t V(\ga)$.
We observe moreover that each $M(i)$ is a $K_\La$-submodule of
$G_{\La\Ga}^t V(\ga)$.
This follows by Corollary~\ref{ideal4} and Theorem~\ref{t1}(i).
The quotient $M(i) / M(i-1)$ has basis given by the images of the vectors
$$
\left\{(c \mu_i t \ga \overline{\ga}) \otimes (\underline{\ga} \ga |
\:\big|\:\text{for all oriented cup diagrams $c \mu_i$}\right\}.
$$
Moreover by Theorem~\ref{cell4} we have that
\begin{multline*}
(a \la b) (c \mu_i t \ga \overline{\ga}) \otimes (\underline{\ga} \ga|
\equiv\\
\begin{cases}
s_{a \la b}(\mu_i) (a \mu_i t \ga \overline{\ga}) \otimes (\underline{\ga} \ga|
&\text{if $b = c^*$ and $a \mu_i$ is oriented,}\\
0&\text{otherwise,}
\end{cases}
\end{multline*}
working modulo $M(i-1)$.
Recalling (\ref{Actby}), it follows that the map
$$
M(i) / M(i-1) \rightarrow V(\mu_i) \langle \deg(\mu_i t \ga) -\caps(t)\rangle,
\quad
(c \mu_i t \ga \overline{\ga}) \otimes (\underline{\ga} \ga|
\mapsto (c \mu_i|
$$
is an isomorphism of graded $K_\La$-modules.
This proves (i).

Now we deduce (ii). If there is a cup in the diagram $t \gamma$
that is not oriented,
then there are no $\mu \in \La$ such that $\mu t \ga$ is oriented.
Hence $G_{\La\Ga}^t V(\ga) = \{0\}$ by (i).
Conversely, if every cup in $t \gamma$ is oriented,
then the weight $\lambda$ defined in the statement of (iii) is a weight such that
$\lambda t \gamma$ is oriented, hence
$G_{\La\Ga}^t V(\ga)$ is non-zero.

Finally to prove (iii), we ignore the grading for a while.
The cell filtration from (ii) is multiplicity-free. Since each cell module
has irreducible head, it follows easily that
$G_{\La\Ga}^t V(\ga)$ has multiplicity-free head.
On the other hand, $G_{\La\Ga}^t V(\ga)$ is a quotient of
$G_{\La\Ga}^t P(\ga)$, which by
Theorem~\ref{pf}(iii) is a direct sum of copies of $P(\la)$.
Hence the head of $G_{\La\Ga}^t V(\ga)$
is a direct sum of copies of $L(\la)$.
These two facts together imply that the head consists of
just one copy of $L(\la)$ (shifted by some degree).
It just remains to determine the degree shift, which follows from (i).
\end{proof}

\phantomsubsection{Adjunctions}
Let $t$ be a proper $\La\Ga$-matching. We
are going
to prove that the functors
$G_{\Ga\La}^{t^*}$
and $G_{\La\Ga}^t$ form an adjoint pair (up to some
degree shifts).
Define a linear map
\begin{equation}\label{phidef}
\phi: K_{\Ga\La}^{t^*} \otimes K_{\La \Ga}^{t} \rightarrow K_\Ga
\end{equation}
as follows.
If $t$ is not a proper $\La\Ga$-matching we simply take $\phi:= 0$.
Now assume that $t$ is proper and
take basis vectors
$(a \la t^* \nu d) \in K_{\Ga\La}^{t^*}$ and
$(d' \kappa t \mu b) \in K_{\La \Ga}^{t}$.
Let $c$ be the upper reduction of $t^* d$.
If $d' = d^*$
 and all mirror image pairs of
upper and lower circles in
$t^* d$ and $d^* t$, respectively, are oriented in {\em opposite} ways
in the diagrams $a \la t^* \nu d$ and $d^* \kappa t \mu b$ then
we set
\begin{align}\label{theform}
\phi((a \la t^* \nu d) \otimes (d' \kappa t \mu b))
&:=
(a \la c) (c^* \mu b)\\\intertext{Otherwise, we set}
\label{theform2}
\phi((a \la t^* \nu d) \otimes (d' \kappa t \mu b)) &:= 0.
\end{align}

\begin{Lemma}\label{phic}
The map $\phi: K_{\Ga \La}^{t^*} \otimes K_{\La \Ga}^{t} \rightarrow K_\Ga$
is a homogeneous
$(K_\Ga, K_\Ga)$-bimodule homomorphism
of degree $-2\caps(t)$.
Moreover it is $K_\La$-balanced, so it factors through the
quotient to induce a map
$\bar\phi:K_{\Ga \La}^{t^*} \otimes_{K_\La} K_{\La \Ga}^{t}
\rightarrow K_\Ga$.
\end{Lemma}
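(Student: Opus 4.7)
The plan is to verify each of the three asserted properties of $\phi$---homogeneity of degree $-2\caps(t)$, the $(K_\Ga,K_\Ga)$-bimodule homomorphism property, and $K_\La$-balancedness---in turn. The first is a direct degree computation; for the other two, my strategy is to factor $\phi$ through the generalised multiplication $m$ from~(\ref{newmult}), which automatically yields the bimodule map and balancedness properties together.

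For the degree, Lemma~\ref{cdeg} supplies the input. Applied to $(a\la t^*\nu d)$ with $c$ the upper reduction of $t^*d$, it gives
\[
\deg(a\la t^*\nu d) = \deg(a\la c) + \cups(t^*) + (p-q),
\]
where $p,q$ count clockwise and anti-clockwise upper circles in $t^*d$. Applied to $(d^*\kappa t\mu b)$ with lower reduction $c^*$, it gives
\[
\deg(d^*\kappa t\mu b) = \deg(c^*\mu b) + \caps(t) + (p'-q'),
\]
where $p',q'$ count the mirror lower circles in $d^*t$. The ``opposite orientation'' hypothesis in~(\ref{theform}) pairs each upper circle with its mirror lower circle and insists they be oppositely oriented, forcing $p=q'$ and $q=p'$ and cancelling the $(p-q)$-contributions. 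Since $\cups(t^*)=\caps(t)$ and multiplication in $K_\Ga$ preserves degree, the sum of the left-hand sides equals $\deg(a\la c)+\deg(c^*\mu b)+2\caps(t)$, giving $\deg\phi(x\otimes y) = \deg x + \deg y - 2\caps(t)$.

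For the remaining two properties, my plan is to realise $\phi$ as the composition
\[
K_{\Ga\La}^{t^*}\otimes K_{\La\Ga}^t \;\xrightarrow{\;m\;}\; K_{\Ga\La\Ga}^{t^*t} \;\xrightarrow{\;\bar\psi\;}\; K_\Ga
\]
for an appropriate $(K_\Ga,K_\Ga)$-bimodule map $\bar\psi$. Granted this, the bimodule property for $\phi$ is immediate since both $m$ and $\bar\psi$ are bimodule maps, and the $K_\La$-balancedness follows from the associativity of $m$ encoded in the commuting diagram~(\ref{assoc})---equivalently, from Theorem~\ref{t1}(iii), which says that $m$ descends to an isomorphism out of $K_{\Ga\La}^{t^*}\otimes_{K_\La} K_{\La\Ga}^t$, so any composition $\bar\psi\circ m$ is automatically $K_\La$-balanced. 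To construct $\bar\psi$ I would invoke Theorem~\ref{t2} to identify $K_{\Ga\La\Ga}^{t^*t}$ with $K_{\Ga\Ga}^u\otimes R^{\otimes n}$ (up to a degree shift), where $u$ is the reduction of $t^*t$ and $n$ counts its internal circles, apply the counit $\tau^{\otimes n}$ from~(\ref{ntau}) to the $R$-factors (which selects clockwise internal circles and kills anti-clockwise ones), and finally read off an element of $K_\Ga$ from the resulting element of $K_{\Ga\Ga}^u$ using the surgery interpretation of the product $(a\la c)(c^*\mu b)$.

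The main obstacle is making the definition of $\bar\psi$ precise and verifying the identity $\phi=\bar\psi\circ m$ diagrammatically. The key correspondence to establish is that each matched pair consisting of an upper circle of $t^*d$ and its mirror lower circle of $d^*t$ closes up, under the surgery procedure underlying $m$, into exactly one internal circle of $t^*t$, and that this internal circle is clockwise (respectively anti-clockwise) precisely when the original upper/lower pair was oriented oppositely (respectively compatibly). With this dictionary in hand, the $\tau$-evaluation in $\bar\psi$ selects exactly the surviving terms from~(\ref{theform}), and the residual $\Ga\Ga$-diagram built from $u$ is identified with the product $(a\la c)(c^*\mu b)$ by surgering across the middle $\La$-number line. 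This is essentially a combinatorial bookkeeping exercise, but tracing the orientations through all the surgeries is where the real work lies.
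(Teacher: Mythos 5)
Your degree computation is correct and coincides with the paper's first step: the two applications of Lemma~\ref{cdeg}, the cancellation of the circle contributions forced by the opposite-orientation condition, and $\cups(t^*)=\caps(t)$ give exactly $\deg\phi(x\otimes y)=\deg x+\deg y-2\caps(t)$. Your strategy for the remaining two assertions is also essentially the paper's: the map you call $\bar\psi$ is the paper's map $\omega$ (kill any term with an anti-clockwise internal circle, pass to the reduction $u$ of $t^*t$, then perform surgery on the resulting symmetric middle section), and balancedness of $\phi$ is to follow from the factorisation through $m$ together with the associativity (\ref{assoc}). The one real difference is that the paper establishes the factorisation at the level of the Khovanov algebras $H$ and then transfers to $K_\La$ by extension and closure, whereas you propose to work with $K$ directly; this is legitimate (the paper remarks that a direct proof via the generalised surgery procedure is possible), but then the well-definedness of $\bar\psi$ and its bimodule property are not free and also need that machinery.

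The genuine gap is that the identity $\phi=\bar\psi\circ m$ --- the heart of the lemma, which occupies all of the paper's step two as equation (\ref{toprove}) --- is asserted rather than proved, and the ``key correspondence'' you propose to prove it with is false as stated. First, the counting is wrong: an upper circle of $t^*d$ containing $r$ cups of $t^*$ does not close up into a single internal circle of $t^*t$; once the middle-section surgeries have removed the caps of $d$ and the cups of $d^*$, its $r$ cups of $t^*$ together with the $r$ mirror caps of $t$ form $r$ small internal circles (indeed $t^*t$ has exactly $\caps(t)$ internal circles, one for each cap of $t$ on the middle number line, independently of $d$). Second, no term-by-term orientation statement of the kind you describe can hold: each surgery is a TQFT multiplication or comultiplication, so $m(x\otimes y)$ is a sum over many orientation patterns of those internal circles, and the content of the lemma is precisely that the terms surviving $\tau^{\otimes n}$ assemble into $(a\la c)(c^*\mu b)$. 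The paper verifies this by induction on $\caps(t)$, treating separately the removal of a small circle of $t^*d$ and the straightening of a kink, and tracking the single extra internal circle created by the final surgery. Without this (or an equivalent) argument the $K_\La$-balancedness of $\phi$ --- the substantive claim --- has not been established; the ``combinatorial bookkeeping exercise'' you defer is the proof.
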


\begin{proof}
We may as well assume that $t$ is a proper $\La\Ga$-matching, as
the lemma is trivial if it is not.
Now we proceed in several steps.

\vspace{1mm}
\noindent
{\em Step one: $\phi$
is homogeneous of degree $-2\caps(t)$.}
To see this, let notation be as in (\ref{theform}).
Suppose $p$ (resp.\ $q$) of the upper circles in
$t^* d$ are clockwise (resp.\ anti-clockwise)
in the diagram $a \la t^* \nu d$.
Then $q$ (resp.\ $p$) of the lower circles in
$d^* t$ are clockwise (resp.\ anti-clockwise).
By Lemma~\ref{cdeg} we have that
\begin{align*}
\deg(a \la t^* \nu d) &=
\deg(a \la c) +\cups(t^*) + p-q,\\
\deg(d^*\kappa t \mu b)&= \deg(c^* \mu b) +\caps(t) + q-p .
\end{align*}
Noting $\cups(t^*) = \caps(t)$, this shows that
$$
\deg((a \la c)(c^*\mu b))
=\deg((a \la t^* \nu d)\otimes (d^* \kappa t \mu b)) - 2\caps(t).
$$

\vspace{1mm}
\noindent
{\em Step two: the analogous statement at the level
of Khovanov algebras.}
Assume that $\La$ and $\Ga$ are Khovanov blocks.
Then we can also define a linear map
$$
\psi:H_{\Ga \La}^{t^*} \otimes H_{\La \Ga}^{t} \rightarrow H_\Ga
$$
by exactly the same formulae
(\ref{theform})--(\ref{theform2})
that were used to define $\phi$.
We are going to prove that $\psi$
is an $(H_\Ga, H_\Ga)$-bimodule homomorphism
and an $H_\La$-balanced map.
To see this, it is quite obvious from the definition that
$\psi$ is a left $H_\Ga$-module homomorphism, since
only the circles that do not meet the bottom number line
get changed in the passage from
$a \la t^* \nu d$ to $a \la c$, and
these play no role in the left $H_\Ga$-module structure.
Similarly, $\psi$ is a right $H_\Ga$-module homomorphism.
It remains to prove that $\psi$ is $H_\La$-balanced.
Introduce a new map
$$
\omega: H_{\Ga \La \Ga}^{t^* t} \rightarrow H_\Ga
$$
as follows. Take a basis vector
$(a \la t^* \mu t \nu b) \in H_{\Ga \La \Ga}^{t^* t}$.
If any of the internal circles in the diagram $t^* t$ are
anti-clockwise in $a \la t^* \mu t \nu b$,
we map it to zero. Otherwise,
let $u$ be the reduction of $t^* t$ and consider the diagram
$a \la u \nu b$. This has a symmetric middle section
containing $u$ so it makes sense to
iterate the surgery procedure from \cite[$\S$3]{BS} to this middle
section to obtain a linear combination of basis vectors
of $H_\Ga$.
We define the image of $(a \la t^* \mu t b)$ under the map $\omega$
to be this linear combination.
The main task now is to prove that
\begin{equation}\label{toprove}
\psi = \omega \circ m,
\end{equation}
where $m$ denotes the multiplication map
$H_{\Ga \La}^{t^*} \otimes H_{\La \Ga}^{t} \rightarrow H_{\Ga \La \Ga}^{t^*t}$
from  (\ref{oldmult}).
Since we know by (\ref{assoc})
that $m$ is $H_\La$-balanced,
(\ref{toprove}) immediately implies that $\psi$ is $H_\La$-balanced.

To prove (\ref{toprove}),
take basis vectors
$(a \la t^* \nu d) \in H_{\Ga\La}^{t^*}$ and
$(d' \kappa t \mu b) \in H_{\La \Ga}^t$. Since both maps $\psi$
and $m$ are zero on
$(a \la t^* \nu d) \otimes (d' \kappa t \mu b)$ if $d'\not= d^*$, we may
assume $d'=d^*$.
Now we proceed to show that
\begin{equation}\label{toprove2}
\psi((a \la t^* \nu d) \otimes (d^* \kappa t \mu b))
=
\omega((a \la t^* \nu d)  (d^* \kappa t \mu b))
\end{equation}
by induction on $\caps(t)$.
In the base case
$\caps(t) = 0$, there are no upper circles in $t^* d$
and no internal circles in the diagram $t^* t$.
Moreover, there
is a natural bijection between the caps in the diagram
$t^* d$ and the caps in its upper reduction $c$.
To compute
$\psi(
(a \la t^* \nu d) \otimes (d^* \kappa t \mu b))
= (a \la c) (c^* \mu b)$
involves applying the surgery procedure to eliminate each of these caps in
turn. On the other hand, to compute
$\omega((a \la t^* \nu d) (d^* \kappa t \mu b))$
involves first applying the surgery procedure to eliminate all of the
caps in $d$ (that is what the multiplication $m$ does)
then applying it some more to eliminate the remaining
caps in $t^*$ (that is what $\omega$ does).
The result is clearly the same either way.

So now assume that $\caps(t) > 0$ and that (\ref{toprove2})
has been proved for all smaller cases. Suppose first
that the diagram $t^* d$ contains a small circle (a circle with
just one cup and cap).
If this circle and its mirror image are oriented in the same
ways in $t^* \nu d$ and $d^* \kappa t$ then $\psi$ produces zero.
If they are both clockwise circles then
the product $(a \la t^* \nu d)(d^* \kappa t \mu b)$ is
 zero,
and if they are both anti-clockwise circles
then
there is an anti-clockwise internal circle
in the product $(a \la t^* \nu d)(d^* \kappa t \mu b)$ so $\omega$ produces
zero.
Hence we may assume the small circle and its mirror image are oriented
in opposite ways in $t^* \nu d$ and $d^*\kappa t$.
Now we can just remove these two circles (and
the vertices they pass through on the number lines)
to obtain simpler diagrams
$a \la^* t_1^*\nu_1 d_1$ and $d_1^* \kappa_1 t_1 \mu b$
with $\cups(t_1) < \cups(t)$.
Using the definitions it is easy to see that
\begin{align*}
\psi((a \la t^* \nu d) \otimes (d^* \kappa t \mu b))
&=
\psi((a \la t_1^* \nu_1 d_1) \otimes (d_1^* \kappa_1 t_1 \mu b)),\\
\omega((a \la t^* \nu d) (d^* \kappa t \mu b))
&=
\omega((a \la t_1^* \nu_1 d_1) (d_1^* \kappa_1 t_1 \mu b)).
\end{align*}
By induction the right hand sides of these equations are equal.
Hence the left hand sides are equal too.

We are now reduced to the situation that $\caps(t) > 0$
and $t^*d$ contains no small circles.
Then there must a kink on the top number
line of $t^*d$ with a
mirror image kink in $d^* t$.
Let $a \la^* t_1^* \nu_1 d_1$ and $d_1^* \kappa_1 t_1^* \mu b$
be the diagrams obtained by straightening these kinks (removing
two vertices from the number lines in the process):
$$
\begin{picture}(0,47)
\put(-153,21){\line(1,0){56}}
\put(-148,21){\line(0,-1){12}}
\put(-102,21){\line(0,1){12}}
\dashline{3}(-102,33)(-102,45)
\dashline{3}(-148,9)(-148,-3)
\put(-136.5,21){\oval(23,23)[t]}
\put(-113.5,21){\oval(23,23)[b]}
\put(-90,18.5){$\rightsquigarrow$}
\put(-72,21){\line(1,0){56}}
\put(-45,21){\line(0,1){12}}
\put(-45,21){\line(0,-1){12}}
\dashline{3}(-45,33)(-45,45)
\dashline{3}(-45,9)(-45,-3)

\put(17,21){\line(1,0){56}}
\put(22,21){\line(0,1){12}}
\put(68,21){\line(0,-1){12}}
\dashline{3}(22,33)(22,45)
\dashline{3}(68,9)(68,-3)
\put(33.5,21){\oval(23,23)[b]}
\put(56.5,21){\oval(23,23)[t]}
\put(80,18.5){$\rightsquigarrow$}
\put(125,21){\line(0,1){12}}
\put(125,21){\line(0,-1){12}}
\dashline{3}(125,33)(125,45)
\dashline{3}(125,9)(125,-3)
\put(98,21){\line(1,0){56}}
\end{picture}
$$
Again we have that $\caps(t_1) < \caps(t)$
so are done by the induction hypothesis as soon as we can show that
\begin{align*}
\psi((a \la t^* \nu d) \otimes (d^* \kappa t \mu b))
&=
\psi((a \la t_1^* \nu_1 d_1) \otimes (d_1^* \kappa_1 t_1 \mu b)),\\
\omega((a \la t^* \nu d) (d^* \kappa t \mu b))
&=
\omega((a \la t_1^* \nu_1 d_1)  (d_1^* \kappa_1 t_1 \mu b)).
\end{align*}
The first equality is clear as $t^*d$ and $t^*_1d_1$ have the same upper
reduction
$c$.
To get the second equality,
observe to compute the product $(a \la t^* \nu d)(d^* \kappa t \mu b)$
we can follow exactly the same sequence of surgery procedures as
to compute $(a \la t_1^* \nu_1 d_1)(d_1^* \kappa_1 t_1 \mu b)$ then add one
additional surgery procedure at the end involving the kinks
that were removed. This final surgery procedure
creates one extra internal circle:
$$
\begin{picture}(150,90)
\put(-99.5,15){\line(1,0){56}}
\put(-99.5,70){\line(1,0){56}}
\put(-94.5,70){\line(0,-1){55}}
\put(-83,15){\oval(23,23)[b]}
\put(-83,70){\oval(23,23)[t]}
\put(-60,70){\oval(23,23)[b]}
\put(-60,15){\oval(23,23)[t]}
\dashline{3}(-60,26.5)(-60,58.5)
\put(-48.5,15){\line(0,-1){5}}
\put(-48.5,75){\line(0,-1){5}}
\dashline{3}(-48.5,10)(-48.5,-5)
\dashline{3}(-48.5,90)(-48.5,75)
\put(-35,40){$\rightsquigarrow$}
\put(-9.5,15){\line(1,0){56}}
\put(-9.5,70){\line(1,0){56}}
\put(-4.5,70){\line(0,-1){55}}
\put(18.5,70){\line(0,-1){55}}
\put(41.5,70){\line(0,-1){55}}
\put(7,15){\oval(23,23)[b]}
\put(7,70){\oval(23,23)[t]}
\put(41.5,15){\line(0,-1){5}}
\put(41.5,75){\line(0,-1){5}}
\dashline{3}(41.5,10)(41.5,-5)
\dashline{3}(41.5,90)(41.5,75)

\put(100.5,15){\line(1,0){56}}
\put(100.5,70){\line(1,0){56}}
\put(151.5,70){\line(0,-1){55}}
\put(140,15){\oval(23,23)[b]}
\put(140,70){\oval(23,23)[t]}
\put(117,70){\oval(23,23)[b]}
\put(117,15){\oval(23,23)[t]}
\dashline{3}(117,26.5)(117,58.5)
\put(105.5,15){\line(0,-1){5}}
\put(105.5,75){\line(0,-1){5}}
\dashline{3}(105.5,10)(105.5,-5)
\dashline{3}(105.5,90)(105.5,75)
\put(165,40){$\rightsquigarrow$}
\put(190.5,15){\line(1,0){56}}
\put(190.5,70){\line(1,0){56}}
\put(195.5,70){\line(0,-1){55}}
\put(218.5,70){\line(0,-1){55}}
\put(241.5,70){\line(0,-1){55}}
\put(230,15){\oval(23,23)[b]}
\put(230,70){\oval(23,23)[t]}
\put(195.5,15){\line(0,-1){5}}
\put(195.5,75){\line(0,-1){5}}
\dashline{3}(195.5,10)(195.5,-5)
\dashline{3}(195.5,90)(195.5,75)
\end{picture}
$$
Since $\omega$ sends anti-clockwise internal circles to zero, we are only interested in terms in which
this extra internal circle is clockwise, so that the other circle
gets oriented at the end in the same way as it was prior to the last surgery procedure.
We conclude that the terms in the product
$(a \la t^* \nu d) (d^* \kappa t \mu b)$ in which the extra circle is clockwise
exactly match the terms in the product
$(a \la t_1^* \nu_1 d_1)  (d_1^* \kappa_1 t_1 \mu b)$.
So $\omega$ takes the same value on them both.

\vspace{1mm}
\noindent{\em Step three:
 $\phi$ is a $K_\La$-balanced $(K_\Ga,K_\Ga)$-bimodule
homomorphism.}
It remains to deduce this
statement about $\phi$
from the analogous statement about $\psi$ at the level of Khovanov algebras
just established in step two.
This is done by reducing first to the case that
$\Ga$ and $\La$ are blocks of bounded weights by a direct limit
argument involving (\ref{exxx}),
then passing from there to the Khovanov algebra setting by taking
closures using (\ref{clmap}). We omit the details.
(It is also possible to give a direct proof of this
statement by mimicing the arguments from step two in terms of
the generalised surgery procedure from \cite[$\S$6]{BS}.)
\end{proof}

\begin{Theorem}\label{gbi}
There is a graded $(K_{\La},K_\Ga)$-bimodule isomorphism
$$
\hat\phi:
K_{\La\Ga}^{t}\langle -2\caps(t)\rangle
\stackrel{\sim}{\rightarrow}
\hom_{K_\Ga}(K_{\Ga\La}^{t^*}, K_\Ga)
$$
sending $y \in K_{\La\Ga}^t$ to the
homomorphism
$\hat\phi(y): K_{\Ga\La}^{t^*} \rightarrow K_\Ga,
x \mapsto \phi(x \otimes y)$.
\end{Theorem}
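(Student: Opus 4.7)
My plan is first to observe that $\hat\phi$ is a well-defined homogeneous $(K_\La, K_\Ga)$-bimodule homomorphism of degree zero after the prescribed shift of the source. Lemma~\ref{phic} records that $\phi$ is $K_\La$-balanced and a $(K_\Ga, K_\Ga)$-bimodule homomorphism of degree $-2\caps(t)$; both facts translate immediately into the required well-definedness, bilinearity and degree properties of $\hat\phi(y) := \phi(- \otimes y)$.

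For bijectivity, I would reduce to a finite-dimensional statement using the idempotent decomposition (\ref{stiller}): it suffices to prove that for each fixed $\alpha \in \La$ and $\beta \in \Ga$, the restriction
$$
\hat\phi:\ e_\alpha K_{\La\Ga}^t e_\beta \langle -2\caps(t)\rangle \longrightarrow \hom_{K_\Ga}(K_{\Ga\La}^{t^*} e_\alpha,\, K_\Ga e_\beta)
$$
is an isomorphism of finite-dimensional graded vector spaces. The target can be computed explicitly via Theorem~\ref{pf}: $K_{\Ga\La}^{t^*} e_\alpha$ is either zero (when the ray-condition of Theorem~\ref{pf}(ii) fails) or isomorphic as a left $K_\Ga$-module to $P(\ga) \otimes R^{\otimes n}\langle \caps(t)\rangle$, where $\overline\ga$ is the upper reduction of $t^*\overline\alpha$ and $n$ is the number of upper circles removed. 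Combined with the standard identification $\hom_{K_\Ga}(K_\Ga e_\ga, K_\Ga e_\beta) \cong e_\ga K_\Ga e_\beta$ and the graded self-duality $R^* \cong R$ (with $1$ and $x$ interchanged), this gives a concrete graded-dimension formula for the target. Running the same argument with $t$ and $t^*$ swapped, and then converting left to right $K_\Ga$-modules via the anti-multiplicative involution $*$ from (\ref{star2}), produces a matching formula for the source; so the two sides already have the same graded dimension.

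To upgrade this to an honest isomorphism, I would check that $\hat\phi$ is injective. Given a nonzero basis vector $y = (a\,\la t \mu\, b) \in e_\alpha K_{\La\Ga}^t e_\beta$, the plan is to produce an $x \in K_{\Ga\La}^{t^*} e_\alpha$ such that $\phi(x \otimes y) \in e_\ga K_\Ga e_\beta$ has a nonzero ``leading'' component. A natural candidate is an $x$ built from the mirror images $a^*, \la^*$, with the orientations on the upper circles of $t^* a^*$ chosen to satisfy the opposite-orientation condition of (\ref{theform}); the surgery collapse in $\phi(x \otimes y)$ then produces $(a\,\la\,\overline\la)(\underline\la\,\la\,\overline\beta)$, which by Theorem~\ref{cell4}(iii) equals $(a\,\la\,\overline\beta)$ plus strictly higher terms in the Bruhat-type order on the basis of $K_\La$. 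The main obstacle will be the orientation bookkeeping under the mirror involution, together with verifying that this triangular behaviour of the pairing extends cleanly across the whole basis (so that the map from basis vectors to ``leading terms'' is bijective up to the order); once that is in place, injectivity follows, and combined with the matching dimensions from the previous paragraph, $\hat\phi$ is the desired isomorphism.
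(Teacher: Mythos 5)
Your formal steps are the same as the paper's: Lemma~\ref{phic} gives that $\hat\phi$ is a well-defined degree-zero bimodule map, and the bijectivity is checked on idempotent pieces after identifying $K_{\Ga\La}^{t^*}e_\al$ (and, via $*$, $e_\al K_{\La\Ga}^t$) with $K_\Ga e_\ga\otimes R^{\otimes n}$ by Theorem~\ref{pf}. Where you diverge is the endgame. The paper never counts dimensions: it transports $\hat\phi$ through these two isomorphisms and observes that the resulting map is exactly $v\otimes x_1\otimes\cdots\otimes x_n\mapsto\bigl(u\otimes y_1\otimes\cdots\otimes y_n\mapsto\tau(x_1y_1)\cdots\tau(x_ny_n)uv\bigr)$, which is an isomorphism because $\tau$ from (\ref{ntau}) is a perfect pairing on $R$ and $e_\ga K_\Ga\rightarrow\hom_{K_\Ga}(K_\Ga e_\ga,K_\Ga)$, given by right multiplication, is a standard isomorphism. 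Your alternative (equal graded dimensions, which do match since each $e_\ga K_\Ga e_\beta$ is finite dimensional in each degree and $R^\circledast\cong R$, plus injectivity) is viable in principle; fixing $\beta$ is harmless because $K_{\Ga\La}^{t^*}e_\al$ is finitely generated, so $\hom$ commutes with the decomposition $K_\Ga=\bigoplus_\beta K_\Ga e_\beta$.

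The problem is in your injectivity step, which is the heart of the matter. As written the computation is wrong: $\phi$ takes values in $K_\Ga$, whereas $(a\,\la\,\overline\la)(\underline\la\,\la\,\overline\beta)$ is not even a legal circle diagram (it mixes a $\La$-weight with a $\Ga$-cap diagram). By (\ref{theform}) the actual value is a product $(c'\,\nu\,\overline\ga)(\underline\ga\,\mu\,\overline\beta)$ in $K_\Ga$, where $\overline\ga$ is the upper reduction of $t^*\overline\al$ and $c'\,\nu$ is the $\Ga$-part of your chosen $x$. Moreover the ``triangularity across the whole basis'' that you flag as the main obstacle is precisely what remains unproved, and it needs care because the pairing is $K_\Ga$-valued, so leading terms of different basis vectors could a priori interfere. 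The clean repair, which is in effect the paper's identification: for a fixed orientation pattern of the lower circles of $\underline\al t$, take $x$ to be the unique basis vector of $K_{\Ga\La}^{t^*}e_\al$ whose $\Ga$-part is $(\underline\ga\,\ga\,\overline\ga)=e_\ga$ and whose upper circles of $t^*\overline\al$ carry the opposite orientations. Then $\phi(x\otimes -)$ annihilates every basis vector of $e_\al K_{\La\Ga}^t$ with a different lower-circle pattern, and sends each remaining basis vector exactly to its reduced part $(\underline\ga\,\mu\,\overline\beta)$, with no higher terms at all (multiplication by the idempotent $e_\ga$, so no appeal to Theorem~\ref{cell4} is needed); injectivity is then immediate, and with your dimension count the theorem follows. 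So the strategy can be completed, but the key computation in your sketch is incorrect and the crucial nondegeneracy claim is left open exactly where the paper supplies the argument.
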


\begin{proof}
We know already that $\phi$ is of degree $-2\caps(t)$ by
Lemma~\ref{phic},
hence the map
$\hat\phi$
is homogeneous of degree 0.
To check it is a left $K_\La$-module homomorphism,
take $u \in K_\La$, $y \in K_{\La\Ga}^t$ and $x \in K_{\Ga\La}^{t^*}$.
Using the fact that $\phi$ is $K_\La$-balanced,
we get that
$$
(u \hat \phi(y))(x) =
\hat \phi(y)(xu) = \phi(xu \otimes y)
= \phi(x \otimes uy)
= \hat\phi(uy)(x).
$$
Hence $u \hat \phi(y) = \hat \phi(uy)$ as required.
To check that it is a right $K_\Ga$-module homomorphism
suppose also that $v \in K_\Ga$.
Then
$$
(\hat\phi(y) v)(x)
= (\hat \phi(y)(x)) v
= \phi(x \otimes y) v
= \phi(x \otimes yv)
= \hat \phi(yv)(x).
$$
So $\hat \phi(y) v = \hat \phi(yv)$ as required.
It remains to check that $\hat\phi$ is a vector space isomorphism.
For this it is sufficient to show that the restriction
$$
\hat\phi: e_\la K_{\La \Ga}^t \rightarrow
e_\la \hom_{K_\Ga}(K_{\Ga\La}^{t^*}, K_\Ga)
= \hom_{K_\Ga}(K_{\Ga\La}^{t^*} e_\la, K_\Ga)
$$
is an isomorphism for each $\la \in \La$.
We may as well assume that $e_\la K_{\La \Ga}^t$ is non-zero,
since this is equivalent to $K_{\Ga\La}^{t^*} e_\la$ being non-zero.
Let $\ga \in \Ga$ be defined so that
$\overline{\ga}$ is the upper reduction of $t^* \overline{\la}$.
Let $n$ be the number of upper circles removed in the reduction process.
Then by Theorem~\ref{pf}(i),(iii) we have that
$K_{\Ga \La}^{t^*} e_\la \cong K_\Ga e_\ga \otimes R^{\otimes n}$
as a left $K_\Ga$-module (for the rest of the proof we are ignoring
the grading).
Similarly,
$e_\la K_{\La \Ga}^t \cong e_\ga K_\Ga \otimes R^{\otimes n}$
as right $K_\Ga$-modules.
Transporting the map $\hat \phi$ through these isomorphisms,
the thing we are trying to prove is equivalent to showing that the map
\begin{align*}
e_\ga K_\Ga \otimes R^{\otimes n}
&\rightarrow\hom_{K_\Ga}(K_\Ga e_\ga \otimes R^{\otimes n},
K_\Ga),\\
v \otimes x_1 \otimes \cdots \otimes x_n&\mapsto \big(u \otimes y_1 \otimes \cdots \otimes y_n
\mapsto\tau(x_1y_1) \cdots \tau(x_ny_n)uv\big)
\end{align*}
is an isomorphism,
where $\tau$
is the counit from (\ref{ntau}).
This quickly reduces to checking that the natural map
$e_\ga K_\Ga \rightarrow \hom_{K_\Ga}(K_\Ga e_\ga,
K_\Ga)$ defined by right multiplication is an isomorphism, which
is well known.
\end{proof}

\begin{Corollary}
\label{iso}
There is a canonical isomorphism
$$
\hom_{K_\Ga}\big(K_{\Ga\La}^{t^*}, ? \big)
\cong
K_{\La \Ga}^t
\langle-2\caps(t) \rangle
\otimes_{K_\Ga} ?
$$
of functors from $\mod{K_\Ga}$ to $\mod{K_\La}$.
\end{Corollary}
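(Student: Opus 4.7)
The plan is to produce the functor isomorphism by globalising the bimodule isomorphism $\hat\phi$ of Theorem~\ref{gbi} and then verifying the result for arbitrary $M$ by exploiting projectivity of $K_{\Ga\La}^{t^*}$.

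For each $M \in \mod{K_\Ga}$ I would define
$$
\theta_M : K_{\La\Ga}^t\langle -2\caps(t)\rangle \otimes_{K_\Ga} M \longrightarrow \hom_{K_\Ga}\bigl(K_{\Ga\La}^{t^*},\, M\bigr), \qquad y\otimes m \mapsto \bigl(x \mapsto \phi(x\otimes y)\cdot m\bigr),
$$
where $\phi$ is the map from (\ref{phidef}). Because $\phi$ is $K_\La$-balanced and a $(K_\Ga,K_\Ga)$-bimodule homomorphism of degree $-2\caps(t)$ by Lemma~\ref{phic}, the map $\theta_M$ is well-defined on the tensor product, degree preserving, left $K_\La$-linear, and natural in $M$. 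When $M = K_\Ga$, $\theta_{K_\Ga}$ is just $\hat\phi$ under the canonical identification $K_{\La\Ga}^t \otimes_{K_\Ga} K_\Ga = K_{\La\Ga}^t$, hence an isomorphism by Theorem~\ref{gbi}.

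To promote this to arbitrary $M$, the key input is the Corollary after Theorem~\ref{pf}: $K_{\Ga\La}^{t^*}$ is a projective left $K_\Ga$-module and $K_{\La\Ga}^t$ is a projective right $K_\Ga$-module. Hence both $\hom_{K_\Ga}(K_{\Ga\La}^{t^*}, -)$ and $K_{\La\Ga}^t\otimes_{K_\Ga} -$ are exact functors in $M$. They also both commute with direct sums in the locally finite graded setting: the tensor functor does so automatically, and the hom functor does so because the decomposition $K_{\Ga\La}^{t^*} = \bigoplus_{\la\in\La} K_{\Ga\La}^{t^*} e_\la$ coming from Theorem~\ref{pf} writes $K_{\Ga\La}^{t^*}$ as a direct sum of finitely generated projective summands, and in each graded piece of a $\hom$ into a locally finite bounded-below module only finitely many summands contribute.

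From these properties, $\theta_M$ is an isomorphism whenever $M$ is a shifted direct sum of modules of the form $K_\Ga e_\beta$, by naturality applied to the bimodule decomposition $K_\Ga = \bigoplus_\beta K_\Ga e_\beta$ together with the case $M = K_\Ga$ from the previous paragraph. The general case then follows by taking a projective presentation $Q_1 \to Q_0 \to M \to 0$ in $\mod{K_\Ga}$ and invoking the five lemma, since $\theta$ is natural and both sides are exact. The main obstacle is the careful bookkeeping of infinite direct sums in the locally finite graded setting, and it is the bounded-below hypothesis built into $\mod{K_\Ga}$ that makes the $\hom$-side sums collapse degree by degree so that the isomorphism truly makes sense before resolving $M$.
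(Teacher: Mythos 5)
Your proposal is correct and is essentially the paper's argument: the paper also deduces the statement from Theorem~\ref{gbi} together with projectivity of $K^{t^*}_{\Ga\La}$ as a left $K_\Ga$-module, simply by citing the natural isomorphism $\hom_{K_\Ga}(K^{t^*}_{\Ga\La},K_\Ga)\otimes_{K_\Ga}M \cong \hom_{K_\Ga}(K^{t^*}_{\Ga\La},M)$ and composing with $\hat\phi^{-1}$, whereas you spell out the standard d\'evissage (summands $K_\Ga e_\beta$, direct sums, projective presentation, five lemma) that proves that cited fact. Your remark on why $\hom_{K_\Ga}(K^{t^*}_{\Ga\La},-)$ commutes with the relevant direct sums is the right idea (it ultimately also uses that only finitely many $\la$ contribute to each graded piece, via the decomposition $K^{t^*}_{\Ga\La}=\bigoplus_\la K^{t^*}_{\Ga\La}e_\la$ into finitely generated projectives and the finiteness of caps in $t$, as in Corollary~\ref{fl}), and the paper's own proof is no more explicit on this point.
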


\begin{proof}
There is an obvious natural homomorphism
$$
\hom_{K_\Ga}(K_{\Ga\La}^{t^*}, M)
\cong
\hom_{K_\Ga}(K_{\Ga \La}^{t^*}, K_\Ga) \otimes_{K_\Ga} M
$$
for any $K_\Ga$-module $M$.
Since $K_{\Ga\La}^{t^*}$ is a projective left $K_\Ga$-module,
it is actually an isomorphism.
Now compose with the inverse of the
bimodule isomorphism from Theorem~\ref{gbi}.
\end{proof}

\begin{Corollary}\label{adj}
There is a canonical degree zero adjunction making
$$
(G^{t^*}_{\Ga\La}\langle \cups(t)-\caps(t)\rangle, G^t_{\La\Ga})
$$
into an adjoint pair of functors between
$\mod{K_\La}$ and $\mod{K_\Ga}$.
\end{Corollary}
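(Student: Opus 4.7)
The plan is to assemble the adjunction purely formally from the standard tensor--hom adjunction for graded bimodules combined with the bimodule isomorphism recorded in Corollary~\ref{iso}. First I would unravel the degree shifts. Using $\caps(t^*)=\cups(t)$ and the definition (\ref{pfun}), the functor $F := G^{t^*}_{\Ga\La}\langle \cups(t)-\caps(t)\rangle$ sends a left $K_\La$-module $M$ to
\[
FM = K_{\Ga\La}^{t^*}\otimes_{K_\La} M \,\langle -\caps(t)\rangle,
\]
while $GN := G^t_{\La\Ga}N = K_{\La\Ga}^t\langle -\caps(t)\rangle\otimes_{K_\Ga}N$ for a left $K_\Ga$-module $N$.

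Next I would apply the standard graded tensor--hom adjunction, which is valid in $\mod{K_\La}$ and $\mod{K_\Ga}$ because $K_{\Ga\La}^{t^*}$ is a graded $(K_\Ga,K_\La)$-bimodule: there is a natural degree zero isomorphism
\[
\hom_{K_\Ga}\bigl(K_{\Ga\La}^{t^*}\otimes_{K_\La} M\langle -\caps(t)\rangle,\;N\bigr)
\;\cong\;
\hom_{K_\La}\bigl(M,\;\hom_{K_\Ga}(K_{\Ga\La}^{t^*},N)\langle \caps(t)\rangle\bigr).
\]
By Corollary~\ref{iso} we have a canonical isomorphism of functors $\hom_{K_\Ga}(K_{\Ga\La}^{t^*},?)\cong K_{\La\Ga}^t\langle -2\caps(t)\rangle\otimes_{K_\Ga}?$, so shifting by $\langle \caps(t)\rangle$ converts the right hand side into $\hom_{K_\La}(M,GN)$. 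Composing these two isomorphisms yields the desired canonical natural isomorphism
\[
\hom_{K_\Ga}(FM,N)\;\cong\;\hom_{K_\La}(M,GN),
\]
which is precisely the claimed adjunction.

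The only real content is bookkeeping with the grading shifts and verifying that the two natural isomorphisms used are indeed natural in both variables; the nontrivial analytic input, namely the explicit bimodule isomorphism $K_{\La\Ga}^t\langle -2\caps(t)\rangle\cong\hom_{K_\Ga}(K_{\Ga\La}^{t^*},K_\Ga)$ coming from the pairing $\phi$ of (\ref{phidef}), has already been established in Theorem~\ref{gbi} and Corollary~\ref{iso}. So after the preceding results the corollary is essentially formal, and there is no serious obstacle beyond careful tracking of the shifts $\langle -\caps(t)\rangle$ and $\langle \cups(t)-\caps(t)\rangle$ so that both the unit and counit of the adjunction end up homogeneous of degree zero.
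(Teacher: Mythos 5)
Your proposal is correct and follows exactly the paper's own argument: the paper also obtains the adjunction by combining the standard tensor--hom adjunction for the bimodule $K_{\Ga\La}^{t^*}$ with the isomorphism of Corollary~\ref{iso} and then accounting for the degree shift built into (\ref{pfun}). Your explicit bookkeeping (using $\caps(t^*)=\cups(t)$ so that the shifted functor becomes $K_{\Ga\La}^{t^*}\langle-\caps(t)\rangle\otimes_{K_\La}?$, and noting $\langle-2\caps(t)\rangle$ combined with $\langle\caps(t)\rangle$ gives $\langle-\caps(t)\rangle$) is accurate and merely spells out what the paper leaves implicit.
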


\begin{proof}
Combine the standard adjunction
making
$(K_{\Ga\La}^{t^*} \otimes_{K_\La} ?, \hom_{K_\Ga}(K_{\Ga\La}^{t^*}, ?))$
into an adjoint pair
with Corollary~\ref{iso},
and recall the degree shift in (\ref{pfun}).
\end{proof}

\phantomsubsection{Projective functors commute with duality}
Next we show that projective functors
commute with the $\circledast$ duality
introduced just before \cite[(5.4)]{BS}.

\begin{Theorem}
\label{duality}
For a proper $\La \Ga$-matching $t$ and any
graded $K_\Ga$-module $M$, there is a natural isomorphism
$G^t_{\La \Ga}({M}^\circledast)
\cong
(G_{\La\Ga}^t M)^\circledast$
of graded $K_\La$-modules.
\end{Theorem}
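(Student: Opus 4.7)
My plan is to rewrite $G^t_{\La\Ga}$ as a Hom functor via Corollary~\ref{iso} and then use graded tensor--hom duality to commute Hom with $\circledast$.

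Applying Corollary~\ref{iso} yields a natural isomorphism
$$G^t_{\La\Ga}(N)\;\cong\;\hom_{K_\Ga}\bigl(K^{t^*}_{\Ga\La},\,N\bigr)\langle\caps(t)\rangle$$
of graded left $K_\La$-modules, for every $N \in \mod{K_\Ga}$. Taking $N = M^\circledast$ gives immediately
$$G^t_{\La\Ga}(M^\circledast)\;\cong\;\hom_{K_\Ga}\bigl(K^{t^*}_{\Ga\La},\,M^\circledast\bigr)\langle\caps(t)\rangle.$$

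Next I tackle the other side, $(G^t_{\La\Ga}M)^\circledast = \bigl(K^t_{\La\Ga}\langle-\caps(t)\rangle\otimes_{K_\Ga}M\bigr)^\circledast$. The graded tensor--hom duality provides a natural isomorphism
$$\bigl(P\otimes_{K_\Ga}M\bigr)^\circledast\;\cong\;\hom_{K_\Ga}\bigl(\widetilde{P},\,M^\circledast\bigr)$$
for any $(K_\La,K_\Ga)$-bimodule $P$, where $\widetilde{P}$ denotes the left $K_\Ga$-module obtained by twisting the right $K_\Ga$-action on $P$ via the graded anti-involution of $K_\Ga$ (the left $K_\La$-structure passes through unchanged, since it commutes with the right $K_\Ga$-structure). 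Taking $P = K^t_{\La\Ga}\langle-\caps(t)\rangle$ and using the anti-multiplicative involution $*$ of~(\ref{star2}) to identify $\widetilde{P}$ as a $(K_\La,K_\Ga)$-bimodule with $K^{t^*}_{\Ga\La}\langle-\caps(t)\rangle$, one concludes
$$\bigl(G^t_{\La\Ga}M\bigr)^\circledast\;\cong\;\hom_{K_\Ga}\bigl(K^{t^*}_{\Ga\La},\,M^\circledast\bigr)\langle\caps(t)\rangle.$$
Comparing with the formula above for $G^t_{\La\Ga}(M^\circledast)$ gives the desired natural isomorphism.

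\emph{Main obstacle.} The delicate step is composing the standard graded tensor--hom duality with the three anti-involutions (on $K_\La$, on $K_\Ga$, and the bimodule-level $*$ of (\ref{star2})) in such a way that the resulting map is manifestly a graded left $K_\La$-module homomorphism of degree zero. The shift $\langle-\caps(t)\rangle$ built into the definition (\ref{pfun}) of $G^t_{\La\Ga}$ is there precisely so that the degree accounting works: two copies of $-\caps(t)$ arising from the tensor expressions on each side combine with the shift $+\caps(t)$ from Corollary~\ref{iso} to produce a degree-zero isomorphism. Naturality in $M$ is automatic from the functoriality of all constructions involved.
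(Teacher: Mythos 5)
Your argument is correct, and it takes a genuinely different (more formal) route than the paper's. The paper proves Theorem~\ref{duality} by hand: it builds an explicit pairing out of the map $\phi$ from (\ref{phidef}) and the anti-multiplicative map $*$ of (\ref{star2}), checks it is balanced and $K_\La$-equivariant, and then verifies bijectivity idempotent-by-idempotent using Theorem~\ref{pf}(i),(iii) to reduce to a concrete computation with $e_\ga K_\Ga \otimes R^{\otimes n}$ and the counit $\tau$ of (\ref{ntau}). You instead quote Corollary~\ref{iso} as a black box and combine it with the twisted graded tensor--hom duality $(P\otimes_{K_\Ga}M)^\circledast \cong \hom_{K_\Ga}(\widetilde P, M^\circledast)$, identifying $\widetilde{K^t_{\La\Ga}}$ with $K^{t^*}_{\Ga\La}$ via $*$. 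Both proofs ultimately rest on the same Frobenius-type input (Theorem~\ref{gbi}, which underlies Corollary~\ref{iso} and whose proof uses the same local computation the paper repeats here), so your version buys brevity by not redoing that local check, and it makes the degree bookkeeping transparent; the paper's version buys an explicit formula for the isomorphism and applies verbatim to arbitrary graded modules. Two points you should make explicit when writing this up. First, your parenthetical that "the left $K_\La$-structure passes through unchanged" is not quite right as stated: after twisting, $\widetilde P$ is naturally a $(K_\Ga,K_\La)$-bimodule (the left $K_\La$-action on $P$ must be converted to a right action using the anti-automorphism of $K_\La$), and it is precisely because $\circledast$ on both sides is defined via that same anti-automorphism that the comparison map $f\mapsto\bigl(p\mapsto f(p\otimes\,\cdot\,)\bigr)$ is $K_\La$-equivariant for the structure appearing in Corollary~\ref{iso}; this is a routine but necessary verification, which you flag but do not carry out. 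Second, Corollary~\ref{iso} is stated as an isomorphism of functors on $\mod{K_\Ga}$, whereas Theorem~\ref{duality} concerns an arbitrary graded module $M$, and $M^\circledast$ of a bounded-below module is in general bounded above, hence need not lie in $\mod{K_\Ga}$; this is harmless because the proof of Corollary~\ref{iso} uses only that each $K^{t^*}_{\Ga\La}e_\la$ is a finitely generated projective left $K_\Ga$-module (Theorem~\ref{pf}), so it holds for arbitrary graded modules, but a sentence to that effect is needed to make your reduction legitimate.
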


\begin{proof}
In view of the grading shift in (\ref{pfun}), it suffices to construct
a natural $K_\La$-module isomorphism
$$
K_{\La \Ga}^t \otimes_{K_\Ga}({M}^\circledast)
\stackrel{\sim}{\rightarrow}
(K_{\La \Ga}^t \otimes_{K_\Ga} M)^\circledast
$$
that is homogeneous of degree $-2\caps(t)$.

We first define a linear map
$$
\theta:K_{\La\Ga}^t \otimes ({M}^\circledast) \otimes K_{\La\Ga}^t \otimes {M}
\rightarrow \C
$$
by sending $x \otimes f \otimes y \otimes m$ to
$f(\phi(x^* \otimes y) m)$, where $\phi$ is the map from
(\ref{phidef}) and $*$ is the degree preserving map from (\ref{star2}).
Since $\phi$ is of degree $-2\caps(t)$ according to Lemma~\ref{phic},
the map $\theta$ is of degree $-2\caps(t)$ as well.
For $u \in K_\Ga$, we have that
\begin{align*}
\theta(xu &\otimes f \otimes y \otimes m)
= f(\phi((xu)^* \otimes y)m)
= f(\phi(u^* x^* \otimes y)m)\\
&=
f(u^* \phi(x^* \otimes y) m)=
(uf)(\phi(x^* \otimes y) m)
=
\theta(x \otimes uf \otimes y \otimes m)
\end{align*}
and
\begin{align*}
\theta(x \otimes f \otimes yu \otimes m)
&= f(\phi(x^* \otimes yu) m)\\
&= f(\phi(x^* \otimes y) um)
= \theta(x \otimes f \otimes y \otimes um).
\end{align*}
This shows that $\theta$ factors through the quotients to induce
a homogeneous linear map
$\left(K_{\La\Ga}^t \otimes_{K_\Ga}
({M}^\circledast)\right) \otimes \left(K_{\La\Ga}^t \otimes_{K_\Ga} {M}\right)
\rightarrow \C$ of degree $-2\caps(t)$.

Hence there is a well-defined
homogeneous linear map of degree $-2\caps(t)$
$$
\tilde\theta:K_{\La\Ga}^t \otimes_{K_\Ga}({M}^\circledast)
\rightarrow
(K_{\La\Ga}^t \otimes_{K_\Ga} M)^\circledast
$$
which sends a generator
$x \otimes f \in
K_{\La\Ga}^t \otimes_{K_\Ga}({M}^\circledast)$ to the function
\begin{align*}
\tilde\theta(x \otimes f):
K_{\La\Ga}^t \otimes_{K_\Ga} M
&\rightarrow \C,\\
y \otimes m &\mapsto \theta(x \otimes f \otimes y \otimes m)  =
f(\phi(x^* \otimes y) m).
\end{align*}
We check moreover that $\tilde\theta$ is a $K_\La$-module homomorphism:
for $v \in K_\La$ we have that
\begin{align*}
(v \tilde\theta(x \otimes f))(y \otimes m)
&=
(\tilde \theta((x) \otimes f))(v^*y \otimes m)
=
f(\phi(x^* \otimes v^*y)m)\\
&=
f(\phi(x^* v^* \otimes y) m)
=
f(\phi((vx)^* \otimes y)m)\\
&=
\tilde\theta(vx \otimes f)(y\otimes m),
\end{align*}
i.e. $v \tilde \theta(x \otimes f) = \tilde\theta(vx \otimes f)$.

It remains to prove that $\tilde\theta$ is a vector space isomorphism.
For this, it suffices to show for each $\la \in \La$ that
the restriction
$$
\tilde\theta: e_\la K_{\La \Ga}^t \otimes_{K_\Ga}({M}^\circledast)
\rightarrow
e_\la (K_{\La \Ga}^t \otimes_{K_\Ga} M)^\circledast.
$$
is an isomorphism.
Of course we can identify
$e_\la (
K_{\La \Ga}^t \otimes_{K_\Ga} M)^\circledast$
with the graded dual
$(e_\la K_{\La \Ga}^t \otimes_{K_\Ga} M)^\circledast$
of the vector space
$e_\la K_{\La \Ga}^t \otimes_{K_\Ga} M$.
We may as well assume that $e_\la K_{\La\Ga}^t \neq \{0\}$.
Let $\ga \in \Ga$ be defined so that
$\overline{\ga}$ is the upper reduction of
$t^* \overline{\la}$.
Let $n$ be the number of upper circles removed in the reduction process.
Then by Theorem~\ref{pf}(i),(iii) we have that
$$
e_\la K_{\La \Ga}^t \cong e_\ga K_\Ga \otimes R^{\otimes n}
$$
as
right $K_\Ga$-modules (ignoring gradings).
Transporting $\tilde\theta$ through this isomorphism,
our problem reduces to showing that the map
$$
(e_\ga K_\Ga \otimes_{K_\Ga} ({M}^\circledast)) \otimes R^{\otimes n}
\rightarrow \big((e_\ga K_\Ga \otimes_{K_\Ga} M) \otimes R^{\otimes n}\big)^\circledast
$$
that sends $(u \otimes f) \otimes x_1 \otimes \cdots \otimes x_n$ to the function
$(v \otimes m) \otimes y_1 \otimes\cdots\otimes y_n
\mapsto \tau(x_1y_1)  \cdots \tau(x_n y_n) f(u^* v m)$
is an isomorphism,
where $\tau$ denotes the counit from (\ref{ntau}).
To see this,
identify $e_\ga K_\Ga \otimes_{K_\Ga} M = e_\ga M$
and
$e_\ga K_\Ga \otimes_{K_\Ga}({M}^\circledast) = e_\ga ({M}^\circledast) =
(e_\ga M)^\circledast$.
\end{proof}

\phantomsubsection{Projective functors on irreducible modules}
The next theorem describes the effect of
projective functors on the
irreducible $K_\Ga$-modules
$\{L(\ga)\:|\:\ga \in \Ga\}$; recall in Theorems~\ref{pf}
and \ref{vf}
we already explained what they do to projective and cell modules.

\begin{Theorem}\label{lf}
Let $t$ be a proper $\La\Ga$-matching and $\ga \in \Ga$.
\begin{itemize}
\item[(i)]
In the graded Grothendieck group $[\mod{K_\La}]$, we have that
$$
[G_{\La \Ga}^t L(\ga)]
= \sum_{\mu} (q+q^{-1})^{n_\mu}
 [L(\mu)],
$$
where
$n_\mu$ denotes the number of lower
circles in $\underline{\mu} t$ and the sum is over all $\mu \in \La$ such that
\begin{itemize}
\item[\rm(a)]
$\underline{\ga}$
is the lower reduction of $\underline{\mu} t$;
\item[\rm(b)]
the rays of each
lower line in $\underline{\mu} \mu t$ are oriented so that
exactly one is $\up$ and one is $\down$.
\end{itemize}
\item[(ii)]
The module $G_{\La\Ga}^t L(\ga)$ is non-zero
if and only if the cups of $t \ga$ are oriented so they are all
anti-clockwise.
\item[(iii)]
Assuming the condition in (ii) is satisfied, define $\la \in \La$ by declaring
that $\overline{\la}$ is the upper reduction of $t \overline{\ga}$;
equivalently,
$\la t \ga$ is oriented and all its caps and cups are anti-clockwise.
Then $G_{\La\Ga}^t L(\ga)$ is a self-dual
indecomposable module
with irreducible head isomorphic to $L(\la)\langle -\caps(t)\rangle$.
\end{itemize}
\end{Theorem}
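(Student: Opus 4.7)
The plan is to derive all three parts by combining the adjunction of Corollary~\ref{adj} with the explicit description of projective functors on projective modules in Theorem~\ref{pf}, treating (i) as the main computation and then extracting (ii), (iii) from it together with Theorems~\ref{vf} and \ref{duality}.

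First I would prove (i) by computing graded composition multiplicities using the adjunction. For each $\mu \in \La$, Corollary~\ref{adj} yields a natural isomorphism
\[
\hom_{K_\La}\!\bigl(P(\mu),\,G^t_{\La\Ga} L(\ga)\bigr)
\;\cong\;
\hom_{K_\Ga}\!\bigl(G^{t^*}_{\Ga\La} P(\mu)\langle \cups(t)-\caps(t)\rangle,\, L(\ga)\bigr).
\]
Theorem~\ref{pf}(iii) applied to $t^*$, using $\cups(t^*)=\caps(t)$ and $\caps(t^*)=\cups(t)$, either vanishes or identifies $G^{t^*}_{\Ga\La}P(\mu)$ with $P(\nu_\mu)\otimes R^{\otimes n_\mu}\langle\caps(t)-\cups(t)\rangle$, with $\overline{\nu_\mu}$ the upper reduction of $t^*\overline{\mu}$; the degree shift cancels exactly with the shift from the adjunction. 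Mirror symmetry translates the upper reduction of $t^*\overline{\mu}$ into the lower reduction of $\underline{\mu}t$, so non-vanishing corresponds to condition (b), and the equality $\nu_\mu=\ga$ corresponds to condition (a). Taking graded dimensions, using that $\hom(P(\nu), L(\ga))$ is one-dimensional in degree zero iff $\nu=\ga$, and that $R^{\otimes n}$ is graded self-dual with graded dimension $(q+q^{-1})^n$, gives the formula in (i).

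Next I would prove (iii). By Corollary~\ref{fp}, the surjection $V(\ga) \twoheadrightarrow L(\ga)$ produces a surjection $G^t_{\La\Ga}V(\ga) \twoheadrightarrow G^t_{\La\Ga}L(\ga)$. Under the hypothesis of (ii), the weight $\la$ satisfies $\la t\ga$ oriented with all cups and caps anti-clockwise, so Lemma~\ref{deglem} forces $\deg(\la t\ga)=0$ and Theorem~\ref{vf}(iii) identifies the head of $G^t_{\La\Ga}V(\ga)$ with $L(\la)\langle -\caps(t)\rangle$. Part (i) applied to $\mu=\la$ guarantees this simple module occurs as a composition factor of $G^t_{\La\Ga}L(\ga)$, so it persists as the head of the quotient. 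Self-duality of $G^t_{\La\Ga}L(\ga)$ follows from Theorem~\ref{duality} together with $L(\ga)^\circledast\cong L(\ga)$; a self-dual module with simple head automatically has simple socle (the dual of the head) and is therefore indecomposable. Part (ii) is then a corollary: ``$\Leftarrow$'' follows from (iii), while ``$\Rightarrow$'' follows from (i), because the existence of any $\mu$ meeting (a) and (b) constrains $\ga$'s labels at the cup endpoints of $t$ to orient every cup of $t\ga$ anti-clockwise.

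The main obstacle is the detailed bookkeeping around mirror symmetry: one must carefully verify that Theorem~\ref{pf}(iii) applied to $t^*\overline{\mu}$ translates via the mirror $t \leftrightarrow t^*$ into statements about the lower reduction of $\underline{\mu}t$ that precisely match conditions (a) and (b) in (i), and check that the two degree shifts cancel as stated. A secondary technical point is reconciling the multiplicity $(q+q^{-1})^{n_\la}$ predicted by (i) at $\mu=\la$ with the claim in (iii) that $L(\la)\langle-\caps(t)\rangle$ is the head; this requires locating the coefficient of $q^{-\caps(t)}$ inside $(q+q^{-1})^{n_\la}$ and confirming it is positive, which in turn depends on understanding $n_\la$ via the explicit geometry of $\underline{\la}t$ when $\la t\ga$ is fully anti-clockwise.
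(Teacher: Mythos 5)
Your proposal is correct and follows essentially the same route as the paper: part (i) via the adjunction of Corollary~\ref{adj} combined with Theorem~\ref{pf} applied to $t^*$ (with the mirror translation of upper reductions/circles of $t^*\overline{\mu}$ into the lower data of $\underline{\mu}t$ appearing in (a), (b)), and parts (ii), (iii) by viewing $G^t_{\La\Ga}L(\ga)$ as a quotient of $G^t_{\La\Ga}V(\ga)$ and invoking Theorems~\ref{vf} and~\ref{duality}. The only superfluous point is your closing worry about locating the coefficient of $q^{-\caps(t)}$ inside $(q+q^{-1})^{n_\la}$: once (i) with $\mu=\la$ shows $G^t_{\La\Ga}L(\ga)\neq\{0\}$, this nonzero quotient of $G^t_{\La\Ga}V(\ga)$ automatically inherits its irreducible head $L(\la)\langle-\caps(t)\rangle$, so no coefficient bookkeeping is required.
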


\begin{proof}
We first prove (i). We need to show
for any $\mu \in \La$ that
$$
(\dagger) :=
\sum_{j \in \Z} q^j \dim \hom_{K_\La}(P(\mu), G_{\La\Ga}^t
L(\ga))_j
$$
is non-zero if and only if the conditions (a) and (b) are satisfied,
when it equals
$(q+q^{-1})^{n_\mu}$.
By Corollary~\ref{adj} we have that
$$
(\dagger) =
\sum_{j \in \Z} q^j \dim
\hom_{K_\Ga}(G_{\Ga\La}^{t^*} P(\mu)\langle \cups(t)-\caps(t) \rangle,
L(\ga))_j.
$$
By Theorem~\ref{pf} we know that
$G_{\Ga\La}^{t^*}P(\mu)$ is non-zero if and only if
(b) is satisfied,
in which case
it is isomorphic to $P(\beta) \otimes R^{\otimes{n_\mu}}\langle\caps(t)-\cups(t)\rangle$
where $\beta \in \Ga$ is such that
the lower reduction of $\underline{\mu} t$ equals $\underline{\beta}$.
Hence ($\dagger$) is non-zero if and only if both (a) and (b) are satisfied,
in which case
\begin{align*}
(\dagger)=
 \sum_{j \in \Z} q^j \dim
\hom_{K_\Ga}(P(\gamma) \otimes R^{\otimes{n_\mu}},
L(\ga))_j
=
(q+q^{-1})^{n_\mu},
\end{align*}
as claimed.

Now we deduce (ii) and (iii).
As $G^t_{\La\Ga} L(\ga)$ is a quotient
of $G^t_{\La\Ga} V(\ga)$,
we may assume by Theorem~\ref{vf}(ii)
that each cup of $t\ga$ is oriented. By
Theorem~\ref{vf}(iii),
$G^t_{\La\Ga} V(\ga)$ has irreducible
head
$L(\la) \langle \deg(\la t \ga)-\caps(t)\rangle$.
Hence
$G^t_{\La\Ga} L(\ga)$ is either zero
or else it too has irreducible head
$L(\la) \langle \deg(\la t \ga)-\caps(t)\rangle$.
But we know already by (i)
that $G^t_{\La\Ga}L(\ga)$ has a composition factor
isomorphic to $L(\la)$ (shifted in degree)
if and only if the weight $\mu := \la$
satisfies the conditions (a) and (b) from (i).
In view of the definition of $\la$,
these conditions are equivalent simply to the assertion
that each cup of $t \ga$ is anti-clockwise,
i.e. $\deg(\la t \ga) = 0$.
We have now proved (ii), and moreover we have
established the statement from (iii) about the head.
Finally the fact that $G^t_{\La\Ga} L(\ga)$ is self-dual
follow from Theorem~\ref{duality}.
\end{proof}

\begin{Corollary}\label{fl}
The functor $G_{\La \Ga}^t$
sends finite dimensional modules to finite dimensional
modules.
\end{Corollary}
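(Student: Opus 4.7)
The plan is to combine the exactness of $G^t_{\La\Ga}$ from Corollary~\ref{fp} with the self-duality statement in Theorem~\ref{lf}(iii), and then invoke a general principle about self-dual bounded-below locally finite-dimensional modules.

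First I would take any finite-dimensional $K_\Ga$-module $M$ and fix a finite composition series whose factors are irreducibles $L(\ga_1),\dots,L(\ga_r)$. Exactness of $G^t_{\La\Ga}$ gives a corresponding finite filtration of $G^t_{\La\Ga} M$ whose sections are the modules $G^t_{\La\Ga} L(\ga_i)$. Hence it suffices to show that $G^t_{\La\Ga} L(\ga)$ is finite-dimensional for every $\ga \in \Ga$.

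If $G^t_{\La\Ga} L(\ga) = 0$ there is nothing to prove, so assume not. On the one hand, Theorem~\ref{lf}(iii) tells us that $G^t_{\La\Ga} L(\ga)$ is self-dual under the $\circledast$-duality. On the other hand, the bimodule $K^t_{\La\Ga}$ is locally finite-dimensional and bounded below (as observed around (\ref{stiller})), and $L(\ga) \in \mod{K_\Ga}$ shares these properties, so the tensor product
$$
G^t_{\La\Ga} L(\ga) = K^t_{\La\Ga}\langle -\caps(t)\rangle \otimes_{K_\Ga} L(\ga)
$$
is also locally finite-dimensional and bounded below. Since $\circledast$ sends a bounded-below locally finite-dimensional module to a bounded-above one, self-duality forces $G^t_{\La\Ga} L(\ga)$ to be bounded both above and below with finite-dimensional graded pieces, hence finite-dimensional.

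There is no real obstacle here; the only ingredient beyond Theorem~\ref{lf} and Corollary~\ref{fp} is the elementary observation that self-duality in the locally finite-dimensional bounded-below category forces finite-dimensionality. If that shortcut were unavailable, one could instead proceed more directly via Theorem~\ref{vf}(i), checking by hand that the index set $\{\mu \in \La \mid \mu t \ga \text{ is oriented}\}$ is finite (at most $2^{\caps(t)}$ choices, one at each cap of $t$) and that each cell module $V(\mu)$ is finite-dimensional.
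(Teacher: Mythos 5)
Your reduction to irreducibles is fine, but the pivotal step of your main route --- ``self-duality in the locally finite-dimensional bounded-below category forces finite-dimensionality'' --- is not a valid principle here, and it fails exactly in the case where the corollary has content, namely $|\La|=\infty$. In this series ``locally finite dimensional'' is a weight-local condition, not finiteness of the total graded components: for an infinite block $(K_\La)_0$ is an infinite direct sum of copies of $\C$, and the paper asserts that the bimodules $K^{\bt}_{\bLa}$ are locally finite dimensional and bounded below even though (e.g.\ for a matching with no cups or caps) their degree-zero component is infinite dimensional. Consequently the category contains modules that are bounded in both directions, self-dual under $\circledast$, and nevertheless infinite dimensional; the semisimple module $K_\La/(K_\La)_{>0}\cong\bigoplus_{\la\in\La}L(\la)$, concentrated in degree zero, is such an example (and it is literally one of the modules handled in $\S$\ref{skoszul}). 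Self-duality controls the grading of $G^t_{\La\Ga}L(\ga)$, but it gives no control over how many idempotents $e_\mu$ act non-trivially on it, and bounding that number is precisely the content of the corollary. The paper supplies this finiteness combinatorially: by Theorem~\ref{lf}(i) the composition factors of $G^t_{\La\Ga}L(\ga)$ are the (one-dimensional) $L(\mu)$ for which $\underline{\ga}$ is the lower reduction of $\underline{\mu}t$, occurring with the finite multiplicities $(q+q^{-1})^{n_\mu}$, and there are only finitely many such $\mu$ because $t$ has only finitely many caps.

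Your fallback sketch is much closer to the real argument: the finiteness of $\{\mu\in\La\,|\,\mu t\ga\text{ oriented}\}$, with at most a binary choice at each of the finitely many caps of $t$, is exactly the right kind of observation. However its second ingredient, ``each cell module $V(\mu)$ is finite dimensional,'' fails for blocks of infinite defect: there one can nest arbitrarily many clockwise cups on $\mu$, so $V(\mu)$ has basis vectors in every degree. That is why the paper argues with irreducibles via Theorem~\ref{lf}(i) rather than with cell modules via Theorem~\ref{vf}(i). (When $\defect(\La)<\infty$ your fallback does go through --- and in that case $P(\la)$ is itself finite dimensional, so Theorem~\ref{pf} gives the corollary with no duality at all.)
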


\begin{proof}
In view of the theorem, it remains to observe for $\ga \in \Ga$
that there are only finitely many $\la \in \La$
such that $\underline{\ga}$ is the lower reduction of $\underline{\la} t$.
This follows ultimately because there are only finitely many caps in $t$.
\end{proof}

\phantomsubsection{Indecomposable projective functors}
The final theorem of the section proves that
the projective functor from (\ref{pfun}) is
an indecomposable functor, i.e. it cannot be written as the
direct sum of two non-zero subfunctors.
The proof follows the same strategy as the proof of
\cite[Theorem 5.1]{Std}.
We need one preliminary lemma.
To formulate this, let $t$ be a proper $\La\Ga$-matching.
Define
\begin{equation}\label{sim}
\Ga(t) := \{\ga \in \Ga\:|\:K_{\La\Ga}^t\otimes_{K_\Ga} L(\ga) \neq \{0\}\}.
\end{equation}
Note Theorem~\ref{lf}(ii) gives an explicit combinatorial description of
$\Ga(t)$: it is the set of all weights $\ga \in \Ga$ such that
all cups in the composite diagram $t \ga$ are anti-clockwise cups.
Let $\stackrel{t}{\sim}$
denote the equivalence relation on $\Ga(t)$ generated by the property
that $\la \stackrel{t}{\sim} \mu$ if the composition multiplicity
$[V(\la):L(\mu)]$ is non-zero, for $\la, \mu \in \Ga(t)$.

\begin{Lemma}\label{block}
The set $\Ga(t)$ is a single $\stackrel{t}{\sim}$-equivalence class.
\end{Lemma}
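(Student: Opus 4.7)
My plan is to reduce Lemma~\ref{block} to a standard connectedness statement on rearrangements, using the composition-multiplicity criterion for cell modules from Part~I.

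First I would translate $\Ga(t)$ into explicit combinatorics. By Theorem~\ref{lf}(ii), a weight $\ga \in \Ga$ belongs to $\Ga(t)$ if and only if for each cup of $t$ joining vertices $i<j$ of $\ga$'s number line the labels are forced: $\ga$ has a $\down$ at $i$ and an $\up$ at $j$. Hence $\Ga(t)$ is parametrised by arbitrary $\{\up,\down\}$-labellings of the remaining vertices (which I will call ``non-cup-of-$t$ vertices''), subject only to the total $\up/\down$-counts fixed by the block $\Ga$.

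Next I would spell out the basic move generating $\stackrel{t}{\sim}$. Part~I (see \cite[Theorem~5.1]{BS}) characterises $[V(\la):L(\mu)]\neq 0$ as the condition that the composite diagram $\overline{\la}\mu$ is an oriented cap diagram; concretely, such $\mu$ is obtained from $\la$ by reorienting some caps and rays of $\overline{\la}$. The key observation is: if $\la \in \Ga(t)$ and one reorients only caps (or rays) of $\overline{\la}$ whose endpoints are not covered by cups of $t$, then the resulting weight $\mu$ still satisfies the cup-of-$t$ constraint, so $\mu \in \Ga(t)$. Thus any such reorientation provides a relation $\la \stackrel{t}{\sim} \mu$.

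Finally I would introduce a distinguished weight $\ga_0 \in \Ga(t)$ in which the $\down$'s on the non-cup-of-$t$ vertices stand to the left of the $\up$'s (with $\circ,\times$ in their fixed positions), and prove by induction on Bruhat length that every $\la \in \Ga(t)$ satisfies $\la \stackrel{t}{\sim} \ga_0$. When $\la \neq \ga_0$ there is a pair of consecutive non-cup-of-$t$ vertices $p<q$ with $\la(p)=\up$ and $\la(q)=\down$; the inductive step amounts to verifying that this pair is joined by a cap of $\overline{\la}$, so that the basic move above produces a strictly smaller $\mu \in \Ga(t)$.

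The hard part will be the geometric check in the inductive step: one must argue that the greedy ``innermost pair first'' algorithm defining $\overline{\la}$ actually matches $p$ with $q$, even though many cup-of-$t$ vertices (together with their forced labels, plus any $\circ,\times$) may interleave with them. The saving feature is that the labels forced by cups of $t$ come in balanced $\down\up$-pairs and are absorbed by the pairing algorithm before the outer pair $p,q$ is considered, which should reduce the verification to a local induction on the number of cup-of-$t$ vertices lying strictly between $p$ and $q$. Should this direct combinatorial analysis become unwieldy, I would fall back on identifying $\Ga(t)$ with a Bruhat interval in a parabolic coset space of the symmetric group, where connectedness under adjacent transpositions is standard.
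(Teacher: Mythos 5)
Your overall route is the paper's: describe $\Ga(t)$ via Theorem~\ref{lf}(ii) (forced $\down\up$ labels at the two ends of every cup of $t$, free labels at the ends of the line segments), and realise adjacent swaps of the free labels as $\stackrel{t}{\sim}$-relations via the cup-diagram criterion for composition multiplicities of cell modules. But the step you yourself flag as ``the hard part'' is mis-stated in a way that would fail as written. First, the criterion is \cite[Theorem 5.2]{BS}: $[V(\la):L(\mu)]\neq 0$ if and only if $\underline{\mu}\la$ is oriented (equivalently $\la\overline{\mu}$ is), not your condition that $\overline{\la}\mu$ is oriented; since $\stackrel{t}{\sim}$ is the generated equivalence relation the direction itself is harmless, but it determines which diagram you must inspect. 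Second, under this paper's conventions an anticlockwise cup or cap joins a $\down$ (left) to an $\up$ (right) and a clockwise one joins $\up$ to $\down$ (cf.\ the computation of $\la^\circ$ in Section 5), and $\la\overline{\la}$ has degree zero, so every cap of $\overline{\la}$ joins a $\down\up$ pair of $\la$. Hence there is never a cap of $\overline{\la}$ joining your pair $p<q$ labelled $\up,\down$, and the verification your inductive step rests on is impossible. What is true --- and is the entire content of the paper's short proof --- is the mirror statement: if $\mu\in\Ga(t)$ carries $\down\up$ at two consecutive line-segment ends $i<j$, then any cup of $t$ with an endpoint strictly between $i$ and $j$ lies entirely between them (crossinglessness: it cannot straddle the line segments ending at $i$ or $j$), its ends carry the forced anticlockwise labels, so between $i$ and $j$ one sees only balanced nested $\down\up$ pairs, which $\underline{\mu}$ matches internally; therefore $\underline{\mu}$ has a cup joining $i$ to $j$, the diagram $\underline{\mu}\la$ is oriented for the swapped weight $\la$, and \cite[Theorem 5.2]{BS} gives $[V(\la):L(\mu)]\neq 0$.

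Two smaller points. Reorienting rays of $\overline{\la}$ changes the number of $\up$'s and $\down$'s and so leaves the block; that part of your ``basic move'' should be dropped. And your fallback (identifying $\Ga(t)$ with a Bruhat interval in a parabolic quotient of the symmetric group) only re-proves the easy half --- that labellings of the free vertices are connected by adjacent transpositions, which the paper takes for granted --- and does not supply the actual content of the lemma, namely that each adjacent swap is realised by a nonzero composition multiplicity. Finally, the paper dispenses with your induction to a distinguished weight $\ga_0$ by treating an arbitrary adjacent swap directly; that difference is cosmetic, and with the corrections above your argument becomes the paper's.
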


\begin{proof}
Let $I$ index the vertices on the top number line of $t$ that are
at  the ends of line segments.
Take any pair of indices $i < j$ from $I$ that are neighbours
in the sense that no integer
between $i$ and $j$ belongs to $I$.
To prove the lemma it suffices to show that
$\la \sim \mu$ for any pair of weights
$\la, \mu \in \Ga(t)$ such that
\begin{itemize}
\item the $i$th and $j$th
vertices of $\la$ and $\mu$
are labelled $\up \down$ and $\down \up$, respectively;
\item all other vertices of $\la$ and $\mu$ are labelled in the same way.
\end{itemize}
For this, note from the
explicit combinatorial description of weights in $\Ga(t)$
that $\underline{\mu}$ has a cup joining vertices $i$ and $j$.
Hence $\underline{\mu}\la$ is an oriented cup diagram.
It follows that $[V(\la):L(\mu)] \neq 0$ by \cite[Theorem 5.2]{BS},
so $\la \stackrel{t}{\sim} \mu$ as required.
\end{proof}

\begin{Theorem}
The functor
$G^t_{\La\Ga}:\mod{K_\Ga} \rightarrow \mod{K_\La}$ is indecomposable.
\end{Theorem}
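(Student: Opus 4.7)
The plan is to assume $G_{\La\Ga}^t = F_1 \oplus F_2$ as functors and derive that one summand must vanish. Each $F_i$ inherits exactness from Corollary~\ref{fp}, and, being a direct summand of $G_{\La\Ga}^t$, has a left adjoint $F_i'$ which is the corresponding summand of the left adjoint $G_{\Ga\La}^{t^*}\langle \cups(t)-\caps(t)\rangle$ supplied by Corollary~\ref{adj}. For each $\ga \in \Ga(t)$, Theorem~\ref{lf}(iii) says that $G_{\La\Ga}^t L(\ga)$ is indecomposable, so exactly one of $F_1 L(\ga), F_2 L(\ga)$ vanishes; this gives a disjoint partition $\Ga(t) = S_1 \sqcup S_2$ with $S_i := \{\ga \in \Ga(t) : F_i L(\ga) \neq 0\}$.

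Next I would show that each $S_i$ is a union of $\stackrel{t}{\sim}$-equivalence classes, so that Lemma~\ref{block} forces one of them to be empty. Suppose $\la, \mu \in \Ga(t)$ with $[V(\la):L(\mu)] \neq 0$; Theorem~\ref{vf}(iii) gives that $G_{\La\Ga}^t V(\la)$ is indecomposable. If $\la \in S_1$, applying $F_1$ to the surjection $V(\la) \twoheadrightarrow L(\la)$ and using exactness yields $F_1 V(\la) \neq 0$, hence $F_2 V(\la) = 0$; exactness then forces $F_2 L(\mu) = 0$, putting $\mu \in S_1$. The same argument with $F_1$ and $F_2$ swapped shows $\la \in S_2$ implies $\mu \in S_2$, so $\la$ and $\mu$ always lie in the same $S_i$. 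Hence, without loss of generality, $S_2 = \emptyset$, and combining with $G_{\La\Ga}^t L(\ga) = 0$ for $\ga \notin \Ga(t)$ we obtain $F_2 L(\ga) = 0$ for every $\ga \in \Ga$.

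To conclude $F_2 = 0$, I would use the adjunction $F_2' \dashv F_2$: it suffices to prove $F_2' = 0$, because then $\hom(X, F_2 Y) = \hom(F_2' X, Y) = 0$ for all $X, Y$, which forces $F_2 Y = 0$. Since $F_2'$ is itself a tensor-product functor, this reduces to checking $F_2' P(\la) = 0$ for every $\la \in \La$. By Theorem~\ref{pf}(iii), $F_2' P(\la)$ is a projective $K_\Ga$-module summand of $G_{\Ga\La}^{t^*} P(\la)$ which, if nonzero, has a simple top $L(\mu)\langle j\rangle$ for some $\mu \in \Ga$ and $j \in \Z$; but then the adjunction yields
\begin{equation*}
0 \neq \hom_{K_\Ga}(F_2' P(\la), L(\mu)\langle j\rangle) = \hom_{K_\La}(P(\la), F_2 L(\mu)\langle j\rangle) = 0,
\end{equation*}
a contradiction. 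The main obstacle will be the middle step, which requires combining the indecomposability of both $G_{\La\Ga}^t V(\la)$ and $G_{\La\Ga}^t L(\ga)$ in a two-way propagation before Lemma~\ref{block} can be invoked.
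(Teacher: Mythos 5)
Your proposal is correct and follows essentially the same strategy as the paper's proof: decompose $G^t_{\La\Ga} = F_1 \oplus F_2$, use the indecomposability statements of Theorem~\ref{lf}(iii) and Theorem~\ref{vf}(iii) together with exactness to show that the sets $\{\ga \in \Ga(t) \:|\: F_i L(\ga) \neq 0\}$ are unions of $\stackrel{t}{\sim}$-equivalence classes, and then invoke Lemma~\ref{block} to conclude one of them is empty. The only divergence is the final step, where the paper deduces $F_2 = 0$ from $F_2 L(\ga) = 0$ for all $\ga$ by a terse appeal to exactness, whereas you argue via the adjunction of Corollary~\ref{adj}, killing the left adjoint summand on projectives; this is a valid, and if anything more careful, way to finish, since it handles arbitrary (possibly infinite dimensional) objects of $\mod{K_\Ga}$ without further comment.
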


\begin{proof}
Suppose that $G^t_{\La\Ga} = F_1 \oplus F_2$ as a direct sum of
functors. Each $F_i$ is automatically exact.
To prove the theorem it suffices to show either that $F_1 = 0$
or that $F_2 = 0$. By exactness, this follows if we can show
either that $\Ga_1 = \varnothing$  or that $\Ga_2 = \varnothing$,
where
$\Ga_i := \{\ga \in \Ga\:|\:F_i L(\ga) \neq \{0\}\}$.

Take $\ga \in \Ga$. If $\ga \notin \Ga(t)$ then
$G^t_{\La\Ga} L(\ga) = \{0\}$, hence
both $F_1 L(\ga)$ and $F_2 L(\ga)$ are zero.
If $\ga \in \Ga(t)$ then $G^t_{\La\Ga}L(\ga)
= F_1 L(\ga) \oplus F_2 L(\ga)$ is a non-zero indecomposable module
by Theorem~\ref{lf}(iii). Hence exactly one of $F_1 L(\ga)$ or
$F_2 L(\ga)$ is non-zero. This shows that we have a partition
$$
\Ga(t) = \Ga_1 \sqcup \Ga_2.
$$
Now we claim for
weights $\la, \mu \in \Ga(t)$ satisfying $[V(\la):L(\mu)] \neq 0$ that
$\la$ and $\mu$ belong to the same one of the sets $\Ga_1$ or $\Ga_2$.
To see this, assume without loss of generality that $\la \in \Ga_1$.
As $F_1 L(\la) \neq \{0\}$ we deduce that $F_1 V(\la) \neq \{0\}$.
As $G^t_{\La\Ga} V(\la) =
F_1 V(\la) \oplus F_2 V(\la)$ is indecomposable by Theorem~\ref{vf}(iii),
we deduce that $F_2 V(\la) = \{0\}$.
As $[V(\la):L(\mu)] \neq 0$ we deduce that $F_2 L(\mu) = \{0\}$ too.
Hence $\mu \notin \Ga_2$, so $\mu \in \Ga_1$ as required.

Recalling the definition of the equivalence relation $\stackrel{t}{\sim}$
on $\Ga(t)$, the previous paragraph shows that $\Ga_1$ and $\Ga_2$
are both unions of $\stackrel{t}{\sim}$-equivalence classes.
In view of Lemma~\ref{block} this means either $\Ga_1 = \varnothing$ or
$\Ga_2 = \varnothing$.
\end{proof}

\section{Kazhdan-Lusztig polynomials and Koszulity}\label{skoszul}

Now we have enough basic tools in place to be able to prove
that the algebras $K_\La$ are Koszul.
The key step is the explicit construction of a linear projective
resolution of each cell module.

\phantomsubsection{Combinatorial definition of Kazhdan-Lusztig polynomials}
In this subsection we recall a beautiful closed formula for the
Kazhdan-Lusztig polynomials associated to
Grassmannians discovered by Lascoux and Sch\"utzenberger \cite{LS};
see also \cite{Z}.
Actually we are going reformulate \cite{LS} in terms of
cap diagrams to match the combinatorial language we are using everywhere
else; one advantage of this is that the reformulation also makes sense
for unbounded weights.

We begin by introducing some length functions.
For a block $\La$, let $I(\La)$ denote the (possibly infinite)
set of integers indexing vertices of
weights in $\La$ whose labels are {\em not} $\circ$ or $\times$.
For $\la, \mu \in \La$,
let $I(\la,\mu)$ denote the (necessarily finite) subset of $I(\La)$
indexing the vertices that are labelled differently in $\la$ and $\mu$.
For any $i \in I(\La)$, we define
\begin{multline}
\ell_i(\la,\mu)
:=
\#\{j \in I(\la,\mu)\:|\:j \leq i
\text{ and vertex $j$ of $\la$ is labelled $\down$}\}
\\
\qquad-
\#\{j \in I(\la,\mu)\:|\:j \leq i
\text{ and vertex $j$ of $\mu$ is labelled $\down$}\}.
\end{multline}
So $\ell_i(\la,\mu)$ counts how many more $\down$'s
there are in $\la$ compared to $\mu$ on the vertices to the left or equal
to the $i$th vertex.
We note that $\la \leq \mu$ in the Bruhat ordering if and only if
$\ell_i(\la,\mu) \geq 0$ for all $i \in I(\La)$.
Introduce the
{\em relative length function} on $\La$ by setting
\begin{equation}
\ell(\la,\mu) := \sum_{i \in I(\La)} \ell_i(\la,\mu).
\end{equation}
If $\la \leq \mu$, this is just
the minimum number of transpositions of neighbouring
$\down \up$ pairs needed to get from $\la$ to $\mu$, where
``neighbouring'' means separated only by $\circ$'s and $\times$'s.

Any cap diagram $c$ cuts the upper half space above the number line
into various open connected regions. We refer to these as the {\em chambers}
of the cap diagram $c$.
A {\em labelled cap diagram} $C$
is a cap diagram whose chambers have been labelled
by integers in such a way that
\begin{itemize}
\item external (unbounded) chambers are labelled by zero;
\item given two chambers separated by a cap,
the label in the inside chamber is greater than or equal to the label
in the outside chamber.
\end{itemize}
If $C$ is a labelled cap diagram we let $|C|$ denote the sum of its labels.
In any cap diagram, we refer to a cap in the diagram that does not
contain any smaller nested caps as a {\em small cap}.

Now we are ready to define
the (combinatorial) Kazhdan-Lusztig polynomials $p_{\la,\mu}(q)$
for each $\la,\mu \in \La$.
If $\la \not\leq \mu$ then we set $p_{\la,\mu}(q) := 0$.
If $\la \leq \mu$, let $D(\la,\mu)$ denote the set of all labelled cap diagrams
obtained by labelling the chambers of
$\overline{\mu}$ in such a way that
the label inside every small cap is
less than or equal to $\ell_i(\la,\mu)$, where $i$
indexes the leftmost vertex of the small cap.
Set
\begin{equation}\label{newp}
p_{\la,\mu}(q) := q^{\ell(\la,\mu)}\sum_{C \in D(\la,\mu)} q^{-2|C|}.
\end{equation}
It is not immediately obvious from (\ref{newp})
that $p_{\la,\mu}(q)$ is actually a polynomial in $q$, but that is clear
from Lemma~\ref{recurse} below.
We will write $p_{\la,\mu}^{(n)}$ for the
$q^n$-coefficient of $p_{\la,\mu}(q)$, so
\begin{equation}
p_{\la,\mu}(q) = \sum_{n \geq 0} p_{\la,\mu}^{(n)} q^n.
\end{equation}
We also define the corresponding matrix
\begin{equation}\label{pmat}
P_\La(q) := (p_{\la,\mu}(q))_{\la,\mu \in \La}.
\end{equation}
Here is an example illustrating the computation
of $p_{\la,\mu}(q)$ in practise:
$$
\begin{picture}(120,95)
\put(-121.5,6){$p_{\la,\mu}(q) =\:\:\:q^{16}(1+q^{-2})(1+q^{-2}+q^{-4})$.}
\put(-95.5,21){$\la = $}
\put(-64.8,23){\line(1,0){299}}
\put(-67.7,23.1){$\scriptstyle\down$}
\put(-44.7,23.1){$\scriptstyle\down$}
\put(-21.7,23.1){$\scriptstyle\down$}
\put(1.3,23.1){$\scriptstyle\down$}
\put(24.3,23.1){$\scriptstyle\down$}
\put(47.3,18.4){$\scriptstyle\up$}
\put(70.3,23.1){$\scriptstyle\down$}
\put(93.3,18.4){$\scriptstyle\up$}
\put(116.3,18.4){$\scriptstyle\up$}
\put(139.3,23.1){$\scriptstyle\down$}
\put(162.3,23.1){$\scriptstyle\down$}
\put(185.3,18.4){$\scriptstyle\up$}
\put(208.3,18.4){$\scriptstyle\up$}
\put(231.3,18.4){$\scriptstyle\up$}

\put(-123.4,33.6){$\ell_i(\la,\mu) = $}
\put(-66.7,33.6){$\scriptstyle 0$}
\put(-43.7,33.6){$\scriptstyle 0$}
\put(-20.7,33.6){$\scriptstyle 1$}
\put(2.3,33.6){$\scriptstyle 1$}
\put(25.3,33.6){$\scriptstyle 1$}
\put(48.3,33.6){$\scriptstyle 1$}
\put(71.3,33.6){$\scriptstyle 2$}
\put(94.3,33.6){$\scriptstyle 2$}
\put(117.3,33.6){$\scriptstyle 2$}
\put(140.3,33.6){$\scriptstyle 2$}
\put(163.3,33.6){$\scriptstyle 2$}
\put(186.3,33.6){$\scriptstyle 1$}
\put(209.3,33.6){$\scriptstyle 1$}
\put(232.3,33.6){$\scriptstyle 0$}

\put(-95.5,46){$\mu = $}
\put(-64.8,48){\line(1,0){299}}
\put(-67.6,48.1){$\scriptstyle\down$}
\put(-44.6,48.1){$\scriptstyle\down$}
\put(-21.6,43.4){$\scriptstyle\up$}
\put(1.4,48.1){$\scriptstyle\down$}
\put(24.3,48.1){$\scriptstyle\down$}
\put(47.3,43.4){$\scriptstyle\up$}
\put(70.3,43.4){$\scriptstyle\up$}
\put(93.3,43.4){$\scriptstyle\up$}
\put(116.3,43.4){$\scriptstyle\up$}
\put(139.3,48.1){$\scriptstyle\down$}
\put(162.3,48.1){$\scriptstyle\down$}
\put(185.3,48.1){$\scriptstyle\down$}
\put(208.3,43.4){$\scriptstyle\up$}
\put(231.3,48.1){$\scriptstyle\down$}

\put(119.1,48){\line(0,1){43}}
\put(142.1,48){\line(0,1){43}}
\put(165.1,48){\line(0,1){43}}
\put(234.1,48){\line(0,1){43}}
\put(-30.3,48){\oval(23,23)[t]}
\put(38.7,48){\oval(23,23)[t]}
\put(38.7,48){\oval(69,40)[t]}
\put(199.7,48){\oval(23,23)[t]}
\put(15.7,48){\oval(161,60)[t]}

\put(194.1,51.7){$\scriptstyle 0 / 1$}
\put(33,51.7){$\scriptstyle 0/1$}
\put(197.6,73.7){$\scriptstyle 0$}
\put(33,61.2){$\scriptstyle 0/1$}
\put(-32.7,66.7){$\scriptstyle 0$}
\put(-32.7,82.7){$\scriptstyle 0$}
\put(-32.7,51.7){$\scriptstyle 0$}
\put(127.7,73.7){$\scriptstyle 0$}
\put(150.7,73.7){$\scriptstyle 0$}

\end{picture}
$$

\begin{Remark}\label{lls}\rm
If $|\La| < \infty$, the polynomials $p_{\la,\mu}(q)$
are almost exactly
the same as the polynomials $Q^v_w(q)$ defined
in \cite[$\S$6]{LS}, which are shown in \cite[Th\'eor\`eme 7.8]{LS}
to be equal to the (geometric) Kazhdan-Lusztig polynomials associated
to Grassmannians in the sense of \cite{KL}.
To be precise,
for $\la \in \La$, let $w(\la)$ denote the word
in symbols $\alpha$ and $\beta$
obtained by reading the vertices of $\la$ from left to right
and writing $\alpha$ for each $\down$ and $\beta$ for each $\up$.
Then we have that
$$
p_{\la,\mu}(q)
= q^{\ell(\la,\mu)} Q^{w(\la)}_{w(\mu)}(q^{-2}).
$$
This equality follows by translating
the combinatorial description from \cite{LS}.
\end{Remark}

In the next subsection we need instead a recursive
description of the polynomials $p_{\la,\mu}(q)$
from \cite{LS}.
To formulate this,
given neighbouring indices $i < j$ from $I(\La)$, we
let
\begin{equation}\label{laij}
\La_{i,j}^{\!\vee \wedge}
:= \left\{\nu\in \La\:\Bigg|\:
\begin{array}{l}
\text{the $i$th vertex of $\nu$ is labelled $\down$}\\
\text{the $j$th vertex of $\nu$ is labelled $\up$}
\end{array}
\right\}.
\end{equation}
The following lemma repeats \cite[Lemme 6.6]{LS} (and its proof).

\begin{Lemma}\label{recurse}
The polynomials $p_{\la,\mu}(q)$ are
determined uniquely by the following properties.
\begin{itemize}
\item[(i)]
If $\la = \mu$ then $p_{\la,\mu}(q) = 1$
and if $\la \not \leq \mu$ then $p_{\la,\mu}(q) = 0$.
\item[(ii)]
If $\la < \mu$, pick neighbouring indices $i < j$ from $I(\La)$
such that $\la \in \La_{i,j}^{\!\vee\wedge}$.
Then
\begin{equation*}
p_{\la,\mu}(q) =
\left\{
\begin{array}{ll}
p_{\la',\mu'}(q) + q p_{\la'',\mu}(q)&\text{if $\mu \in \La_{i,j}^{\!\vee\wedge}$,}\\
q p_{\la'',\mu}(q)&\text{otherwise,}
\end{array}\right.
\end{equation*}
where $\la'$ and $\mu'$ denote the weights in some ``smaller'' block
obtained from $\la$
and $\mu$ by deleting vertices $i$ and $j$,
and $\la''$ is the weight obtained from $\la$ by transposing the labels
on vertices $i$ and $j$.
\end{itemize}
\end{Lemma}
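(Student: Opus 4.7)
The plan is to split the argument into two independent parts: proving that any family $\{q_{\la,\mu}(q)\}_{\la,\mu\in\La}$ satisfying (i) and (ii) is uniquely determined, and then verifying that the explicit formula (\ref{newp}) satisfies these two properties.

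For uniqueness, I would induct lexicographically on the pair $(|I(\La)|,\ell(\la,\mu))$. The base cases $\la\not\leq\mu$ and $\la=\mu$ are covered directly by (i). In the inductive step $\la<\mu$, neighbouring indices $i<j$ in $I(\La)$ with vertex $i$ of $\la$ labelled $\down$ and vertex $j$ labelled $\up$ must exist, since $\la<\mu$ in the Bruhat order means $\la$ can be reached from $\mu$ by iterated $\down\up$-transpositions. On the right-hand side of (ii), the weights $\la',\mu'$ live in a block whose non-auxiliary vertex set has size $|I(\La)|-2$, while $\la''$ lives in the same block as $\la$ and satisfies $\ell(\la'',\mu)=\ell(\la,\mu)-1$; the latter holds because $i,j$ are $I(\La)$-neighbours, so the only $\ell_k$-value that the swap modifies is $\ell_i$, which drops by one. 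In both cases the inductive hypothesis pins down the right-hand side.

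For (i) applied to (\ref{newp}): when $\la=\mu$ every $\ell_k$ vanishes, so the bound inside each small cap forces the label $0$; combined with the inside-$\geq$-outside condition propagating from the external zero chambers, every chamber carries the label $0$, $D(\la,\la)$ is a singleton and $p_{\la,\la}(q)=1$. When $\la\not\leq\mu$, some $\ell_k$ is negative; descending to an innermost small cap whose leftmost vertex inherits a negative $\ell$-value shows $D(\la,\mu)=\varnothing$, so $p_{\la,\mu}(q)=0$.

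The combinatorial heart of the argument, and the step I expect to be most delicate, is the verification of (ii) from (\ref{newp}). With $i,j$ fixed as above, I would partition $D(\la,\mu)$ according to the structure of $\overline{\mu}$ at the vertices $i,j$, that is the dichotomy $\mu\in\La_{i,j}^{\!\vee\wedge}$ versus $\mu\notin\La_{i,j}^{\!\vee\wedge}$. In the first case $\overline{\mu}$ carries a small cap joining vertices $i,j$, and each labelling of $D(\la,\mu)$ is specified by the integer $k\geq 0$ inside this cap together with the remaining data: the $k=0$ labellings biject with $D(\la',\mu')$ by erasing this cap and the two vertices, producing the $p_{\la',\mu'}(q)$ summand, while for $k\geq 1$ subtracting one from the label gives a bijection with $D(\la'',\mu)$, with the overall factor of $q$ arising from $\ell(\la,\mu)=\ell(\la'',\mu)+1$. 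In the second case $\overline{\mu}$ has no small cap at $(i,j)$ and the identity map gives a bijection $D(\la,\mu)\leftrightarrow D(\la'',\mu)$, again shifted by $q$. The hard part is verifying that the small-cap bounds really match under these bijections, which reduces to the pointwise identities $\ell_i(\la'',\mu)=\ell_i(\la,\mu)-1$ and $\ell_k(\la'',\mu)=\ell_k(\la,\mu)$ for $k\neq i$; these follow because the swap at the neighbouring indices $i,j$ alters the running count of $\down$'s only on the single $I(\La)$-index lying in $[i,j)$, namely $i$ itself.
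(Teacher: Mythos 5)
Your overall skeleton (uniqueness by induction, checking (i) directly, and proving (ii) by splitting $D(\la,\mu)$ according to whether $\overline{\mu}$ has a small cap joining $i$ and $j$) is the same as the paper's, and your treatment of the case $\mu\notin\La_{i,j}^{\!\vee\wedge}$, together with the identities $\ell_i(\la'',\mu)=\ell_i(\la,\mu)-1$ and $\ell_k(\la'',\mu)=\ell_k(\la,\mu)$ for $k\neq i$, is correct. But in the case $\mu\in\La_{i,j}^{\!\vee\wedge}$ your bijection is matched the wrong way round and fails. Test it on $\mu=\up\down\up\down$, $\la=\down\down\up\up$ (four relevant vertices), $i=2$, $j=3$: here $\ell(\la,\mu)=3$ and $\ell_2(\la,\mu)=1$, so $p_{\la,\mu}(q)=q^3+q$, while $p_{\la',\mu'}(q)=q$ and $p_{\la'',\mu}(q)=q^2$. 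The unique labelling with the $(i,j)$-cap labelled $0$ contributes $q^3$, not the summand $p_{\la',\mu'}(q)=q$, and the unique labelling with label $1$ contributes $q$, not $q\,p_{\la'',\mu}(q)=q^3$. The underlying problem is that $\ell(\la',\mu')$ equals $\ell(\la,\mu)-2\ell_i(\la,\mu)$, not $\ell(\la,\mu)$: since $i,j\notin I(\la,\mu)$ and no vertex of $I(\La)$ lies between them, deleting $i,j$ removes the two equal summands $\ell_i(\la,\mu)=\ell_j(\la,\mu)$ and changes nothing else. (The equality $\ell(\la,\mu)=\ell(\la',\mu')$ asserted parenthetically in the paper's own proof is a slip, but the bijection used there is the right one.) Consequently your ``erase the cap labelled $0$'' map is off by $q^{2\ell_i(\la,\mu)}$ and your ``subtract one from the label'' map is off by $q^2$; worse, neither is a bijection in general: subtracting one violates the inside-$\geq$-outside condition whenever the chamber immediately outside the $(i,j)$-cap carries the same label, and erasing a cap labelled $0$ fails to surject onto $D(\la',\mu')$ when $(i,j)$ is nested inside a cap that becomes small in $\overline{\mu'}$ (try $\mu=\up\down\down\up\up\down$, $\la=\down\down\down\up\up\up$, $i=3$, $j=4$: the label-$0$ part has one element while $D(\la',\mu')$ has two).

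The correct matching, which is the one the paper uses, is the opposite one. The labellings whose $(i,j)$-cap carries a label at most $\ell_i(\la,\mu)-1=\ell_i(\la'',\mu)$ are literally the elements of $D(\la'',\mu)$ (identity map, no relabelling), and each contributes $q^{\ell(\la,\mu)-2|C|}=q\cdot q^{\ell(\la'',\mu)-2|C|}$, giving the summand $q\,p_{\la'',\mu}(q)$. The labellings whose $(i,j)$-cap carries the maximal label $\ell_i(\la,\mu)$ correspond to $D(\la',\mu')$ by deleting that cap; the resulting drop of $|C|$ by $\ell_i(\la,\mu)$ exactly compensates $\ell(\la,\mu)-\ell(\la',\mu')=2\ell_i(\la,\mu)$, giving the summand $p_{\la',\mu'}(q)$. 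To see that this second map is well defined and onto, one uses that if the cap enclosing $(i,j)$ becomes a small cap of $\overline{\mu'}$, its left vertex $i'$ satisfies $\ell_{i'}(\la',\mu')=\ell_{i'}(\la,\mu)=\ell_i(\la,\mu)$, because no vertex of $I(\La)$ lies strictly between $i'$ and $i$. With the bijection set up this way, and with your case $\mu\notin\La_{i,j}^{\!\vee\wedge}$ and your uniqueness argument retained, the proof goes through; as written, the key step does not.
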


\begin{proof}
We just check (ii). If $\mu \in \La_{i,j}^{\!\down\up}$
then $\ell_i(\la,\mu) = \ell_i(\la'',\mu)+1$ and
$\ell(\la,\mu) = \ell(\la',\mu') =
\ell(\la'',\mu)+1$.
There is a bijection
from $D(\la'',\mu)\sqcup D(\la',\mu')$
to $D(\la,\mu)$
mapping $C \in D(\la'',\mu)$ to itself
and $C \in D(\la',\mu')$ to the labelled cap diagram
obtained by adding vertices $i$ and $j$ and
a small cap between them labelled by $\ell_i(\la,\mu)$.
Using these observations
and (\ref{newp}), we get that $p_{\la,\mu}(q) =
p_{\la',\mu'}(q) + q p_{\la'',\mu}(q)$.
Instead if $\mu \notin \La_{i,j}^{\!\down\up}$ then
$\ell_i(\la,\mu) = \ell_i(\la'',\mu)+1$
and $D(\la,\mu) = D(\la'',\mu)$.
Again we get that $p_{\la,\mu}(q) = q p_{\la'',\mu}(q)$.
\end{proof}

\phantomsubsection{Linear projective resolutions of cell modules}
Recall the cell modules from (\ref{Actby}).
The following theorem is the central result of the section;
the proof here is based on \cite[Lemma 4.49]{B}.

\begin{Theorem}\label{pres}
For $\la \in \La$, there is an exact sequence
$$
\cdots
\stackrel{d_1}{\longrightarrow}
\cP_{1}(\la)
\stackrel{d_0}{\longrightarrow} \cP_{0}(\la)
\stackrel{\eps}{\longrightarrow} V(\la) \longrightarrow 0
$$
where $\cP_0(\la) := P(\la)$ and
$\cP_n(\la) := \bigoplus_{\mu \in \La}
p_{\la,\mu}^{(n)} P(\mu)\langle n \rangle$
for $n \geq 0$.
\end{Theorem}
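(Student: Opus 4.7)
The strategy is a double induction: on $|I(\La)|$ and, within $\La$, on $\ell(\la,\mu_{\max})$, where $\mu_{\max} \in \La$ denotes the unique weight with all $\up$'s lying to the left of all $\down$'s. For the base case $\la = \mu_{\max}$, Lemma~\ref{recurse}(i) gives $p_{\la,\mu}(q) = \delta_{\la,\mu}$; since there is no $\mu > \la$, the standard cell filtration of $P(\la)$ has only $V(\la)$ as a subquotient, so $P(\la) \cong V(\la)$ and the resolution $0 \to P(\la) \to V(\la) \to 0$ works. For infinite $\La$ one first handles the bounded case and then passes to the direct limit via the extension maps (\ref{exxx}).

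For the inductive step, Lemma~\ref{recurse}(ii) provides neighbouring indices $i < j \in I(\La)$ with $\la \in \La_{i,j}^{\!\vee\wedge}$. Let $\la'' \in \La_{i,j}^{\!\wedge\vee}$ be obtained from $\la$ by interchanging the labels at $i$ and $j$, let $\Ga$ be the block obtained from $\La$ by replacing these two labels with $\times$'s, and let $\la' \in \Ga$ be the resulting weight. Take $t$ to be the $\La\Ga$-matching consisting of a single cup connecting vertices $i$ and $j$ on the $\La$ side together with vertical line segments at every other position, so that $\cups(t) = 1$ and $\caps(t) = 0$. The only $\mu \in \La$ making $\mu t \la'$ oriented are $\la$ and $\la''$, and in both cases the only non-trivial line in $\mu t \la'$---the cup at $(i,j)$---has degree one by Lemma~\ref{deglem}. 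Since $\la'' > \la$, Theorem~\ref{vf}(i) furnishes the short exact sequence
\[
0 \to V(\la'')\langle 1\rangle \to G^t_{\La\Ga} V(\la') \to V(\la)\langle 1\rangle \to 0,
\]
or equivalently, after shifting by $\langle -1\rangle$,
\[
0 \to V(\la'') \to G^t_{\La\Ga} V(\la')\langle -1\rangle \to V(\la) \to 0. \qquad (\dagger)
\]

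By the inductive hypothesis applied in the smaller block $\Ga$, $V(\la')$ admits a linear projective resolution $\cP_\bullet(\la')_\Ga$ in $\mod{K_\Ga}$. Applying the exact functor $G^t_{\La\Ga}\langle -1\rangle$ and invoking Theorem~\ref{pf}(iii)---which for our $t$ yields $G^t_{\La\Ga} P_\Ga(\nu') \cong P(\nu)\langle 1\rangle$, where $\nu \in \La_{i,j}^{\!\vee\wedge}$ is the unique lift of $\nu'$ and no $R$-tensor factor appears since $t\overline{\nu'}$ has no upper circles---produces a linear projective resolution of $G^t_{\La\Ga} V(\la')\langle -1\rangle$ whose $n$th term is $\bigoplus_{\nu \in \La_{i,j}^{\!\vee\wedge}} p_{\la',\nu'}^{(n)} P(\nu)\langle n\rangle$. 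By the inductive hypothesis at $\la''$ (same block, smaller $\ell(\cdot,\mu_{\max})$), the cell module $V(\la'')$ also admits a linear projective resolution $\cP_\bullet(\la'')$. The mapping cone of any chain map lifting the inclusion in $(\dagger)$ is then a linear projective resolution of $V(\la)$, whose $n$th term is $\cP_{n-1}(\la'')\langle 1\rangle \oplus G^t_{\La\Ga}\cP_n(\la')_\Ga\langle -1\rangle$; the multiplicity of $P(\mu)\langle n\rangle$ therein equals $p_{\la'',\mu}^{(n-1)} + p_{\la',\mu'}^{(n)}$ if $\mu \in \La_{i,j}^{\!\vee\wedge}$ and $p_{\la'',\mu}^{(n-1)}$ otherwise, which by the Kazhdan--Lusztig recursion of Lemma~\ref{recurse}(ii) is exactly $p_{\la,\mu}^{(n)}$. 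The main technical hurdle is the careful verification of the short exact sequence $(\dagger)$ and of the clean identification $G^t_{\La\Ga} P_\Ga(\nu') \cong P(\nu)\langle 1\rangle$; both amount to detailed bookkeeping of orientations and degrees around the cup at $(i,j)$ via Theorems~\ref{vf} and \ref{pf}.
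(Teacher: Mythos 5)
Your overall strategy is the same as the paper's: pick a neighbouring $\down\up$ pair $i<j$ in $\la$, pass to the smaller block $\Ga$ via the one-arc $\La\Ga$-matching $t$, use Theorem~\ref{vf}(i) to relate $G^t_{\La\Ga}V(\la')$ to $V(\la)$ and $V(\la'')$, resolve the two outer terms by induction, take a cone, and match multiplicities with the recursion of Lemma~\ref{recurse}. (Your induction runs on $|I(\La)|$ and the distance to the maximal weight rather than on $\defect(\la)$ and $n$, which is fine for bounded blocks, though the reduction of the unbounded case to a ``direct limit'' is left vague, whereas the paper's induction applies to all blocks directly.)

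However, the degree bookkeeping --- which is the whole content of a \emph{linearity} statement --- contains genuine errors. The arc joining $i$ and $j$ lies on the bottom number line of the $\La\Ga$-matching (the $\La$ side is the bottom), so it is a cap: $\caps(t)=1$, $\cups(t)=0$, not the reverse. Moreover the two orientations of this single arc cannot both have degree one: $\la\, t\, \la'$ orients it anti-clockwise (degree $0$) while $\la''\, t\, \la'$ orients it clockwise (degree $1$), by Lemma~\ref{deglem}. Consequently, with the normalisation (\ref{pfun}), Theorem~\ref{vf}(i) gives the short exact sequence $0 \to V(\la'') \to G^t_{\La\Ga}V(\la') \to V(\la)\langle -1\rangle \to 0$ (sub and quotient differ by one degree), not your $(\dagger)$ in which both carry the same shift; and Theorem~\ref{pf}(iii) gives $G^t_{\La\Ga}P_\Ga(\nu') \cong P(\nu)\langle \cups(t)-\caps(t)\rangle = P(\nu)\langle -1\rangle$, not $P(\nu)\langle 1\rangle$. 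These slips are not self-cancelling: taking your own shifts at face value, the $\la''$-half of the cone contributes summands $P(\mu)\langle n\rangle$ while the $G^t\cP_n(\la')$-half contributes $P(\mu)\langle n+1\rangle$ (or $\langle n-1\rangle$ versus $\langle n\rangle$ after your final $\langle -1\rangle$), so the asserted count of copies of $P(\mu)\langle n\rangle$ does not follow as written. With the corrected shifts the argument closes exactly as in the paper: the cone resolves $V(\la)\langle -1\rangle$ with $n$th term $\cP_{n-1}(\la'') \oplus G^t_{\La\Ga}\cP_n(\la') \cong \cP_n(\la)\langle -1\rangle$ by Lemma~\ref{recurse}, and one shifts the whole sequence by $\langle 1\rangle$ at the end.
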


\begin{proof}
As $\cP_0(\la)$ is the projective cover $P(\la)$ of $V(\la)$,
there is a surjection $\eps:\cP_0(\la) \rightarrow V(\la)$.
We show by simultaneous induction on $\defect(\la)$
and $n =0,1,\dots$
that there exist maps
$d_n:\cP_{n+1}(\la) \rightarrow \cP_{n}(\la)$
such that $\im\, d_n = \ker d_{n-1}$, interpreting
$d_{-1}$ as the map $\eps$.
If $\defect(\la) = 0$
then $\la$ is maximal in the Bruhat ordering, so
$P(\la) = V(\la)$ by \cite[Theorem~5.1]{BS}
and $\cP_1(\la)=\cP_2(\la)=\cdots=0$.
Hence all the maps $d_0,d_1,\dots$ are necessarily zero and $\eps$ is an isomorphism.
This verifies the assertion for $\defect(\la) = 0$.

Now we assume that $n \geq 0$, $\defect(\la) > 0$ and that we have already
constructed the maps $d_m:\cP_{m+1}(\la) \rightarrow \cP_m(\la)$
for all $0 \leq m < n$.
Choose neighbouring indices
$i < j$ from $I(\La)$
such that $\la\in\La_{i,j}^{\!\vee \wedge}$.
Recalling (\ref{laij}), let $\Ga$ denote the ``smaller'' block
consisting of all weights
obtained by removing vertices $i$ and $j$ from
the weights in $\La^{\!\vee\wedge}_{i,j}$, and let
$
\La_{i,j}^{\!\vee\wedge} \rightarrow \Ga,\:
\nu \mapsto \nu'$
be the obvious bijection defined by letting
$\nu'$ denote the weight
obtained from $\nu$ by deleting vertices $i$ and $j$.
Let $t$ be the proper $\La\Ga$-matching with a cap joining vertices $i$ and $j$
on the bottom number line and vertical line segments joining all other
corresponding pairs of
vertices in the bottom and top number lines (skipping vertices labelled
$\circ$ or $\times$).
The definition of $t$ means that $\overline{\mu}$ is the upper reduction of
$t \overline{\mu'}$ for every $\mu \in \La^{\!\vee\wedge}_{i,j}$, hence
\begin{equation}\label{step1}
G^t_{\La\Ga} P(\mu')  \cong P(\mu)
\end{equation}
by Theorem~\ref{pf}.
Also by Theorem~\ref{vf}(i), there is a short exact sequence
\begin{equation}\label{step2}
0 \longrightarrow V(\la'')
\stackrel{f}{\longrightarrow}
G^t_{\La\Ga} V(\la')
\longrightarrow V(\la)\langle -1\rangle
\longrightarrow 0.
\end{equation}
The idea now is to apply the induction hypothesis to the left and middle term of this sequence and then deduce the result for the right hand side by taking the cone of $f$.

By the induction hypothesis, there are  exact sequences
\begin{align*}
\cP_{n}(\la'')
{\rightarrow} \cdots
{\rightarrow} \cP_0(\la'')
{\rightarrow} V(\la'') \rightarrow 0&,\\
\cP_{n+1}(\la')
{\rightarrow}
\cP_{n}(\la')
{\rightarrow} \cdots
{\rightarrow} \cP_0(\la') {\rightarrow} V(\la')\rightarrow 0&
\end{align*}
with $\cP_n(\la'') = \bigoplus_{\mu \in \La} p_{\la'',\mu}^{(n)}
P(\mu)\langle n \rangle$ and $\cP_{n+1}(\la')
= \bigoplus_{\mu \in \La^{\!\vee\wedge}_{i,j}} p_{\la',\mu'}^{(n+1)} P(\mu')
\langle n +1\rangle$.
Applying the exact functor $G^t_{\La\Ga}$ to the second of
these and then
using \cite[2.2.6]{Wei}
to lift the map $f$ from (\ref{step2}) to a chain map, we obtain
a commuting diagram
$$
\begin{CD}
&&\cP_{n}(\la'')
&\rightarrow\cdots\rightarrow&\cP_0(\la'')&\rightarrow&V(\la'')&\rightarrow 0\\
&&@VVV@VVV@VfVV\\
G^t_{\La\Ga}\cP_{n+1}(\la')&\rightarrow&
G^t_{\La\Ga}\cP_{n}(\la')&\rightarrow\cdots\rightarrow&G^t_{\La\Ga}\cP_0(\la')&\rightarrow&G^t_{\La\Ga}
V(\la')&\rightarrow 0
\end{CD}
$$
with exact rows.
The total complex of this double complex is exact by
\cite[2.7.3]{Wei}, so gives an exact sequence
\begin{multline*}
\cP_{n}(\la'')\oplus G^t_{\La\Ga} \cP_{n+1}(\la')
\rightarrow\cdots\rightarrow
 V(\la'') \oplus G^t_{\La\Ga} \cP_0(\la')
\rightarrow
G^t_{\La\Ga} V(\la')\rightarrow 0.
\end{multline*}
To this sequence, there is an obvious injective chain map
from the exact
sequence $
0 \rightarrow \cdots \rightarrow 0 \rightarrow
V(\la'') \rightarrow V(\la'')
\rightarrow 0$.
Taking the quotient using (\ref{step2}) and \cite[Exercise 1.3.1]{Wei}, we
obtain another exact sequence
$$
\cP_{n}(\la'') \oplus G^t_{\La\Ga} \cP_{n+1}(\la')
\rightarrow\cdots\rightarrow
G^t_{\La\Ga} \cP_0(\la')
\rightarrow
V(\la)\langle -1 \rangle\rightarrow 0.
$$
Recalling Lemma~\ref{recurse} and (\ref{step1}), we
have that $$
G^t_{\La\Ga} \cP_0(\la') = G^t_{\La\Ga} P(\la') \cong P(\la)\langle -1 \rangle = \cP_0(\la)\langle -1 \rangle
$$ and
\begin{align*}
\cP_{n}(\la'')
\oplus G^t_{\La\Ga} \cP_{n+1}(\la')
&\cong
\bigoplus_{\mu \in \La}
p_{\la'',\mu}^{(n)}
P(\mu) \langle n \rangle
\oplus \bigoplus_{\mu \in \La_{i,j}^{\!\vee \wedge}}
p_{\la',\mu'}^{(n+1)} P(\mu)\langle n \rangle\\
&\cong
\bigoplus_{\mu \in \La} p_{\la,\mu}^{(n+1)} P(\mu) \langle n
\rangle =
\cP_{n+1}(\la)\langle -1\rangle.
\end{align*}
So, shifting degrees by one, our exact sequence can be
rewritten as an exact sequence
$$
\cP_{n+1}(\la) \stackrel{d_n}{\longrightarrow}\cdots
\stackrel{d_0}{\longrightarrow} \cP_0(\la) \stackrel{\eps}{\longrightarrow} V(\la)
\longrightarrow 0.
$$
This constructs a map $d_n$
such that $\im \,d_n = \ker d_{n-1}$,
where for $n > 0$ the map $d_{n-1}$ is as constructed
in the previous iteration of the induction.
\end{proof}

\begin{Corollary}\label{invmat}
The matrix $P_\La(-q)$ is the inverse
of the $q$-decomposition
matrix $D_\La(q)$
from \cite[(5.13)]{BS}.
\end{Corollary}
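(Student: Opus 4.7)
The plan is to take the Euler characteristic of the linear projective resolution established in Theorem~\ref{pres} and combine this with BGG reciprocity for the quasi-hereditary algebra $K_\La$, which was proved in Part~I. Fix $\la \in \La$ and restrict attention to the finite dimensional summand $e_\al K_\La e_\la$ for some $\al$ (or work summand by summand in the locally finite graded Grothendieck group $[\mod{K_\La}]$ over $\Z[q,q^{-1}]$, where $[M\langle n\rangle] = q^n[M]$); since each Kazhdan--Lusztig polynomial $p_{\la,\mu}(q)$ has bounded degree and only finitely many $\mu$ contribute in any given homological degree after projecting onto $e_\al K_\La$, the alternating sum converges without subtlety.

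First I would apply the Euler characteristic to the resolution in Theorem~\ref{pres}. Using $[\cP_n(\la)] = \sum_{\mu}p_{\la,\mu}^{(n)} q^n [P(\mu)]$ and summing, this yields
$$
[V(\la)] \;=\; \sum_{n \geq 0} (-1)^n [\cP_n(\la)] \;=\; \sum_{\mu \in \La}\Bigl(\sum_{n \geq 0}(-1)^n p_{\la,\mu}^{(n)} q^n\Bigr) [P(\mu)] \;=\; \sum_{\mu \in \La} p_{\la,\mu}(-q) [P(\mu)]
$$
in the graded Grothendieck group.

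Next I would invoke BGG reciprocity for $K_\La$, established in Part~I: each indecomposable projective $P(\mu)$ admits a graded cell filtration whose multiplicities are controlled by the $q$-decomposition matrix, giving an identity of the form $[P(\mu)] = \sum_{\nu} d_{\nu,\mu}(q) [V(\nu)]$ where $d_{\nu,\mu}(q)$ is the corresponding entry of $D_\La(q)$. Substituting this into the previous display and comparing coefficients of the linearly independent classes $[V(\nu)]$ in the graded Grothendieck group, I obtain
$$
\sum_{\mu \in \La} p_{\la,\mu}(-q)\, d_{\nu,\mu}(q) \;=\; \delta_{\la,\nu}
$$
for all $\la,\nu \in \La$. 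Read as matrices, this is precisely the statement that $P_\La(-q)$ and $D_\La(q)$ are mutually inverse (up to matching the indexing conventions of \cite[(5.13)]{BS}, the transpose in BGG reciprocity being absorbed into their definition of $D_\La(q)$).

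The main obstacle is not conceptual but bookkeeping: one must verify that the alternating sum $\sum_n (-1)^n[\cP_n(\la)]$ makes sense in the (possibly not-everywhere-bounded) graded Grothendieck group, and line up the precise convention of $D_\La(q)$ from Part~I with the direction of BGG reciprocity so that the transpose comes out correctly. The first point is handled by the observation that $p_{\la,\mu}(q) \in \Z_{\geq 0}[q]$ has degree bounded by $\ell(\la,\mu)$, so in each fixed internal degree only finitely many $[P(\mu)]$ appear; the second is automatic once the conventions are matched.
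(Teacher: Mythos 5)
Your argument is correct and rests on the same two inputs as the paper's proof: the Euler characteristic of the linear resolution from Theorem~\ref{pres}, and the cell filtration of projective indecomposables from Part I. The difference is in the packaging. You work directly in the graded Grothendieck group, substitute the cell filtration into the Euler characteristic identity, and compare coefficients of the linearly independent classes $[V(\nu)]$; this forces you to address convergence of the infinite alternating sum, which you do correctly (since $\cP_n(\la)$ is generated in degree $n$ and projectives are non-negatively graded, only finitely many homological degrees contribute to each graded piece of each $e_\al$-summand). One bookkeeping correction: by \cite[Theorem 5.1]{BS} the graded multiplicity of $V(\nu)$ in $P(\mu)$ is $d_{\mu,\nu}(q)$ (row index the projective's label), so the identity you obtain is $\sum_{\mu} p_{\la,\mu}(-q)\,d_{\mu,\nu}(q) = \delta_{\la,\nu}$, i.e.\ $P_\La(-q)D_\La(q) = I$ directly, with no transpose to absorb; your displayed formula has the indices of $d$ flipped, but you flagged this as a convention to check and it does not affect the substance. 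The paper instead applies the exact functor $\hom_{K_\La}(P(\la),?)$ to the resolution of $V(\mu)$, obtaining $d_{\la,\mu}(q) = \sum_{\nu} p_{\mu,\nu}(-q)c_{\la,\nu}(q)$, i.e.\ $D_\La(q) = C_\La(q)P_\La(-q)^T$, then invokes the factorisation $C_\La(q) = D_\La(q)D_\La(q)^T$ from \cite[(5.17)]{BS} and cancels the unitriangular matrix $D_\La(q)$. That route keeps every sum manifestly finite (the relevant graded Hom spaces are finite dimensional) and never needs linear independence of cell classes nor a completed Grothendieck group, at the cost of routing through the Cartan matrix; your version is slightly more direct and makes the BGG-reciprocity input explicit. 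Either way the essential mechanism and the conclusion are the same.
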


\begin{proof}
Recalling \cite[(5.14)]{BS}
and the Cartan matrix
$C_\La(q) = (c_{\la,\mu}(q))_{\la,\mu \in \La}$ from
\cite[(5.6)]{BS}, we have by Theorem~\ref{pres} that
\begin{align*}
d_{\la,\mu}(q)
 &= \sum_{j \geq 0} q^j
\dim \hom_{K_\La}(P(\la),V(\mu))_j
\phantom{hehrtheqrgqergqerd|h}\\
&=
\sum_{i,j \geq 0} (-1)^iq^j \dim \hom_{K_\La}(P(\la), \cP_i(\mu))_j\\
&=
\sum_{\nu \in \La}
\sum_{i,j \geq 0} (-1)^iq^j p_{\mu,\nu}^{(i)}
\dim \hom_{K_\La}(P(\la), P(\nu) \langle i \rangle)_j
\end{align*}\begin{align*}
\phantom{d_{\la,\mu}(q)}&=
\sum_{\nu \in \La}
\Big(\sum_{i\geq 0} (-q)^i  p_{\mu,\nu}^{(i)}\Big)\Big(\sum_{j \geq 0}q^j
\dim \hom_{K_\La}(P(\la), P(\nu))_j\Big)\\
&=
\sum_{\nu \in \La}
p_{\mu,\nu}(-q)c_{\la,\nu}(q).
\end{align*}
Using also the familiar factorisation \cite[(5.17)]{BS}, this shows that
$$
D_\La(q) = C_\La(q) P_\La(-q)^T =
D_\La(q) D_\La(q)^T P_\La(-q)^T.
$$
Now multiply on the left by the inverse matrix
$D_\La(q)^{-1}$ (which exists in all cases as $D_\La(q)$ is a unitriangular matrix)
and transpose.
\end{proof}

The next corollary identifies the polynomials $p_{\la,\mu}(q)$
with the (representation theoretic)
{Kazhdan-Lusztig polynomials} associated to the category
$\mod{K_\La}$
in the sense of
Vogan \cite{V}.

\begin{Corollary}\label{indep}
For $\la,\mu \in \La$, we have that
$$
p_{\la,\mu}(q) =
\sum_{i \geq 0} q^i \dim \ext^i_{K_\La}(V(\la), L(\mu)).
$$
Moreover,
$\ext^{i}_{K_\La}(V(\la),L(\mu))_{-j} = 0$
unless $i = j \equiv \ell(\la,\mu) \pmod{2}$.
\end{Corollary}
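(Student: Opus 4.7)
The plan is to apply the functor $\hom_{K_\La}(-,L(\mu))$ to the linear projective resolution $\cP_\bullet(\la) \to V(\la)$ provided by Theorem~\ref{pres} and to exploit the fact that this resolution is \emph{linear} in the sense that each term is a sum of projectives shifted exactly by the homological degree.

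First I would compute $\hom_{K_\La}(\cP_n(\la), L(\mu))$ explicitly. Since $\cP_n(\la) = \bigoplus_\nu p_{\la,\nu}^{(n)} P(\nu)\langle n\rangle$ and
$\hom_{K_\La}(P(\nu)\langle n\rangle, L(\mu))_{j} = \hom_{K_\La}(P(\nu),L(\mu))_{n+j}$,
which is one-dimensional when $\nu=\mu$ and $n+j=0$ and zero otherwise, the graded vector space $\hom_{K_\La}(\cP_n(\la),L(\mu))$ is concentrated in graded degree $-n$, where its dimension equals $p_{\la,\mu}^{(n)}$.

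The key observation is then that the differentials in the complex $\hom_{K_\La}(\cP_\bullet(\la),L(\mu))$ induced by the differentials of $\cP_\bullet(\la)$ are homogeneous of degree zero. But the $n$th term lives in graded degree $-n$ while the $(n{+}1)$st lives in graded degree $-(n{+}1)$, so every such differential must vanish. Hence the complex has zero differential and
$\ext^n_{K_\La}(V(\la),L(\mu)) \cong \hom_{K_\La}(\cP_n(\la),L(\mu))$,
which is concentrated in graded degree $-n$ with total dimension $p_{\la,\mu}^{(n)}$. Summing over $n$ recovers $p_{\la,\mu}(q) = \sum_i q^i \dim\ext^i_{K_\La}(V(\la),L(\mu))$, and at the same time establishes the vanishing $\ext^i_{K_\La}(V(\la),L(\mu))_{-j}=0$ unless $i=j$.

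For the residual parity statement $i\equiv\ell(\la,\mu)\pmod 2$, I would simply read it off from the closed form (\ref{newp}): since $p_{\la,\mu}(q) = q^{\ell(\la,\mu)}\sum_{C\in D(\la,\mu)} q^{-2|C|}$, every monomial appearing has degree congruent to $\ell(\la,\mu)\pmod 2$, so $p_{\la,\mu}^{(i)}=0$ unless $i\equiv\ell(\la,\mu)\pmod 2$. I do not anticipate any real obstacle: all the content has been done in Theorem~\ref{pres}, and the argument here is the standard observation that a linear projective resolution forces the Hom-complex into a single diagonal, making $\ext$-groups read off directly from the multiplicities of the resolution.
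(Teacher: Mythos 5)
Your proposal is correct and follows essentially the same route as the paper: apply $\hom_{K_\La}(?,L(\mu))$ to the linear resolution of Theorem~\ref{pres}, note that the $n$th term of the resulting complex is concentrated in graded degree $-n$ with dimension $p_{\la,\mu}^{(n)}$, conclude that all differentials vanish, and read off the parity claim from the closed formula (\ref{newp}).
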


\begin{proof}
Apply the functor $\hom_{K_\La}(?, L(\mu))$ to the projective resolution of
$V(\la)$ constructed in Theorem~\ref{pres} to obtain
a cochain complex
$0 \rightarrow \cC^0 \rightarrow \cC^1 \rightarrow \cdots$
with
$$
\cC^i = \bigoplus_{j \in \Z} \cC^{i,j} =
\bigoplus_{j \in \Z}
\hom_{K_\La}(\cP_i(\la), L(\mu))_{-j},
$$
the cohomology of which computes $\ext_{K_\La}^i(V(\la),L(\mu))$.
We get from the definition of $\cP_i(\la)$ that
$\dim\cC^{i,j} = \delta_{i,j} p_{\la,\mu}^{(i)}$,
In view of the definition (\ref{newp}), this means that $\cC^{i,j} = 0$
unless $i=j\equiv \ell(\la,\mu)\pmod{2}$.
In particular, this shows that all the differentials in the
cochain complex are zero, so we actually have that
$\cC^{i,j} = \ext^{i}_{K_\La}(V(\la),L(\mu))_{-j}$.
The corollary follows.
\end{proof}

\phantomsubsection{Koszulity in the finite dimensional case}
Recall from \cite[Definition 1.1.2]{BGS} that
a positively graded associative unital algebra
$K = \bigoplus_{n \geq 0} K_n$ is {\em Koszul} if
\begin{itemize}
\item $K_0$ is a semisimple algebra;
\item
the module $K / K_{> 0}$ has a linear projective resolution,
i.e.
there is an exact sequence
$\cdots \rightarrow \cP_2 \rightarrow \cP_1 \rightarrow \cP_0 \rightarrow K / K_{> 0} \rightarrow 0$
in the category of graded $K$-modules
such that each $\cP_n$ is projective and generated in degree $n$.
\end{itemize}

\begin{Theorem} \label{koszul}
If $|\La| < \infty$ then $K_\La$ is Koszul.
\end{Theorem}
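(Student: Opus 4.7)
The plan is to verify the two clauses of the Koszul definition separately. The first, semisimplicity of $(K_\La)_0$, is immediate from the idempotent decomposition and the observation that no oriented circle diagram of degree zero connects distinct weights, so $(K_\La)_0 = \bigoplus_{\la \in \La} \C e_\la$ is a direct sum of copies of $\C$. For the second clause, since $K_\La / (K_\La)_{>0} \cong \bigoplus_\la L(\la)$ as a graded left $K_\La$-module, it suffices to show each simple module $L(\la)$ admits a linear projective resolution, which is equivalent to the vanishing $\ext^i_{K_\La}(L(\la), L(\mu))_{-j} = 0$ for all $\la,\mu$ and all $i \neq j$.

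The central input is Theorem~\ref{pres}, which supplies such a resolution for each cell module $V(\la)$; equivalently, Corollary~\ref{indep} yields $\ext^i_{K_\La}(V(\la), L(\mu))_{-j} = 0$ unless $i = j$. I would upgrade from cell modules to simples by induction on $\la$ in the reverse Bruhat order. The base case is $\la$ maximal, where $V(\la) = L(\la)$ by the base case of Theorem~\ref{pres}. For the inductive step, write $K(\la)$ for the kernel of the canonical surjection $V(\la) \twoheadrightarrow L(\la)$, giving the short exact sequence
\[
0 \longrightarrow K(\la) \longrightarrow V(\la) \longrightarrow L(\la) \longrightarrow 0,
\]
whose kernel has composition factors of the form $L(\nu)\langle d \rangle$ with $\nu > \la$ and various $d \geq 1$. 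The long exact $\hom_{K_\La}(-, L(\mu))$-sequence reduces the desired Koszul vanishing for $L(\la)$ to a degree-shifted analogue for $K(\la)$, to which one tries to apply the inductive hypothesis on each composition factor.

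The principal obstacle is that the composition factors of $K(\la)$ may occur in various degrees $d \geq 1$ rather than only in degree one, so the linear resolutions inductively produced for the $L(\nu)$ do not immediately combine into a linear resolution of $K(\la)$. I would circumvent this by exploiting the $\circledast$-duality coming from the diagrammatic anti-involution $*$ introduced in \eqref{star2}: this interchanges the cell module $V(\la)$ with the costandard $V(\la)^\circledast$ and converts the linear projective resolution furnished by Theorem~\ref{pres} into a linear injective coresolution of every costandard. One then invokes the criterion of Agoston--Dlab--Lukacs which asserts that, for a quasi-hereditary graded algebra, the coexistence of a linear projective resolution of every standard and a linear injective coresolution of every costandard forces Koszulity; alternatively one reproves that criterion in the present diagrammatic setting via a double induction on cohomological degree and on Bruhat order, using Theorems~\ref{pf} and \ref{vf} to handle the $V$-filtrations of projectives.
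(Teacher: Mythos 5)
Your proposal is correct and essentially reproduces the paper's own argument: the main input is Theorem~\ref{pres} (linear projective resolutions of cell modules, which are the standard modules by \cite[Theorem 5.3]{BS}, with quasi-heredity from \cite[Corollary 5.4]{BS}), combined with a duality twist and the Agoston--Dlab--Luk\'acs criterion \cite[Theorem 1]{ADL}. The only cosmetic difference is that the paper obtains the dual half by twisting with the algebra anti-automorphism $*$ to get linear projective resolutions of \emph{right} standard modules, which is equivalent to your formulation via $\circledast$ and linear injective coresolutions of costandards; your abandoned Bruhat-order induction detour plays no role in the final argument.
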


\begin{proof}
Under the assumption that $|\La| < \infty$,
$K_\La$ is a graded
quasi-hereditary algebra
by \cite[Corollary 5.4]{BS}.
Moreover by \cite[Theorem 5.3]{BS} its left standard modules in
the usual sense of quasi-hereditary algebras
are the cell modules $V(\la)$.
Hence Theorem~\ref{pres} establishes that its left standard modules
have linear projective resolutions. Twisting with the
anti-automorphism $*$ from \cite[(4.14)]{BS} we also get that its right standard modules
have linear projective resolutions.
Therefore $K_\La$ is Koszul by \cite[Theorem 1]{ADL}.
(Alternatively, this can be deduced from
\cite[Theorem (3.9)]{CPS3} and Corollary~\ref{indep}.)
\end{proof}

\begin{Corollary}\label{quadratic}
If $|\La| < \infty$ then $K_\La$ is a quadratic algebra.
\end{Corollary}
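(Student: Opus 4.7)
The plan is to deduce this directly from Theorem~\ref{koszul}, invoking the standard fact that every Koszul algebra is quadratic. Explicitly, by Theorem~\ref{koszul}, under the assumption $|\La|<\infty$ the algebra $K_\La$ is a positively graded Koszul algebra with semisimple degree zero component $K_\La{}_0 = \bigoplus_{\la\in\La}\C e_\la$.

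From here I would appeal to \cite[Corollary 2.3.3]{BGS} (or equivalently to Priddy's original argument), which asserts that any Koszul ring is quadratic. Sketching why this holds in the present situation: let $\cdots\to \cP_2 \to \cP_1 \to \cP_0 \to K_\La/K_\La{}_{>0}\to 0$ be a linear projective resolution. Since $\cP_1$ is generated in degree $1$ and surjects onto $\ker(\cP_0\to K_\La/K_\La{}_{>0}) = K_\La{}_{>0}$, the graded algebra $K_\La$ is generated over $K_\La{}_0$ by $K_\La{}_1$; writing $T$ for the tensor algebra of the $K_\La{}_0$-bimodule $K_\La{}_1$, this gives a surjection $\pi:T \twoheadrightarrow K_\La$. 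Since $\cP_2$ is generated in degree $2$ and surjects onto the kernel of the degree-one generated map $\cP_1\to \cP_0$, a direct diagram chase shows that $\ker\pi$ is generated as a two-sided ideal by its degree $2$ component, i.e.\ $K_\La$ admits a presentation by generators in degree one and relations in degree two.

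The main obstacle would only be conceptual, namely ensuring that the positively graded, locally unital, locally finite dimensional setting of $K_\La$ fits into the framework of \cite{BGS}; since $|\La|<\infty$ the algebra $K_\La$ is an honest finite dimensional unital graded algebra, so this causes no trouble and the cited corollary applies verbatim.
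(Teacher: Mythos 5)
Your proposal is correct and matches the paper's argument exactly: the paper deduces the corollary from Theorem~\ref{koszul} by citing \cite[Corollary 2.3.3]{BGS} (Koszul implies quadratic), which is precisely your route. The extra sketch of why Koszulity forces degree-one generators and degree-two relations is fine but not needed beyond the citation.
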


\begin{proof}
This is \cite[Corollary 2.3.3]{BGS}.
\end{proof}

For the next corollary, we recall the definition of the
{Koszul complex} following \cite[2.6]{BGS}.
Still assuming $|\La| < \infty$, let $k_\La$ denote the semisimple
algebra $(K_\La)_0$
and write $\otimes$ for $\otimes_{k_\La}$.
By Corollary~\ref{quadratic}, we can identify
$K_\La$ with $T(V_\La) / (R_\La)$ where
$T(V_\La)$ is the tensor algebra of
the $k_\La$-module
$V_\La := (K_\La)_1$ and $R_\La$ is the subspace of $V_\La \otimes V_\La$
consisting of the elements whose canonical image in $(K_\La)_2$ is zero.
Let $\cP_0 := K_\La$, $\cP_1 := K_\La \otimes V_\La$ and
\begin{equation}\label{complex}
\cP_n :=
K_\La \otimes
\bigcap_{i=0}^{n-2}
\left(
V_\La^{\otimes i} \otimes R_\La \otimes V_\La^{\otimes (n-2-i)}\right)
\end{equation}
for $n \geq 2$, all
viewed as $K_\La$-modules via the left regular action on the first tensor factor.
The {\em Koszul complex} for $K_\La$ is
\begin{equation}\label{complex2}
\cdots\stackrel{d_2}{\longrightarrow}
\cP_2 \stackrel{d_1}{\longrightarrow} \cP_1 \stackrel{d_0}{\longrightarrow} \cP_0 \stackrel{\eps}{\longrightarrow} K_\La / (K_\La)_{> 0}
\longrightarrow 0
\end{equation}
where
the differential
$d_n$
is the map
$a \otimes v_0 \otimes \cdots \otimes v_n \mapsto
a v_0 \otimes\cdots\otimes v_n$,
and $\eps$ is the natural quotient map.
The following corollary implies that the Koszul complex
is a canonical linear projective resolution
of $K_\La / (K_{\La})_{> 0}$.

\begin{Corollary}\label{kex}
If $|\La| < \infty$ then the Koszul complex for $K_\La$ is exact.
\end{Corollary}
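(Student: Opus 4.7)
The plan is to deduce this from the general characterisation of Koszulity in terms of the Koszul complex, as developed in \cite{BGS}. Theorem~\ref{koszul} has just established that $K_\La$ is Koszul when $|\La|<\infty$, and Corollary~\ref{quadratic} that it is quadratic; these are precisely the ingredients needed to invoke the standard criterion.

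First I would verify that (\ref{complex2}) really is a complex, i.e.\ that $d_{n-1}\circ d_n = 0$. On a generator $a\otimes v_0\otimes\cdots\otimes v_n$ of $\cP_{n+1}$ the composite sends it to $(av_0 v_1)\otimes v_2\otimes\cdots\otimes v_n$, and $v_0 v_1$ vanishes in $(K_\La)_2$ because the $i=0$ condition in the intersection (\ref{complex}) places $v_0\otimes v_1$ in $R_\La$. Moreover each $\cP_n$ is projective and generated in degree $n$: since $k_\La$ is semisimple, the intersection appearing in (\ref{complex}) is a projective $k_\La$-module concentrated in degree $n$, and $K_\La\otimes_{k_\La}(-)$ of a projective $k_\La$-module is a projective $K_\La$-module.

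The exactness of (\ref{complex2}) is then precisely the content of \cite[Proposition~2.6.1]{BGS} (or equivalently \cite[$\S$2.6]{BGS}), which for a quadratic algebra identifies Koszulity with the property that the Koszul complex is a resolution of $K_\La/(K_\La)_{>0}$. Since the hypotheses hold here by Theorem~\ref{koszul} and Corollary~\ref{quadratic}, this proposition applies directly to give acyclicity of (\ref{complex2}).

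The only real obstacle is that one must trust the general fact just quoted, whose proof in \cite{BGS} proceeds by comparing the Koszul complex degree by degree with any linear projective resolution of $K_\La/(K_\La)_{>0}$ (whose existence is the defining property of Koszul algebras established in Theorem~\ref{koszul}) and observing that the graded ranks match term by term. Nothing in that argument is special to $K_\La$, so we simply quote it rather than reprove it.
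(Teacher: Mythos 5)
Your proof is correct and is essentially the paper's own argument: the paper also deduces exactness directly from \cite[Theorem 2.6.1]{BGS} (which you cite as Proposition 2.6.1), using the Koszulity established in Theorem~\ref{koszul}; your extra verifications that (\ref{complex2}) is a complex of projectives generated in the right degrees are harmless but not needed beyond the citation.
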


\begin{proof}
This is \cite[Theorem 2.6.1]{BGS}.
\end{proof}

To formulate the final corollary, recall the matrix $P_\La(q)$ from (\ref{pmat}) and
the Cartan matrix $C_\La(q)$ from \cite [(5.6)]{BS}.
Continuing to assume that $|\La| < \infty$,
define the {\em Poincar\'e matrix}
\begin{equation}\label{poine}
E_\La(q) := (e_{\la,\mu}(q))_{\la,\mu \in \La}
\end{equation}
where
$e_{\la,\mu}(q) := \sum_{i \geq 0}
q^i \dim \ext_{K_\La}^i(L(\la),L(\mu))$.
Because $K_\La$
has finite global dimension in our current situation
each $e_{\la,\mu}(q)$ is a polynomial.

\begin{Corollary}\label{poin}
If $|\La| < \infty$ then
$E_\La(q)  = C_\La(-q)^{-1}= P_\La(q)^T P_\La(q)$.
\end{Corollary}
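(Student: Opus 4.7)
The plan is to prove the two equalities in turn, treating the first as a purely algebraic consequence of identities already in hand and the second via the Koszul complex.

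For $C_\La(-q)^{-1} = P_\La(q)^T P_\La(q)$, the argument is formal. By the BGG-type reciprocity \cite[(5.17)]{BS} we have $C_\La(q) = D_\La(q) D_\La(q)^T$, and substituting $q \to -q$ in Corollary~\ref{invmat} yields $D_\La(-q) = P_\La(q)^{-1}$. Combining these gives $C_\La(-q) = P_\La(q)^{-1} (P_\La(q)^T)^{-1}$, and inverting both sides establishes the claim.

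For $E_\La(q) = C_\La(-q)^{-1}$, I would extract a linear projective resolution of each simple module from the Koszul complex (Corollary~\ref{kex}). Observe first that $(K_\La)_0 = k_\La$ is spanned by the pairwise orthogonal idempotents $\{e_\la\:|\:\la \in \La\}$, since a basis vector $(a \la b)$ has internal degree zero precisely when $a = \underline{\la}$ and $b = \overline{\la}$; hence $K_\La / (K_\La)_{>0} \cong \bigoplus_\la L(\la)$ as a left $K_\La$-module. Each $\cP_n$ in (\ref{complex}) is naturally a $(K_\La, k_\La)$-bimodule with the right $k_\La$-action on the second tensor factor, so applying $(-)e_\la$ to (\ref{complex2}) splits off, for each $\la$, an exact sequence
$$
\cdots \longrightarrow \cP_n e_\la \longrightarrow \cdots \longrightarrow \cP_0 e_\la = P(\la) \longrightarrow L(\la) \longrightarrow 0
$$
in which each $\cP_n e_\la$ is projective and generated in internal degree $n$, hence of the form $\bigoplus_\nu P(\nu)\langle n \rangle^{\oplus f^{(n)}_{\la,\nu}}$ for non-negative integers $f^{(n)}_{\la,\nu}$.

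A standard purity argument then identifies these multiplicities with Ext dimensions: since $\hom_{K_\La}(P(\nu)\langle n\rangle, L(\mu))$ is concentrated in internal degree $-n$, all differentials in $\hom_{K_\La}(\cP_\bullet e_\la, L(\mu))$ are degree-zero maps between graded vector spaces living in different internal degrees, and so must vanish; this gives $\dim\ext^n_{K_\La}(L(\la), L(\mu)) = f^{(n)}_{\la,\mu}$ and therefore $e_{\la,\mu}(q) = \sum_{n\geq 0} q^n f^{(n)}_{\la,\mu}$. Comparing graded dimensions of $e_\mu(-)$ applied to the exact sequence above yields
$$
\sum_{n \geq 0} (-q)^n \sum_\nu f^{(n)}_{\la,\nu}\, c_{\mu,\nu}(q) = \delta_{\la,\mu},
$$
which in matrix form reads $E_\La(-q) C_\La(q)^T = I$. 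To eliminate the transpose, I would invoke the anti-automorphism $*$ from \cite[(4.14)]{BS}: it is homogeneous of degree zero and restricts to a vector space isomorphism $e_\la K_\La e_\mu \cong e_\mu K_\La e_\la$, showing that $C_\La(q)$ is a symmetric matrix. Hence $E_\La(-q) C_\La(q) = I$, giving $E_\La(q) = C_\La(-q)^{-1}$ and completing the proof. The main delicate point is the purity step identifying $f^{(n)}_{\la,\mu}$ with $\dim\ext^n_{K_\La}(L(\la),L(\mu))$, but this is routine for linear resolutions over a Koszul algebra; everything else is algebraic manipulation of identities already established.
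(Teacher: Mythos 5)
Your proposal is correct, and it splits naturally into a part that coincides with the paper and a part that takes a genuinely different route. The second equality $C_\La(-q)^{-1} = P_\La(q)^T P_\La(q)$ is handled exactly as in the paper: invert the factorisation $C_\La(q) = D_\La(q)D_\La(q)^T$ from \cite[(5.17)]{BS} and use Corollary~\ref{invmat}. For the first equality $E_\La(q) = C_\La(-q)^{-1}$, however, the paper simply cites \cite[Theorem 2.11.1]{BGS}, whereas you re-derive this numerical Koszulity statement directly from the Koszul complex of Corollary~\ref{kex}: splitting off $(-)e_\la$ to get a linear projective resolution of $L(\la)$, using the purity/degree-concentration argument (the same device as in the paper's proof of Corollary~\ref{indep}) to identify the multiplicities $f^{(n)}_{\la,\mu}$ with $\dim\ext^n_{K_\La}(L(\la),L(\mu))$, and then taking an Euler characteristic to get $E_\La(-q)C_\La(q)^T = I$, with the transpose removed via symmetry of $C_\La(q)$ (which is in any case immediate from $C_\La(q)=D_\La(q)D_\La(q)^T$). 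Your argument is essentially the route the paper itself follows later for the infinite-dimensional case (Corollaries~\ref{Es} and \ref{poin2}), so it is entirely consistent with the paper's machinery; what it buys is self-containedness modulo Corollary~\ref{kex}, at the cost of redoing work that \cite{BGS} packages as a standard fact. Two small points worth making explicit: the purity step also shows $f^{(n)}_{\la,\mu}=0$ for $n$ greater than the (finite) global dimension, or alternatively one should note that $\cP_n e_\la$ is generated in degree $n$ so each fixed internal degree of $e_\mu\cP_\bullet e_\la$ involves only finitely many terms — either remark justifies the Euler characteristic computation; and the identification $\dim_q e_\mu P(\nu) = c_{\mu,\nu}(q)$ should be recorded, as it is what converts the graded-dimension count into the matrix identity.
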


\begin{proof}
The first equality is \cite[Theorem 2.11.1]{BGS}.
The second equality follows
by inverting the
formula $C_\La(-q) = D_\La(-q) D_\La(-q)^T$ from \cite[(5.17)]{BS}
and using Corollary~\ref{invmat}.
\end{proof}

\phantomsubsection{Koszulity in the general case}
Recall a {\em locally unital algebra} $K$ is an
associative algebra with a system $\{e_i\:|\:i \in I\}$
of mutually orthogonal idempotents such that
$K = \bigoplus_{i,j\in I} e_i K e_j$.
There is a natural notion of a
{\em locally unital Koszul algebra}:
a locally unital positively
graded algebra $K = \bigoplus_{n \geq 0} K_n$ such that
\begin{itemize}
\item $K_0$ is a (possibly infinite) direct sum of matrix algebras;
\item the module $K  / K_{>0}$ has a linear projective resolution.
\end{itemize}
Assume for the remainder of the subsection that $\La$ is
block with $|\La| = \infty$.
The algebra $K_\La$ is not unital
but it is locally unital; see \cite[(4.13)]{BS}.
We wish to prove that it is Koszul in the new sense.
(We remark that a detailed treatment of quadratic and Koszul duality
for non-unital algebras, based on the approach of \cite{MVS},
can be found for example in \cite{MOS}.)

Recall the notation $\prec$ from \cite[$\S$4]{BS}.
If $\Ga \prec \La$ then
$\Ga$ is a finite block that is canonically identified with a finite subset of $\La$; see \cite[(4.9)]{BS}.
Moreover
the algebra  $K_\Ga$ is canonically identified with a subalgebra of $K_\La$; see \cite[(4.12)]{BS}.
Any finite subset of $\La$ is contained in some $\Ga \prec \La$.
It follows that any finite dimensional subspace of $K_\La$ is contained
in $K_\Ga$ for some $\Ga \prec \La$.
Since each $K_\Ga$ is quadratic by Corollary~\ref{quadratic}
we deduce:

\begin{Lemma}\label{q2}
$K_\La$ is quadratic.
\end{Lemma}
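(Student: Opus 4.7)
The plan is to exhibit the natural graded algebra homomorphism
$$
\pi: T(V)/(R) \longrightarrow K_\La
$$
as an isomorphism, where $V := (K_\La)_1$, $k_\La := (K_\La)_0$, the tensor algebra $T(V)$ is formed over $k_\La$ (as a locally unital algebra), and $R \subseteq V \otimes_{k_\La} V$ is the kernel of the multiplication map $V \otimes_{k_\La} V \to (K_\La)_2$. By construction $\pi$ is well-defined, so the content is to check bijectivity in each homogeneous degree, and the strategy will be to transfer quadraticity along the directed system of subalgebras $K_\Ga \hookrightarrow K_\La$ indexed by $\Ga \prec \La$.

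For each such $\Ga$ I would set $V_\Ga := (K_\Ga)_1$ and $k_\Ga := (K_\Ga)_0$. The embedding $K_\Ga \hookrightarrow K_\La$ respects both the grading and the system of idempotents, so it restricts to inclusions $k_\Ga \hookrightarrow k_\La$ and $V_\Ga \hookrightarrow V$ and induces natural injections $V_\Ga^{\otimes_{k_\Ga} n} \hookrightarrow V^{\otimes_{k_\La} n}$ for every $n$, compatible with the multiplication maps into $(K_\Ga)_n$ and $(K_\La)_n$. Writing $R_\Ga$ for the kernel of $V_\Ga \otimes_{k_\Ga} V_\Ga \to (K_\Ga)_2$, this gives $R_\Ga = R \cap (V_\Ga \otimes_{k_\Ga} V_\Ga)$.

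The crucial input is that everything is the colimit of its finite sub-block versions. Any element of $V^{\otimes_{k_\La} n}$ is a finite sum of elementary tensors which, after expansion via the idempotent decomposition, involves only finitely many $e_\la$; enlarging $\Ga$ to contain the corresponding finite set of weights puts the element into $V_\Ga^{\otimes_{k_\Ga} n}$, so $V^{\otimes_{k_\La} n} = \bigcup_{\Ga \prec \La} V_\Ga^{\otimes_{k_\Ga} n}$, and likewise $(K_\La)_n = \bigcup_\Ga (K_\Ga)_n$. Combined with Corollary~\ref{quadratic} (each $K_\Ga$ is quadratic, so is generated in degree one with relations in degree two), taking unions in degrees one and two shows that $V$ generates $K_\La$ as an algebra, giving surjectivity of $\pi$. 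For injectivity, given $x \in V^{\otimes_{k_\La} n}$ mapping to zero in $(K_\La)_n$, I would choose $\Ga$ with $x \in V_\Ga^{\otimes_{k_\Ga} n}$; since $(K_\Ga)_n \hookrightarrow (K_\La)_n$ is injective, $x$ also vanishes in $(K_\Ga)_n$, and the quadraticity of $K_\Ga$ then expresses $x$ as a combination of elements $a \otimes r \otimes b$ with $r \in R_\Ga \subseteq R$, whence $x = 0$ in $T(V)/(R)$.

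The only real obstacle is bookkeeping: one must verify that the change of coefficient algebra from $k_\Ga$ to $k_\La$ introduces no unwanted identifications on tensor powers, and that the ideal generated by $R_\Ga$ inside $T(V_\Ga)$ embeds into the ideal generated by $R$ inside $T(V)$. Both points become transparent once $V_\Ga^{\otimes_{k_\Ga} n}$ and $V^{\otimes_{k_\La} n}$ are decomposed via the orthogonal idempotents---the former appears as precisely the summand of the latter indexed by chains $(\la_0,\dots,\la_n)$ of weights lying in $\Ga$---but this is the only step that requires genuine care in the locally unital setting.
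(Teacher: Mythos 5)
Your argument is correct and is essentially the paper's own: the paper deduces Lemma~\ref{q2} precisely by observing that every finite-dimensional subspace of $K_\La$ (hence every finite collection of basis vectors and intermediate weights) lies in $K_\Ga$ for some finite sub-block $\Ga \prec \La$, and then invoking Corollary~\ref{quadratic} for $K_\Ga$. You have merely written out the routine direct-limit bookkeeping (compatibility of $V_\Ga$, $R_\Ga$ and the idempotent decompositions) that the paper leaves implicit.
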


Let $k_\La := (K_\La)_{0}$, which is an infinite direct sum of copies of the
ground field by \cite[(5.2)]{BS}.
Let $V_\La := (K_\La)_{1}$
and $R_\La \subseteq V_\La \otimes V_\La$ denote the elements
whose canonical image in $(K_\La)_2$ is zero.
Lemma~\ref{q2}
means we can identify $K_\La = T(V_\La) / (R_\La)$.
Now construct the Koszul complex
\begin{equation}\label{newk}
\cdots\stackrel{d_2}{\longrightarrow}
\cP_2 \stackrel{d_1}{\longrightarrow} \cP_1 \stackrel{d_0}{\longrightarrow} \cP_0 \stackrel{\eps}{\longrightarrow} K_\La / (K_\La)_{> 0}
\longrightarrow 0
\end{equation}
in the infinite dimensional
case in exactly the same way as (\ref{complex2}).
Note for $\Ga \prec \La$ that $V_\Ga$ is a subspace
of $V_\La$ and $R_\Ga$ is a subspace of $R_\La$.

\begin{Lemma}\label{tr}
Given $n \geq 0$ and $\la,\mu \in \La$,
there exists $\Ga \prec \La$ such that
$\la,\mu \in \Ga$ and
$e_\mu V_\La^{\otimes n} e_\la = e_\mu V_\Ga^{\otimes n} e_\la$.
Hence
$$
e_\mu\bigcap_{i=0}^{n-2}
\left(V_\La^{\otimes i} \otimes R_\La \otimes V_\La^{\otimes (n-2-i)}\right)
e_\la =
e_\mu
\bigcap_{i=0}^{n-2}
\left(V_\Ga^{\otimes i} \otimes R_\Ga \otimes V_\Ga^{\otimes (n-2-i)}\right)e_\la.
$$
\end{Lemma}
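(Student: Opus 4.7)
Plan: My approach is to reduce the first equality $e_\mu V_\La^{\otimes n} e_\la = e_\mu V_\Ga^{\otimes n} e_\la$ to finite-dimensionality of $e_\mu V_\La^{\otimes n} e_\la$. Once this is established, pick a basis of $e_\mu V_\La^{\otimes n} e_\la$ consisting of finitely many elementary tensors $v_n \otimes \cdots \otimes v_1$ of diagrammatic basis vectors of $V_\La$; by the observation used in the proof of Lemma~\ref{q2}, each of the finitely many $v_i$ that appears is contained in $V_{\Ga'}$ for some $\Ga' \prec \La$, so taking $\Ga \prec \La$ large enough to contain $\la$, $\mu$ and all of these $\Ga'$'s yields $e_\mu V_\La^{\otimes n} e_\la \subseteq e_\mu V_\Ga^{\otimes n} e_\la$, while the reverse inclusion is automatic from $V_\Ga \subseteq V_\La$.

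To prove finite-dimensionality, decompose by idempotents
\[
e_\mu V_\La^{\otimes n} e_\la \;=\; \bigoplus_{\nu_1,\dots,\nu_{n-1} \in \La}\; e_\mu V_\La e_{\nu_{n-1}} \otimes \cdots \otimes e_{\nu_1} V_\La e_\la ,
\]
with $\nu_0 := \la$ and $\nu_n := \mu$. Each summand is finite-dimensional by the local finite-dimensionality of $K_\La$ recorded after (\ref{stiller}), so it suffices to show that the neighbourhood $N(\beta) := \{\alpha \in \La : e_\alpha V_\La e_\beta \neq \{0\}\}$ is finite for each $\beta$: then sequences $(\nu_1,\dots,\nu_{n-1})$ whose summand is nonzero must satisfy $\nu_1 \in N(\la)$, $\nu_2 \in N(\nu_1)$, etc., giving only finitely many options.

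To verify this combinatorial claim, inspect the degree-$1$ diagrammatic basis vectors $(\underline{\alpha}\,\nu''\,\overline{\beta})$ of $e_\alpha V_\La e_\beta$: by Lemma~\ref{deglem} we have $\deg(\underline{\alpha}\nu'') + \deg(\nu''\overline{\beta}) = 1$ with $\alpha, \beta \le \nu''$, so either $\nu'' = \alpha$ (and $\alpha$ is obtained from $\beta$ by swapping the labels at the two endpoints of a single cap of $\overline{\beta}$) or $\nu'' = \beta$ (and $\alpha$ is obtained from $\beta$ by swapping the labels at an $\up\down$-inversion of $\beta$ paired by a cup of $\underline{\alpha}$). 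Each alternative gives finitely many candidate $\alpha$'s, since $\overline{\beta}$ has only finitely many caps and $\beta$ has only finitely many $\up\down$-inversions under the weight conventions of \cite[$\S 2$]{BS}.

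Finally, for the ``Hence'' assertion, since $\ex_\Ga^\La : K_\Ga \hookrightarrow K_\La$ is a degree-preserving algebra injection the quadratic relations satisfy $R_\Ga = R_\La \cap V_\Ga^{\otimes 2}$; propagating this through the outer tensor factors (or equivalently, using the partial multiplication map from $V_\La^{\otimes n}$ to $V_\La^{\otimes i} \otimes (K_\La)_2 \otimes V_\La^{\otimes (n-2-i)}$ and the injectivity of $(K_\Ga)_2 \hookrightarrow (K_\La)_2$) gives $V_\Ga^{\otimes i} \otimes R_\Ga \otimes V_\Ga^{\otimes (n-2-i)} = (V_\La^{\otimes i} \otimes R_\La \otimes V_\La^{\otimes (n-2-i)}) \cap V_\Ga^{\otimes n}$ for every $i$. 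Intersecting these equalities over $i$ and applying $e_\mu(\cdot) e_\la$ in combination with the first part produces the desired equality of intersections. The main obstacle in the whole argument is the finiteness of $N(\beta)$; once that combinatorial input is secured, everything else is a routine direct-limit/partial-multiplication manipulation.
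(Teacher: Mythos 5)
Your overall strategy is the same as the paper's: show that only finitely many weights can occur in the diagrammatic spanning vectors of $e_\mu V_\La^{\otimes n} e_\la$, choose $\Ga \prec \La$ containing all of them together with $\la$ and $\mu$, and note that the reverse inclusion is automatic; your ``Hence'' paragraph, via $R_\Ga = R_\La \cap V_\Ga^{\otimes 2}$, is a correct and more explicit version of the paper's ``similar considerations''. The paper gets the finiteness at once by writing the spanning vectors as chains $(\underline{\la}_0\nu_1\overline{\la}_1)\otimes\cdots\otimes(\underline{\la}_{n-1}\nu_n\overline{\la}_n)$ of degree-one oriented circle diagrams with \emph{both} ends $\la_0=\mu$ and $\la_n=\la$ fixed and citing \cite[Lemma 2.4]{BS}. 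You instead try to prove by hand the stronger one-sided statement that $N(\beta):=\{\alpha\in\La : e_\alpha V_\La e_\beta\neq\{0\}\}$ is finite for each fixed $\beta$, and that is where there is a genuine gap.

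The flaw is your Case B count: ``$\beta$ has only finitely many $\up\down$-inversions'' is false in precisely the setting of this subsection ($|\La|=\infty$). For instance, in the block whose weights are, up to finitely many transpositions, $\up$ at all vertices $\leq 0$ and $\down$ at all vertices $\geq 1$, every $\down$ has infinitely many $\up$'s to its left. (Your Case A count is fine, but ``$\overline{\beta}$ has only finitely many caps'' is exactly the statement that each individual weight has finite defect --- true because a general block is a union of bounded ones --- and is really the same local-finiteness input the paper is importing from Part I, not a freebie.) The conclusion you need in Case B, that only finitely many $\alpha$ have $\underline{\alpha}\beta$ oriented with exactly one clockwise cup, is in fact true, but inversion-counting does not prove it: one must use that every vertex lying strictly under a cup of $\underline{\alpha}$ is itself an endpoint of a nested cup, so all cups of such a diagram are short, and each must sit either at one of the finitely many $\down$'s of $\beta$ having an $\up$ somewhere to their right (finitely many, since otherwise one manufactures arbitrarily many disjoint $\down\up$ arcs and $\defect(\beta)=\infty$) or at one of the finitely many places where $\beta$ reads $\up\down$ on neighbouring vertices. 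As written, the crux step of your argument --- the one you yourself flag as the main obstacle --- therefore fails; the quickest repair is to argue as the paper does, invoking \cite[Lemma 2.4]{BS} for the chains with both endpoints $\mu$ and $\la$ fixed, or else to insert the combinatorial argument just sketched in place of the inversion count.
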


\begin{proof}
Note $e_\mu V_\La^{\otimes n} e_\la$ is spanned by
vectors of the form
$$
(\underline{\la}_0 \nu_1 \overline{\la}_1)
\otimes(\underline{\la}_1 \nu_2 \overline{\la}_2)\otimes
\cdots
\otimes(\underline{\la}_{n-1} \nu_n \overline{\la}_n)
$$
for weights $\la_i, \nu_j \in \La$ such that
$\la_0 = \mu, \la_n = \la$ and
each $\underline{\la}_{i-1} \nu_i \overline{\la}_i$
is an oriented circle diagram of degree one.
By \cite[Lemma 2.4]{BS} there are only finitely many such weights.
Hence we can find $\Ga \prec \La$ such that all possible
$\la_i,\nu_j \in \La$ actually belong to $\Ga$.
Then it is clear that $e_\mu V_\La^{\otimes n} e_\la
= e_\mu V_\Ga^{\otimes n} e_\la$.
The second statement follows by similar considerations.
\end{proof}

\begin{Theorem}
The Koszul complex is exact.
\end{Theorem}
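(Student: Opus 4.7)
The plan is to deduce exactness of the Koszul complex for $K_\La$ from the finite dimensional case via a direct limit argument. Since $\cP_n = \bigoplus_{\mu \in \La} e_\mu \cP_n$ and this decomposition is respected by the differentials and the augmentation $\eps$, exactness of (\ref{newk}) is equivalent to exactness of the complex $e_\mu \cP_\bullet$ for every $\mu \in \La$. Fix $\mu \in \La$ and let $\Xi$ denote the directed set of finite sub-blocks $\Ga \prec \La$ with $\mu \in \Ga$, ordered by inclusion.

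For each $\Ga \in \Xi$, the extension map of \cite[$\S$4]{BS} realises $K_\Ga$ as a graded subalgebra of $K_\La$, whence $V_\Ga \subseteq V_\La$, $R_\Ga \subseteq R_\La$, and therefore $W_n^\Ga \subseteq W_n^\La$, where $W_n^K := \bigcap_{i=0}^{n-2} V_K^{\otimes i} \otimes R_K \otimes V_K^{\otimes (n-2-i)}$. These inclusions assemble into a degreewise injective chain map of augmented complexes $e_\mu \cP_\bullet(K_\Ga) \hookrightarrow e_\mu \cP_\bullet(K_\La)$: the differentials are compatible because both are given by algebra multiplication, and $K_\Ga \hookrightarrow K_\La$ preserves multiplication. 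By Theorem~\ref{koszul} and Corollary~\ref{kex}, $\cP_\bullet(K_\Ga)$ is exact for every $\Ga \in \Xi$, and hence so is $e_\mu \cP_\bullet(K_\Ga)$.

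The crux is to show that $e_\mu \cP_n(K_\La) = \bigcup_{\Ga \in \Xi} e_\mu \cP_n(K_\Ga)$ for every $n \geq 0$. A general element $x \in e_\mu \cP_n(K_\La) = e_\mu K_\La \otimes_{k_\La} W_n^\La$ can be written as a finite sum $\sum_i (e_\mu a_i e_{\la_i}) \otimes (e_{\la_i} w_i e_{\rho_i})$ involving only finitely many weights $\la_i, \rho_i \in \La$ and finitely many vectors $a_i, w_i$. Using the remark in the paper that any finite dimensional subspace of $K_\La$ is contained in some $K_\Ga$, together with Lemma~\ref{tr} applied to each pair $(\la_i, \rho_i)$, one can choose a single $\Ga \in \Xi$ large enough that each $e_\mu a_i e_{\la_i}$ lies in $K_\Ga$ and each $e_{\la_i} w_i e_{\rho_i}$ lies in $e_{\la_i} W_n^\Ga e_{\rho_i}$. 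Then $x$ lies in $e_\mu \cP_n(K_\Ga)$, as required.

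Once the directed union description is in hand, exactness of $e_\mu \cP_\bullet(K_\La)$ follows routinely: a cycle $x \in e_\mu \cP_n(K_\La)$ lies in $e_\mu \cP_n(K_\Ga)$ for some $\Ga \in \Xi$; the relation $d_{n-1}(x) = 0$ still holds in $e_\mu \cP_{n-1}(K_\Ga)$ by injectivity of the inclusion into $e_\mu \cP_{n-1}(K_\La)$; exactness of $e_\mu \cP_\bullet(K_\Ga)$ then furnishes $y \in e_\mu \cP_{n+1}(K_\Ga) \subseteq e_\mu \cP_{n+1}(K_\La)$ with $d_n(y) = x$. The only non-routine ingredient is the directed union statement, and this rests almost entirely on Lemma~\ref{tr} together with basic bookkeeping about the extension maps; no further essentially new ideas are required.
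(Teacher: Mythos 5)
Your proposal is correct and follows essentially the same route as the paper: both reduce exactness to the finite-dimensional case (Corollary~\ref{kex}) by truncating with idempotents and exploiting the finiteness of weights that underlies Lemma~\ref{tr}. The only difference is in packaging — the paper truncates on both sides and fixes the degree $m$ so that each piece $(e_\mu \cP_n e_\la)_m$ literally equals the corresponding piece of the Koszul complex of a single $\Ga \prec \La$, whereas you truncate on one side only and realise $e_\mu \cP_\bullet(K_\La)$ as a directed union of the exact subcomplexes $e_\mu \cP_\bullet(K_\Ga)$; both work, granted the routine directedness and monotonicity (in $\Ga$) of $V_\Ga$, $R_\Ga$ and the extension maps that you correctly flag as bookkeeping.
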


\begin{proof}
The image of $d_0$ is the kernel of $\eps$
and of course $\eps$ is surjective.
So it suffices to show
that the complex of vector spaces
$$
0 \longrightarrow (e_\mu \cP_m e_\la)_m \stackrel{d_{m-1}}{\longrightarrow}
\cdots
\stackrel{d_1}{\longrightarrow}
 (e_\mu \cP_1 e_\la)_m
\stackrel{d_0}{\longrightarrow}
 (e_\mu \cP_0 e_\la)_m
$$
is exact for each $\la,\mu \in \La$ and $m \geq 0$.
The elements of all of the vector spaces appearing in this complex are
linear combinations of
vectors of the form
$$
(\underline{\mu} \nu_0 \overline{\la}_0)
\otimes
(\underline{\la}_0 \nu_1 \overline{\la}_1)\otimes\cdots\otimes
(\underline{\la}_{n-1} \nu_n \overline{\la}_n)
$$
for $0 \leq n \leq m$ and $\la_i,\nu_j \in \La$ with
$\la_n = \la$ such that
$\underline{\mu} \nu_0 \overline{\la}_0$ is an oriented circle
diagram of degree $(m-n)$ and each
$\underline{\la}_{i-1} \nu_i \overline{\la}_i$ is an oriented
circle diagram of degree one.
By \cite[Lemma 2.4]{BS} there are only finitely many such weights.
Hence we can find $\Ga \prec \La$ such that all possible
$\la_i, \nu_j$ belong to $\Ga$.
Just like in the proof of Lemma~\ref{tr}, we deduce that
$(e_\mu \cP_n e_\la)_m
= (e_\mu \cQ_n e_\la)_m$
where $\cQ_n$ denotes the $n$th term in the Koszul complex for
$K_\Ga$.
Now we are done because the Koszul complex for $K_\Ga$
is already known to be exact by Corollary~\ref{kex}.
\end{proof}

\begin{Corollary} $K_\La$ is Koszul.
\end{Corollary}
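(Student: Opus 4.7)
The plan is to unpack the definition of Koszulity for locally unital algebras and verify both bullet points directly, using the preceding theorem as the crucial input.

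First, I would observe that the zeroth component $k_\La = (K_\La)_0$ is, by \cite[(5.2)]{BS}, an infinite direct sum of copies of the ground field $\C$, hence a (possibly infinite) direct sum of matrix algebras. This takes care of the first bullet point in the definition of a locally unital Koszul algebra.

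Next I would argue that the Koszul complex (\ref{newk}) provides the required linear projective resolution of $K_\La / (K_\La)_{>0}$. Exactness is exactly the content of the preceding theorem, so nothing more needs to be said there. It remains to check that each $\cP_n$ is projective and generated in degree $n$. For projectivity, note that each $\cP_n$ is of the form $K_\La \otimes_{k_\La} W_n$, where $W_n$ is the $k_\La$-submodule of $V_\La^{\otimes n}$ appearing in (\ref{complex}) (interpreted in the infinite dimensional setting). Since $k_\La$ is a direct sum of copies of $\C$ and $W_n = \bigoplus_{\la,\mu \in \La} e_\mu W_n e_\la$ decomposes accordingly, $\cP_n$ is a direct sum of modules of the form $K_\La e_\mu\langle n\rangle$ (one for each basis vector of $e_\mu W_n e_\la$), which is a direct sum of shifted projective indecomposables. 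For the grading, $V_\La$ sits in degree one, so $W_n \subseteq V_\La^{\otimes n}$ sits in degree $n$, and hence $\cP_n$ is generated in degree $n$ as desired.

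Both conditions in the definition of a locally unital Koszul algebra are satisfied, so $K_\La$ is Koszul. There is essentially no obstacle: all the hard work has already been done in establishing exactness of the Koszul complex via the reduction to the finite-dimensional case $\Ga \prec \La$ and Corollary~\ref{kex}; this final corollary is a formal consequence.
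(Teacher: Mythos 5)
Your proposal is correct and follows the same route as the paper: the paper's own proof is the one-line observation that, granted the exactness theorem, the Koszul complex (\ref{newk}) is a linear projective resolution of $K_\La/(K_\La)_{>0}$. The extra verifications you supply (that $k_\La$ is a direct sum of copies of $\C$, that each $\cP_n = K_\La\otimes_{k_\La}W_n$ is a direct sum of shifted modules $K_\La e_\mu\langle n\rangle$ and is generated in degree $n$) are exactly the routine details the paper leaves implicit.
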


\begin{proof}
The Koszul complex is a linear projective resolution of
$K_\La / (K_\La)_{> 0}$.
\end{proof}

\begin{Corollary}\label{Es}
Given $\la,\mu \in \La$ and $n \geq 0$,
there exists $\Ga \prec \La$ such that
$\la,\mu \in \Ga$ and
$\ext^n_{K_\La}(L(\la),L(\mu))
\cong
\ext^n_{K_\Ga}(L(\la),L(\mu))$.
\end{Corollary}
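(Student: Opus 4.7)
The plan is to read off both Ext groups directly from the Koszul complex (\ref{newk}), identify them with a common finite-dimensional piece, and then invoke Lemma~\ref{tr} to pass to a finite $\Ga$.

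First I would note that each term $\cP_m$ of (\ref{newk}) carries a natural right $k_\La$-module structure (via right multiplication on $K_\La$ when $m=0$, and on the final tensor factor otherwise) with respect to which the differentials $d_m$ are bilinear. Hence right multiplication by $e_\la$ produces a subcomplex
$$\cdots \longrightarrow \cP_2 e_\la \longrightarrow \cP_1 e_\la \longrightarrow \cP_0 e_\la \longrightarrow L(\la) \longrightarrow 0,$$
which is a linear projective resolution of $L(\la)$. Writing $W_m := \bigcap_{i=0}^{m-2}(V_\La^{\otimes i} \otimes R_\La \otimes V_\La^{\otimes(m-2-i)})$ for $m \geq 2$, $W_1 := V_\La$ and $W_0 := k_\La$, the decomposition $W_m e_\la = \bigoplus_{\nu} e_\nu W_m e_\la$ gives
$$\cP_m e_\la \cong \bigoplus_{\nu \in \La} P(\nu)\langle m\rangle^{\dim e_\nu W_m e_\la}$$
as graded $K_\La$-modules.

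Next, applying $\hom_{K_\La}(-,L(\mu))$ I would use that $\hom_{K_\La}(P(\nu)\langle m\rangle, L(\mu))$ vanishes unless $\nu = \mu$, and is concentrated in internal degree $-m$ when $\nu = \mu$; this forces every coboundary to vanish for grading reasons, so
$$\ext^n_{K_\La}(L(\la),L(\mu)) \cong \bigl(e_\mu W_n e_\la\bigr)^*,$$
concentrated in internal degree $-n$.

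Finally, Lemma~\ref{tr} supplies $\Ga \prec \La$ containing both $\la$ and $\mu$ such that $e_\mu W_n e_\la$ coincides with its counterpart $e_\mu W_n^\Ga e_\la$ built inside $K_\Ga$; applying Theorem~\ref{koszul} (since $|\Ga| < \infty$) and repeating the previous two steps for $K_\Ga$ gives $\ext^n_{K_\Ga}(L(\la),L(\mu)) \cong (e_\mu W_n^\Ga e_\la)^*$, and the claimed isomorphism follows. The main obstacle lies in the first step, where one must carefully unpack the $k_\La$-bimodule structure on each $\cP_m$ and verify that $\cP_\bullet e_\la$ really is a summand subcomplex; once that is in place, the rest is pure bookkeeping with the Koszul grading.
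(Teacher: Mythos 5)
Your proposal is correct and takes essentially the same route as the paper: multiply the Koszul complex (\ref{newk}) on the right by $e_\la$ to get a projective resolution of $L(\la)$, apply $\hom_{K_\La}(?,L(\mu))$ and identify $\ext^n_{K_\La}(L(\la),L(\mu))$ with $e_\mu\bigcap_{i}\bigl(V_\La^{\otimes i}\otimes R_\La\otimes V_\La^{\otimes(n-2-i)}\bigr)e_\la$ (the paper phrases your degree/vanishing bookkeeping as ``adjointness of tensor and hom,'' and records the space rather than its dual), then conclude with Lemma~\ref{tr} together with exactness of the Koszul complex for the finite block $\Ga$.
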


\begin{proof}
Multiplying the Koszul complex (\ref{newk}) on the
right by $e_\la$ gives a projective resolution of $L(\la)$.
Then apply the functor $\hom_{K_\La}(?, L(\mu))$ and use
adjointness of tensor and hom to deduce that
$$
\ext^n_{K_\La}(L(\la),L(\mu))
\cong
e_\mu\bigcap_{i=0}^{n-2}
\left(V_\La^{\otimes i} \otimes R_\La \otimes V_\La^{\otimes (n-2-i)}\right)
e_\la,
$$
interpreting the right hand side as
$e_\mu V_\La e_\la$ in case $n=1$ and as $e_\mu k_\La e_\la$
in case $n=0$.
Now apply Lemma~\ref{tr}.
\end{proof}

The previous corollary
implies in particular that each $\ext^n_{K_\La}(L(\la),L(\mu))$
is finite dimensional.
So it still makes sense to define the {Poincar\'e matrix}
$E_\La(q)$
exactly as in (\ref{poine}), though its entries
may now be power series rather than polynomials.

\begin{Corollary}\label{poin2}
$E_\La(q)  = C_\La(-q)^{-1}= P_\La(q)^T P_\La(q)$.
\end{Corollary}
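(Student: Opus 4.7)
The plan is to reduce both equalities to the finite-dimensional Corollary~\ref{poin} by combining the locality statement of Corollary~\ref{Es} with the fact that the combinatorial polynomials $p_{\la,\mu}(q)$ defined in~(\ref{newp}) are intrinsic to the pair $(\la,\mu)$ and do not depend on the ambient block.

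I will first handle the second equality $C_\La(-q)^{-1} = P_\La(q)^T P_\La(q)$ by formal manipulation. Combining the factorisation $C_\La(q) = D_\La(q) D_\La(q)^T$ from \cite[(5.17)]{BS} with Corollary~\ref{invmat} (which gives $P_\La(-q) = D_\La(q)^{-1}$) yields the identity directly, once one knows that the relevant infinite matrix products and inverses are entrywise well-defined. To justify this, I will observe that $p_{\nu,\la}(q) = 0$ unless $\nu \leq \la$ in the Bruhat order, and that the downset $\{\nu \in \La : \nu \leq \la\}$ is finite (any two weights in $\La$ agree at cofinitely many vertices, and only finitely many rearrangements of $\up$'s and $\down$'s in a common finite window are possible).

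For the first equality, fix $\la,\mu \in \La$ and choose a sub-block $\Ga \prec \La$ containing $\la$, $\mu$, and the finite set $\{\nu \in \La : \nu \leq \la \text{ or } \nu \leq \mu\}$. The block-independence of the $p_{\nu,\la}(q)$ then gives
\begin{equation*}
[P_\Ga(q)^T P_\Ga(q)]_{\la,\mu} = [P_\La(q)^T P_\La(q)]_{\la,\mu}.
\end{equation*}
Corollary~\ref{poin} applied inside the finite-dimensional quasi-hereditary algebra $K_\Ga$ identifies the left side with $e^\Ga_{\la,\mu}(q)$, the Poincar\'e polynomial computed in $\Ga$. It remains to show $e^\Ga_{\la,\mu}(q) = e_{\la,\mu}(q)$ coefficient by coefficient; by Corollary~\ref{Es} (which, as the proof via Lemma~\ref{tr} shows, continues to hold after any further enlargement of $\Ga$) the match holds for $n \leq \operatorname{gldim}(K_\Ga)$.

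The main obstacle is establishing the match in degrees $n > \operatorname{gldim}(K_\Ga)$, where the $\Ga$-side vanishes trivially but the $\La$-side a priori need not. The fix is to iterate the enlargement: for each such $n$, pick $\Ga_n \supseteq \Ga$ realising Corollary~\ref{Es} in degree $n$; since $\Ga$ already exhausts the Bruhat downsets of $\la$ and $\mu$, Corollary~\ref{poin} applied to $K_{\Ga_n}$ produces the same finite sum $\sum_\nu p_{\nu,\la}(q)\,p_{\nu,\mu}(q) = e^\Ga_{\la,\mu}(q)$, forcing the $q^n$-coefficient on the $\La$-side to vanish. Together with the second equality this completes the proof.
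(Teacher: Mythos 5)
There is a genuine gap: your claim that the Bruhat downset $\{\nu \in \La : \nu \leq \la\}$ is finite is false for general infinite blocks, and your argument leans on it at the decisive point. The parenthetical justification is a non sequitur: although any two weights of $\La$ differ at only finitely many vertices, the window in which they differ is not bounded uniformly over the downset. Concretely, take $\La$ to be the block on the number line $\Z$ whose weights have a single $\down$ and $\up$'s elsewhere; if $\la$ has its $\down$ at position $0$, then every weight $\nu_k$ with its $\down$ at position $-k$ satisfies $\nu_k \leq \la$ (indeed $p_{\nu_k,\la}(q)=q^k$), so the downset is infinite. Similar phenomena occur in any block where a $\down$ has infinitely many positions available to its left, including blocks of the type arising in Part IV. Consequently there is in general no $\Ga \prec \La$ containing the downsets of $\la$ and $\mu$, and the two places where you use this both collapse: the entrywise well-definedness of $P_\La(q)^TP_\La(q)$ is not settled by your finiteness argument (it is true, but needs proof), and, more seriously, your comparison of the sums over $\Ga$ and over $\Ga_n$ -- ``Corollary~\ref{poin} applied to $K_{\Ga_n}$ produces the same finite sum'' -- has no justification once $\Ga_n$ may contain weights $\nu \leq \la,\mu$ lying outside $\Ga$, which is exactly the situation that must be handled.

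This missing step is precisely what the paper's proof supplies, and your overall strategy (reduce to Corollary~\ref{poin} via Corollary~\ref{Es} and the block-independence of the combinatorial polynomials) is otherwise the same as the paper's. The paper fixes the degree $n$, chooses $\Ga$ realising Corollary~\ref{Es} in degree $n$, and then runs a limiting argument over intermediate blocks $\Ga \prec \Upsilon \prec \La$: by the proof of Corollary~\ref{Es} one has $\ext^n_{K_\Upsilon}(L(\la),L(\mu)) \cong \ext^n_{K_\Ga}(L(\la),L(\mu))$, so applying Corollary~\ref{poin} to each such $\Upsilon$ shows that the degree-$n$ contribution of every $\nu \in \Upsilon$ outside $\Ga$ vanishes (all coefficients being nonnegative); since every $\nu \in \La$ lies in some such $\Upsilon$, the sum over $\La$ equals the sum over $\Ga$. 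This single argument simultaneously kills the potential high-degree Ext groups on the $\La$-side that worried you, establishes the coefficientwise finiteness of the matrix product, and avoids any appeal to finiteness of downsets. If you replace your ``exhaust the downsets'' step by this comparison over intermediate blocks, your proof becomes correct and is essentially the paper's.
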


\begin{proof}
In view of the factorisation
\cite[(5.17)]{BS} and Corollary~\ref{invmat},
we just need to show that
$$
\dim
\ext^n_{K_\La}(L(\la),L(\mu))
= \sum_{m=0}^n \sum_{\nu \in \La}
p_{\nu,\la}^{(m)} p_{\nu,\mu}^{(n-m)}
$$
for each fixed $n \geq 0$.
By Corollary~\ref{Es} we can pick $\Ga \prec \La$ (depending on $n$)
such that
$\ext^n_{K_\La}(L(\la),L(\mu))
\cong\ext^n_{K_\Ga}(L(\la),L(\mu))$.
Applying Corollary~\ref{poin} to $\Ga$, it remains to show that
\begin{equation}\label{rems}
\sum_{m=0}^n \sum_{\nu \in \La}
p_{\nu,\la}^{(m)} p_{\nu,\mu}^{(n-m)}
=
\sum_{m=0}^n \sum_{\nu \in \Ga}
p_{\nu,\la}^{(m)} p_{\nu,\mu}^{(n-m)}.
\end{equation}
By the proof of Corollary~\ref{Es},
$\ext^n_{K_\Upsilon}(L(\la),L(\mu))
\cong\ext^n_{K_\Ga}(L(\la,L(\mu))$
for any $\Ga \prec \Upsilon \prec \La$.
Applying Corollary~\ref{poin} to $\Upsilon$,
we deduce that (\ref{rems}) is definitely true if $\La$ is
replaced by any such a block $\Upsilon$.
Hence the equality must also be true for $\La$ itself.
\end{proof}

\section{The double centraliser property}\label{sbijective}

In this section, we are at last able to explain the
full connection between the generalised Khovanov algebra
$H_\La$ and the algebra $K_\La$.
If $\La$ is of infinite defect
then $H_\La = \{0\}$, in which case there is of course no
connection at all.
In all other cases, we are going to prove a
double centralizer property which implies that
$K_\La$ is a quasi-hereditary cover of $H_\La$
in the sense of \cite[Definition 4.34]{Rou}.

\phantomsubsection{Prinjective modules}
To start with, let $\La$ be any block.
We call a $K_\La$-module a
{\em prinjective module} if it is both projective and injective
in the category of
locally finite dimensional graded $K_\La$-modules.
The next theorem is
rather analogus to a theorem of Irving \cite{I1}
in the context of parabolic category $\mathcal O$.
It shows that the prinjective indecomposable modules
are parametrised by
exactly
the set $\La^\circ$ of weights of maximal defect
that appears in the definition (\ref{hla}) of $H_\La$ (no coincidence!).

\begin{Theorem}\label{prin}
For $\la \in \La$, the following conditions are equivalent:
\begin{itemize}
\item[(i)] $\la$ is of maximal defect, i.e. $\la$ belongs to $\La^\circ$;
\item[(ii)] $P(\la)^\circledast \cong P(\la)\langle -2\defect(\la)\rangle$;
\item[(iii)] $P(\la)$ is a prinjective module;
\item[(iv)] $L(\la)$ is isomorphic to a submodule of
$V(\mu)\langle j \rangle$ for some $\mu \in \La$, $j \in \Z$.
\end{itemize}
\end{Theorem}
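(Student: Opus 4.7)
The plan is to prove the implications (i)$\Rightarrow$(ii)$\Rightarrow$(iii), (ii)$\Rightarrow$(iv), (iv)$\Rightarrow$(i), and (iii)$\Rightarrow$(i), which together establish the equivalence. The first two are essentially formal, relying on projective functors and duality; the last two require a combinatorial analysis of socles of cell modules, and this is where the real work lies.

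For (i)$\Rightarrow$(ii), I would realise $P(\la)$ as the image under a projective functor of a one-dimensional self-dual module. Given $\la\in\La^\circ$, the cap diagram $\overline{\la}$ uses every $\down$ and $\up$ vertex of $\la$; take $t$ to be the $\La\Ga$-matching whose bottom cap diagram equals $\overline{\la}$ (there are then no rays to connect to a top cup diagram, which is empty), where $\Ga$ is obtained from $\La$ by relabelling, in every position inside a cap of $\overline{\la}$, the $\down$ or $\up$ as $\circ$ or $\times$ according to the orientation in $\la$. Then $\Ga$ contains a unique weight $\ga$, the module $L(\ga)=P(\ga)$ is one-dimensional (hence self-dual), $\caps(t)=\defect(\la)$, $\cups(t)=0$, and there are no upper circles in $t\overline{\ga}$. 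Theorem~\ref{pf}(iii) then yields $G^t_{\La\Ga}P(\ga)\cong P(\la)\langle -\defect(\la)\rangle$, and Theorem~\ref{duality} combined with $L(\ga)^\circledast\cong L(\ga)$ gives
\[
P(\la)^\circledast\langle \defect(\la)\rangle
\cong \bigl(G^t_{\La\Ga}L(\ga)\bigr)^\circledast
\cong G^t_{\La\Ga}L(\ga)
\cong P(\la)\langle -\defect(\la)\rangle,
\]
which rearranges to (ii). The step (ii)$\Rightarrow$(iii) is then immediate because $\circledast$ sends projectives to injectives, so (ii) makes $P(\la)$ simultaneously projective and injective. For (ii)$\Rightarrow$(iv), self-duality forces $\soc P(\la)\cong L(\la)\langle 2\defect(\la)\rangle$ (being the grading-reversed head of the dual); invoking the cell filtration of $P(\la)$ from Part~I, the smallest nonzero submodule of such a filtration is a shifted cell module $V(\mu)\langle k\rangle$ that must contain $\soc P(\la)$, so $L(\la)$ embeds into $V(\mu)\langle j\rangle$ for suitable $j$.

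The main obstacle is (iv)$\Rightarrow$(i), which requires showing that $\soc V(\mu)$ contains only simples $L(\nu)\langle j\rangle$ with $\nu\in\La^\circ$. My plan is to extract this directly from the cellular-basis description of $V(\mu)$: a basis vector $(c\mu|$ is annihilated by the radical of $K_\La$ precisely when the action formula~(\ref{Actby}) forces $(a\la b)(c\mu|=0$ for every $(a\la b)$ of positive degree, and a direct combinatorial check using Theorem~\ref{cell4} together with Lemma~\ref{deglem} pins this down to $c=\underline{\nu}$ for some weight $\nu$ of maximal defect. Hence every simple in $\soc V(\mu)$ is of the form $L(\nu)\langle j\rangle$ with $\nu\in\La^\circ$, so the hypothesis of~(iv) forces $\la\in\La^\circ$.

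Finally, for (iii)$\Rightarrow$(i) I would combine the previous steps with uniqueness of indecomposable injectives. Injectivity of $P(\la)$ forces $\soc P(\la)\cong L(\mu)\langle k\rangle$ to be simple; the cell-filtration argument from (ii)$\Rightarrow$(iv) embeds $L(\mu)$ in some shifted cell module, so (iv)$\Rightarrow$(i) just proved yields $\mu\in\La^\circ$. Then by (i)$\Rightarrow$(ii)$\Rightarrow$(iii), the module $P(\mu)$ is also prinjective with $\soc P(\mu)\cong L(\mu)\langle 2\defect(\mu)\rangle$. Since an indecomposable injective is determined up to grading shift by its socle, $P(\la)\cong P(\mu)\langle k-2\defect(\mu)\rangle$, and matching heads forces $\la=\mu\in\La^\circ$, closing the circle of implications.
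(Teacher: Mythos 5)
Your overall architecture is sound and the formal implications are essentially the paper's: (i)$\Rightarrow$(ii) by realising $P(\la)$ as a projective functor applied to the one-dimensional projective--simple module of a defect-zero singleton block and invoking Theorems~\ref{pf} and~\ref{duality}, and (ii)$\Rightarrow$(iii), (ii)$\Rightarrow$(iv) by duality plus the cell filtration of projectives from \cite[Theorem 5.1]{BS}. But the implication you yourself single out as ``where the real work lies'', (iv)$\Rightarrow$(i), is not proved in your proposal: you assert that ``a direct combinatorial check using Theorem~\ref{cell4} together with Lemma~\ref{deglem} pins this down to $c=\underline{\nu}$ for some weight $\nu$ of maximal defect'', but no such check is supplied, and this is precisely the content of the theorem. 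Two things are missing. First, the reduction from an arbitrary vector spanning a simple submodule of $V(\mu)$ to a single basis vector: one multiplies by $e_\la$ and uses that $e_\la V(\mu)$ is at most one-dimensional, spanned by $(\underline{\la}\mu|$, so one may assume the submodule is spanned by $(\underline{\la}\mu|$ itself. Second, and crucially, one must exhibit a positive-degree element of $K_\La$ acting nontrivially on $(\underline{\la}\mu|$ whenever $\la\notin\La^\circ$. The paper's argument does this concretely: if $\la$ is not of maximal defect, the diagram $\underline{\la}\mu$ contains two neighbouring, oppositely oriented rays; replacing them by a cup produces $\underline{\nu}$ with $\nu\neq\la$ and both $\underline{\nu}\la$, $\underline{\nu}\mu$ oriented, and then \cite[Theorem 4.4(iii)]{BS} together with (\ref{Actby}) gives $(\underline{\nu}\la\overline{\la})(\underline{\la}\mu|=(\underline{\nu}\mu|\neq 0$, contradicting that $(\underline{\la}\mu|$ spans a submodule. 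Without this (or an equivalent) explicit construction your proof of (iv)$\Rightarrow$(i) is a placeholder, and since your (iii)$\Rightarrow$(i) also routes through (iv)$\Rightarrow$(i), the whole cycle collapses onto this gap.

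Two smaller points. In (i)$\Rightarrow$(ii) your construction of $t$ assumes $\overline{\la}$ has no rays for $\la\in\La^\circ$; this is only true when $\up(\La)=\down(\La)$. In general a maximal-defect weight still has $|\up(\La)-\down(\La)|$ rays (all oriented the same way), and these must become vertical line segments of $t$, with the singleton block $\Ga=\{\ga\}$ of defect zero retaining the corresponding $\up$/$\down$ labels -- exactly the matching the paper uses; the rest of your computation ($\caps(t)=\defect(\la)$, $\cups(t)=0$, no upper circles) then goes through unchanged. Finally, your (iii)$\Rightarrow$(i) invokes simplicity of $\soc P(\la)$ for an indecomposable injective and uniqueness of indecomposable injectives with prescribed socle; this injective-hull machinery in $\mod{K_\La}$ is avoidable -- the paper instead proves (iii)$\Rightarrow$(iv) directly, using that $P(\la)^\circledast$ is projective with socle $L(\la)$ and applying the cell filtration to it -- and avoiding it would make your argument both shorter and safer in the infinite-block setting.
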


\begin{proof}
(i)$\Rightarrow$(ii).
Assume that $\la \in \La^\circ$.
There exists a block $\Ga = \{\ga\}$ of defect $0$
and a proper $\La \Ga$-matching $t$ such that
$\overline{\la}$ is the upper reduction of
$\underline{\ga}t$.
This matching $t$ has $\caps(t) = \defect(\la)$ and $\cups(t) = 0$.
By Theorem~\ref{pf},
$$
P(\la) \cong
(G_{\La\Ga}^t P(\ga))\langle\defect(\la)\rangle.
$$
As $P(\ga)$ is irreducible, we have that
$P(\ga)^\circledast \cong P(\ga)$.
Hence by Theorem~\ref{duality}, we deduce that
$P(\la)^\circledast \cong
P(\la) \langle -2 \defect(\la) \rangle$.

(ii)$\Rightarrow$(iii).
This follows by duality
because $P(\la)$ is projective in the category
of locally finite dimensional graded $K_\La$-modules.

(iii)$\Rightarrow$(iv).
As $L(\la)$ is the irreducible head of $P(\la)$,
we deduce that $L(\la)$ is the irreducible socle of
$P(\la)^\circledast$.
Assuming (iii), $P(\la)^\circledast$ is a projective module.
By
\cite[Theorem~5.1]{BS}, projective modules have a filtration by cell modules.
Hence $P(\la)^{\circledast}$ has a submodule isomorphic to
$V(\mu)\langle j \rangle$ for some
$\mu \in \La$ and $j \in \Z$,
such that $L(\la)$ is a submodule of this
$V(\mu)\langle j \rangle$.

(iv)$\Rightarrow$(i).
Take $\mu \in \La$ and
suppose that
$w$ is a vector
spanning an irreducible $K_\La$-submodule
of $V(\mu)$ isomorphic to $L(\la)\langle -j \rangle$ for some
$\la \in \La$ and $j \in \Z$.
Since $e_\la$ acts as $1$ on $L(\la)$
and as zero on all the basis vectors
$(\underline{\nu} \mu|$  of $V(\mu)$ for $\la \neq \nu$,
we may assume actually that
$\underline{\la}\mu$ is an oriented cup diagram and
$w = (\underline{\la} \mu|$.
We need to show that $\la$ is of maximal defect.

Assume for a contradiction that $\la$ is not of maximal defect.
Then we can find a pair of rays
in the diagram $\underline{\la} \mu$ that are oppositely oriented
and neigbouring (in the sense that there are only vertices labelled
$\circ$ or $\times$ at the ends of cups in between).
Define $\nu$ so that $\underline{\nu}$
is obtained from $\underline{\la}$ by replacing these two rays
by a single cup.
We then have that $\underline{\nu}\la$ and $\underline{\nu}\mu$
are oriented cup diagrams,  and
$\nu \neq \la$.
By \cite[Theorem~4.4(iii)]{BS} and (\ref{Actby}),
we see that
$(\underline{\nu} \la \overline{\la}) (\underline{\la} \mu|
=
(\underline{\nu} \mu|$.
This contradicts the assumption that $w = (\underline{\la}\mu|$ spans
a $K_\La$-submodule
of $V(\mu)$.
\end{proof}

\phantomsubsection{``Struktursatz''}
From now on we assume that $\defect(\La) < \infty$.
The following theorem is key
to proving the double centraliser property.

\begin{Theorem}
For each $\la \in \La$, there is an exact sequence
$$
0 \rightarrow P(\la) \rightarrow \cP^0 \rightarrow \cP^1
$$
where $\cP^0$ and $\cP^1$ are
finite direct sums of prinjective indecomposable modules.
\end{Theorem}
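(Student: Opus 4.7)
The plan is to induct on the defect deficiency $d(\la) := \defect(\La) - \defect(\la)$. The base case $d(\la) = 0$ is immediate: Theorem~\ref{prin}(iii) gives that $P(\la)$ is itself prinjective, so I take $\cP^0 := P(\la)$ and $\cP^1 := 0$.

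For the inductive step, suppose $d(\la) > 0$. Since $\la \notin \La^\circ$, there exist neighbouring indices $i < j$ in $I(\La)$ whose labels in $\la$ are $\up$ and $\down$ (otherwise the labels of $\la$ would read $\down \cdots \down \up \cdots \up$, forcing $\la \in \La^\circ$). Let $\la'' \in \La^{\!\vee\wedge}_{i,j}$ arise from $\la$ by transposing at $i, j$, so $d(\la'') = d(\la) - 1$, and apply the inductive hypothesis to obtain an exact sequence $0 \to P(\la'') \to \cQ^0 \to \cQ^1$ with $\cQ^0, \cQ^1$ prinjective. Let $\Ga$ be the smaller block obtained by deleting positions $i, j$, let $\la' \in \Ga$ be the image of $\la''$, and let $t$ be the $\La\Ga$-matching of Theorem~\ref{pres}. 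The functor $G^t_{\La\Ga}$ sends prinjective indecomposables to prinjective indecomposables: by Theorem~\ref{pf}(iii), $G^t_{\La\Ga} P(\gamma) \cong P(\tilde\gamma)\langle -1\rangle$ for $\gamma \in \Ga^\circ$, and $\defect(\tilde\gamma) = \defect(\gamma) + 1 = \defect(\La)$ forces $\tilde\gamma \in \La^\circ$.

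The bridge from $P(\la)$ to the inductive data is a natural map $\eta_\la : P(\la) \to P(\la'')\langle -1\rangle$ coming from the unit of the adjunction $(G^{t^*}_{\Ga\La}\langle -1\rangle, G^t_{\La\Ga})$ of Corollary~\ref{adj}, together with the computations $G^{t^*}_{\Ga\La} P(\la) \cong P(\la')\langle 1\rangle$ and $G^t_{\La\Ga} P(\la') \cong P(\la'')\langle -1\rangle$ from Theorem~\ref{pf}(iii); equivalently, $\eta_\la$ lifts through projectivity of $P(\la)$ the composite surjection $P(\la'')\langle -1\rangle = G^t_{\La\Ga} P(\la') \twoheadrightarrow G^t_{\La\Ga} V(\la') \twoheadrightarrow V(\la)$ arising from the short exact sequence $0 \to V(\la'')\langle -1\rangle \to G^t_{\La\Ga} V(\la') \to V(\la) \to 0$ of Theorem~\ref{vf}(i). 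Granted that $\eta_\la$ is injective and its cokernel $C := P(\la'')\langle -1\rangle / \eta_\la(P(\la))$ embeds into a prinjective $T$, one assembles the desired sequence by taking $\cP^0 := \cQ^0\langle -1\rangle$ and composing $\eta_\la$ with the shifted inductive embedding $P(\la'')\langle -1\rangle \hookrightarrow \cQ^0\langle -1\rangle$; the cokernel $\cP^0/P(\la)$ then has a filtration with $C$ on the bottom and $\cQ^0\langle -1\rangle / P(\la'')\langle -1\rangle \hookrightarrow \cQ^1\langle -1\rangle$ on top, and extending $C \hookrightarrow T$ via injectivity of $T$ in $\mod{K_\La}$ yields a map $\cP^0 \to \cP^1 := \cQ^1\langle -1\rangle \oplus T$ with kernel exactly $P(\la)$.

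The hard part will be verifying these two properties of $\eta_\la$. Injectivity should follow from a direct diagrammatic computation using the explicit description of the adjunction unit from Theorem~\ref{gbi} and the multiplication formula of Theorem~\ref{cell4}, showing that $\eta_\la$ does not kill the cyclic generator $e_\la \in P(\la)$; combined with the fact that $P(\la)$ has simple head $L(\la)$, this rules out a non-trivial kernel. Controlling the cokernel $C$ reduces to showing its socle lies in $\{L(\sigma) : \sigma \in \La^\circ\}$, after which $C$ embeds into its injective hull, which is a direct sum of prinjectives by Theorem~\ref{prin}(ii); the key input is Theorem~\ref{prin}(iv), which identifies the irreducible submodules of any cell module as those labelled by weights in $\La^\circ$, applied to the cell filtration of $P(\la'')\langle -1\rangle$ together with the explicit description of $\eta_\la(P(\la))$ as the kernel of the composite surjection onto $V(\la)$.
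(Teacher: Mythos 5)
Your overall scaffolding (projective functors preserve prinjectives via Corollary~\ref{fp} and Theorem~\ref{duality}, and Theorem~\ref{prin}(ii),(iv) to embed a module with controlled socle into prinjectives) is sound, but the inductive engine driving it has genuine gaps. First, the claim that transposing a neighbouring $\up\down$ pair raises the defect by one is false, so the induction on $d(\la)=\defect(\La)-\defect(\la)$ is not well-founded: in the block with two $\down$'s and two $\up$'s, the weight $\la=\up\down\up\down$ has defect $1<2=\defect(\La)$, yet both of its neighbouring transpositions, $\down\up\up\down$ and $\up\down\down\up$, again have defect $1$. Second, and more fatally, the bridge map $\eta_\la\colon P(\la)\to P(\la'')\langle -1\rangle$ cannot be injective in general, for dimension reasons: in the same example with $(i,j)=(1,2)$ one has $\la''=\down\up\up\down$, and the cell filtrations of \cite[Theorem 5.1]{BS} give $\dim P(\la)=\dim V(\up\down\up\down)+\dim V(\up\up\down\down)=5+3=8$ while $\dim P(\la'')=\dim V(\down\up\up\down)+\dim V(\up\down\up\down)=2+5=7$ (and the other choice of pair gives $7$ as well), so no embedding of $P(\la)$ into $P(\la'')$ exists for any shift. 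This reflects the general fact that a single ``translation through one wall'' loses information, so its unit cannot be injective on all projectives. There is also a directional error: in Theorem~\ref{vf}(i) the \emph{larger} weight indexes the submodule, so with your labelling the sequence reads $0\to V(\la)\to G^t_{\La\Ga}V(\la')\to V(\la'')\langle-1\rangle\to 0$; there is no composite surjection $P(\la'')\langle-1\rangle\twoheadrightarrow V(\la)$ to lift against, so even the definition of $\eta_\la$ breaks down.

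The paper avoids any step-by-step induction. It reduces to the extreme case in a single stroke: letting $\ga$ be obtained from $\la$ by deleting the vertices at the ends of all caps of $\overline{\la}$, and $t$ the $\La\Ga$-matching whose caps are exactly those caps, Theorem~\ref{pf} gives $G^t_{\La\Ga}\bigl(P(\ga)\langle\caps(t)\rangle\bigr)\cong P(\la)$, with $\ga$ of defect zero, i.e.\ maximal in its block. For a maximal weight one has $P(\ga)=V(\ga)$, which by the cell filtration of \cite[Theorem 5.1]{BS} embeds into the single prinjective $P(\mu)\langle-\defect(\Ga)\rangle$ where $\mu$ is obtained by swapping to maximal defect; the cokernel is cell-filtered, so by Theorem~\ref{prin}(iv) its socle involves only weights in $\Ga^\circ$ and by Theorem~\ref{prin}(ii) its injective hull is a finite direct sum of prinjectives. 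Applying the exact, prinjective-preserving functor $G^t_{\La\Ga}$ to the resulting sequence $0\to P(\ga)\langle\caps(t)\rangle\to \cQ^0\to\cQ^1$ finishes the proof. So the correct fix for your argument is not to open one cap at a time with a horseshoe splice, but to open all caps at once and do the honest embedding only in the defect-zero case, where $P=V$ makes the socle control you had in mind actually applicable.
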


\begin{proof}
Assume to start with that $\la$ is maximal in $\La$ with respect to the
Bruhat order.
Then $P(\la) = V(\la)$ by \cite[Theorem~5.1]{BS}.
Let $\mu$ be the weight obtained from $\la$
by swapping $\defect(\La)$ $\up$'s with $\defect(\La)$ $\down$'s.
This ensures that $\mu \in \La^\circ$ and $\la\overline{\mu}$
is an oriented cap diagram of degree $\defect(\La)$.
By \cite[Theorem~5.1]{BS} again we deduce that
$V(\la)$ embeds into $\cP^0 := P(\mu) \langle -\defect(\La)\rangle$
and the quotient $\cP^0 / V(\la)$ has a filtration by cell modules.
By Theorem~\ref{prin} $\cP^0$ is
a prinjective indecomposable module, the socle of $\cP^0 / V(\la)$
is a finite direct sum
of $L(\nu)\langle j \rangle$'s for $\nu \in \La^\circ$,
and
the injective hull of each such $L(\nu)\langle j \rangle$
is the prinjective indecomposable module
$P(\nu)\langle j  - 2 \defect(\nu) \rangle$.
Hence the injective hull $\cP^1$ of $\cP^0 / V(\la)$
is a finite direct sum of prinjective indecomposable modules.
This constructs the desired exact sequence
$0 \rightarrow P(\la) \rightarrow \cP^0 \rightarrow \cP^1$.

Now for the general case, let $\ga$ be the weight obtained
from $\la$ by removing the vertices at the ends of all
caps in the cap diagram $\overline{\la}$.
Then the diagram $\overline{\ga}$ has no caps, hence $\ga$ is
maximal in in its block
$\Ga$.
Let $t$ be the proper $\La\Ga$-matching such that
$\overline{\la}$ is the upper reduction of $t\overline{\ga}$.
The previous paragraph applied to $\ga$
implies there is an exact sequence
$0 \rightarrow P(\ga)\langle \caps(t) \rangle
\rightarrow \cQ^0 \rightarrow \cQ^1$
with $\cQ^0$ and $\cQ^1$ being finite direct sums
of (possible several copies of) prinjective indecomposable modules.
Now apply the
functor $G_{\La\Ga}^t$, noting
$G^t_{\La\Ga}
(P(\ga) \langle \caps(t) \rangle) \cong P(\la)$ by Theorem~\ref{pf}, to
obtain an
exact sequence
$0 \rightarrow P(\la) \rightarrow \cP^0 \rightarrow \cP^1$
where $\cP^i := G^t_{\La\Ga} \cQ^i$.
Since $F$ sends finitely generated prinjectives to
finitely generated prinjectives by Corollary~\ref{fp}
and Theorem~\ref{duality},
each $\cP^i$ is a finite direct sum of prinjective indecomposable
modules, as required.
\end{proof}

The following corollary is analogous to Soergel's
Struktursatz from \cite{Soergel} for regular blocks
of category $\mathcal O$ (hence the title of this subsection).

\begin{Corollary}\label{fullyfaithful}
The truncation functor $e:\mod{K_\La} \rightarrow \mod{H_\La}$
from (\ref{efunc})
is fully faithful on projectives.
\end{Corollary}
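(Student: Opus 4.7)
The plan is to prove that for any two projective $K_\La$-modules $P$ and $Q$ the natural map
$$
\hom_{K_\La}(P, Q) \longrightarrow \hom_{H_\La}(eP, eQ)
$$
induced by $e$ is a bijection. By additivity it suffices to treat the indecomposable case $P = P(\la)$, $Q = P(\mu)$ for $\la,\mu \in \La$. I would begin with the base case in which $\la \in \La^\circ$: here $eP(\la) = H_\La e_\la$ and $eP(\mu) = e_{\La^\circ} K_\La e_\mu$, where $e_{\La^\circ} := \sum_{\nu \in \La^\circ} e_\nu$. Both sides of the comparison then identify canonically with $e_\la K_\La e_\mu$ via evaluation at $e_\la$, so the map is an isomorphism by direct computation. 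In particular, when $\la,\mu$ are both in $\La^\circ$, we get the familiar identification of $\hom$-spaces between prinjectives.

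To extend to arbitrary $\la$ with $\mu \in \La^\circ$, I would feed the exact sequence $0 \to P(\la) \to \cP^0 \to \cP^1$ supplied by the preceding theorem (with $\cP^0,\cP^1$ direct sums of prinjective indecomposables) into $\hom_{K_\La}(?,P(\mu))$. Since $\mu \in \La^\circ$, Theorem~\ref{prin} tells us that $P(\mu)$ is injective in $\mod{K_\La}$, so we obtain an exact sequence
$$
\hom_{K_\La}(\cP^1,P(\mu)) \to \hom_{K_\La}(\cP^0,P(\mu)) \to \hom_{K_\La}(P(\la),P(\mu)) \to 0.
$$
Applying the exact functor $e$ gives a short exact sequence $0 \to eP(\la) \to e\cP^0 \to e\cP^1$, and since $H_\La$ is symmetric (Part~I), the projective $H_\La$-module $eP(\mu) = H_\La e_\mu$ is also injective in $\mod{H_\La}$, so $\hom_{H_\La}(?,eP(\mu))$ produces a parallel right-exact sequence. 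The two left-hand vertical comparison maps are isomorphisms by the base case applied to the prinjective summands of $\cP^0,\cP^1$, and a short diagram chase pushes the isomorphism onto the right-hand term.

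Finally, to allow $\mu$ to be arbitrary, I would apply the preceding theorem to $P(\mu)$ to obtain $0 \to P(\mu) \to \cQ^0 \to \cQ^1$ with $\cQ^0,\cQ^1$ prinjective. Applying the left-exact functors $\hom_{K_\La}(P(\la),?)$ and $\hom_{H_\La}(eP(\la),?)$ (using exactness of $e$ again) yields a commutative ladder of left-exact five-term sequences in which the two right-hand vertical maps are already known to be isomorphisms by the previous paragraph. The five-lemma then closes the argument. The essential ingredient I expect to shoulder the real work is the symmetry of $H_\La$: without it the prinjective module $eP(\mu)$ would not be injective in $\mod{H_\La}$ and the two right-exact hom sequences in the middle step would fail to line up, so this is the place where the finite-defect hypothesis is really being used.
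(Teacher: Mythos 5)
Your argument is correct, and it is essentially a worked-out version of the general double centraliser criterion that the paper invokes by citation: the paper's proof is just the preceding theorem combined with \cite[Theorem 2.10]{KSX} (see also \cite[Corollary 1.7]{Stp}), and your three-step d\'evissage --- the identification $\hom_{K_\La}(P(\la),P(\mu))\cong e_\la K_\La e_\mu\cong\hom_{H_\La}(H_\La e_\la,eP(\mu))$ for $\la\in\La^\circ$, then arbitrary source against a prinjective target via the copresentation $0\to P(\la)\to \cP^0\to\cP^1$ and exactness of the two $\hom$-rows, then arbitrary target via the copresentation of $P(\mu)$ and left exactness --- is exactly the skeleton of those references, resting as it should on the preceding theorem and on Theorem~\ref{prin}. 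The one substantive difference is your middle step, where exactness of the $H_\La$-side row is obtained from injectivity of $eP(\mu)=H_\La e_\mu$ in $\mod{H_\La}$, justified by the symmetry of $H_\La$ from \cite{BS}; this is a legitimate extra input (though when $|\La|=\infty$ with $\defect(\La)<\infty$ you should add a word that $H_\La e_\mu$ is finite dimensional and self-dual under $\circledast$, hence injective in the locally finite dimensional graded category), but note that the criterion of \cite{KSX} and \cite{Stp} is stated with no self-injectivity hypothesis on the small algebra, so the paper's route gets by with only the $K_\La$-side injectivity of the prinjectives supplied by Theorem~\ref{prin}. Indeed part of your middle step can be done without it: if $f\colon P(\la)\to P(\mu)$ satisfies $ef=0$ then its image is a submodule of $P(\mu)$ with zero $e$-part, which is impossible since $\soc P(\mu)$ is a degree shift of $L(\mu)$ with $\mu\in\La^\circ$ by Theorem~\ref{prin}(ii), so faithfulness is free. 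Finally, your closing diagnosis slightly misplaces the role of the hypothesis $\defect(\La)<\infty$: the symmetry of $H_\La$ holds regardless (vacuously when $H_\La=\{0\}$), and the finiteness of the defect is really used to ensure $\La^\circ\neq\emptyset$ and, above all, to produce the two-step copresentation by prinjectives in the theorem preceding the corollary, which is where the genuine work lies.
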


\begin{proof}
This follows by the theorem and \cite[Theorem 2.10]{KSX};
see also \cite[Corollary 1.7]{Stp} for a self-contained argument.
\end{proof}

\begin{Corollary}\label{doublecentraliser}
The truncation functor $e:\mod{K_\La} \rightarrow \mod{H_\La}$
from (\ref{efunc}) defines a graded algebra isomorphism
$$
K_\La
\cong \bigoplus_{\la, \mu \in \La}
\hom_{H_\La}(e P(\la), e P(\mu)),
$$
where the right hand side becomes an algebra
with multiplication defined by the rule $f g := g \circ f$
for $f:e P(\la) \rightarrow e P(\mu)$ and
$g:e P(\mu) \rightarrow e P(\nu)$.
\end{Corollary}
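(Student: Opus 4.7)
The plan is to combine two standard facts: the tautological identification of a ring with the endomorphism algebra of its regular module, and the full faithfulness of $e$ on projectives from Corollary~\ref{fullyfaithful}. The only real content is to track conventions carefully.

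First I would set up the tautological isomorphism. Because $P(\la) = K_\La e_\la$ and $K_\La$ decomposes as $K_\La = \bigoplus_{\la,\mu \in \La} e_\la K_\La e_\mu$, we have a canonical graded vector space isomorphism
\begin{equation*}
K_\La \stackrel{\sim}{\longrightarrow} \bigoplus_{\la,\mu \in \La}\hom_{K_\La}(P(\la),P(\mu)),\qquad x \in e_\la K_\La e_\mu \longmapsto r_x,
\end{equation*}
where $r_x:P(\la)\to P(\mu),\ a\mapsto ax$ is right multiplication by $x$. Under this map the product $xy$ of $x \in e_\la K_\La e_\mu$ and $y \in e_\mu K_\La e_\nu$ corresponds to $r_{xy}=r_y \circ r_x$, which matches precisely the multiplication convention $fg := g \circ f$ declared in the statement. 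Hence the map is a homogeneous isomorphism of graded algebras.

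Next I would apply Corollary~\ref{fullyfaithful}: the truncation functor $e$ induces a graded vector space isomorphism
\begin{equation*}
\hom_{K_\La}(P(\la),P(\mu)) \stackrel{\sim}{\longrightarrow} \hom_{H_\La}(eP(\la),eP(\mu)),\qquad f \longmapsto ef,
\end{equation*}
for every $\la,\mu \in \La$. Summing over all $(\la,\mu)$ and composing with the first isomorphism produces the map of the statement. It remains to check that this composite is compatible with multiplication, but this is automatic: $e$ is a functor, so $e(g \circ f) = eg \circ ef$, i.e. it intertwines the composition products on both sides, and we have already arranged our identification of $K_\La$ with $\bigoplus \hom_{K_\La}(P(\la),P(\mu))$ so that composition on the hom-side matches the algebra multiplication on $K_\La$. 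Because each of the three maps involved preserves the grading, the final isomorphism is an isomorphism of graded algebras.

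The main obstacle, if any, is purely bookkeeping: being careful that the opposite-composition convention $fg := g \circ f$ used in the statement matches the direction of the identification $e_\la K_\La e_\mu \cong \hom_{K_\La}(P(\la),P(\mu))$ given by right multiplication. With that pinned down, there is nothing further to do — all substantive content (the fact that non-prinjective projectives are recovered as kernels of maps between prinjectives, and that this information survives truncation by $e$) has already been absorbed into Corollary~\ref{fullyfaithful}.
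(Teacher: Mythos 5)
Your proposal is correct and follows essentially the same route as the paper: the paper's proof likewise applies Corollary~\ref{fullyfaithful} to identify $e_\la K_\La e_\mu$ with $\hom_{H_\La}(eP(\la),eP(\mu))$ via $x \mapsto e(\text{right multiplication by }x)$ and then sums over $\la,\mu$. You merely make explicit the tautological identification $e_\la K_\La e_\mu \cong \hom_{K_\La}(P(\la),P(\mu))$ and the check that $r_{xy}=r_y\circ r_x$ matches the convention $fg := g\circ f$, steps the paper leaves implicit.
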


\begin{proof}
By the previous corollary, there is a vector space isomorphism
$$
e_\la K_\La e_\mu \stackrel{\sim}{\rightarrow} \hom_{H_\La}(e P(\la), e P(\mu))
$$
sending $x \in e_\la K_\La e_\mu$
to the map
$e P(\la) \rightarrow e P(\mu)$ obtained by applying
the functor $e$ to the homomorphism
$P(\la) \rightarrow P(\mu)$ defined by right multiplication by $x$.
Now take the direct sum of these maps
over all $\la,\mu \in \La$.
\end{proof}

\begin{Remark}\rm
This double centraliser property is most interesting in the case that
$|\La| < \infty$. Then $e = \sum_{\la \in \La^\circ} e_\la$
is an idempotent in $K_\La$,
the generalised Khovanov algebra
$H_\La$ is equal to $e K_\La e \cong \End_{K_\La}(K_\La e)^{\op}$,
the functor $e$ is the obvious truncation functor defined by
multiplication by $e$,
and the double centralizer property shows simply that
$$
K_\La \cong \End_{H_\La}(e K_\La)^{\op}.
$$
If $|\La| = \infty$ (but still $\defect(\La) < \infty$) it often happens
that $H_\La = K_\La$, in which case the double centraliser property is rather vacuous; see \cite[(6.11)]{BS}.
\end{Remark}

\phantomsubsection{Rigidity of cell modules}
Continue to assume that $\defect(\La) < \infty$ and fix $\la \in \La$.
We know already from
Theorem~\ref{prin} that all constituents of the socle of the cell module $V(\la)$ are parametrized by
weights from the set $\La^\circ$.
We want to make this more precise by showing
that $V(\la)$ actually has
irreducible socle.

Let $\la^\circ$ be the unique weight $\mu \in \La^\circ$ such that $\underline{\mu} \la$
is oriented and $\deg(\underline{\mu} \la)$ is maximal.
To compute $\la^\circ$ in practise, start from the diagram $\la$.
Add clockwise cups to the diagram by repeatedly
connecting $\up\down$ pairs of vertices that are neighbours in the sense that there are no vertices in between not yet
connected to cups.
When no more such pairs are left, add nested anti-clockwise cups connecting as many as possible of the remaining vertices together
in $\down\up$ pairs.
Finally add rays (which will be oriented so either all are $\down$ or all are $\up$) to any vertices
left at the end. Then $\la^\circ$ is the weight whose associated cup diagram $\underline{\la^\circ}$ is the cup diagram
just constructed.
For example, if $\la = \down\down\down\up\up\down\up\up\down\down\up$ then
$\la^\circ = \down\down\down\up\down\up\down\down\up\up\up$:
$$
\begin{picture}(30,55)
\put(-88,45){\line(1,0){206}}
\put(-35,45){\oval(20,20)[b]}
\put(5,45){\oval(20,20)[b]}
\put(65,45){\oval(20,20)[b]}
\put(65,45){\oval(60,50)[b]}
\put(25,45){\oval(180,80)[b]}
\put(-85,45){\line(0,-1){40}}
\put(-87.8,45.2){$\scriptstyle\down$}
\put(-67.8,45.2){$\scriptstyle\down$}
\put(-47.8,45.2){$\scriptstyle\down$}
\put(12.2,45.2){$\scriptstyle\down$}
\put(72.2,45.2){$\scriptstyle\down$}
\put(92.2,45.2){$\scriptstyle\down$}

\put(-27.8,40.5){$\scriptstyle\up$}
\put(-7.8,40.5){$\scriptstyle\up$}
\put(32.2,40.5){$\scriptstyle\up$}
\put(52.2,40.5){$\scriptstyle\up$}
\put(112.2,40.5){$\scriptstyle\up$}

\put(-120,43){$\la$}
\put(-120,20){$\underline{\la^\circ}$}
\end{picture}
$$

\begin{Theorem}\label{simplesocle}
For any $\la \in \La$, we have that $\soc V(\la) \cong L(\la^\circ) \langle \deg(\underline{\la^\circ} \la)\rangle$.
\end{Theorem}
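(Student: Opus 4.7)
Set $d := \deg(\underline{\la^\circ}\la)$. The plan is to establish the theorem in two steps: first, exhibit an embedding $L(\la^\circ)\langle d\rangle \hookrightarrow V(\la)$; second, show that this submodule exhausts the socle.

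\emph{Existence.} By the very definition of $\la^\circ$, the cup diagram $\underline{\la^\circ}\la$ is oriented, so $v := (\underline{\la^\circ}\la|$ is a nonzero basis vector of $V(\la)$ in degree $d$. It gives a nonzero graded homomorphism $\phi: P(\la^\circ)\langle d\rangle \to V(\la)$ via $e_{\la^\circ} \mapsto v$, whose image $W := K_\La v$ is a cyclic submodule. I claim $W \cong L(\la^\circ)\langle d\rangle$; equivalently, $\phi$ annihilates the radical of $P(\la^\circ)\langle d\rangle$. It suffices to check that $(\underline{\nu}\nu'\overline{\la^\circ}) \cdot v = 0$ for every basis element of $K_\La$ of strictly positive degree. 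By (\ref{Actby}) and Theorem~\ref{cell4}, the action produces either zero or a scalar multiple of $(\underline{\nu}\la|$, and when nonzero, the grading forces $\deg(\underline{\nu}\la) > d$. If $\nu \in \La^\circ$, this directly contradicts the maximality of $d$ among $\{\deg(\underline{\mu}\la) : \mu \in \La^\circ, \underline{\mu}\la \text{ oriented}\}$. If $\nu \notin \La^\circ$, I would argue diagrammatically that $(\underline{\nu}\nu'\overline{\la^\circ})$ cannot exist as a basis element of $K_\La$ at all: since $\overline{\la^\circ}$ carries the maximum possible number of caps on its number line, any ``extra'' ray of $\underline{\nu}$ at a vertex sitting under a cap of $\overline{\la^\circ}$ produces a line in the diagram $\underline{\nu}\nu'\overline{\la^\circ}$ whose orientability (demanding equal ray labels) clashes with the orientability of the cap it contains (demanding opposite endpoint labels). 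Hence $W \cong L(\la^\circ)\langle d\rangle$ and embeds into $\soc V(\la)$.

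\emph{Exhaustion.} By Theorem~\ref{prin}, any simple constituent of $\soc V(\la)$ is isomorphic to some $L(\mu)\langle j\rangle$ with $\mu \in \La^\circ$. For such $\mu$, the standard estimate
\[
\dim\hom_{K_\La}(L(\mu)\langle j\rangle, V(\la)) \leq \dim\hom_{K_\La}(P(\mu)\langle j\rangle, V(\la)) = \dim(e_\mu V(\la))_{-j}
\]
evaluates to $1$ when $\underline{\mu}\la$ is oriented with $j = \deg(\underline{\mu}\la)$ and to $0$ otherwise. Hence each graded simple occurs in the socle at most once, so it remains to show that $L(\mu)\langle \deg(\underline{\mu}\la)\rangle \not\subseteq \soc V(\la)$ for $\mu \in \La^\circ$ with $\underline{\mu}\la$ oriented and $\mu \neq \la^\circ$. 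Since every simple $L(\mu)$ is one-dimensional (because $(K_\La)_0$ is a direct sum of copies of the ground field), I would do this by exhibiting a basis vector $(\underline{\nu}\la|$ with $\nu \neq \mu$ lying in the submodule generated by $(\underline{\mu}\la|$, forcing that submodule to be at least two-dimensional and hence not isomorphic to $L(\mu)\langle\deg(\underline{\mu}\la)\rangle$. The plan is to apply a ``defect-changing'' local move on the cup diagram $\underline{\mu}$ in the spirit of the proof of Theorem~\ref{prin}(iv)$\Rightarrow$(i) to produce an appropriate $\nu$ and a corresponding basis element of $K_\La$ whose action on $(\underline{\mu}\la|$ has nonzero scalar by Theorem~\ref{cell4}(iii).

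The main obstacle is the diagrammatic case analysis in the existence step for $\nu \notin \La^\circ$, where one must carefully exploit the maximality of caps in $\overline{\la^\circ}$ to derive the unavoidable orientability conflict for $\underline{\nu}\nu'\overline{\la^\circ}$.
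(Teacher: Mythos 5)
Your two-step plan (exhibit $L(\la^\circ)\langle d\rangle$ inside the socle, then show nothing else occurs) is reasonable, but both steps have gaps, and the second one is where the real content of the theorem lies. In the existence step, the claim that no basis vector $(\underline{\nu}\nu'\overline{\la^\circ})$ with $\nu\notin\La^\circ$ can exist is simply false: a line in an oriented circle diagram is allowed to contain caps, and orientability only demands that each cap carry one $\down$ and one $\up$; the ``both endpoints labelled the same'' rule you invoke applies to line segments of crossingless matchings, not to lines of circle diagrams. Concretely, in the block $\La=\{\down\up,\,\up\down\}$ one has $\la^\circ=\down\up$ for every $\la$, and $(\underline{\up\down}\,(\up\down)\,\overline{\down\up})$ is a perfectly good degree-one basis vector of $K_\La$ with $\up\down\notin\La^\circ$ (it is one of the arrows of the underlying quiver), so $e_\nu K_\La e_{\la^\circ}\neq\{0\}$ for $\nu\notin\La^\circ$ in general. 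The correct repair is different: one shows that $d=\deg(\underline{\la^\circ}\la)$ is the maximal degree of $V(\la)$ over \emph{all} $\mu$ with $\underline{\mu}\la$ oriented (replacing a neighbouring oppositely oriented pair of rays by a cup never decreases the degree and terminates at a weight of maximal defect), so $V(\la)_{>d}=0$, the top graded piece is killed by $(K_\La)_{>0}$, and $(\underline{\la^\circ}\la|$ spans a copy of $L(\la^\circ)\langle d\rangle$ in $\soc V(\la)$; this is what the paper extracts from the definition of $\la^\circ$ together with \cite[Theorem 5.2]{BS}.

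The more serious gap is in the exhaustion step. Excluding the other weights $\mu\in\La^\circ$ with $\underline{\mu}\la$ oriented and $\mu\neq\la^\circ$ is exactly the hard part, and the mechanism you sketch cannot work: since $\mu$ already has maximal defect, the diagram $\underline{\mu}\la$ contains no pair of oppositely oriented neighbouring rays, so the cupping move from the proof of Theorem~\ref{prin}(iv)$\Rightarrow$(i) produces no $\nu$ at all. Moreover the only elements whose scalar Theorem~\ref{cell4}(iii) guarantees to be $1$ are those of the form $(\underline{\nu}\mu\overline{\mu})$, and such elements with $\nu\neq\mu$ need not exist: for $\la=\up\down\up\down$ and $\mu=\down\down\up\up$ (Bruhat-minimal, so $V(\mu)=L(\mu)$) one has $\underline{\mu}\la$ oriented of degree $1<d=2$, yet there is no $(\underline{\nu}\mu\overline{\mu})$ with $\nu\neq\mu$; showing that this $(\underline{\mu}\la|$ is nevertheless not annihilated by $(K_\La)_{>0}$ requires controlling scalars that Theorem~\ref{cell4}(iii) does not cover. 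The paper sidesteps all of this by induction on the Bruhat order using projective functors: writing $\la$ as obtained from a smaller weight $\mu'$ by one transposition, it embeds $V(\la)\hookrightarrow G^t_{\La\La}V(\mu')$ (Theorem~\ref{vf}), embeds $V(\mu')$ into the injective hull $P(\nu)\langle\cdot\rangle$ of its inductively known simple socle, and uses Theorem~\ref{pf} to see that $G^t_{\La\La}P(\nu)$ is a direct sum of degree-shifted copies of a \emph{single} $P(\gamma)$ with $\gamma\in\La^\circ$; hence $\soc V(\la)$ is $L(\gamma)$-isotypic, and the multiplicity-freeness of $V(\la)$ then forces the socle to be irreducible. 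Your proposal contains no substitute for this mechanism, so as it stands the theorem is not proved.
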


\begin{proof}
By the definition of $\la^\circ$ and \cite[Theorem 5.2]{BS},
 $L(\la^\circ) \langle\deg(\underline{\la^\circ} \la)\rangle$
belongs to $\soc V(\la)$. Therefore it suffices to show that the socle of $V(\la)$ is irreducible.

We begin with two easy situations.
First suppose that $\la$ is minimal in the Bruhat order on $\La$.
Then $V(\la) = L(\la)$ and the conclusion is trivial.
Instead suppose that
$\deg(\underline{\la^\circ} \la) = \defect(\La)$.
Then $V(\la) \langle \defect(\La)\rangle$ is a submodule of $P(\la^\circ)$ thanks to \cite[Theorem 5.1]{BS}.
As $P(\la^\circ)$ has irreducible socle by Theorem~\ref{prin}(ii), we deduce that $V(\la)$ has irreducible socle too.

Next suppose that $\La$ does not have a smallest element in the Bruhat order.
This implies that $|\La| = \infty$ and moreover every weight $\la \in \La$
has the property that $\deg(\underline{\la^\circ} \la) = \defect(\La)$. So we are done thanks to
the second easy situation just discussed.
Hence we may assume that $\La$ has a smallest element in the Bruhat order, and proceed by ascending induction on
the Bruhat ordering. The base case of the induction is the first easy situation just mentioned.
For the induction step, take $\la \in \La$ that is not minimal, and let $\mu$ be the weight obtained from $\la$
by interchanging some $\up\down$ pair of vertices that are neighbours in the usual sense. Denote the indices
of this pair of vertices by $i < j$.
We know by induction that $\soc V(\mu) \cong L(\nu)\langle j\rangle$
for some $\nu \in \La^\circ$ and $j \geq 0$.
Moreover as $\nu \in \La^\circ$ the injective hull of $L(\nu)\langle j \rangle$ is isomorphic to
$P(\nu) \langle j-2\defect(\La)\rangle$ thanks to Theorem~\ref{prin}(ii).
Hence there is an embedding $V(\mu) \hookrightarrow P(\nu)\langle j - 2\defect(\La)\rangle$.

Let $t$ be the $\La\La$-matching with a cap joining vertices $i$ and $j$ on the bottom number line,
a cup joining vertices $i$ and $j$ on the top number line, and straight line segments everywhere else.
Let $G^t_{\La\La}$ be the corresponding projective functor.
By Theorem~\ref{vf}, there is an embedding $V(\la) \hookrightarrow G^t_{\La\La} V(\mu)$.
Applying $G^t_{\La\La}$ to our earlier embedding, we deduce that $V(\la)
\hookrightarrow
G^t_{\La\La} P(\nu) \langle j - 2 \defect(\La)\rangle$.
Finally by Theorem~\ref{pf}, we see that
$G^t_{\La\La} P(\nu) \langle j - 2 \defect(\La)\rangle$ is a direct sum of some number of copies of
a single projective module $P(\gamma)$ (possibly shifted in different degrees) for some $\gamma \in \La^\circ$.
Combined with Theorem~\ref{prin}(ii) again, this shows that $\soc V(\la)$ is also a direct sum of some number of copies
of a single irreducible module $L(\gamma)$ (possibly shifted in different degrees).
But $V(\la)$ itself has a multiplicity-free composition series
according to \cite[Theorem 5.2]{BS}. Hence its socle must actually be irreducible.
\end{proof}

\begin{Corollary}
The radical and socle filtrations of cell modules both coincide with the grading filtration.
In particular, cell modules are rigid.
\end{Corollary}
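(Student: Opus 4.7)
Let $V := V(\la)$, let $d := \deg(\underline{\la^\circ}\la)$ be the top degree of $V$, and write $F^i V := \bigoplus_{j \geq i} V_j$ for the grading filtration. Because $K_\La$ is positively graded with semisimple degree-zero part, each $F^i V$ is a $K_\La$-submodule with semisimple quotient, which forces the automatic inclusions $\operatorname{rad}^i V \subseteq F^i V \subseteq \soc^{d+1-i} V$. It therefore suffices to prove both filtrations equal $F^\bullet V$; rigidity $\operatorname{rad}^i V = \soc^{d+1-i} V$ is then immediate.

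The radical filtration is handled by Koszul theory. Since $V$ is generated in degree $0$ and admits a linear projective resolution by Theorem~\ref{pres}, equivalently (via Corollary~\ref{indep}) satisfies $\ext^i_{K_\La}(V, L(\mu))_j = 0$ for $j \neq -i$, the standard characterization of Koszul modules (cf.\ \cite[Proposition~2.4.1]{BGS}) yields $\operatorname{rad}^i V = F^i V$ for every $i$.

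For the socle filtration I would argue by dualizing the proof of Theorem~\ref{pres}. Since projective functors commute with $\circledast$ by Theorem~\ref{duality}, applying $\circledast$ to the short exact sequence of Theorem~\ref{vf}(i) yields an analogous short exact sequence of $\circledast$-dual cell modules; combined with the $\circledast$-twisted form of Theorem~\ref{pf}, which identifies $(G^t_{\La\Ga} P(\ga))^\circledast$ with $P(\la)^\circledast$, the inductive cone construction of Theorem~\ref{pres} transports verbatim to the $\circledast$-dual setting to produce a linear projective resolution of $V^\circledast\langle d\rangle$, a module generated in degree $0$ with head $L(\la^\circ)$. Applying the radical-equals-grading argument of the previous paragraph to $V^\circledast\langle d\rangle$ and pulling back through $\circledast$ yields $\soc^j V = F^{d+1-j} V$ for every $j$, completing the proof.

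The main obstacle is the bookkeeping in the dualization step: verifying that every ingredient of the inductive construction of Theorem~\ref{pres}, including the choice of projective functor $G^t_{\La\Ga}$ attached to the caps-into-rays matching and the surjection $G^t_{\La\Ga} P(\ga') \twoheadrightarrow P(\la)$ from Theorem~\ref{pf}, transports compatibly through $\circledast$. Thanks to Theorems~\ref{pf}, \ref{vf}, and \ref{duality}, this reduces to careful checking rather than requiring any new ideas.
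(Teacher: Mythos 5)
Your reduction of the statement to ``radical filtration $=$ grading filtration'' for both $V(\la)$ and $V(\la)^\circledast\langle d\rangle$ is sound, and the half concerning $V(\la)$ is correct (though the linear resolution is not needed there: generation in degree $0$ together with Lemma~\ref{q2}, i.e. $K_\La$ generated in degrees $0$ and $1$, already forces $\operatorname{rad}^i V = F^i V$). The gap is in the socle half. Dualising the construction of Theorem~\ref{pres} does not produce a projective resolution of $V(\la)^\circledast\langle d\rangle$: the functor $\circledast$ turns the projectives $\cP_n$ into injectives, and $P(\mu)^\circledast$ is projective only for $\mu \in \La^\circ$ (Theorem~\ref{prin}); already the base case breaks down, since for $\defect(\la)=0$ one gets $V(\la)^\circledast = P(\la)^\circledast$, which is in general not projective. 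What dualising honestly yields is a linear \emph{injective coresolution} of $V(\la)^\circledast$, which gives no control over its radical filtration. Worse, the intermediate claim is simply false: dual cell modules need not admit linear projective resolutions. Take $\La = \{\down\up,\,\up\down\}$, $\alpha := \down\up$, $\beta := \up\down$. Then $V(\beta) = P(\beta)$ has graded composition factors $L(\beta), L(\alpha)\langle 1\rangle$, so $V(\beta)^\circledast$, shifted so that its head $L(\alpha) = L(\beta^\circ)$ sits in degree $0$, has projective cover $P(\alpha)$ (graded factors $L(\alpha), L(\beta)\langle 1\rangle, L(\alpha)\langle 2 \rangle$) with kernel $L(\alpha)\langle 2\rangle$; the first syzygy is generated in degree $2$, not $1$, so the minimal resolution fails to be linear at the very first step.

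What your argument actually needs is much weaker and is exactly what the preceding Theorem~\ref{simplesocle} supplies: since $\soc V(\la)$ is simple, it must equal the top graded component of $V(\la)$ (the top component is always contained in the socle), so $d = \deg(\underline{\la^\circ}\la)$ is indeed the top degree and $V(\la)^\circledast\langle d\rangle$ is generated in degree $0$ with head $L(\la^\circ)$. Combined with Lemma~\ref{q2}, the same elementary radical computation as for $V(\la)$ then gives $\operatorname{rad}^i\bigl(V(\la)^\circledast\langle d\rangle\bigr) = F^i\bigl(V(\la)^\circledast\langle d\rangle\bigr)$, and dualising converts this into the socle filtration statement; no resolutions enter. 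This is in substance the paper's proof, which invokes \cite[Proposition 2.4.1]{BGS} (a graded module generated in a single degree whose socle is concentrated in a single degree, over an algebra generated in degrees $0$ and $1$, is rigid with radical, socle and grading filtrations all equal) together with Theorem~\ref{simplesocle} and Lemma~\ref{q2}.
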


\begin{proof}
This follows from the lemma combined with Lemma~\ref{q2} and \cite[Proposition 2.4.1]{BGS}.
\end{proof}

\section{Kostant modules}

In the last section, we construct an analogue of the
(generalised) BGG resolution from \cite{BGG, Lep} in the diagrammatic setting.
Throughout the section, we work with a fixed block $\La$.

\phantomsubsection{Homomorphisms between neighbouring cell modules}
Recall we say two vertices $i < j$ of a weight $\la \in \La$
are {\em neighbours} if they are separated only by
$\circ$'s and $\times$'s.

\begin{Lemma}\label{homs}
Let $\mu \in \La$ be a weight and $i < j$ be
neighbouring vertices of
$\mu$ labelled $\up$ and $\down$, respectively.
Let $\la$ be the weight obtained from $\mu$
by interchanging the labels on these two vertices.
Then the linear map
\begin{align*}
f_{\la,\mu}:V(\la)\langle 1 \rangle &\rightarrow V(\mu),\\
(c \la| &\mapsto \left\{\begin{array}{ll}
(c \mu|&\text{if $c$ has a cup joining the $i$th and $j$th vertices,}\\
0&\text{otherwise,}
\end{array}\right.
\end{align*}
is a graded
$K_\La$-module homomorphism of degree zero.
Every $K_\La$-module homomorphism
from $V(\la)$ to $V(\mu)$ is a multiple of $f_{\la,\mu}$.
\end{Lemma}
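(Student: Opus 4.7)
The plan is to realize $f_{\la,\mu}$ as the map induced on the quotient $V(\la)\langle 1 \rangle = P(\la)\langle 1\rangle / N\langle 1\rangle$ by a canonical homomorphism out of the projective cover $P(\la)$, and then to deduce uniqueness from the simple head of $V(\la)$.

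First the degree statement follows from Lemma~\ref{deglem}: when $c$ has a cup joining vertices $i$ and $j$, this cup is anti-clockwise in $c\la$ (with labels $\down\up$) and clockwise in $c\mu$ (with labels $\up\down$), while all other cups and rays of $c$ receive identical orientations since $\la$ and $\mu$ agree outside $\{i,j\}$. Hence $\deg(c\mu) = \deg(c\la) + 1$, which matches the shift $\langle 1 \rangle$ on the source; when $c$ has no such cup the image is zero and there is nothing to check. By the same analysis $\underline{\la}\mu$ is oriented of degree $1$: the unique cup of $\underline{\la}$ joining $i,j$ becomes clockwise under $\mu$, while all remaining structure of $\underline{\la}$ is unaffected by the swap. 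Hence $(\underline{\la}\mu|$ is a nonzero degree $1$ element of $e_\la V(\mu)$, and by projectivity of $P(\la)$ there is a unique $K_\La$-homomorphism $g: P(\la)\langle 1\rangle \to V(\mu)$ with $g(e_\la) = (\underline{\la}\mu|$.

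To show that $g$ descends to $V(\la)\langle 1\rangle$, recall from \cite[Theorem~5.1]{BS} that $V(\la) = P(\la)/N$, where $N$ is the span of basis vectors $(c\nu\overline{\la})$ with $\nu > \la$. Applying the action formula (\ref{Actby}) gives $g((c\nu\overline{\la})) = s_{c\nu\overline{\la}}(\mu)(c\mu|$ when $c\mu$ is oriented and zero otherwise. The crucial claim is that $s_{c\nu\overline{\la}}(\mu) = 0$ for $\nu > \la$ whenever $c\mu$ is oriented; I would establish this by unpacking the surgery procedure underlying Theorem~\ref{cell4}, using that the rightmost cap diagram is $\overline{\la}$ (not $\overline{\nu}$) together with the strict inequality $\nu > \la$ to obstruct the production of a $(c\mu|$-term. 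This vanishing is the main obstacle in the argument. Once granted, $g$ induces the desired homomorphism $\bar g: V(\la)\langle 1\rangle \to V(\mu)$.

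To identify $\bar g$ with $f_{\la,\mu}$, I would evaluate on a basis vector $(c\la|$, represented by $(c\la\overline{\la}) \in P(\la)$: (\ref{Actby}) gives $\bar g((c\la|) = s_{c\la\overline{\la}}(\mu)(c\mu|$ if $c\mu$ is oriented, and zero otherwise. By Theorem~\ref{cell4}(iii) we have $s_{c\la\overline{\la}}(\mu) = 1$ whenever $c\mu$ is oriented, and a short combinatorial check at vertices $i,j$ (using the consistency requirements on rays and cups in an oriented cup diagram under the interchange $\down\up \leftrightarrow \up\down$) shows that, given $c\la$ is oriented, $c\mu$ is oriented if and only if $c$ has a cup joining $i$ and $j$; otherwise some neighbouring cup or ray at $i$ or $j$ would be forced to carry inconsistent labels. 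This matches the formula for $f_{\la,\mu}$ exactly. Finally, uniqueness: any $K_\La$-homomorphism $\phi: V(\la) \to V(\mu)$ is determined by $\phi((\underline{\la}\la|)$, and since $e_\la (\underline{\la}\la| = (\underline{\la}\la|$, this image must lie in $e_\la V(\mu)$; inspection shows this space is one-dimensional, spanned by $(\underline{\la}\mu|$, so $\phi$ is necessarily a scalar multiple of $f_{\la,\mu}$.
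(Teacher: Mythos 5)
Your construction is essentially the paper's own: the paper also produces $f_{\la,\mu}$ from multiplication by the canonical degree-one element joining $\la$ to $\mu$ (there, right multiplication by $(\underline{\la}\mu\overline{\la})$ viewed as an endomorphism of $P(\la)$, which is shown to induce a map between cell-filtration subquotients $P/M \rightarrow M/N \cong V(\mu)\langle 1\rangle$), and your uniqueness argument via the cyclic generator and $\dim e_\la V(\mu)=1$ is the same in substance as the paper's appeal to the one-dimensionality of the relevant hom space. The problem is that the one step you yourself flag as ``the main obstacle'' --- the vanishing $s_{c\nu\overline{\la}}(\mu)=0$ for all $\nu>\la$ with $c\mu$ oriented, which is the entire content of the assertion that $g$ annihilates the kernel of $P(\la)\twoheadrightarrow V(\la)$ --- is not proved. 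Announcing that you ``would establish this by unpacking the surgery procedure'', with the hint that the right-hand cap diagram is $\overline{\la}$ rather than $\overline{\nu}$, is a statement of intent rather than an argument, so the proposal is incomplete exactly where the content lies. This is the same point the paper absorbs into its assertion (citing \cite[Theorem 4.4(i)]{BS}) that $(c\nu\overline{\la})(\underline{\la}\mu\overline{\la})$ only involves basis vectors $(c\gamma\overline{\la})$ with $\nu<\gamma\geq\mu$.

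The gap is genuine but can be closed in a few lines from results you already quote, with no new surgery analysis. If $s_{c\nu\overline{\la}}(\mu)\neq 0$ then the basis vector $(c\mu d)$ occurs in the product $(c\nu\overline{\la})(\underline{\la}\mu d)$ (any $d$, say $d=\overline{\mu}$; the scalar is independent of $d$ by Theorem~\ref{cell4}(ii)). By Corollary~\ref{ideal4} every basis vector occurring in this product has middle weight $\geq\nu$ as well as $\geq\mu$, so $\mu\geq\nu>\la$; since $\ell(\la,\mu)=1$ and $\ell$ is additive, $\mu$ covers $\la$, forcing $\nu=\mu$. But $\nu=\mu$ is impossible for degree reasons: multiplication is homogeneous of degree zero, while $\deg(\mu\overline{\la})=\deg(\underline{\la}\mu)=1$, so $(c\mu\overline{\la})(\underline{\la}\mu d)$ sits in degree $\deg(c\mu d)+2$ and cannot contain the vector $(c\mu d)$. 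Hence $s_{c\nu\overline{\la}}(\mu)=0$ for every $\nu>\la$, which is exactly what your descent argument needs. With this inserted your proof goes through; the remaining steps (the degree count via Lemma~\ref{deglem}, the identification of the induced map with $f_{\la,\mu}$ using Theorem~\ref{cell4}(iii) and the observation that, given $c\la$ oriented, $c\mu$ is oriented precisely when $c$ has a cup joining $i$ and $j$, and the uniqueness argument) are correct and parallel the paper.
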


\begin{proof}
The second statement follows at once from the first, since
$f_{\la,\mu}$ is clearly a
non-zero map and the space
$\hom_{K_\La}(V(\la),V(\mu))$ is at most one-dimensional by \cite[(5.12), (5.14)]{BS}.

For the first statement,
recall that the projective indecomposable module
$P := P(\la) = K_\La e_\la$ has a filtration by cell modules
constructed explicitly in \cite[Theorem 5.1]{BS}.
In particular, if we let $M$ (resp.\ $N$) be the submodule of $P$
spanned by all basis vectors
$(c \nu \overline{\la})$ such that $\nu \neq \la$
(resp.\ $\nu \neq \la, \mu$),
then $P / M \cong V(\la)$
and $M / N \cong V(\mu)\langle 1 \rangle$.
Right multiplication by
$(\underline{\la} \mu \overline{\la}) \in K_\La$
defines an endomorphism $f:P\rightarrow P$ that is homogeneous
of degree two.
By \cite[Theorem 4.4(i)]{BS},
$f$ maps
any basis vector $(c \nu \overline{\la})$ of $P$ to
a linear combination of basis vectors
of the form $(c \ga \overline{\la})$ where
$\nu < \ga \geq \mu$. It follows that $f(P) \subseteq M$ and
$f(M) \subseteq N$.
Hence $f$ induces a degree two homomorphism
$$
f_{\la,\mu}:V(\la)
\cong P / M \rightarrow M / N \cong V(\mu)\langle 1 \rangle.
$$
To see that this is
exactly the map $f_{\la,\mu}$ from the statement of the lemma,
it remains to observe by \cite[Theorem 4.4(iii)]{BS} that
$$
f((c \la \overline{\la}))
=
(c \la \overline{\la}) (\underline{\la} \mu \overline{\la})
\equiv \left\{
\begin{array}{ll}
(c \mu \overline{\la}) \pmod{N}&\text{if $c \mu$ is oriented,}\\
0 \hspace{7.5mm}\pmod{N}&\text{otherwise.}
\end{array}\right.
$$
Since $c \la$ is oriented {\em a priori}
and $\la$ differs from $\mu$ only at the vertices $i$ and $j$,
the condition that
$c \mu$ is oriented is equivalent to saying that $c$ has a cup joining the
$i$th and $j$th vertices.
\end{proof}

\phantomsubsection{Kostant weights}
A {\em Kostant weight} means a weight $\mu \in \La$ such that
\begin{equation}
\sum_{i \geq 0} \dim \ext^i_{K_\La}(V(\la),L(\mu)) \leq 1
\end{equation}
for every $\la \in \La$.
The following lemma gives a combinatorial classification of such
weights: they are the weights that are ``$\up\down\up\:\down$-avoiding.''
In view of Remark~\ref{lls}, this result can also be deduced
from existing literature (at least
for bounded weights); see e.g. \cite{BH} and the references therein.

\begin{Lemma}
\label{kostantwts}
For $\mu \in \La$, the following are equivalent:
\begin{itemize}
\item[(i)] $p_{\la,\mu}(q) = q^{\ell(\la,\mu)}$
for all $\la \leq \mu$;
\item[(ii)] $\mu$ is a Kostant weight;
\item[(iii)] $\mu$ is $\up\down\up\:\down$-avoiding, by which we mean it is impossible to find vertices
$i < j < k < l$
whose labels in $\mu$ are
$\up$, $\down$, $\up$ and $\down$, respectively.
\end{itemize}
\end{Lemma}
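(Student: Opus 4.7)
My plan is to derive (i)$\iff$(ii) from Corollary~\ref{indep} and to establish (i)$\iff$(iii) by a direct combinatorial analysis of~(\ref{newp}). For (i)$\iff$(ii): by Corollary~\ref{indep}, $p_{\la,\mu}(q) = \sum_{i\geq 0} q^i \dim\ext^i_{K_\La}(V(\la),L(\mu))$, and each $\ext^i_{K_\La}(V(\la),L(\mu))$ is concentrated in a single internal degree with parity matching $\ell(\la,\mu)$. Thus $p_{\la,\mu}(q)$ has non-negative integer coefficients whose sum equals the total Ext-dimension featuring in (ii). Since the zero labelling always belongs to $D(\la,\mu)$ and contributes the leading monomial $q^{\ell(\la,\mu)}$, this sum is at least $1$ for every $\la\leq\mu$; hence (ii) forces $p_{\la,\mu}(q)=q^{\ell(\la,\mu)}$ for $\la\leq\mu$ and $0$ otherwise, which is (i), and the reverse implication is trivial.

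For (i)$\iff$(iii), I will argue directly with~(\ref{newp}), which says that (i) holds iff $|D(\la,\mu)|=1$ for every $\la\leq\mu$. The monotonicity rule (labels weakly increase inward, starting from $0$ on external chambers) implies that any non-trivial labelling necessarily carries a positive integer on the interior of some small cap; conversely, whenever $\ell_i(\la,\mu)\geq 1$ for the leftmost vertex $i$ of some small cap of $\overline{\mu}$, the labelling placing $1$ in that small cap's interior and $0$ everywhere else is valid. A short computation, realising $\la\leq\mu$ via successive $\up\down \mapsto \down\up$ moves applied to $\mu$, gives $\max_{\la\leq\mu}\ell_i(\la,\mu)=\min\bigl(\#\{k\leq i:\mu_k=\up\},\,\#\{l>i:\mu_l=\down\}\bigr)$. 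Hence (i) fails iff $\overline{\mu}$ contains a small cap $(i,j)$ together with an $\up$ at some $k<i$ and a $\down$ at some $l>i$ in $\mu$.

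Since any small cap $(i,j)$ of $\overline{\mu}$ has $\mu_i=\down$, $\mu_j=\up$ with only vertices labelled $\circ$ or $\times$ strictly between, the $\down$ at a position $>i$ must actually occur at a position $>j$. This exhibits a pattern $\up\down\up\down$ at positions $k<i<j<l$ in $\mu$, yielding one direction of (i)$\iff$(iii). For the converse, given a pattern $\up\down\up\down$ at $a<b<c<d$, let $i$ be the largest index in $[b,c]$ with $\mu_i=\down$ and $j$ the smallest index in $(i,c]$ with $\mu_j=\up$; then $b\leq i<j\leq c$ and every label strictly between $i$ and $j$ is $\circ$ or $\times$. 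So $(i,j)$ is an immediately adjacent $\down\up$-pair, and therefore a small cap of $\overline{\mu}$ (matched at the very first iteration of the standard algorithm building $\overline{\mu}$). The outer witnesses $\up_a$ (at $a<b\leq i$) and $\down_d$ (at $d>c\geq j$) then realise the obstruction identified in the previous paragraph.

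The main obstacle is the combinatorial localisation in the last paragraph: one must check that a global $\up\down\up\down$-pattern in $\mu$ necessarily produces an \emph{immediately} adjacent $\down\up$-pair inside it, and hence a genuine small cap rather than merely some nested cap. This rests on the ``inside-out'' construction of $\overline{\mu}$, which guarantees that any two adjacent $\down\up$'s get paired at the first step. Once this local-to-global step is in place, everything else in the argument is either a straightforward consequence of the monotonicity rules for labelled cap diagrams or a direct appeal to Corollary~\ref{indep}.
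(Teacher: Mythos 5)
Your proof is correct, and all the claims you leave as ``short computations'' do check out: the zero labelling lies in $D(\la,\mu)$ whenever $\la\leq\mu$, a non-trivial labelling forces a positive entry inside some small cap by pushing the monotonicity inward along the nesting, the formula $\max_{\la\leq\mu}\ell_i(\la,\mu)=\min\bigl(\#\{k\leq i:\mu_k=\up\},\#\{l>i:\mu_l=\down\}\bigr)$ is right (the easy inequality $\leq$ is all you need for one direction, and one leftward move of a single $\down$ already gives $\geq 1$ for the other), and your largest-$\down$/smallest-$\up$ localisation does produce a genuine neighbouring $\down\up$ pair, hence a small cap of $\overline{\mu}$. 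The route differs from the paper's in its organisation rather than its ingredients. The paper proves the cycle (i)$\Rightarrow$(ii)$\Rightarrow$(iii)$\Rightarrow$(i): for (ii)$\Rightarrow$(iii) it takes an $\up\down\up\:\down$ pattern at $i<j<k<l$ with $l-i$ minimal (so that $j,k$ are neighbours and give a small cap), forms the single weight $\la$ by swapping the labels at $i$ and $l$, observes $\ell_j(\la,\mu)=1$, and concludes $p_{\la,\mu}(1)\geq 2$, contradicting Kostantness via Corollary~\ref{indep}; for (iii)$\Rightarrow$(i) it argues directly that avoidance forces $\ell_i(\la,\mu)=0$ at every small cap for every $\la\leq\mu$. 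You instead isolate the representation theory entirely in (i)$\Leftrightarrow$(ii) and make (i)$\Leftrightarrow$(iii) a purely combinatorial statement about (\ref{newp}), with the max-formula replacing the paper's explicit construction of one witness $\la$, and a different extraction of the small cap from the pattern (no minimality needed). A side benefit of your write-up is that it makes explicit the ``zero labels on small caps force the zero labelling'' step, which the paper uses silently in its (iii)$\Rightarrow$(i); a mild cost is that you prove slightly more than needed (the exact value of the maximum, and the parity refinement of Corollary~\ref{indep}, neither of which is used beyond positivity).
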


\begin{proof}
(i)$\Rightarrow$(ii).
This is clear from Corollary~\ref{indep}.

(ii)$\Rightarrow$(iii).
Assume that (iii) is false.
Choose vertices $i < j < k < l$
labelled $\up, \down, \up$ and $\down$, respectively,
so that $(l-i)$ is minimal and $j$ and $k$ are neighbours.
Let $\la$ be the weight obtained from $\mu$ by interchanging
the labels on the $i$th and $l$th vertices.
Then $\overline{\mu}$ has a small cap
connecting the $j$th and $k$th vertices, and
$\ell_j(\la,\mu) = 1$.
We deduce from (\ref{newp}) that $p_{\la,\mu}(1) \geq 2$,
so that $\mu$ is not a Kostant weight by Corollary~\ref{indep}.

(iii)$\Rightarrow$(i).
Assume that $\mu$ is $\up\down\up\:\down$-avoiding, and we are given $\la \leq \mu$.
Let $i < j$ be neighbouring vertices labelled $\down$ and $\up$
in $\mu$, respectively. So there is a small cap in $\overline{\mu}$
joining the $i$th and $j$th vertices.
By the assumption on $\mu$, it is either the
case that no vertex $\leq i$ in $\mu$ is labelled $\up$,
or that no vertex $> i$ in $\mu$ is labelled $\down$.
In the former case, the fact that $\la \leq \mu$
implies that all of the vertices $\leq i$
are labelled in the same way in $\la$ as in $\mu$,
hence $\ell_i(\la,\mu) = 0$.
In the latter case, all the vertices $> i$ are labelled in the same way
in $\la$ as in $\mu$, and of course $\la$ and $\mu$ are in the same block,
so we can  again deduce that
$\ell_i(\la,\mu) = 0$. We deduce from (\ref{newp}) that $p_{\la,\mu}(q) = q^{\ell(\la,\mu)}$, because the label in every
small cap of $\overline{\mu}$ is zero.
\end{proof}

\phantomsubsection{The BGG resolution}
Let
$\mu \in \La$ be any weight.
Following a construction going back to
work of Bernstein, Gelfand and Gelfand \cite{BGG}
and Lepowsky \cite{Lep}, we are going to define a complex
of graded $K_\La$-modules
\begin{equation}\label{bggres}
\cdots
\longrightarrow
V_2 \stackrel{d_1}{\longrightarrow}
V_1 \stackrel{d_0}{\longrightarrow}
V_0 \stackrel{\eps}{\longrightarrow}
L(\mu) \longrightarrow 0,
\end{equation}
where
\begin{equation}\label{modi}
V_n := \bigoplus_{\substack{\la \leq \mu
\\
\ell(\la,\mu) = n}}
V(\la)\langle n \rangle.
\end{equation}
We will refer to this as the {\em BGG complex}. It will turn out that
it is
exact if and only if $\mu$ is a Kostant weight (as described combinatorially by
Lemma~\ref{kostantwts}).

To define the maps in the BGG complex,
note to start with that
$V_0 = V(\mu)$. So we can simply
define the map $\eps$ to be the canonical
surjection $V(\mu) \twoheadrightarrow L(\mu)$.
To define the other
maps we need a little more preparation.
Introduce the notation $\la \rightarrow \nu$
to indicate that $\la \leq \nu$ and $\ell(\la,\nu) = 1$.
In that case,
there is
a canonical homomorphism $f_{\la,\nu}:V(\la)\langle 1 \rangle
\rightarrow V(\nu)$
thanks to Lemma~\ref{homs}.
Call a quadruple of weights $(\la, \nu, \nu', \la')$
a {\em square} if
$$
\begin{picture}(-30,48)
\put(-35.5,18.7){$\la$}
\put(10.5,18.7){$\la'.$}
\put(-25.5,29){$\nearrow$}
\put(-25.5,7){$\searrow$}
\put(-2.5,29){$\searrow$}
\put(-2.5,7){$\nearrow$}
\put(-11.5,39.3){$\nu$}
\put(-13.4,-1){$\nu'$}
\end{picture}
$$
By an easy variation on
\cite[Lemma 10.4]{BGG}, it is possible to pick
a sign $\sigma(\la,\nu)$ for each arrow $\la \rightarrow \nu$
such that for every square
the product of the signs associated to its four arrows
is equal to $-1$.
We can now define the differential
$d_n:V_{n+1} \rightarrow V_n$
to be the sum of the maps
$$
\sigma(\la,\nu) f_{\la,\nu}:V(\la)\langle n+1\rangle
\rightarrow V(\nu)\langle n \rangle
$$
for each $\la \rightarrow \nu \leq \mu$ with
$\ell(\nu,\mu) = n$.

Now we can formulate the main result of the section.
It shows that (\ref{bggres})
is indeed a complex for any $\mu \in \La$, and
it is exact
if and only if $\mu$ is a Kostant weight.
Hence, for Kostant weights,
the BGG complex is actually a
{\em resolution} of $L(\mu)$ by multiplicity-free direct sums of cell modules.
Our proof mimics the
general idea of the argument in \cite[Lemma 4.4]{Lep} (see also \cite[Lemma 5]{HK}
and \cite[Proposition 6]{HK}) in the diagrammatic setting.

\begin{Theorem}
The following hold for any $\mu \in \La$:
\begin{itemize}
\item[(i)] $\im\, d_{n+1} = \ker d_n$
for each $n \geq 0$;
\item[(ii)] $\im\, d_0 \subseteq \ker \eps$
with equality if and only if
$\mu$ is a Kostant weight.
\end{itemize}
\end{Theorem}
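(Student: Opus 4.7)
The plan is threefold: first establish that (\ref{bggres}) is a complex with the containment from (ii); second, prove the main exactness by induction on the ideal $\{\la : \la \leq \mu\}$ using projective functors; third, characterise the equality in (ii) via a Kazhdan-Lusztig Euler characteristic computation. For the complex property $d_n \circ d_{n+1} = 0$, I would restrict to each summand $V(\la)\langle n+2 \rangle$ of $V_{n+2}$ and project to $V(\nu')\langle n\rangle$ of $V_n$, yielding $\sum_{\la \to \nu \to \nu'}\sigma(\la,\nu)\sigma(\nu,\nu')f_{\nu,\nu'}\circ f_{\la,\nu}$. By the explicit formula in Lemma~\ref{homs}, each composition $f_{\nu,\nu'}\circ f_{\la,\nu}$ maps $(c\la|$ to $(c\nu'|$ only when $c$ contains cups at both swap positions. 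A direct combinatorial case analysis then separates two situations: if the two swaps involved are supported on disjoint neighbouring pairs, there are exactly two paths $\la\to\nu\to\nu'$, the four weights form a square, and the sign condition $\sigma$ forces cancellation; if they share a vertex, no cup diagram $c$ can contain cups at both swap positions simultaneously, so each individual composition vanishes. The containment $\im d_0 \subseteq \ker \eps$ is immediate from the same description in Lemma~\ref{homs} since no image vector is a scalar multiple of the cyclic generator $(\underline{\mu}\mu|$ of $V(\mu)$.

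For the main exactness, I would proceed by induction on $|\{\la \leq \mu\}|$. The base case $\mu$ minimal in the Bruhat order is trivial. For the inductive step, choose neighbouring vertices $i<j$ where $\mu$ has the pattern $\up\down$, let $\mu^\# < \mu$ be the swap, and let $t$ be the $\La\La$-matching with a cap joining $i,j$ on the bottom, a cup joining $i,j$ on top, and straight line segments elsewhere. Theorem~\ref{vf}(i) applied to $V(\tilde\la)$ for $\tilde\la \leq \mu^\#$ yields short exact sequences $0 \to V(\tilde\la^\#) \to G^t_{\La\La}V(\tilde\la) \to V(\tilde\la)\langle -1\rangle \to 0$ when $\tilde\la$ has $\down\up$ at $(i,j)$, and $G^t_{\La\La}V(\tilde\la) = 0$ otherwise. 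Combining the decomposition $\{\la \leq \mu\} = \{\la \leq \mu^\#\} \sqcup \{\tilde\la^\# : \tilde\la \leq \mu^\#, \tilde\la \in \La^{\!\vee\wedge}_{i,j}\}$ with the length identities $\ell(\la,\mu) = \ell(\la,\mu^\#) + 1$ for $\la\leq\mu^\#$ and $\ell(\tilde\la^\#,\mu) = \ell(\tilde\la,\mu^\#)$, I would identify $\mathrm{BGG}(\mu)$ as the mapping cone of an appropriate chain map between $\mathrm{BGG}(\mu^\#)[-1]\langle 1\rangle$ and $G^t_{\La\La}\langle 1\rangle\mathrm{BGG}(\mu^\#)$. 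Exactness of $G^t_{\La\La}$ together with the inductive hypothesis then propagates through the cone to give (i) for $\mathrm{BGG}(\mu)$.

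For the dichotomy in (ii), I would compute the graded Euler characteristic $\sum_n (-1)^n [V_n]$ in the Grothendieck group $[\mod{K_\La}]$ and compare with $[L(\mu)]$. Using Corollary~\ref{invmat} to express $[L(\mu)]$ in terms of the $[V(\la)]$'s via the inverse Kazhdan-Lusztig matrix, equality holds exactly when $p_{\la,\mu}(q) = q^{\ell(\la,\mu)}$ for every $\la \leq \mu$, which is precisely the Kostant condition by Lemma~\ref{kostantwts}. The principal obstacle is carrying out the inductive step cleanly: identifying the mapping cone reconstruction of $\mathrm{BGG}(\mu)$ from $\mathrm{BGG}(\mu^\#)$ via $G^t_{\La\La}$ requires tracking filtration subquotients, degree shifts, and establishing compatibility between the sign conventions $\sigma$ used in the two BGG complexes.
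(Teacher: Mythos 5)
Your handling of the complex property (the two--case analysis of shared versus disjoint swap positions together with the sign rule on squares), of the containment $\im\, d_0 \subseteq \ker \eps$, and of the dichotomy in (ii) via the Euler characteristic, Corollary~\ref{invmat} and Lemma~\ref{kostantwts} coincides with the paper's steps one, two and four. The problem is your replacement of the exactness step (i) by an induction with the projective functor $G^t_{\La\La}$: two of the claims this rests on are false. First, for $\tilde\la \leq \mu^\#$ with $\tilde\la \notin \La_{i,j}^{\vee\wedge}$ it is not true that $G^t_{\La\La}V(\tilde\la) = 0$; by Theorem~\ref{vf}(ii) this module vanishes only when vertices $i$ and $j$ of $\tilde\la$ carry equal labels, and weights $\tilde\la \leq \mu^\#$ with $\up\down$ at $(i,j)$ do occur. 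Concretely, take four relevant vertices, $\mu = \up\up\down\down$, $(i,j) = (2,3)$, so $\mu^\# = \up\down\up\down$; then $\tilde\la = \down\up\down\up$ satisfies $\tilde\la \leq \mu^\#$, has $\up\down$ at $(2,3)$, and $G^t_{\La\La}V(\tilde\la)$ is a nonzero extension of two cell modules. Second, in the same example your index decomposition $\{\la \leq \mu\} = \{\la \leq \mu^\#\} \sqcup \{\tilde\la^\# : \tilde\la \leq \mu^\#,\ \tilde\la \in \La_{i,j}^{\vee\wedge}\}$ is not disjoint: $\down\up\down\up$ lies in both sets (it equals $\tilde\la^\#$ for $\tilde\la = \down\down\up\up$), so the total count of cell subquotients in your cone exceeds that of the complex (\ref{bggres}) for $\mu$.

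There is also a structural obstacle even after correcting the bookkeeping: unlike in Theorem~\ref{pres}, where $G^t_{\La\Ga}$ sends the terms of the resolution (projectives) to modules of exactly the same kind, here $G^t_{\La\La}V(\tilde\la)$ is only \emph{filtered} by cell modules, so the mapping cone is not the BGG complex of $\mu$ on the nose, and "tracking filtration subquotients'' does not by itself produce the required differentials and signs. Moreover the induction on $|\{\la \leq \mu\}|$ need not be well founded, since for unbounded blocks this set can be infinite. The paper proves (i) by an entirely different, direct argument: it suffices to show $e_\gamma(\ker d_n) \subseteq e_\gamma(\im\, d_{n+1})$ for every $\gamma \in \La$; choosing a neighbouring pair $i<j$ labelled $\down\up$ in $\gamma$, one partitions the weights $\la \leq \mu$ according to their labels at $i,j$, notes that $e_\gamma V(\la)$ is at most one dimensional with explicit basis vector $(\underline{\gamma}\la|$, and uses the formula of Lemma~\ref{homs} to solve $d_n(x)=0$ explicitly modulo $\im\, d_{n+1}$. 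If you want to salvage your approach you would need to prove a genuine comparison (e.g.\ a quasi-isomorphism) between the cone and the complex (\ref{bggres}), which is precisely the missing content.
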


\begin{proof}
We proceed in several steps.

\vspace{1mm}
\noindent
{\em Step one: $\im\, d_0 \subseteq \ker \eps$.}
This is clear as the image of $d_0$ is contained in the
unique maximal submodule of $V_0$, which is $\ker \eps$.

\vspace{1mm}
\noindent
{\em Step two: $\im\, d_{n+1} \subseteq \ker d_n$ for any $n \geq 0$.}
Given $\la \leq \mu$,
it is convenient to let
$$
i_\la:V(\la)\langle m \rangle \hookrightarrow
V_m,
\qquad
p_\la:V_m \twoheadrightarrow V(\la)\langle m \rangle
$$
be the natural inclusion and projection maps,
where $m := \ell(\la,\mu)$.
We need to show $d_n \circ d_{n+1} = 0$,
which follows if we check that the map
$$
r_{\la,\la'} := p_{\la'} \circ d_n \circ d_{n+1} \circ i_\la:
V(\la)\langle n+2 \rangle \rightarrow V(\la') \langle n \rangle
$$
is zero for every $\la, \la' \leq \mu$ with
$\ell(\la,\mu) = n+2$ and $\ell(\la',\mu) = n$.
The map $r_{\la,\la'}$ is trivially zero
unless there exists a weight
$\nu$ such that $\la \rightarrow \nu \rightarrow \la'$.
Suppose that $\nu$ is such a weight, and
let $i < j$ (resp.\ $k < l$) be the pair of neighbouring
vertices that are labelled
differently in $\nu$ compared to $\la$ (resp.\ in $\la'$ compared to $\nu$).
If $\{i,j\} \cap \{k,l\} \neq \varnothing$, then
$\nu$ is the unique weight such that $\la \rightarrow \nu
\rightarrow \la'$, and it is clear from the explicit description
from Lemma~\ref{homs} that
$$
r_{\la,\la'} = \sigma(\la,\nu) \sigma(\nu,\la')
f_{\nu,\la'} \circ f_{\la,\nu}
$$
is zero.
If $\{i,j\} \cap \{k,l\}  = \varnothing$, then there is just
one more weight $\nu' \neq \nu$ with $\la \rightarrow \nu' \rightarrow \la'$,
namely, the weight obtained from $\la$ be interchanging the labels on
the $k$th and $l$th vertices. In this case,
$(\la,\nu,\nu',\la')$ is a square, and
$$
r_{\la,\la'} = \sigma(\la,\nu) \sigma(\nu,\la') f_{\nu,\la'} \circ f_{\la,\nu}
+ \sigma(\la,\nu') \sigma(\nu',\la')f_{\nu',\la'} \circ f_{\la,\nu'}.
$$
This time, Lemma~\ref{homs} gives that
$f_{\nu,\la'} \circ f_{\la,\nu} = f_{\nu',\la'} \circ f_{\la,\nu'}$.
Hence $r_{\la,\la'} = 0$ as
$\sigma(\la,\nu) \sigma(\nu,\la')
=
-\sigma(\la,\nu') \sigma(\nu',\la')$ by the choice of signs.

\vspace{1mm}
\noindent
{\em Step three: $\im\, d_{n+1} = \ker d_n$ for any $n \geq 0$.}
It suffices to show that
$e_\gamma$ annihilates $\ker d_n / \im\, d_{n+1}$
for all $\gamma \in \La$.
If $\gamma \not< \mu$ then $e_\gamma V(\la)  = \{0\}$
for all $\la < \mu$, and the conclusion is trivial.
So we may assume that $\gamma < \mu$. This means that
$\gamma$ is not maximal in the Bruhat order, so we can
pick neighbouring vertices $i < j$ of $\gamma$ that are labelled
$\down$ and $\up$, respectively.
For any $m \geq 0$ and symbols $a,b \in \{\down, \up\}$, let
\begin{align*}
\La_m^{a b} &:= \{\la \leq \mu\:|\:\ell(\la,\mu) = m
\text{, vertices $i, j$ of $\la$
are labelled $a, b$, respectively}\}.
\end{align*}
Note that there is a bijection
$\La^{\down\up}_{m+1} \rightarrow \La^{\up\down}_m,
\la \mapsto \la^s$,
where $\la^s$ denotes the weight
obtained from $\la$ by interchanging the labels on the
$i$th and $j$th vertices. Also for any $\la \leq \mu$
with $\ell(\la,\mu) = m$,
the space $e_\gamma V(\la) \langle m \rangle$ is one-dimensional
with basis $(\underline{\gamma} \la|$ if $\la \in \La^{\down\up}_m \cup
\La^{\up\down}_m$,
and $e_\gamma V(\la)\langle m \rangle = \{0\}$ if $\la \in \La^{\up\up}_m \cup \La^{\down\down}_m$.
Finally, for $\la \in \La^{\down\up}_{m+1}$, we have that
\begin{equation}\label{use}
d_m \left((\underline{\gamma} \lambda|\right) \equiv
\sigma(\lambda,\lambda^s) (\underline{\gamma} \lambda^s|
\pmod{\bigoplus_{\nu \in \La^{\down\up}_m} V(\nu)\langle m\rangle}.
\end{equation}

Now take $x \in e_\gamma(\ker d_n)
\subseteq V_{n+1}$. Using the observations just made, we can
write
$x = \sum_{\la \in \La^{\down\up}_{n+2}} a_\la (\underline{\gamma} \la^s|
+ \sum_{\la \in \La^{\down\up}_{n+1}} b_\la (\underline{\gamma} \la|$
for unique $a_\la,b_\la \in \C$.
By (\ref{use}), if we let
$y := \sum_{\la \in \La_{n+2}^{\down\up}} a_\la \sigma(\la,\la^s)
(\underline{\gamma} \la|$,
then
$$
d_{n+1}(y) \equiv x \pmod{\bigoplus_{\nu \in \La^{\down\up}_{n+1}} V(\nu) \langle n+1 \rangle}.
$$
Hence
$x = d_{n+1}(y) + \sum_{\la \in \La^{\down\up}_{n+1}} b'_\la (\underline{\gamma} \lambda|$
for some $b'_\la \in \C$.
Finally we apply the map $d_n$ and use (\ref{use}) again to see that
$$
0=d_n(x) \equiv \sum_{\la \in \La^{\down\up}_{n+1}} b'_\la
\sigma(\la,\la^s) (\underline{\ga}
\la^s| \pmod{\bigoplus_{\nu \in \La^{\down\up}_n} V(\nu) \langle n \rangle}.
$$
It follows that $b'_\la = 0$ for all $\la \in \La^{\down\up}_{n+1}$,
hence $x = d_{n+1} (y)$. This shows that
$e_\gamma (\ker d_n) \subseteq e_\gamma (\im d_n)$, as required.

\vspace{1mm}
\noindent
{\em Step four: $\im\, d_{0} = \ker \epsilon$ if and only if
$\mu$ is a Kostant weight.}
By what we have established so far,
$\im\, d_0 = \ker \epsilon$ if and only if the Euler characteristic
of our complex (computed
in $[\mod{K_\La}]$)
is zero, i.e.
$[L(\mu)] = \sum_{\la \leq \mu} (-q)^{\ell(\la,\mu)}
[V(\la)]$.
By Corollary~\ref{invmat} and \cite[(5.14)]{BS}, we know that
$[L(\mu)] = \sum_{\la \leq \mu} p_{\la,\mu}(-q) [V(\la)]$.
Hence $\im\, d_0 = \ker \epsilon$ if and only if
$p_{\la,\mu}(-q) = (-q)^{\ell(\la,\mu)}$ for each $\la \leq \mu$.
Applying
Lemma~\ref{kostantwts}
this is if and only if $\mu$ is a Kostant weight.
\end{proof}

\end{document}